\numberwithin{equation}{section}
\providecommand{\U}[1]{\protect\rule{.1in}{.1in}}
\newtheorem{theorem}{Theorem}[section]
\newtheorem{corollary}[theorem]{Corollary}
\newtheorem{definition}[theorem]{Definition}
\newtheorem{lemma}[theorem]{Lemma}
\newtheorem{proposition}[theorem]{Proposition}
\newtheorem{remark}[theorem]{Remark}
\renewenvironment{proof}[1][Proof]{\noindent\textbf{#1:~}}{\hfill \qed}
\newcommand{\field}[1]{\mathbb{#1}}
\newcommand{\N}{\field{N}}
\newcommand{\R}{\field{R}}
\newcommand{\C}{\field{C}}
\newcommand{\Z}{\field{Z}}
\def\preuve{\noindent\textbf{Proof:~}}
\def\diag{\mathop{\mathrm{diag}}}
\def\Real{\mathop{\mathrm{Re}}~}
\def\Imag{\mathop{\mathrm{Im}}~}
\def\Tr{\mathop{\mathrm{Tr}\,}}
\def\Id{\mathop{\mathrm{Id}\,}}
\def\supp{\mathrm{supp}\,}
\def\Ker{\mathrm{Ker}}
\def\diag{\mathrm{diag}\,}
\begin{document}
\bibliographystyle{plain}

\title{Adiabatic evolution of 1D shape resonances: an artificial interface conditions approach.}
\author{A. Faraj$^{*}$, A. Mantile$^{*}$, F. Nier\footnote{IRMAR, UMR - CNRS 6625, Universit\'{e} Rennes 1, Campus de
Beaulieu, 35042 Rennes Cedex, France}}
\date{}
\maketitle
\begin{abstract}
  Artificial interface conditions parametrized by a complex number
  $\theta_{0}$ are
  introduced for 1D-Schr{\"o}dinger operators. When this complex parameter
  equals the parameter $\theta\in i\R$ of the complex deformation which unveils the
  shape resonances, the Hamiltonian becomes dissipative. This makes
  possible an adiabatic theory for the time evolution of  resonant
  states for arbitrarily large  time scales. The effect of 
  the artificial interface conditions on the important stationary
  quantities involved in quantum transport models is also checked to
  be as small as wanted, in the polynomial scale $(h^N)_{N\in \N}$
as $h\to 0$, according to $\theta_{0}$.
\end{abstract}
\section{Introduction}
\label{se.intro}

The adiabatic evolution of resonances is an old problem which has
received various answers in the last twenty years. The most effective
results where obtained by remaining on the real spectrum and by
considering the evolution of quasi-resonant states (see for example
\cite{Per}\cite{SoWe}\cite{ASFr}). Motivated by nonlinear problems coming
from the modelling of quantum electronic transport, we reconsider this problem and propose a new approach which rely on a
modification of the initial kinetic energy operator
$-h^{2}\Delta$ into $-h^{2}\Delta_{\theta_{0}}^{h}$ where
$\theta_{0}$ parametrizes  artificial interface conditions. 
With this analysis, we aim at developing reduced models for the
nonlinear dynamics of transverse quantum transport in resonant
tunneling diodes or possibly more complex structures. A functional
framework for such a model has been proposed in \cite{Ni1} and
implements a dynamically nonlinear version of the Landauer-B{\"u}ttiker
approach based on Mourre's theory and Sigal-Soffer propagation estimates
(see \cite{BDM} and \cite{BeA} for an alternative presentation of 
the stationary problem). The
derivation of reduced models for the steady state problem has been
developed in \cite{BNP1}\cite{BNP2}\cite{Ni2} on the basis of
Helffer-Sj{\"o}strand analysis of resonances in \cite{HeSj}. This
asymptotic model elucidated the influence of the
geometry of the potential on the feasibility of hysteresis phenomena
already studied in  \cite{JLPS}\cite{PrSj}.
Numerical
applications have been carried out in realistic 
$Ga-As$ or $Si-SiO_{2}$ structure in \cite{BNP} and \cite{BFN}, showing
a good agreement with previous numerical simulations in \cite{BAPi} or
\cite{LKF} and finally predicting the possibility of exotic
bifurcation diagrams. From the modelling point
of view a difficulty comes from the phase-space description of the tunnel
effect which can be summarized with the importance in the asymptotic
nonlinear system of the asymptotic
value of the branching ratio
\begin{equation}
  \label{eq.branching}
t_{j}=\lim_{h\to 0}\frac{|\langle W^{h} \widetilde{\psi}^{h}_{-}(+k,.)\,,\,
  \Phi_{j}^{h}\rangle|^{2}}{4hk\Gamma_{j}^{h}}\,.
\end{equation}
In the above formula $z_{j}^{h}=E_{j}^{h}-i\Gamma_{j}^{h}$ is a resonance for the Hamiltonian
$H^{h}=-h^{2}\Delta+V-W^{h}$ with a semiclassical island $V$ and a
quantum well $W^{h}$, $\tilde{\psi}_{-}(\pm k, .)$ are the generalized
eigenfunctions for the filled well Hamiltonian
$\widetilde{H}^{h}=-h^{2}\Delta+V$ with a momentum $\pm k$ such that
$k^{2}\sim E_{j}^{h}$ and $\Phi_{j}^{h}$ is the $j$-th
eigenfunction of the Dirichlet Hamiltonian $H^{h}_{D}=-h^{2}\Delta+
V-W^{h}$ on some finite interval $(a,b)$.
 The imaginary part of the
resonance is given by the Fermi Golden rule proved in \cite{BNP2}
\begin{equation}
  \label{eq.FGint}
\Gamma_{j}^{h}(1+o(1))=
\frac{|\langle W^h\tilde\psi^h_-(+k,\cdot),\phi^h_j\rangle|^2}{4hk}
  +\frac{|\langle W^h\tilde\psi^h_-(-k,\cdot),\phi^h_j
    \rangle|^2}{4hk}\,,
\end{equation}
where the interaction with the continuous spectrum leading to a
resonance
contains two contributions from the left-hand side with $+k$ and
from the right-hand side with $-k$.\\

Our purpose is the derivation of reduced models for the dynamics of 
quantum nonlinear systems like it has been done in the stationary case
in  \cite{BNP1}\cite{BNP2}\cite{Ni2} with the following motto: The
(nonlinear) phenomena are governed by a finite number of resonant
states. As this was already explained in \cite{BNP1}, such a remark
dictates the scaling of the potential which leads to the small
parameter analysis and in the end to effective reduced models even
when  $h\sim 0.1$ or $0.3$ (see \cite{BNP}).\\
 For the dynamical problem, the time evolution of resonant states have
 to be considered possibly with a time-dependent potential. And it is
 known that this is a rather subtle point. 
Quantum resonances follow \underline{almost} but not exactly the
general intuition (see \cite{SimR}) and remain an inexhaustible
playground for mathematical analysis.
For example,  the exponential decay law is an approximation
which has a physical interpretation in terms of the
evolution of quasi-resonant states
(truncated resonant states which lie on the real $L^{2}$-space) and writes as
$$
e^{-itH^{h}}\psi_{qr,j}=e^{-itE_{j}}e^{-t\Gamma_{j}}\psi_{qr,j}+ R(t)\,,
$$
where the remainder term $R(t)$ is small only in the range of times scaled as $\frac{1}{\Gamma_{j}}$. A very accurate analysis of this
has been done in
\cite{GeSi}\cite{Ski1}\cite{Ski2}\cite{Ski3}\cite{KRW}\cite{KlRa}
and adiabatic results for slowly
varying potentials and for quasi-resonant states have been
obtained in \cite{Per}(see also \cite{SoWe} and \cite{ASFr} in a
similar spirit) on this range of time-scales.
On the other side the relation
$$
e^{-itH^{h}(\theta)}\psi_{j}=e^{-itE_{j}}e^{-t\Gamma_{j}}\psi_{j}
$$
holds without remainder terms when $\theta\in i\R_{+}$ parametrizes a
complex deformation of $H^{h}$ according to the general approach to
resonances (see
\cite{AgCo}\cite{BaCo}\cite{HiSi1}\cite{CFKS}\cite{HeSj}\cite{HeMa}). 
However, and this is well known within the analysis of
resonances, the deformed generator $iH^{h}(\theta)$ is not maximal
accretive although its spectrum lies in $\left\{\Real z\geq 0\right\}$
and no uniform estimates are available on $e^{-itH^{h}(\theta)}$\,. 
One of the two next strategies have to be chosen:
\begin{itemize}
\item Stay on the real space with quasi-resonant states, with uniform
  estimates of the semigroups, groups or dynamical systems (they are
  unitary) but with remainder term which can be neglected only on some
  parameter dependent range of time.
\item Consider the complex deformed situation and try to solve or
  bypass the defect of accretivity.
\end{itemize}
Because the remainder terms seemed hard to handle within the original
nonlinear problem and also because there may be multiple time scales to
handle, due to the nonlinearity or due to several resonances
involved in the nonlinear process, we chose the second one.\\

In a one dimensional problem the simplest approach is the complex
dilation method according to \cite{AgCo}\cite{BaCo}\cite{CFKS}. 
Since the possibly nonlinear potential with a compact
support has a limited regularity inside the interval $(a,b)$, this
deformation is done only outside this interval following an approach
already presented in \cite{SimE}. The dilation is defined according
to
\begin{equation}
U_{\theta}\psi(x)=\left\{
\begin{array}
[c]{ll}
e^{\frac{\theta}{2}}\psi(\smallskip e^{\theta}(x-b)+b),& x>b\\
\smallskip\psi(x),& x\in(a,b)\\
e^{\frac{\theta}{2}}\psi(\smallskip e^{\theta}(x-a)+a),& x<a\,,
\end{array}
\right.\label{U_theta.intro}
\end{equation}
and finally handled with $\theta\in i\R_{+}$. The conjugated Laplacian
is 
$$
U_{\theta}(-h^{2}\Delta)U_{\theta}^{-1}=-h^{2}e^{- 2\theta\,1_{\R\setminus(a,b)}(x)}\Delta\,,
$$
with the domain made of functions $u\in H^{2}(\R\setminus
\left\{a,b\right\})$ with the interface conditions
\begin{equation}
\left[
\begin{array}
[c]{l}
e^{-\frac{\theta}{2}}u(b^{+})=u(b^{-}
);\quad\medskip e^{-\frac{3\theta}{2}}u^{\prime
}(b^{+})=u^{\prime}(b^{-})\\
e^{-\frac{\theta}{2}}u(a^{-})=u(a^{+});\quad
e^{-\frac{3\theta}{2}}u^{\prime}(a^{-})=u^{\prime
}(a^{+})\,.
\end{array}
\right.  \label{BC_1.intro}
\end{equation}
This can be viewed as a singular version of the black-box formalism of
\cite{SjZw}. Additionally to the fact that this singular deformation
is convenient for the original model with potential barriers presented
with a discontinuous potential and with a nonlinear part inside
$(a,b)$, 
the obstruction to the accretivity
of $U_{\theta}(-h^{2}\Delta)U_{\theta}^{-1}$ is concentrated in two
boundary terms at $x=a$ and $x=b$ in
\begin{multline}
  \label{eq.boundterm}
 \Real \langle u\,,\, i U_{\theta}(-h^{2}\Delta)U_{\theta}^{-1}
  u\rangle_{L^{2}(\R)}
= \Real \left[ih^{2} (\bar u
    u')\big|_{a^{-}}^{b^{+}}(e^{-2\theta}-e^{-\frac{\bar
      \theta+3\theta}{2}})
 \right]
\\
 + h^{2}e^{-2\Real \theta}\sin(2\Imag \theta)\int_{\R\setminus[a,b]}|u'|^{2}~dx\,.
\end{multline}
Our strategy then relies on the introduction of artificial interface
conditions parametrized with $\theta_{0}$ which modify the operator
$-h^{2}\Delta$. The parameter $\theta_{0}$ is then 
chosen so that the above boundary term vanishes
when $\theta=\theta_{0}=i\tau$. The modified and deformed Hamiltonian 
$H^{h}_{\theta_{0}=i\tau}(\theta=i\tau)$ then generates a contraction semigroup
and uniform estimates are available for
$e^{-itH_{i\tau}(i\tau)}$ or for the dynamical system
$(U^{h}(t,s))_{t\geq s}$ for time-dependent potentials\,.\\
Hopefully this modification has a little effect on the Hamiltonian
$H^{h}=-h^{2}\Delta+ V-W^{h}$ and all the quantities involved in the
nonlinear
problem, with explicit estimates with respect to $\theta_{0}$ and
$h$. Indeed all the quantities and even the exponentially small ones
like $\Gamma_{j}$ or the ones appearing in the branching ratio
\eqref{eq.branching} experiment small relative variation with respect
to $\theta_{0}$ when $\theta_{0}=ih^{N_{0}}$ with $N_{0}\geq 5$.\\
In comparison with the modelling of artificial dissipative boundary
conditions in \cite{BKNR1}\cite{BKNR2}\cite{BNR}\cite{BMN}, 
our approach has the advantage of remaining
close to the initial quantum model. Such a comparison is valuable and
ensures the validity of numerical applications when the non-linear
bifurcation phenomena are very sensitive to small variations of the data.\\
Once the above comparison is done, it is checked that adiabatic
evolution for a slowly varying potential or equivalently for the
$\varepsilon$-dependent Cauchy problem
$$
i\varepsilon\partial_{t}u= H_{\theta_{0}}^{h}(\theta_0, t)u\,,\quad u_{t=0}=u_{0}\,,
$$
with some exponentially large time scale
$\frac{1}{\varepsilon}=e^{\frac{C}{h}}$,
is adapted from the general approach for the adiabatic evolution of
bound states of self-adjoint generators in
\cite{ASY}\cite{Nen}\cite{JoPf}.\\
Adiabatic dynamics have already been considered within the modelling of
out-of-equilibrium quantum transport in
\cite{CDNP}\cite{AEGS}\cite{AEGSS} playing with the continuous
spectrum with self-adjoint techniques.
Only partial results are known with non self-adjoint generators: in
\cite{Loc} only small time results are valid for resonances, in
\cite{NeRa} bounded generators are considered and in \cite{Sjo} a
general scheme for the the higher order construction of the adapted
projector is done but without time propagation estimates.
In \cite{Joy}, A.~Joye considered a general time-adiabatic evolution
for semigroups in Banach spaces under a fixed gap condition and with
analyticity assumptions: The exponential growth of the dynamical system
$\|S_{\varepsilon}(t,0)\|\leq e^{\frac{Ct}{\varepsilon}}$ is compensated by the ${\cal
  O}(e^{-\frac{c}{\varepsilon}})$ error of the adiabatic approximation
under analyticity assumptions (see \cite{Nen}\cite{JoPf}) and lead to a
total error $\mathcal{O}(\frac{Ct-c}{\varepsilon})$ which is small
when $t< c/C$.
 Our adaptation combines the uniform estimates due to the accretivity
of the modified and deformed Hamiltonian with the accurate resolvent
estimate provided by the accurate comparison with self-adjoint
problems (shape resonances result from the coupling of some Dirichlet
eigenvalues with a continuous spectrum).
Although, the error associated with the adiabatic evolution is estimated at the first order as an 
$\mathcal{O}(\varepsilon^{1-\delta})$, with $\delta>0$ as small as wanted,
 it is necessary to reconsider the higher-order method
 in \cite{Nen} or \cite{JoPf}, because
we work with small gaps (vanishing as $h\to 0$) and with
non self-adjoint operators. Finally note that the exponential time
scale is not necessarily related with the imaginary parts of resonances
and several resonant states with various life-time scales are taken
into account in our application.\\

The outline of the article is the following. 
\begin{itemize}
\item The artificial interface
conditions parametrized by $\theta_{0}$ are
 introduced in Section~\ref{Modified_laplacian}. 
With these new interface conditions $-\Delta$ is transformed into a
non self-adjoint operator conjugated with $-\Delta$, 
$W(\theta_{0})(-\Delta)W(\theta_{0})^{-1}$ with
$W(\theta_{0})=\Id_{L^{2}}+\mathcal{O}(\theta_{0})$. The case with a
potential is illustrated with numerical computations.
\item The functional analysis of the complex deformation parametrized
  with $\theta$ is done in Section~\ref{se.extcompl}. After
  introducing a Krein formula associated with the
  $(\theta_{0},\theta)$-dependent interface
  conditions, it mimics the
  standard approach to resonances summarized in \cite{CFKS}\cite{HeMa}
  but things have to be reconsidered for we start from an already non
  self-adjoint operator when $\theta_{0}\neq 0$. Assumptions on the
  time-dependent variations of the potential  which
  ensure the well-posedness of the dynamical systems are specified in
  the end of this section.
\item The small parameter  problem modelling quantum wells in a
  semiclassical island is introduced  in
  Section~\ref{se.expdecay}. Accurate exponential decay estimates are
  presented for the spectral problems reduced to $(a,b)$ making use of
  the fact that our operators are proportional to $-h^{2}\Delta$
  outside  $[a,b]$.
\item The Grushin problem leading to an accurate theory of resonances
  is presented in Section~\ref{se.grush}. There it is checked that the
  imaginary parts of the resonances, which are exponentially small,
 are little perturbed by the introduction of $\theta_{0}$-dependent
 artificial conditions. The conclusion of this section is that all the
 quantities involved in the Fermi Golden Rule \eqref{eq.FGint} have
 little relative variations with respect to $\theta_{0}$, even the
 exponentially small ones.
\item Accurate parameter-dependent resolvent estimates for the whole
  space problem are done in 
Section~\ref{se.wholeline}. Again the Krein formula for the
resolvent associated with the $(\theta_{0},\theta)$-dependent
interface conditions is especially useful.
\item The adiabatic evolution of resonances is really introduced in
  \ref{se.adiabev}. After specifying all the assumptions, the main
  result about this is stated in Theorem~\ref{th.adiabappl}.
\item The appendix contains various preliminary and sometimes
  well-known estimates, plus a variation of 
the general adiabatic theory concerned with non self-adjoint maximal
accretive operators in Section~\ref{se.adiabvar}
\end{itemize}

\section{\label{Modified_laplacian}Artificial interface conditions}

Modified Hamiltonians with artificial interface conditions are
introduced. It is checked that the effect of these artificial
conditions parametrized by $\theta_{0}\in \C$
is of order $|\theta_{0}|$ for all the quantities associated with 
the free Schr{\"o}dinger Hamiltonian $-h^{2}\Delta$ on the whole line. 
Instead of pursuing this analysis for
general Schr{\"o}dinger operators $-h^{2}\Delta +V$ we simply give a
numerical evidence of this stability with respect $\theta_{0}$\,.
\subsection{The modified Laplacian}
\label{se.modlap}
We consider a class of singular perturbations of the 1-D Laplacian, defined
through non self-adjoint boundary conditions in the extrema of a bounded
interval. For $\theta_{0}\in\mathbb{C}$ and $a,b\in\mathbb{R}$, with $a<b$ and
$b-a=L$, the Hamiltonian $H_{\theta_{0},0}^{h}$ is defined by
\begin{equation}
\left\{
\begin{array}
[c]{l}
\medskip D(H_{\theta_{0},0}^{h})=\left\{  u\in H^{2}(\mathbb{R}\backslash
\left\{  a,b\right\}  ):\left[
\begin{array}
[c]{l}
\smallskip e^{-\frac{\theta_{0}}{2}}u(b^{+})=u(b^{-});\ e^{-\frac{3}{2}
\theta_{0}}u^{\prime}(b^{+})=u^{\prime}(b^{-})\\
e^{-\frac{\theta_{0}}{2}}u(a^{-})=u(a^{+});\ e^{-\frac{3}{2}\theta_{0}
}u^{\prime}(a^{-})=u^{\prime}(a^{+})
\end{array}
\right.  \right\}\,, \\
H_{\theta_{0},0}^{h}u=-h^{2}\partial_{x}^{2}\,,
\end{array}
\right. \label{H}
\end{equation}
where $u(x^{+})$ and $u(x^{-})$ respectively denote the right and the left
limits of $u$ in $x$, while the notation $u^{\prime}$ is used for the first
derivative. When  $\left\vert \theta_{0}\right\vert $ is small, the
analysis of $H_{\theta_{0},0}^{h}$ follows by a direct comparison with
$H_{0,0}^{h}$ (coinciding with the usual Laplacian: $-h^{2}\Delta_{\mathbb{R}
}$). To fix this point, let us introduce the intertwining operator
$W_{\theta_{0}}$ defined through the integral kernel
\begin{equation}
W_{\theta_{0}}(x,y)=\int_{-\infty}^{+\infty}\psi
_{-}(k,x)e^{-i\frac{k}{h}y}~\frac{dk}{2\pi h}\,, \label{W}
\end{equation}
with $\psi_{-}(k,x)$ denoting the generalized eigenfunctions associated with our
model. These are described by the plane wave solutions to the equation
\begin{equation}
\left(  H_{\theta_{0},0}^{h}-k^{2}\right)  \psi_{-}(k,\cdot)=0\,.
\end{equation}
For $k>0$, one has
\begin{equation}
1_{(0,+\infty)}(k)\,\psi_{-}(k,x)=\left\{
\begin{array}
[c]{ll}
\medskip e^{i\frac{k}{h}x}+R(k)e^{-i\frac{k}{h}x}, & x<a\\
\medskip A(k)e^{i\frac{k}{h}x}+B(k)e^{-i\frac{k}{h}x}, & x\in(a,b)\\
T(k)e^{i\frac{k}{h}x}, & x>b \,.
\end{array}
\right. \label{psi_k}
\end{equation}
Since $\psi_{-}(k,x)$ fulfills the boundary conditions in (\ref{H}), the
explicit expression of the coefficients in (\ref{psi_k}) are
\begin{equation}
\left\{
\begin{array}
[c]{l}
\medskip A(k)=2\frac{c_{+}(\theta_{0})e^{-i\frac{k}{h}L}}{d(\theta_{0}
,k)}\,,\qquad B(k)=-2\frac{c_{-}(\theta_{0})e^{2i\frac{k}{h}a}e^{i\frac{k}{h}L}}{d(\theta
_{0},k)}\,,\\
T(k)=\frac{e^{-i\frac{k}{h}L}}{d(\theta_{0},k)}\left(c_{+}(\theta_{0})^2-c_{-}(\theta_{0})^2\right)\,,\qquad R(k)=\frac{-2ic_{+}(\theta_{0})c_{-}(\theta_{0})}{d(\theta
_{0},k)}e^{2i\frac{k}{h}a}\sin\left(\frac{kL}{h}\right)\,,
\end{array}
\right.  \label{coefficients1}
\end{equation}
with: $c_{+}(\theta_{0})=e^{\frac{\theta_{0}}{2}}+e^{\frac{3}{2}\theta_{0}}$,
$c_{-}(\theta_{0})=e^{\frac{\theta_{0}}{2}}-e^{\frac{3}{2}\theta_{0}}$ and
\begin{equation}
d(\theta_{0},k)=\det
\begin{pmatrix}
c_{+}(\theta_{0})e^{i\frac{k}{h}a} & c_{-}(\theta_{0})e^{-i\frac{k}{h}a}\\
c_{-}(\theta_{0})e^{i\frac{k}{h}b} & c_{+}(\theta_{0})e^{-i\frac{k}{h}b}
\end{pmatrix}
=c_{+}(\theta_{0})^2e^{-i\frac{k}{h}L}-c_{-}(\theta_{0})^2e^{i\frac{k}{h}L}\,. \label{d_k}
\end{equation}
For $k<0$, an analogous computation gives
\begin{equation}
1_{(-\infty,0)}(k)\,\psi_{-}(k,x)=\left\{
\begin{array}
[c]{ll}
\medskip\tilde{T}(k)e^{i\frac{k}{h}x}, & x<a\\
\medskip\tilde{A}(k)e^{i\frac{k}{h}x}+\tilde{B}(k)e^{-i\frac{k}{h}x}, & x\in(a,b)\\
e^{i\frac{k}{h}x}+\tilde{R}(k)e^{-i\frac{k}{h}x}, & x>b\,,
\end{array}
\right. \label{psi_(-k)}
\end{equation}
with
\begin{equation}
\left\{
\begin{array}
[c]{ll}
\medskip\tilde{A}(k)=A(-k)\,, & \tilde{B}(k)=e^{4i\frac{k}{h}a}e^{2i\frac{k}{h}L}B(-k)\,,\\
\tilde{T}(k)=T(-k)\,,&\tilde{R}(k)=e^{4i\frac{k}{h}a}e^{2i\frac{k}{h}L}R(-k)\,.
\end{array}
\right.  \label{coefficients2}
\end{equation}
In what follows we adopt the simplified notation
\begin{equation}
A(k,\theta_{0})=\left\{
\begin{array}
[c]{ll}
A(k),& k\geq 0\\
\tilde{A}(k),& k<0\,,
\end{array}
\right. \quad B(k,\theta_{0})=\left\{
\begin{array}
[c]{ll}
B(k),& k\geq 0\\
\tilde{B}(k)\,& k<0\,,
\end{array}
\right.
\end{equation}
and
\begin{equation}
T(k,\theta_{0})=\left\{
\begin{array}
[c]{ll}
T(k), & k\geq0\\
\tilde{T}(k),& k<0\,,
\end{array}
\right. \quad R(k,\theta_{0})=\left\{
\begin{array}
[c]{ll}
R(k), & k\geq0\\
\tilde{R}(k), & k<0\,.
\end{array}
\right.  \label{coefficients T,R}
\end{equation}

\begin{lemma}
\label{Lemma W} The operator $W_{\theta_0}$ defined by \eqref{W} verifies the expansion 
\begin{equation}
W_{\theta_{0}}-Id=\mathcal{O}(|\theta_{0}|) \label{W1}
\end{equation}
in operator norm.
\end{lemma}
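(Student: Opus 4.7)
The plan is to compare $W_{\theta_0}$ with $\mathrm{Id}$ by exploiting the fact that at $\theta_0 = 0$ the generalized eigenfunctions reduce to plane waves. First I would note that $c_-(0) = 0$ and $c_+(0) = 2$, so the formulas \eqref{coefficients1}--\eqref{coefficients2} give $R(k) = \tilde R(k) = B(k) = \tilde B(k) = 0$ and $T(k) = \tilde T(k) = A(k) = \tilde A(k) = 1$. In particular $\psi_-(k,x) = e^{ikx/h}$ for all $x$ and all $k$, so the kernel \eqref{W} collapses to the Fourier inversion formula and $W_0 = \mathrm{Id}$.

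Next I would expand around $\theta_0=0$. Since
\[
 c_-(\theta_0) = e^{\theta_0/2} - e^{3\theta_0/2} = \mathcal{O}(|\theta_0|),
 \qquad c_+(\theta_0) = 2 + \mathcal{O}(|\theta_0|),
\]
an elementary lower bound on the denominator
\[
 |d(\theta_0,k)| \geq |c_+(\theta_0)|^2 - |c_-(\theta_0)|^2 \geq \mathrm{const} > 0
\]
holds uniformly in $k \in \mathbb{R}$ for $|\theta_0|$ small enough. Consequently each of the scattering coefficients $R,\tilde R, B, \tilde B$, as well as $T-1,\tilde T-1, A-1,\tilde A-1$, is a bounded function of $k$ with $L^\infty$-norm $\mathcal{O}(|\theta_0|)$, uniformly in $k$ and $h$.

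The main step is then a piecewise decomposition of the kernel of $W_{\theta_0} - \mathrm{Id}$ according to the three regions $x < a$, $x \in (a,b)$, $x > b$. On each region the difference $\psi_-(k,x) - e^{ikx/h}$ is a finite sum of terms of the form $m_j(k)\, e^{\pm ikx/h}$, where $m_j$ is one of the $\mathcal{O}(|\theta_0|)$ coefficients above, restricted to either $\{k>0\}$ or $\{k<0\}$. The contribution of such a term to $W_{\theta_0} - \mathrm{Id}$ is (multiplication by the region indicator in $x$) composed with a Fourier multiplier by $m_j$, possibly preceded by the reflection $k \mapsto -k$ coming from the $e^{-ikx/h}$ factor. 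By Plancherel each building block is bounded on $L^2(\mathbb{R})$ with norm at most $\|m_j\|_{L^\infty_k} = \mathcal{O}(|\theta_0|)$.

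Summing the finitely many contributions yields $\|W_{\theta_0} - \mathrm{Id}\|_{\mathcal{L}(L^2)} = \mathcal{O}(|\theta_0|)$. The main point to be careful about is the uniform lower bound on $|d(\theta_0,k)|$, which however follows directly from \eqref{d_k} once $|c_-/c_+|$ is small; everything else reduces to Plancherel and the explicit $\mathcal{O}(|\theta_0|)$ bounds on the reflection, transmission, and inner-region amplitudes.
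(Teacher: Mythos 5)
Your proposal is correct and follows essentially the same route as the paper: write $W_{\theta_0}-\mathrm{Id}$ as a finite sum of (region indicator)$\circ$(parity)$\circ$(Fourier multiplier by a scattering coefficient), use Plancherel to reduce the operator norm to the $L^\infty_k$-norms of those coefficients, and then verify the coefficients are $\mathcal{O}(|\theta_0|)$ from the explicit formulas together with a uniform lower bound on $|d(\theta_0,k)|$. The only cosmetic difference is that you obtain the lower bound on $|d|$ via the reverse triangle inequality $|d|\geq |c_+|^2-|c_-|^2$, whereas the paper expands $d = 4e^{-ikL/h}+\mathcal{O}(|\theta_0|)$; both yield the needed uniform-in-$k$ bound.
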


\begin{proof}
According to (\ref{psi_k}), (\ref{psi_(-k)}) and to the definition (\ref{W}),
one can express the integral kernel $W_{\theta_{0}}(x,y)$ as follows
\begin{multline*}
W_{\theta_{0}}(x,y)=\int_{-\infty}^{+\infty}e^{i\frac{k}{h}(x-y)}\frac{dk}{2\pi h}\\
+1_{(a,b)}(x)\int_{-\infty}^{+\infty}\left(A(k,\theta_{0})-1\right)e^{i\frac{k}{h}(x-y)}\frac{dk}{2\pi h}+1_{(a,b)}(x)\int_{-\infty}^{+\infty}B(k,\theta_{0})e^{-i\frac{k}{h}(x+y)}\frac{dk}{2\pi h}\\
+1_{(-\infty,a)}(x)\int_{-\infty}^{0}\left(T(k,\theta_{0})-1\right)e^{i\frac{k}{h}(x-y)}\frac{dk}{2\pi h}+1_{(b,+\infty)}(x)\int_{0}^{+\infty}\left(T(k,\theta_{0})-1\right)e^{i\frac{k}{h}(x-y)}\frac{dk}{2\pi h}\\
+1_{(b,+\infty)}(x)\int_{-\infty}^{0}R(k,\theta_{0})e^{-i\frac{k}{h}(x+y)}\frac{dk}{2\pi h}+1_{(-\infty,a)}(x)\int_{0}^{+\infty}R(k,\theta_{0})e^{-i\frac{k}{h}(x+y)}\frac{dk}{2\pi h}\,.
\end{multline*}
The previous expression is rewritten in terms of operators:
\begin{multline*}
W_{\theta_{0}}-Id=1_{(a,b)}\mathcal{F}^{-1}\left(A(k,\theta_{0})-1\right)\mathcal{F}+1_{(a,b)}P
\mathcal{F}^{-1}B(k,\theta_{0})\mathcal{F}\\
+1_{(-\infty,a)}\mathcal{F}^{-1}1_{(-\infty,0)}\left(T(k,\theta_{0})-1\right)\mathcal{F}+1_{(b,+\infty)}\mathcal{F}^{-1}1_{(0,+\infty)}\left(T(k,\theta_{0})-1\right)\mathcal{F}\\
+1_{(b,+\infty)}P\mathcal{F}^{-1}1_{(-\infty,0)}R(k,\theta_{0})\mathcal{F}+1_{(-\infty,a)}P\mathcal{F}^{-1}1_{(0,+\infty)}R(k,\theta_{0})\mathcal{F}\,,
\end{multline*}
where $\mathcal{F}$ denotes the Fourier transform normalized as
$\mathcal{F}u(k)=\int_{\R}u(x)e^{-i\frac{k}{h}x}\frac{dx}{(2\pi h)^{1/2}}$, and $P$ denotes the parity operator: $Pu(x)=u(-x)$.\\
Since the operators $P$, $\mathcal{F}$, $\mathcal{F}^{-1}$ and multiplication by the characteristic function of a set are uniformly bounded with respect to $\theta_0$, we get:
\[
\|W_{\theta_{0}}-Id\|\leq C\left( \|A(k,\theta_{0})-1\|_{L^{\infty}(\R)} + \|B(k,\theta_{0})\|_{L^{\infty}(\R)}+\|T(k,\theta_{0})-1\|_{L^{\infty}(\R)} + \|R(k,\theta_{0})\|_{L^{\infty}(\R)}  \right)\,.
\]
From \eqref{coefficients2}, it is enough to estimate for $k>0$ the terms at the r.h.s of the inequality above to get the $L^{\infty}(\R)$ bounds. Moreover, from the definition of the coefficients $c_{-}(\theta_{0})$, $c_{+}(\theta_{0})$ and $d(\theta_{0},k)$, we have:
\[
c_{-}(\theta_{0})=\mathcal{O}(\vert\theta_{0}\vert), \quad c_{+}(\theta_{0})=2+\mathcal{O}(\vert\theta_{0}\vert), \quad d(\theta_{0},k)=4e^{-\frac{ikL}{h}}+\mathcal{O}(\vert\theta_{0}\vert)\,,
\]
where the upper bound of $\mathcal{O}(\vert\theta_{0}\vert)$ holds with a universal constant. The previous equation gives $\vert d(\theta_{0},k)\vert \geq 1$ when $\vert\theta_{0}\vert$ is small enough, and using \eqref{coefficients1}, this implies:
\[
\vert A(k,\theta_0)-1 \vert = \frac{\left\vert 2c_{+}(\theta_{0})e^{-\frac{ikL}{h}}-d(\theta_{0},k) \right\vert}{\left\vert d(\theta_{0},k) \right\vert}= \frac{\left\vert \mathcal{O}(\vert\theta_{0}\vert) \right\vert}{\left\vert d(\theta_{0},k) \right\vert} \leq C\vert\theta_{0}\vert\,,
\]
\[
\vert T(k,\theta_0)-1 \vert = \frac{\left\vert e^{-\frac{ikL}{h}}\left(c_{+}(\theta_{0})^2-c_{-}(\theta_{0})^2\right)-d(\theta_{0},k) \right\vert}{\left\vert d(\theta_{0},k) \right\vert}
= \frac{\left\vert 2c_{-}(\theta_{0})^2\sin\left(\frac{kL}{h}\right) \right\vert}{\left\vert d(\theta_{0},k) \right\vert}=\frac{\left\vert \mathcal{O}(\vert\theta_{0}\vert^2) \right\vert}{\left\vert d(\theta_{0},k) \right\vert} \leq C\vert\theta_{0}\vert\,,
\]
\[
\vert B(k,\theta_0)\vert =\frac{\left\vert 2c_{-}(\theta_{0}) \right\vert}{\left\vert d(\theta_{0},k) \right\vert} =\frac{\left\vert \mathcal{O}(\vert\theta_{0}\vert) \right\vert}{\left\vert d(\theta_{0},k) \right\vert} \leq C\vert\theta_{0}\vert\,,
\]
\[
\vert R(k,\theta_0)\vert =\frac{\left\vert 2c_{+}(\theta_{0})c_{-}(\theta_{0})\sin\left(\frac{kL}{h}\right)\right\vert}{\left\vert d(\theta_{0},k) \right\vert} =\frac{\left\vert\left(2+\mathcal{O}(\vert\theta_{0}\vert)\right) \mathcal{O}(\vert\theta_{0}\vert) \right\vert}{\left\vert d(\theta_{0},k) \right\vert} \leq C\vert\theta_{0}\vert\,.
\]
\end{proof}

According to the result of Lemma \ref{Lemma W}, for $\left\vert \theta
_{0}\right\vert $ small enough, $W_{\theta_{0}}$ is invertible; in particular
one has
\begin{equation}
W_{\theta_{0}}=1+\mathcal{O}(\vert\theta_{0}\vert)\,,\qquad W_{\theta_{0}}^{-1}
=1+\mathcal{O}(\vert\theta_{0}\vert)\,.\label{expansion_0}
\end{equation}
Then, it follows from the definition (\ref{W}) that $H_{\theta_{0},0}^{h}$ are
$H_{0,0}^{h}$ conjugated operators with
\begin{equation}
H_{\theta_{0},0}^{h}=W_{\theta_{0}}H_{0,0}^{h}W_{\theta_{0}}^{-1}\,.\label{conjugation}
\end{equation}
This relation allows to discuss the spectral and the dynamical properties
related to $H_{\theta_{0},0}^{h}$ for small values of the parameter $\theta_0$.

\begin{proposition}
\label{Prop.1}There exists $c>0$ such that: for any $\theta_{0}$ with
$\left\vert \theta_{0}\right\vert\leq c$, the following property holds:
\medskip\newline1) The operator
$H_{\theta_{0},0}^{h}$ has only essential spectrum defined by $\sigma
_{ess}(H_{\theta_{0},0}^{h})=\mathbb{R}_{+}$.\newline2) The semigroup
associated with $H_{\theta_{0},0}^{h}$ is uniformly bounded in time and the expansion
\begin{equation}
e^{-itH_{\theta_{0},0}^{h}}=e^{-itH_{0,0}^{h}}+\mathcal{O}(\vert\theta_{0}\vert)\label{expansion}
\end{equation}
holds uniformly in $t\in\R$.
\end{proposition}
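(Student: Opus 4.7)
The proof will rest entirely on two observations already recorded above: the conjugation identity \eqref{conjugation}, $H_{\theta_{0},0}^{h} = W_{\theta_{0}} H_{0,0}^{h} W_{\theta_{0}}^{-1}$, and the fact from \eqref{expansion_0} that both $W_{\theta_{0}}$ and $W_{\theta_{0}}^{-1}$ are $\Id + \mathcal{O}(|\theta_{0}|)$ in operator norm for $|\theta_{0}|$ small enough. Choosing $c>0$ so that this smallness makes $W_{\theta_{0}}$ bounded invertible, the plan is to transport all spectral and dynamical information from the self-adjoint reference $H_{0,0}^{h} = -h^{2}\Delta_{\R}$ to $H_{\theta_{0},0}^{h}$ by similarity, and then read off the $\mathcal{O}(|\theta_{0}|)$ error from the Neumann-type expansion of $W_{\theta_{0}}^{\pm 1}$.

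For part 1), the spectrum of an operator, and its essential part in any of the usual definitions preserved under similarity by bounded invertible operators, is invariant under such conjugation. Hence $\sigma(H_{\theta_{0},0}^{h}) = \sigma(H_{0,0}^{h}) = [0,+\infty)$, and since $-h^{2}\Delta_{\R}$ has no eigenvalues (its spectrum is purely absolutely continuous), every point of $[0,+\infty)$ belongs to the essential spectrum. This yields $\sigma_{ess}(H_{\theta_{0},0}^{h}) = \R_{+}$.

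For part 2), similarity passes to the functional calculus: the group $W_{\theta_{0}} e^{-it H_{0,0}^{h}} W_{\theta_{0}}^{-1}$ is strongly continuous in $t\in\R$, and a direct differentiation at $t=0$ on $W_{\theta_{0}}D(H_{0,0}^{h}) = D(H_{\theta_{0},0}^{h})$ identifies its generator with $-iH_{\theta_{0},0}^{h}$ via \eqref{conjugation}. Therefore
\[
e^{-itH_{\theta_{0},0}^{h}} = W_{\theta_{0}}\, e^{-itH_{0,0}^{h}}\, W_{\theta_{0}}^{-1}\,.
\]
Since $e^{-itH_{0,0}^{h}}$ is unitary, this gives the uniform-in-$t$ bound $\|e^{-itH_{\theta_{0},0}^{h}}\| \leq \|W_{\theta_{0}}\|\,\|W_{\theta_{0}}^{-1}\| = 1 + \mathcal{O}(|\theta_{0}|)$. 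For the expansion, write $W_{\theta_{0}} = \Id + K$, $W_{\theta_{0}}^{-1} = \Id + K'$ with $\|K\|,\|K'\| = \mathcal{O}(|\theta_{0}|)$; then
\[
e^{-itH_{\theta_{0},0}^{h}} - e^{-itH_{0,0}^{h}} = K\, e^{-itH_{0,0}^{h}} + e^{-itH_{0,0}^{h}}\, K' + K\, e^{-itH_{0,0}^{h}}\, K'\,,
\]
whose operator norm is bounded by $\|K\| + \|K'\| + \|K\|\,\|K'\| = \mathcal{O}(|\theta_{0}|)$ uniformly in $t\in\R$.

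The only real subtlety, and it is essentially bookkeeping, is the passage from the algebraic similarity of unbounded operators in \eqref{conjugation} to the corresponding similarity for the unitarily-generated group: it is handled by the standard fact that if $A$ generates a $C_{0}$-group and $W$ is bounded invertible, then $WAW^{-1}$ generates the $C_{0}$-group $W e^{tA}W^{-1}$. No estimate beyond Lemma~\ref{Lemma W} is needed, which is why both the uniform boundedness of the semigroup and the $\mathcal{O}(|\theta_{0}|)$ expansion come out of the same one-line computation.
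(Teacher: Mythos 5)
Your proof is correct and follows essentially the same route as the paper: both rely on the similarity $H_{\theta_{0},0}^{h}=W_{\theta_{0}}H_{0,0}^{h}W_{\theta_{0}}^{-1}$ together with $W_{\theta_{0}}^{\pm 1}=\Id+\mathcal{O}(|\theta_{0}|)$ to transport the spectrum and the unitary group, and then read off the $\mathcal{O}(|\theta_{0}|)$ error. Your version is slightly more careful in two places the paper glosses over — identifying the generator of $W_{\theta_{0}}e^{-itH_{0,0}^{h}}W_{\theta_{0}}^{-1}$ explicitly, and noting that $-h^{2}\Delta_{\R}$ has no eigenvalues so that the spectrum is purely essential — but these are amplifications of the same argument rather than a different one.
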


\begin{proof}
1) According to (\ref{conjugation}), one has
\[
\left(  H_{\theta_{0},0}^{h}-z\right)  ^{-1}=W_{\theta_{0}}\left(  H_{0,0}
^{h}-z\right)  ^{-1}W_{\theta_{0}}^{-1},
\]
with $W_{\theta_{0}},W_{\theta_{0}}^{-1}$ bounded in $L^{2}(\mathbb{R})$. This
implies: $\sigma(H_{\theta_{0},0}^{h})=\sigma_{ess}(H_{0,0}^{h})=\mathbb{R}
_{+}$.

2) Since $H_{\theta_{0},0}^{h}$ are $H_{0,0}^{h}$ conjugated by $W_{\theta
_{0}}$, we have
\[
e^{-itH_{\theta_{0},0}^{h}}W_{\theta_{0}}=W_{\theta_{0}}e^{-itH_{0,0}^{h}}.
\]
For $\left\vert \theta_{0}\right\vert $ small, one can use the expansions
(\ref{expansion_0}) to write
\[
e^{-itH_{\theta_{0},0}^{h}}=W_{\theta_{0}}e^{-itH_{0,0}^{h}}W_{\theta_{0}
}^{-1}=\left(  1+\mathcal{O}(\vert\theta_{0}\vert)\right)e^{-itH_{0,0}^{h}}\left(  1+\mathcal{O}(\vert\theta_{0}\vert)\right)  .
\]
Recalling that $H_{0,0}^{h}$ is self-adjoint (it coincides with the usual
Laplacian), and $\left(e^{-itH_{0,0}^{h}}\right)_{t\in\R}$ is a unitary group, one has
\[
e^{-itH_{\theta_{0},0}^{h}}=e^{-itH_{0,0}^{h}}+\mathcal{O}(\vert\theta_{0}\vert).
\]
\end{proof}

\begin{remark}
It is worthwhile to notice that $H_{\theta_{0},0}^{h}$ is not self-adjoint
(excepting for $\theta_{0}=0$) neither accretive, since
\[
\operatorname{Re}\left\langle u,iH_{\theta_{0},0}^{h}u\right\rangle
=h^2\operatorname{Im}\left\{  \left[  \bar{u}(a^{-})u^{\prime}(a^{-})-\bar
{u}(b^{+})u^{\prime}(b^{+})\right]  \left(  1-e^{-\frac{3\theta_{0}+\bar
{\theta}_{0}}{2}}\right)  \right\}  ,\qquad u\in D(H_{\theta_{0},0}^{h})
\]
has not a fixed sign. Thus, it is not possible to use standard arguments to
state that $e^{-itH_{\theta_{0},0}^{h}}$ is a contraction. Nevertheless, for
small values of the parameter $\theta_{0}$, the result of Proposition
\ref{Prop.1} allows to control the operator norm of $e^{-itH_{\theta_{0}
,0}^{h}}$ uniformly in time, and states that the time evolution generated by
$H_{\theta_{0},0}^{h}$ is close to the one associated with the usual
Laplacian $H_{0,0}^{h}$.
\end{remark}

Spectral properties of Hamiltonians obtained as singular perturbations of
$H_{\theta_{0},0}^{h}$ can be discussed using standard results in spectral
analysis adapted to this non self-adjoint case.

\begin{lemma}
\label{Lemma 2}Let $\mathcal{V}=\mathcal{V}_{1}+\mathcal{V}_{2}$, with $\mathcal{V}_{1}\in L^{\infty}((a,b))$
and $\mathcal{V}_{2}$ a bounded measure supported in $U\subset\subset\left(  a,b\right)
$. Then: $\sigma_{ess}(H_{\theta_{0},0}^{h}+\mathcal{V})=\sigma_{ess}
(H_{\theta_{0},0}^{h})$.
\end{lemma}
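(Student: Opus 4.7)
My plan is to show that $\mathcal{V}$ is a relatively compact perturbation of $H^h_{\theta_0,0}$, in the sense that the resolvent difference $(H^h_{\theta_0,0}+\mathcal{V}-z)^{-1}-(H^h_{\theta_0,0}-z)^{-1}$ is compact on $L^2(\R)$ for some $z$ lying in both resolvent sets. Weyl-type stability of the essential spectrum under compact resolvent differences, which remains valid for closed non self-adjoint operators and for the standard notions of essential spectrum, then yields $\sigma_{ess}(H^h_{\theta_0,0}+\mathcal{V})=\sigma_{ess}(H^h_{\theta_0,0})$. In particular, by Proposition~\ref{Prop.1}, both sides coincide with $\R_+$.

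For the bounded part $\mathcal{V}_1\in L^\infty((a,b))$, extended by zero outside $(a,b)$, the argument is standard: given $f\in L^2(\R)$, the function $u=(H^h_{\theta_0,0}-z)^{-1}f$ lies in $D(H^h_{\theta_0,0})\subset H^2(\R\setminus\{a,b\})$, so $1_{(a,b)}u\in H^2((a,b))$; the Rellich--Kondrachov embedding $H^2((a,b))\hookrightarrow L^2((a,b))$ being compact, the map $f\mapsto \mathcal{V}_1(H^h_{\theta_0,0}-z)^{-1}f$ is compact on $L^2(\R)$, and the second resolvent identity yields compactness of the corresponding resolvent difference.

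For the singular part $\mathcal{V}_2$, I would first define $H^h_{\theta_0,0}+\mathcal{V}_2$ through the sesquilinear form $\mathfrak{t}(u,v)=\langle H^h_{\theta_0,0}u,v\rangle+\int \bar{u}v\,d\mathcal{V}_2$. Because $\mathrm{supp}\,\mathcal{V}_2\subset U\subset\subset(a,b)$ stays at positive distance from the singular points $\{a,b\}$, one may restrict attention to a neighborhood $V$ with $U\subset\subset V\subset\subset(a,b)$ on which functions in the form domain are simply $H^1(V)$, hence continuous in one dimension, so the pairing with the bounded measure $\mathcal{V}_2$ defines a bounded form. Compactness of the associated resolvent difference would then follow from a Birman--Schwinger-type factorization $\mathcal{V}_2=\mathrm{sgn}(\mathcal{V}_2)|\mathcal{V}_2|$: the operator $|\mathcal{V}_2|^{1/2}(H^h_{\theta_0,0}-z)^{-1}$ factors through the compact embedding $H^2(V)\hookrightarrow C(\overline{V})\hookrightarrow L^2(V,|\mathcal{V}_2|)$ and is therefore Hilbert--Schmidt.

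The hardest point will be this functional-analytic setup for the measure perturbation $\mathcal{V}_2$ in the non self-adjoint framework: one cannot directly invoke the KLMN theorem as in the self-adjoint case, so either a sectorial-form argument following Kato, or an explicit Krein-type resolvent formula analogous to the one developed later in the paper for the $(\theta_0,\theta)$-dependent interface conditions, is required to produce a closed operator with non-empty resolvent set. Once this is settled, combining the two compactness statements gives the claim.
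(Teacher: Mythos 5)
Your proposal is correct in outline but takes a genuinely different route from the paper. The paper's proof is a one-liner: it defers to Corollary~\ref{cor_krein} (proved in the next section), whose Krein formula \eqref{res_formula_gen} writes $\left(H_{\theta_0,\mathcal{V}}^h(\theta)-z\right)^{-1}$ as $\left(H_{ND,\mathcal{V}}^h(\theta)-z\right)^{-1}$ plus an explicit \emph{rank-four} term, and then observes that $\sigma_{ess}(H_{ND,\mathcal{V}}^h(\theta))=e^{-2\theta}\R_+$ is computed directly from the decoupled Neumann--Dirichlet structure. Taking $\theta=0$ gives the claim, since Proposition~\ref{Prop.1} already identifies $\sigma_{ess}(H^h_{\theta_0,0})=\R_+$. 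You instead compare $H^h_{\theta_0,\mathcal{V}}$ directly with $H^h_{\theta_0,0}$ and argue relative compactness. That is a different decomposition, and the two approaches buy different things: the paper's Krein comparison gives a finite-rank (hence very explicit) resolvent difference, which it reuses later for quantitative resolvent estimates; your comparison gives compactness only, but for the bounded part $\mathcal{V}_1$ it is more elementary, since it only needs $D(H^h_{\theta_0,0})\subset H^2(\R\setminus\{a,b\})$ and Rellich--Kondrachov on $(a,b)$, without invoking any boundary triple machinery.

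The part of your argument that is clean and self-contained is the $\mathcal{V}_1$ step. For the measure part $\mathcal{V}_2$, you correctly identify the genuine difficulty: $H^h_{\theta_0,0}$ is neither self-adjoint nor accretive (this is noted in the remark after Proposition~\ref{Prop.1}), so the KLMN/sectorial form route you sketch does not apply as stated, and you must first establish that $H^h_{\theta_0,0}+\mathcal{V}$ is closed with nonempty resolvent set before any resolvent-difference argument can even be formulated. Your fallback --- ``an explicit Krein-type resolvent formula analogous to the one developed later in the paper'' --- is exactly what the paper does, so for this step your proof effectively collapses to the paper's. In other words, your proposal is an honest partial alternative: a more elementary argument for the bounded perturbation, combined with an acknowledged reliance on the Krein formula for the singular one. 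Two minor technical remarks: (i) the Weyl-type stability you invoke is valid for closed non-self-adjoint operators with the Fredholm notion of essential spectrum, but you should note this explicitly since Reed--Simon XIII.4 Lemma~3 (the paper's stated reference) is formulated for self-adjoint operators; (ii) ``Hilbert--Schmidt'' in your Birman--Schwinger factorization is an overclaim --- compactness of $|\mathcal{V}_2|^{1/2}(H^h_{\theta_0,0}-z)^{-1}$ via the chain $H^2(V)\hookrightarrow C(\overline V)\hookrightarrow L^2(|\mathcal{V}_2|)$ is all you need and all you actually prove.
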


\begin{proof}
The proof follows from the first point of Corollary~\ref{cor_krein} below in the case $\theta=0$.
\end{proof}

\subsection{Numerical computation of the time propagators for small $|\theta_0|$}

This part is devoted to the numerical comparison of the propagators
$e^{-itH_{\theta_{0},V}^{h}}$ and $e^{-itH_{0,V}^{h}}$ where $V$ is a locally supported perturbation of $H_{\theta_{0},0}^{h}$ with
\[
H_{\theta_{0},V}^{h}=H_{\theta_{0},0}^{h}+V, \quad \textrm{and} \quad V\in L^{\infty}((a,b))\,.
\]
Using discrete time dependent transparent boundary
conditions for the Schr{\"o}dinger equation, it is possible to compute the
propagator $e^{-itH_{0,V}^{h}}$ with a Crank-Nicolson scheme, see 
\cite{ErAr}\cite{Arn}\cite{Pin}\cite{AABCMS}. To compute the propagator $e^{-itH_{\theta_{0},V}
^{h}}$, the key point is to integrate the boundary conditions in \eqref{H} in
the resolution in a way which preserves the stability. This is performed by
integrating the boundary conditions in the finite difference discretization of
the Laplacian at the points $a$ and $b$.
\newline To simplify the presentation,
we suppose temporarily that the interface conditions occur only at $0$. So we want
to write a modified discretization of the operator $\frac{d^{2}}{dx^{2}}$ with
the condition
\begin{equation}
\label{BC_0}\left\{
\begin{array}
[c]{l}
e^{-\frac{\theta_{0}}{2}}u(0^{+})=u(0^{-})\\
e^{-\frac{3}{2}\theta_{0}}u^{\prime}(0^{+})=u^{\prime}(0^{-})\,.
\end{array}
\right.
\end{equation}
For a given mesh size $\Delta x$, we introduce the discretization of
$\mathbb{R}$: $x_{j}=j\Delta x$ with $j\in\mathbb{Z}$. For $j\neq0$, the
number $u_{j}$ will denote the approximation of $u(x_{j})$, while $u_{0}$ will
denote the approximation of $u(0^{-})$ and $u_{0}^{+}$ will denote the
approximation of $u(0^{+})$. If the function $u$ is regular on $\mathbb{R}
^{*}$, we can use the usual finite difference approximation
\begin{equation}
\label{eq.diff}\left(  \frac{d^{2}}{dx^{2}}u\right)  _{j}=\frac{u_{j-1}
-2u_{j}+u_{j+1}}{\Delta x^{2}}\,,
\end{equation}
for $j\notin\{-1,0,1\}$. Due to the regularity constraint, this approximation
is written correctly for $j=-1$, and respectively for $j=1$, only when
considering the continuous extension of the function from the left, and
respectively from the right, which leads to
\[
\left(  \frac{d^{2}}{dx^{2}}u\right)  _{-1}=\frac{u_{-2}-2u_{-1}+u_{0}}{\Delta
x^{2}}\,, \quad\quad\left(  \frac{d^{2}}{dx^{2}}u\right)  _{1}=\frac{u_{0}
^{+}-2u_{1}+u_{2}}{\Delta x^{2}}\,.
\]
With the first relation in \eqref{BC_0}, the approximation at $j=1$ is
\begin{equation}
\label{eq.diff_mod1}\left(  \frac{d^{2}}{dx^{2}}u\right)  _{1}=\frac
{e^{\frac{\theta_{0}}{2}}u_{0}-2u_{1}+u_{2}}{\Delta x^{2}}\,.
\end{equation}
At $j=0$, due to the possible discontinuity of a function $u$ verifying
\eqref{BC_0}, we define $u^{-}$ on $\mathbb{R}$ as a regular continuation of
$u\vert_{(-\infty,0)}$. More precisely $u^{-}\in C^{2}(\mathbb{R})$ is such
that $u^{-}=u$ on $(-\infty,0)$ and we get the following approximation at
$j=0$
\begin{equation}
\label{eq.diff_mod0_temp}\left(  \frac{d^{2}}{dx^{2}}u\right)  _{0}
=\frac{u_{-1}-2u_{0}+u_{1}^{-}}{\Delta x^{2}}\,.
\end{equation}
This method corresponds to the introduction of a fictive point $u_{1}^{-}$
which allows to write the finite difference approximation for $\frac{d^{2}
}{dx^{2}}$ and to calculate $u^{\prime}(0^{-})$ and $u^{\prime}(0^{+})$ in
\eqref{BC_0} by using the same points of the space grid
\[
\left\{
\begin{array}
[c]{l}
e^{-\frac{\theta_{0}}{2}}u_{0}^{+}=u_{0}\\
e^{-\frac{3}{2}\theta_{0}}(u_{1}-u_{0}^{+})=u_{1}^{-}-u_{0}\,.
\end{array}
\right.
\]
The resolution of the system above gives: $u_{1}^{-}=(1-e^{-\theta_{0}}
)u_{0}+e^{-\frac{3}{2}\theta_{0}}u_{1}$ and \eqref{eq.diff_mod0_temp} becomes
\begin{equation}
\label{eq.diff_mod0}\left(  \frac{d^{2}}{dx^{2}}u\right)  _{0}=\frac
{u_{-1}-(1+e^{-\theta_{0}})u_{0}+e^{-\frac{3}{2}\theta_{0}}u_{1}}{\Delta
x^{2}}\,.
\end{equation}
Therefore, from the boundary conditions in \eqref{H}, the scheme to compute
the propagator $e^{-itH_{\theta_{0},V}^{h}}$ is obtained by using the modified
Laplacian corresponding to the application of \eqref{eq.diff_mod1} and
\eqref{eq.diff_mod0} at $x=a$ and $x=b$. When $\theta_{0}$ is small, the
equations \eqref{eq.diff_mod1} and \eqref{eq.diff_mod0} approximate well the
usual finite difference equation \eqref{eq.diff}, therefore the solution
$e^{-itH_{\theta_{0},V}^{h}}$ will be close to the solution $e^{-itH_{0,V}^{h}
}$, as expected.\newline After the change of variable $x^{\prime}=\frac
{x-a}{\ell}-1$, where $\ell=\frac{b-a}{2}$, the problem for $e^{-itH_{\theta_{0}
,V}^{h}}$ is the following
\begin{equation}
\label{eq.pb_resnum}\left\{
\begin{array}
[c]{ll}
i\partial_{t}u(t,x)=\left[  -\frac{h^{2}}{\ell^{2}}\partial_{x}^{2}+\tilde
{V}(x)\right]  u(t,x), & t>0,x\in\mathbb{R}\setminus\{-1,1\}\\
\ e^{\frac{\theta_{0}}{2}}u(t,-1^{+})=u(t,-1^{-}),\quad e^{\frac{3}{2}
\theta_{0}}\partial_{x}u(t,-1^{+})=\partial_{x}u(t,-1^{-}), & t>0\\
e^{-\frac{\theta_{0}}{2}}u(t,1^{+})=u(t,1^{-}),\quad e^{-\frac{3}{2}\theta
_{0}}\partial_{x}u(t,1^{+})=\partial_{x}u(t,1^{-}), & t>0\\
u(0,x) = u_{I}(x), & x\in\mathbb{R}\,,
\end{array}
\right.
\end{equation}
where $\tilde{V}(x)=V((x+1)\ell+a)$ and $u_{I}\in C^{\infty}(\mathbb{R})$ is the
initial data. The resolution will be performed on the bounded interval
$[-5,5]$ using homogeneous transparent boundary conditions valid when
$\supp u_{I}\subset\subset(-5,5)$.\newline Set $\Delta x=\frac{1}
{J}$ and consider the uniform grid points $x_{j}=j\Delta x$ for $j\in
\{-5J,\ldots,5J\}$. Then using \eqref{eq.diff_mod1} and \eqref{eq.diff_mod0},
the Crank-Nicolson scheme for the modified Hamiltonian is obtained from the
Crank-Nicolson scheme in \cite{Arn}\cite{ErAr} by replacing the usual discrete Laplacian
by the modified discrete Laplacian defined below
\[
\Delta_{\theta_0}u_{j} = \frac{1}{\Delta x^{2}}\left\{
\begin{array}
[c]{ll}
u_{j-1}-(1+e^{\mp\theta_{0}})u_{j}+e^{\mp\frac{3}{2}\theta_{0}}u_{j+1}, &
\text{ if }~j=\pm J\\
e^{\pm\frac{\theta_{0}}{2}}u_{j-1}-2u_{j}+u_{j+1}, & \text{ if }~j=\pm J+1\\
u_{j-1}-2u_{j}+u_{j+1}, & \text{ else}\,.
\end{array}
\right.
\]
The discrete transparent boundary conditions at $x=-5$ and $x=5$ are those
used for $e^{-itH_{0,V}^{h}}$ in \cite{Arn}\cite{ErAr}.

For a given time step $\Delta t$ and for $\theta_{0}
=i\mathop{\mathrm{Im}}~\theta_{0}$, we present some comparison of the
numerical solution $u_{\theta_{0}}^{n}$ to the system \eqref{eq.pb_resnum},
given at time $t^{n}=n\Delta t$ by the scheme described above, with the
numerical solution $u^{n}$ to the reference problem, computed by taking
$\theta_{0}=0$. In particular, the numerical parameters are the following:
$\ell=1$, $h=0.03$, $J=30$, $\Delta t = 0.8$, and the comparison is realized with
the initial condition equal to the wave packet
\[
u_{I}(x) = \exp\left(  -\frac{(x-x_{0})^{2}}{2\sigma^{2}} + ik(x-x_{0})
\right)\,,
\]
where $\sigma=0.2$, $k=\frac{2\pi}{8\Delta x}$ and the center $x_{0}$ will be specified in each simulation.

Three simulations were performed corresponding to different values of the
potential $V$ and of the center $x_{0}$. The first test was realized with
$V=0$ and $x_{0}=-3$. Although the comparison presented here can be extented
to more general potentials, the two other tests were realized in the case
where $V$ is a non trivial barrier potential
\[
V=V_{0}I_{(a,b)}\,,
\]
where $V_{0}=0.8$: for this potential one test was realized with an initial
condition localized at the left of $(-1,1)$ by taking $x_{0}=-3$, and the
second with an initial condition localized in $(-1,1)$ by taking $x_{0}=0$.
The solution $u_{\theta_{0}}^{n}$ is represented, at different time $t$, next
to the reference solution $u^{n}$ in the Figures \ref{fig.evlibre},
\ref{fig.evbarg} and \ref{fig.evbarm}, for the fixed small value $\theta
_{0}=0.09i$. We remark that $u_{\theta_{0}}^{n}$ has the same qualitative
behaviour than the reference solution.

In the case $V=0$, the solution $u_{\theta_{0}}^{n}$ corresponds to an
incoming function from the left which goes near the domain $(-1,1)$. When time
grows, it crosses the interface points and leaves the domain.

\begin{figure}[ptb]
\begin{center}
\begin{tabular}{cc}
\includegraphics[width=0.40\linewidth]{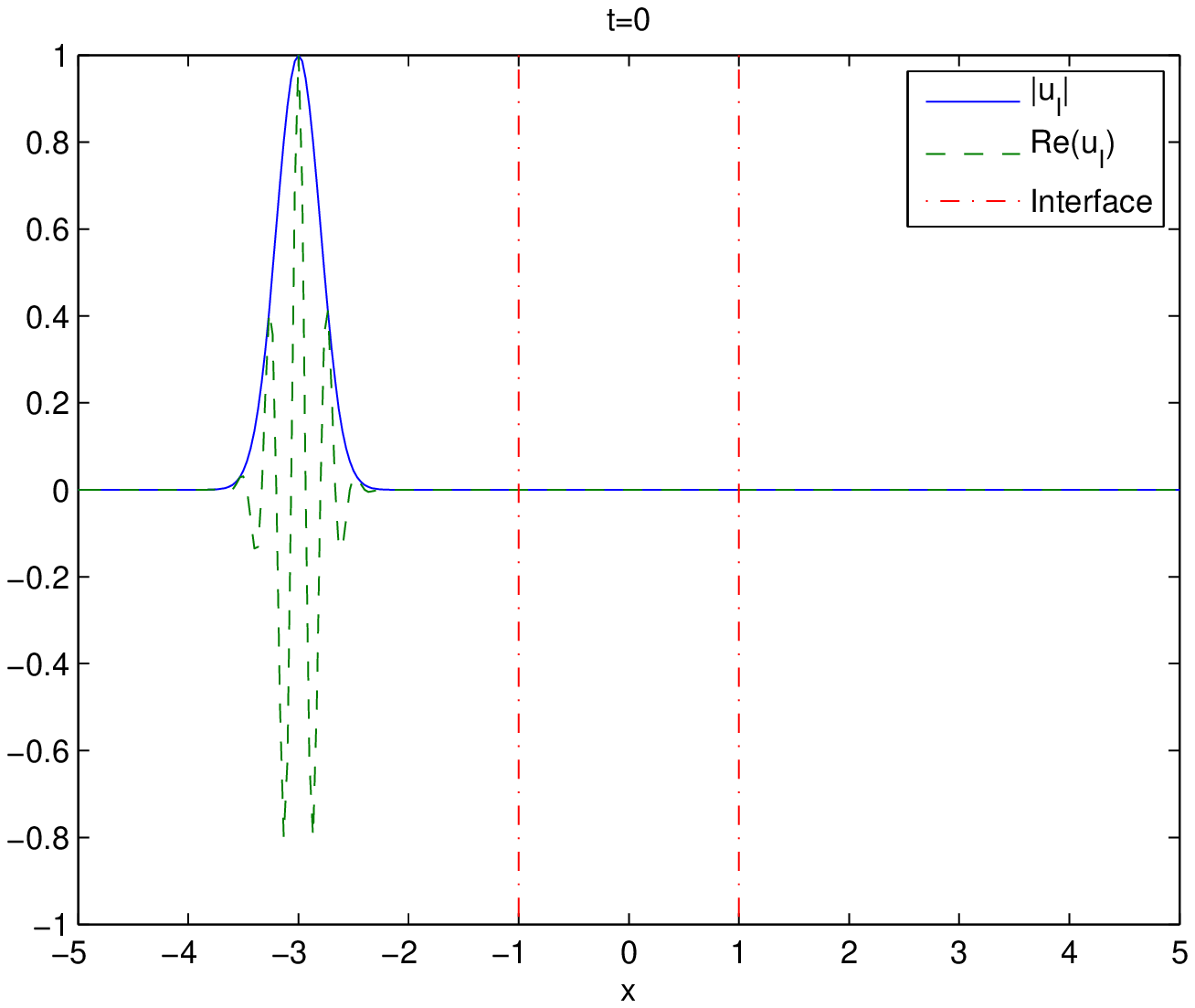}&\includegraphics[width=0.40\linewidth]{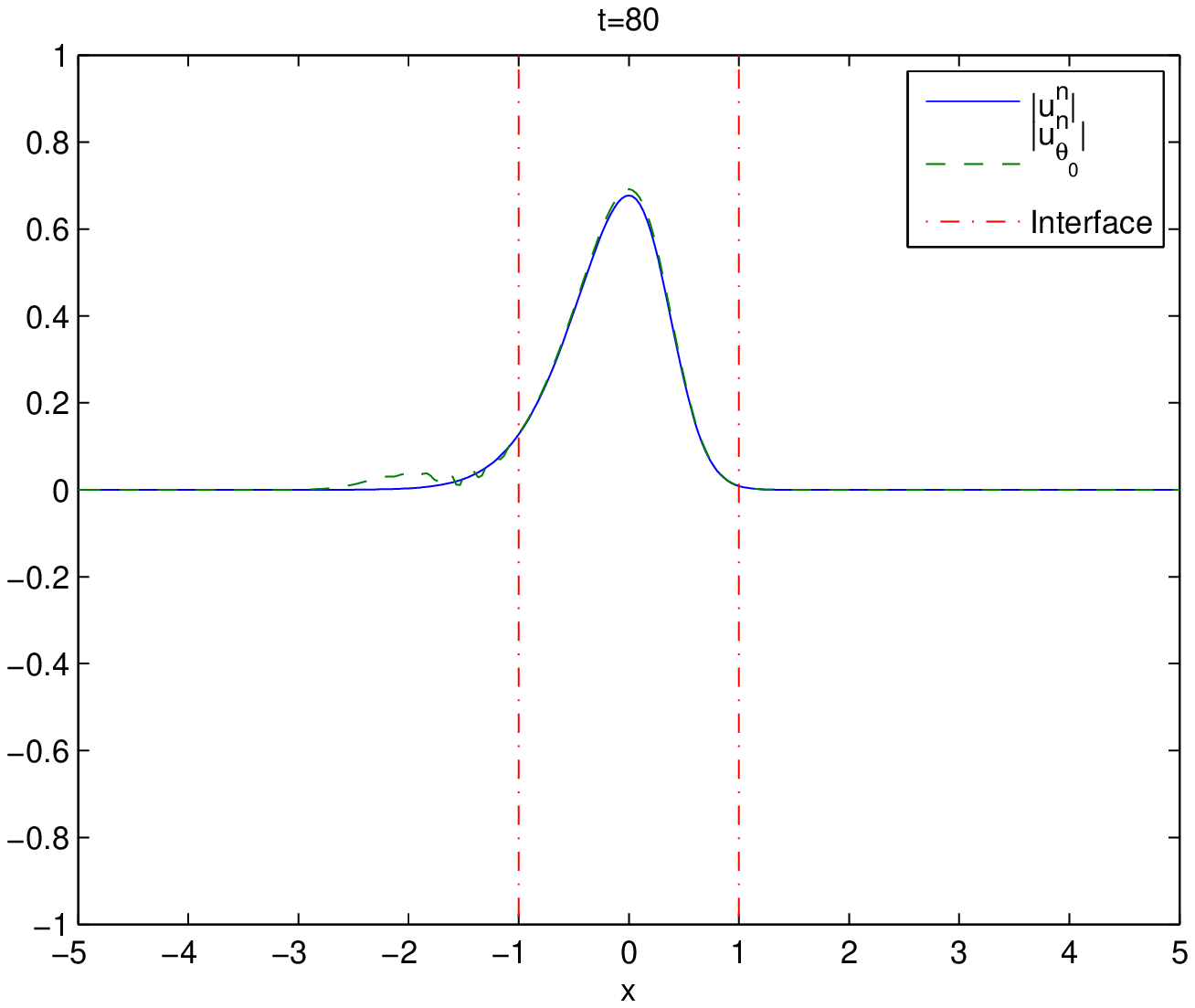}\\
\includegraphics[width=0.40\linewidth]{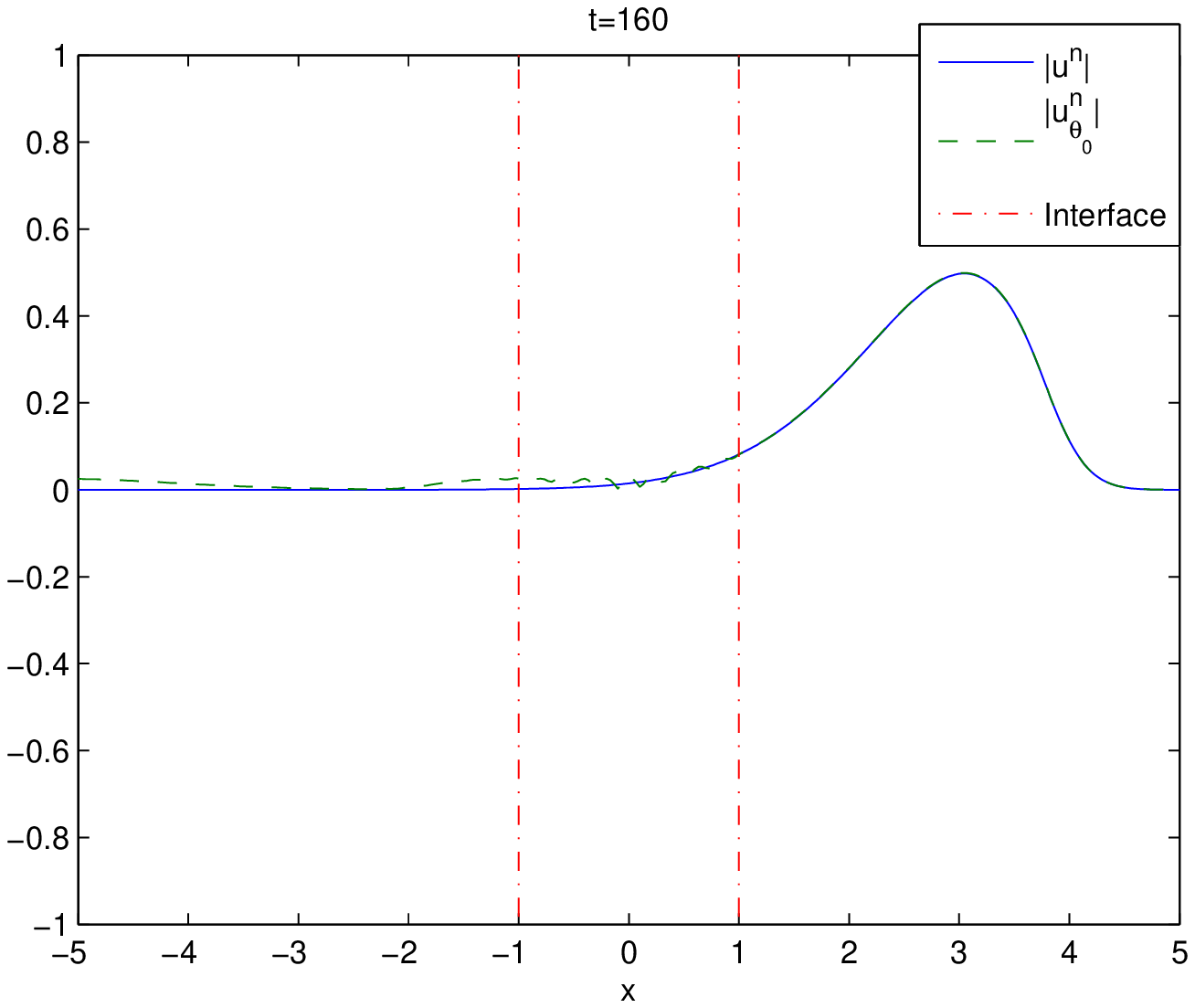}&
\includegraphics[width=0.40\linewidth]{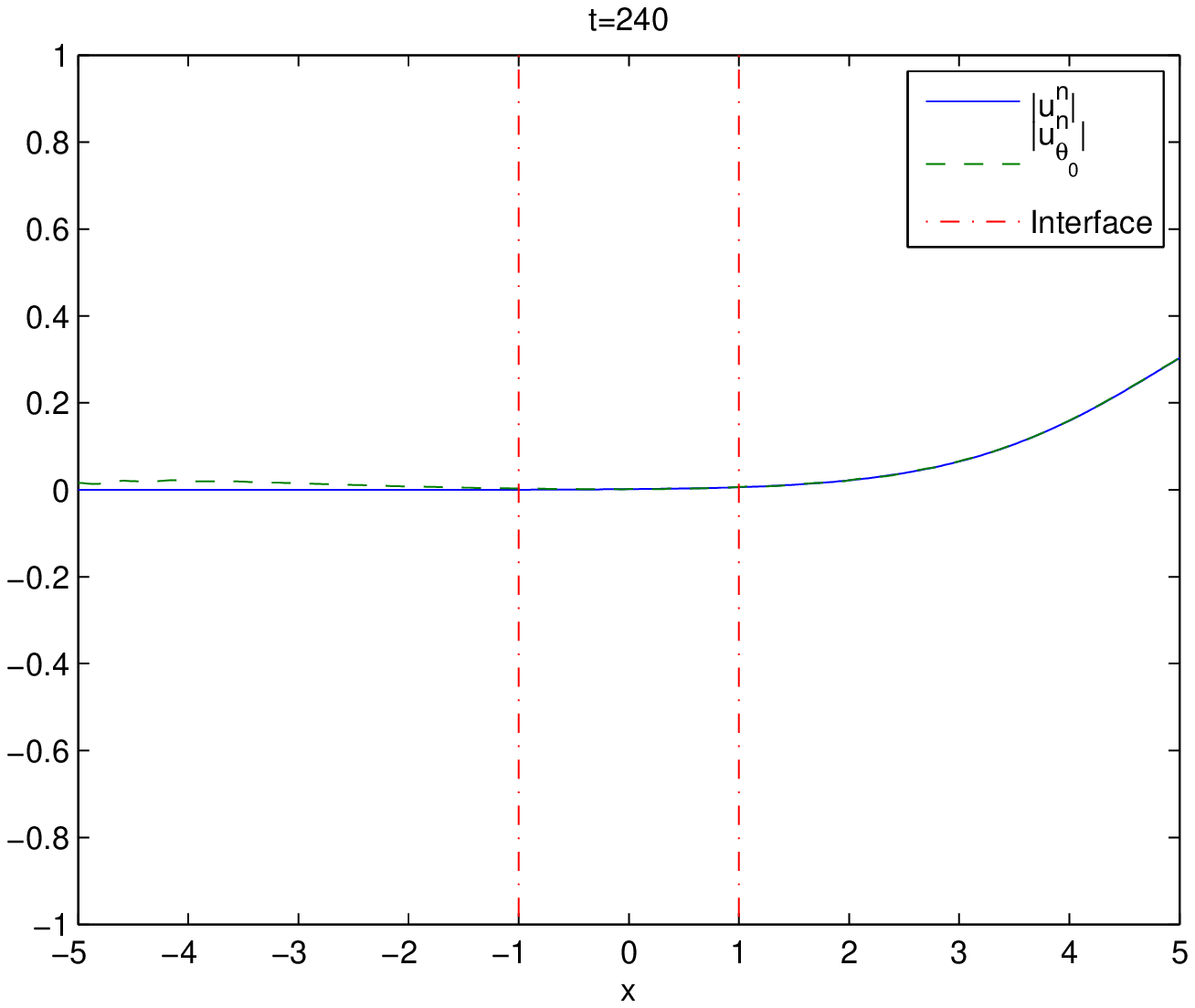}
\end{tabular}
\end{center}
\caption{Case $V=0$: modulus of the functions $u_{\theta_{0}}^{n}$ and $u^{n}$
at different time $t$ with $x_{0}=-3$ and $\theta_{0}=0.09i$}
\label{fig.evlibre}
\end{figure}

In the case of the barrier potential with $x_{0}=-3$, the solution is splitted
in two parts: a first one which passes through the barrier; and a second one
which is reflected and goes out of the domain.

\begin{figure}[ptb]
\begin{center}
\begin{tabular}{cc}
\includegraphics[width=0.40\linewidth]{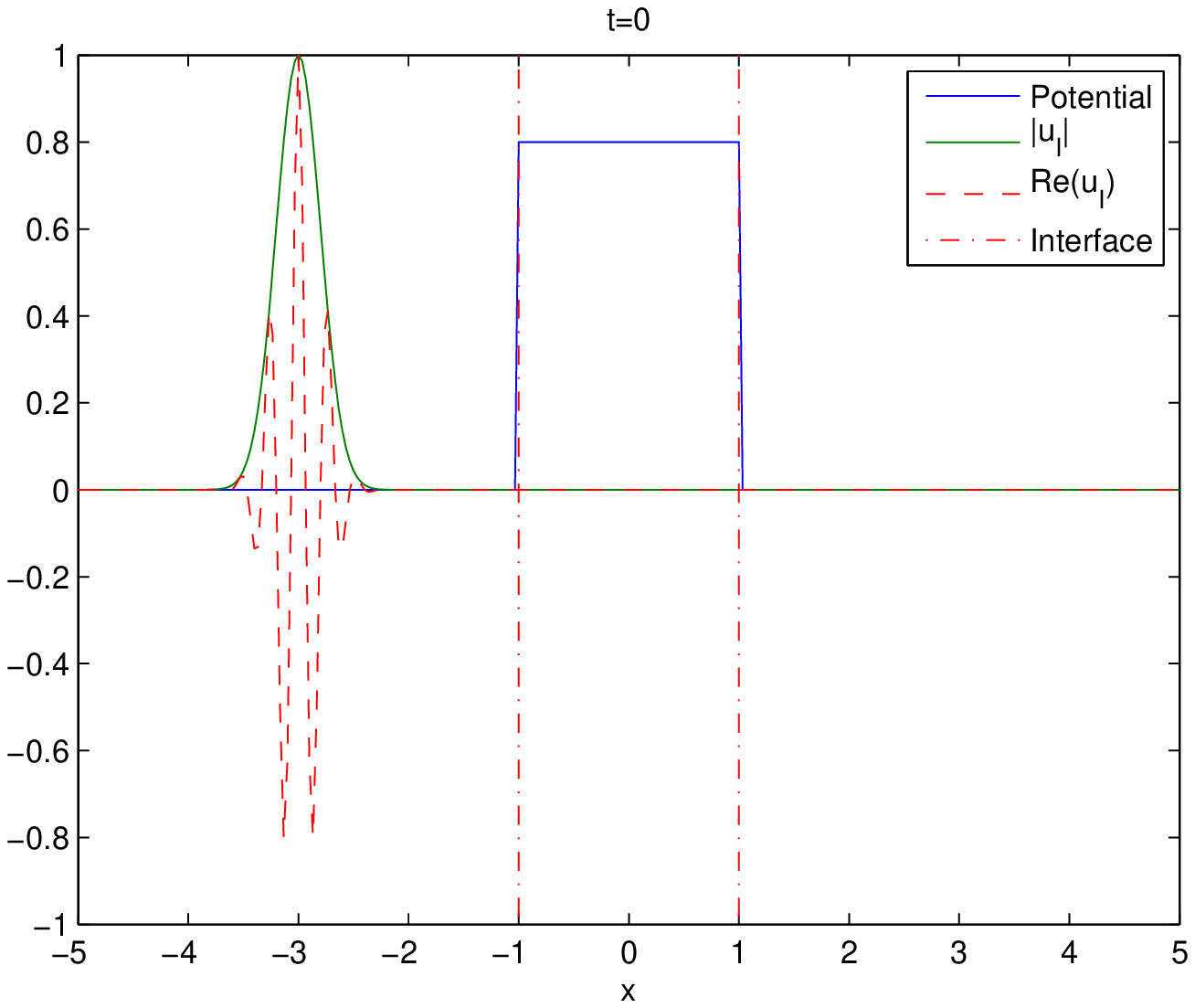}&
\includegraphics[width=0.40\linewidth]{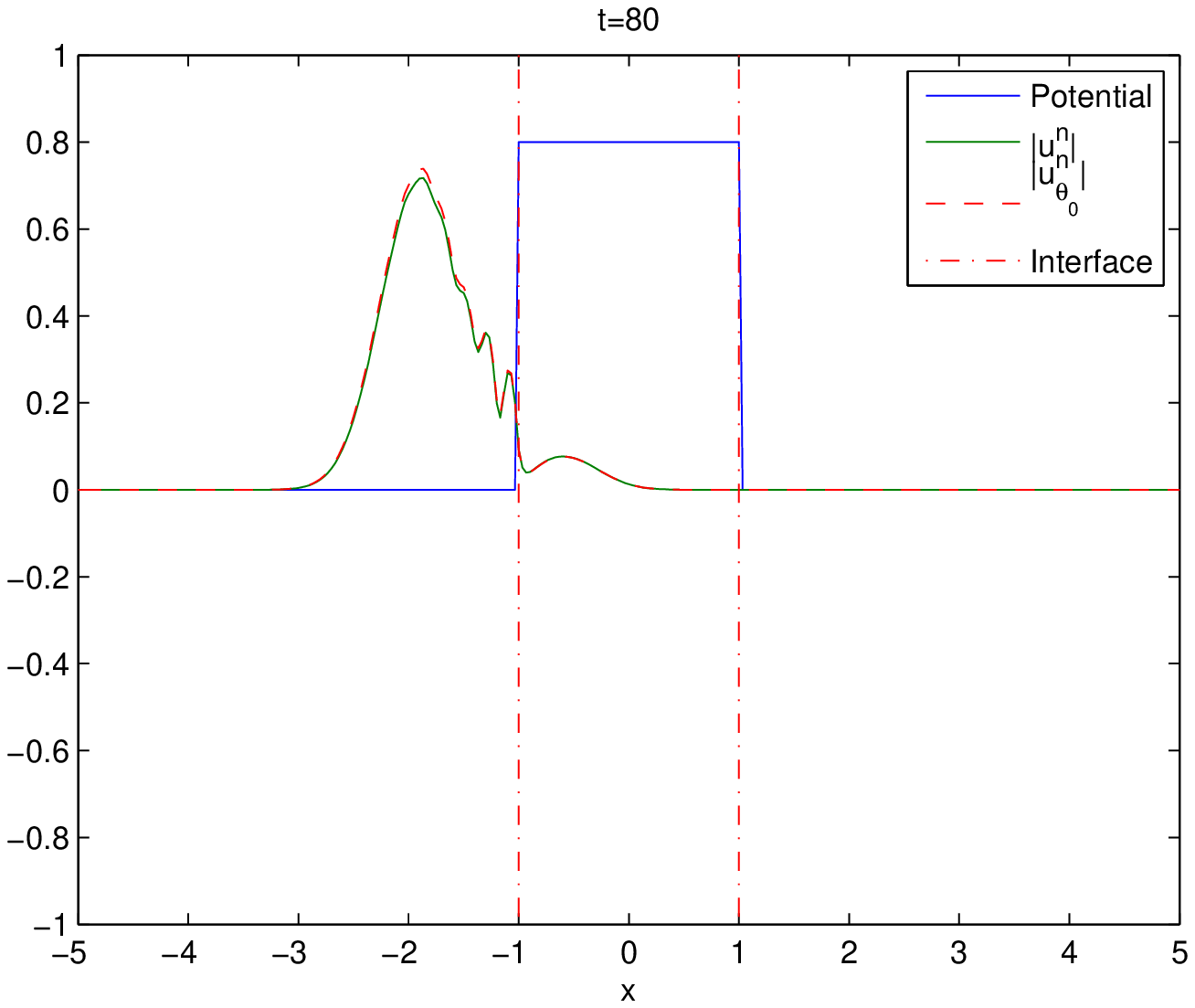}\\
\includegraphics[width=0.40\linewidth]{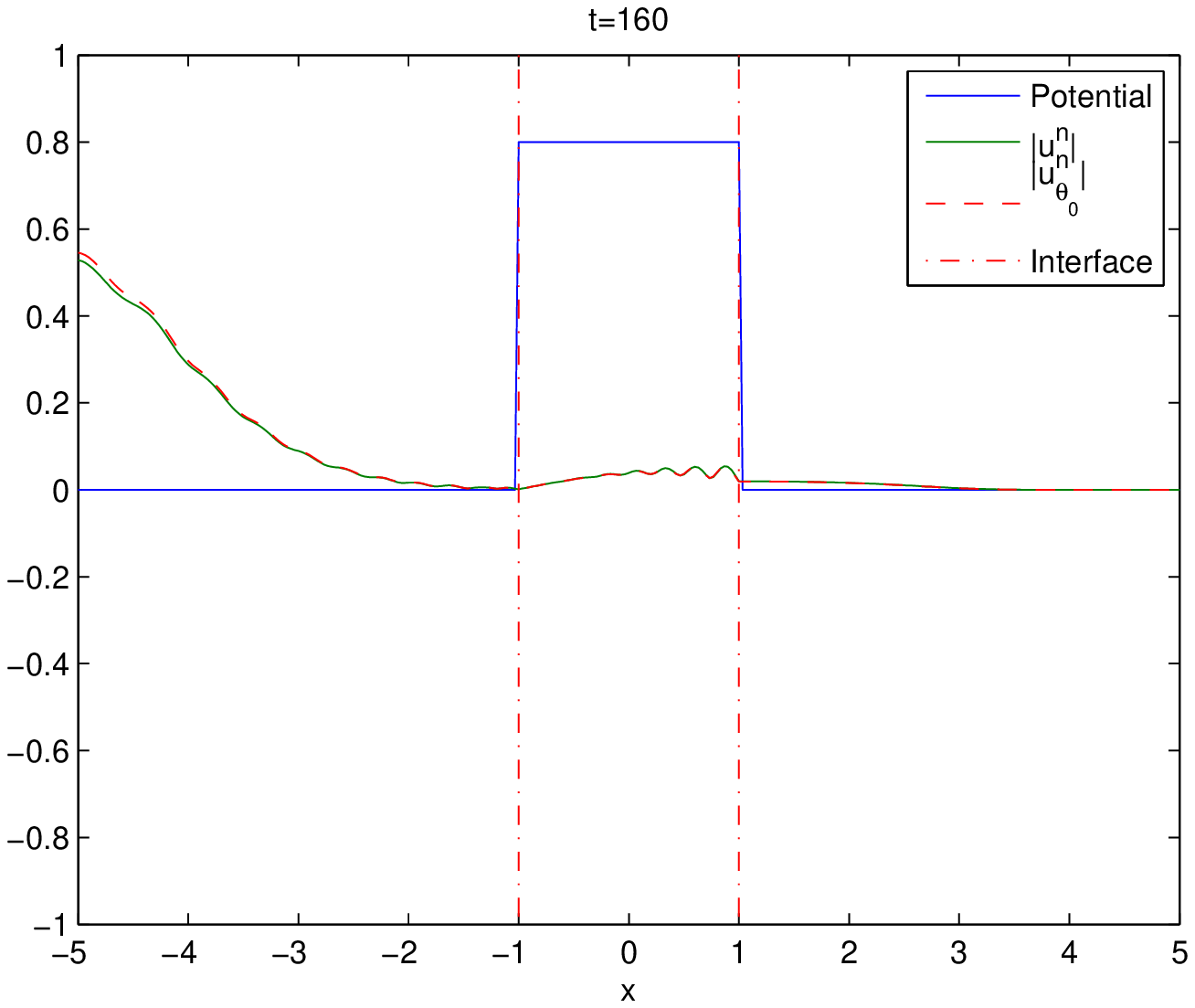}&
\includegraphics[width=0.40\linewidth]{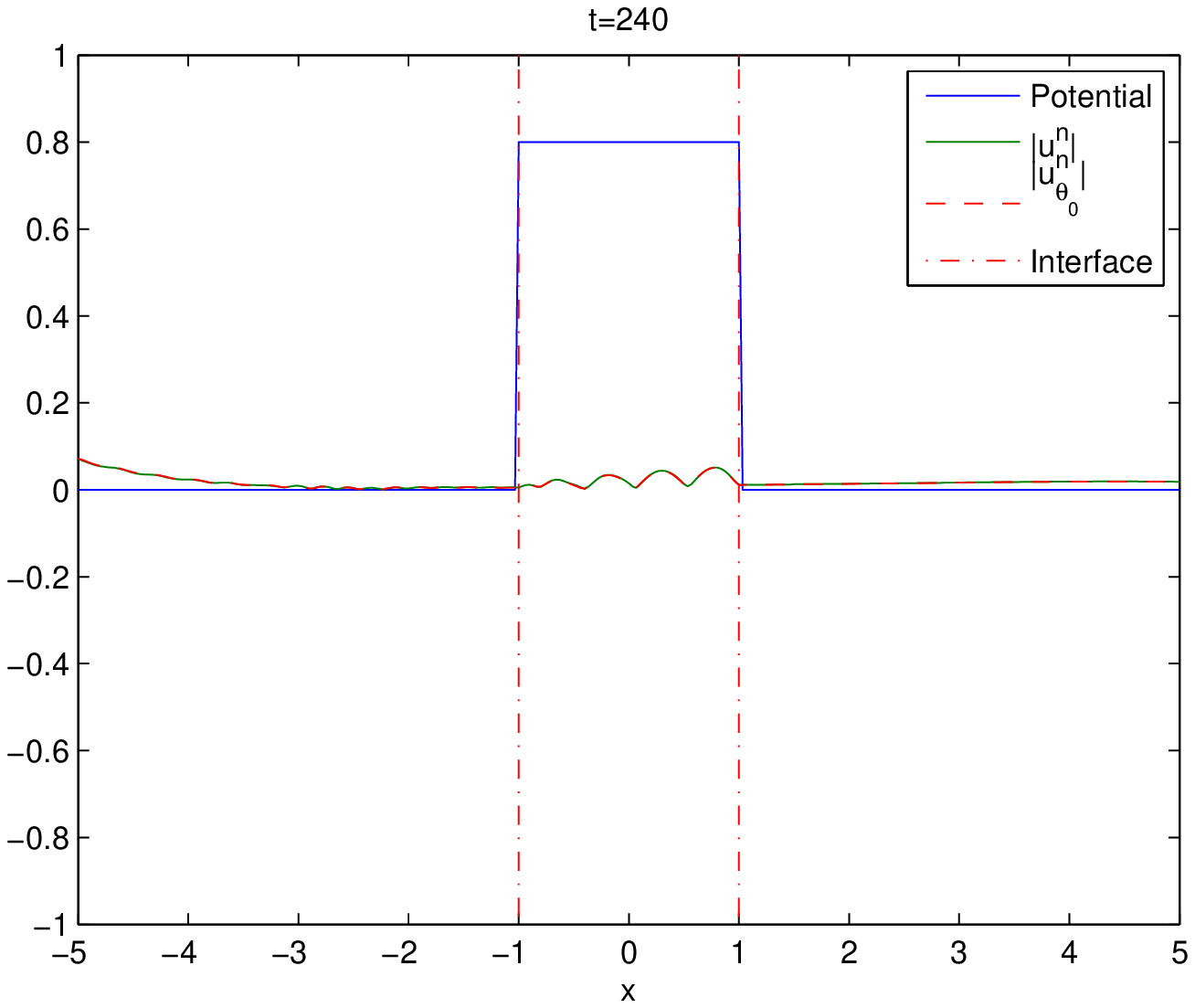}
\end{tabular}
\end{center}
\caption{Case of a barrier potential: modulus of the functions $u_{\theta_{0}
}^{n}$ and $u^{n}$ at different time $t$ with $x_{0}=-3$ and $\theta
_{0}=0.09i$}
\label{fig.evbarg}
\end{figure}

In the case of the barrier potential with $x_{0}=0$, it appears that the wave
packet is splitted in two outgoing parts: one which leaves the barrier on the
left and the second on the right. The part on the right is more important and
goes out faster, it is due to the sign of the wave vector $k$.

\begin{figure}[ptb]
\begin{center}
\begin{tabular}{cc}
\includegraphics[width=0.40\linewidth]{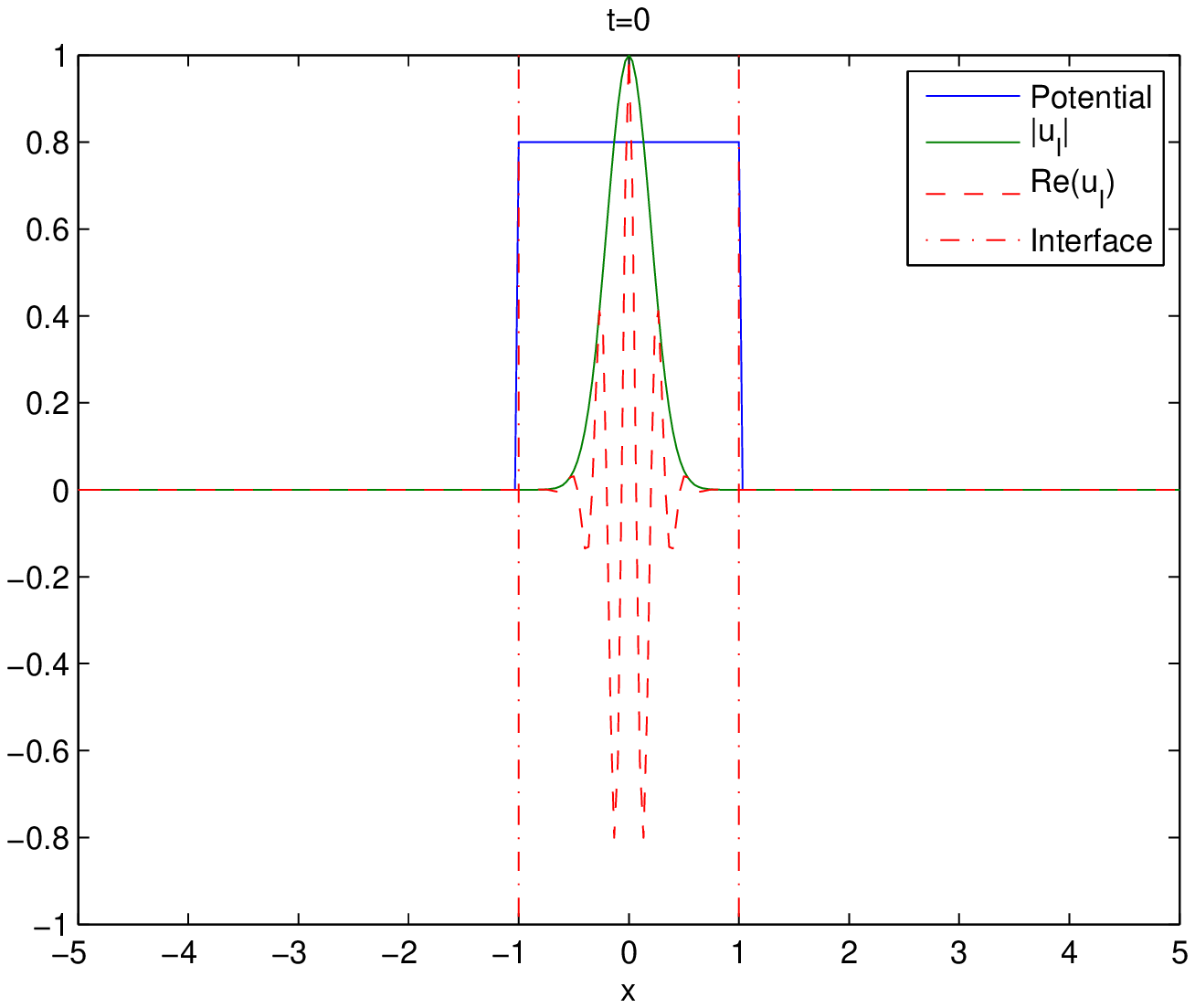}&
\includegraphics[width=0.40\linewidth]{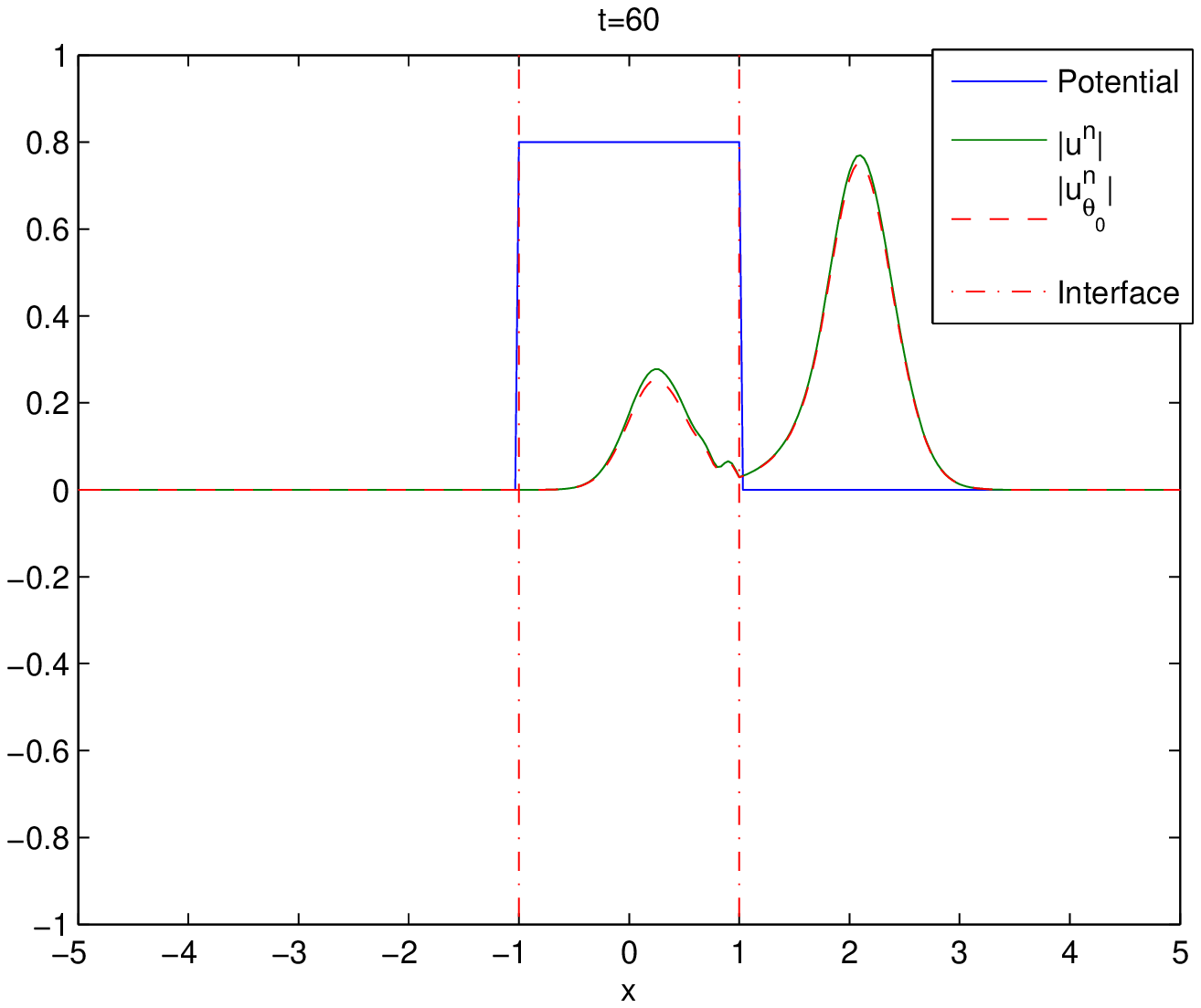}\\
\includegraphics[width=0.40\linewidth]{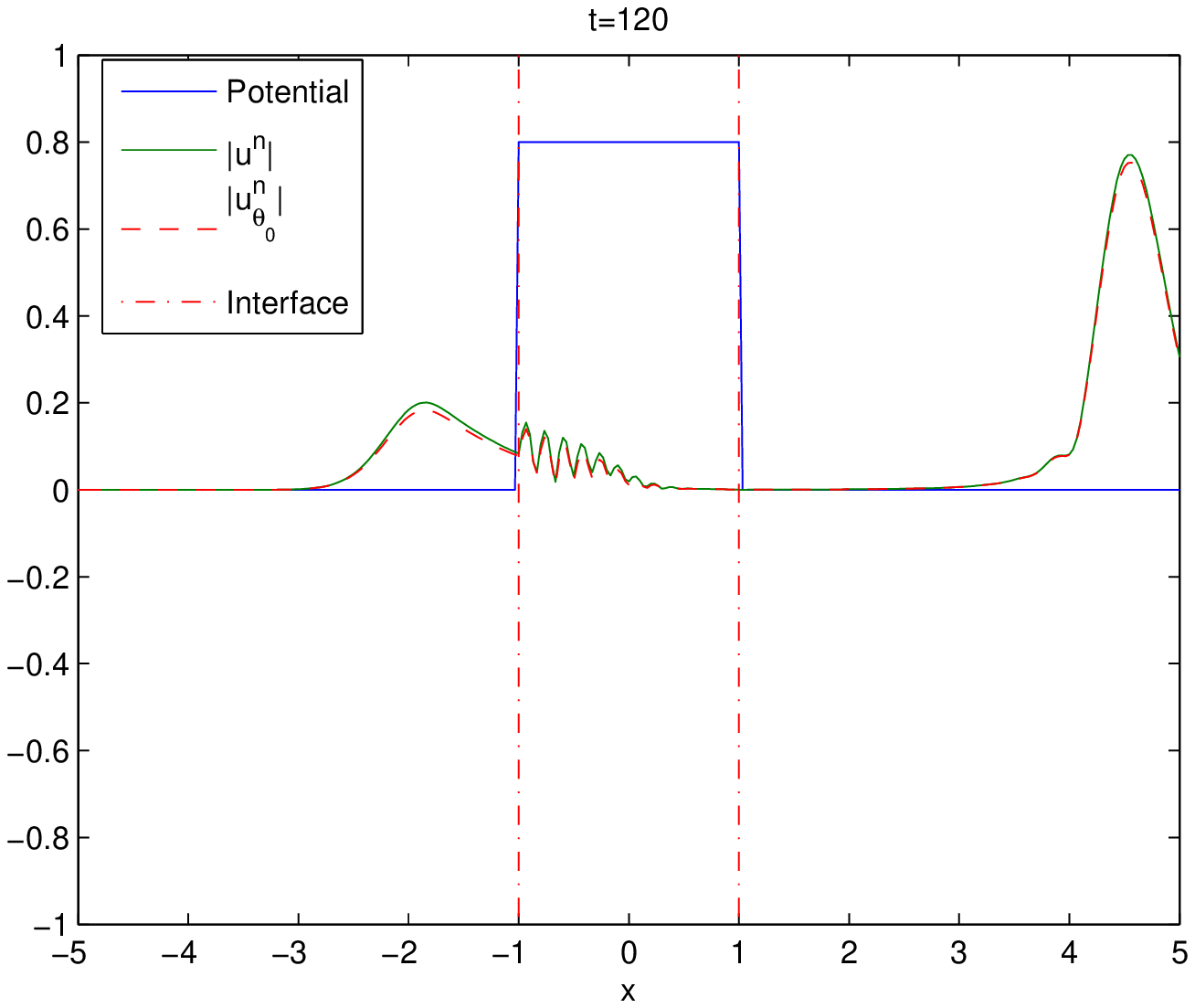}&
\includegraphics[width=0.40\linewidth]{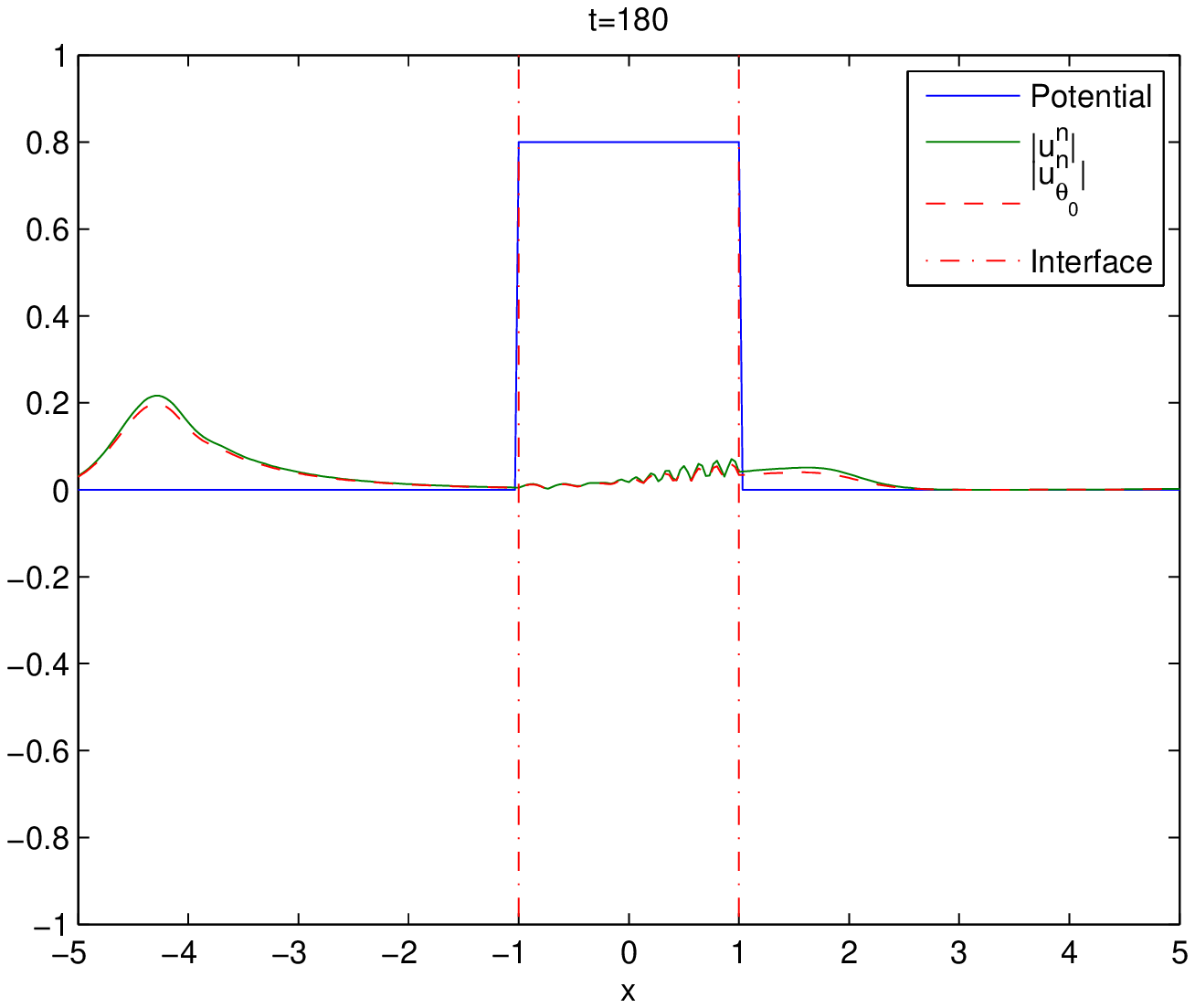}
\end{tabular}
\end{center}
\caption{Case of a barrier potential: modulus of the functions $u_{\theta_{0}
}^{n}$ and $u^{n}$ at different time $t$ with $x_{0}=0$ and $\theta_{0}
=0.09i$}
\label{fig.evbarm}
\end{figure}

In the three tests described above, although some oscillations occur when
crossing the interfaces $x=-1$ and $x=1$, the quantitative comparison gives
also good results. In particular, we represented in Figure \ref{fig.diffL2T}
the variation with respect to $\mathop{\mathrm{Im}}~\theta_{0}$ of the maximum
in time of the $L^{2}$ relative difference:
\begin{equation}
\label{eq.Dtheta0}D_{\theta_{0}}=\max_{1\leq n \leq N}100\frac{\|u_{\theta
_{0}}^{n}-u^{n}\|}{\|u_{I}\|}\,,
\end{equation}
where $N=400$ is the number of time iterations. It shows that, for every case,
the difference tends to $0$ when $\mathop{\mathrm{Im}}~\theta_{0}$ tends to
$0$. Moreover, the graphic of $D_{\theta_{0}}$ is a line which validates the
result \eqref{expansion}. We note also that the difference in the
case of a barrier potential is smaller then in the case $V=0$. This may be due to the fact that
the error coming from the interface conditions is compensated by the
exponential decay imposed by the barrier. Then, in the case of a barrier
potential, we note that the difference is more important when the initial
condition is supported in $(-1,1)$. It can be explained by the fact that the
solution crosses the two interfaces, at $x=-1$ and $x=1$, whereas, when the
solution comes from the left, only the interaction with the first interface
$x=-1$ is relevant, which is also a consequence of the exponential decay in
the barrier.

\begin{figure}[ptb]
\includegraphics[width=0.8\linewidth]{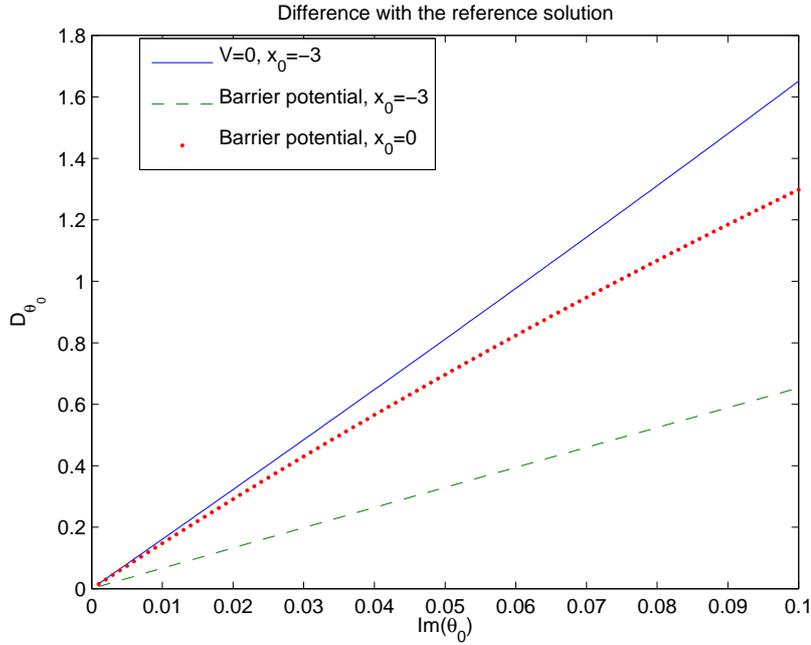}\caption{Variation
with respect to $\mathop{\mathrm{Im}}~\theta_{0}$ of the difference
$D_{\theta_{0}}$, defined in \eqref{eq.Dtheta0}, in the case $V=0$ with
$x_{0}=-3$, and in the case of a barrier potential with $x_{0}=-3$ and
$x_{0}=0$}
\label{fig.diffL2T}
\end{figure}

\section{Exterior complex scaling and local perturbations}
\label{se.extcompl}

Spectral deformations of Schr{\"o}dinger operators arising from complex
dilations form a standard tool to study resonances. This technique --
originally developed by J.M.~Combes and coauthors in \cite{AgCo}\cite{BaCo} 
for the homogeneous scaling in $L^{2}(\mathbb{R}^{n})$:
$U_{\theta}\psi(x)=e^{\frac{n\theta}{2}}\psi(e^{\theta}x)$ -- allows to relate
the resonances of the Hamiltonian $H=-\Delta+\mathcal{V}$ with the spectral
points of a non self-adjoint operator
$H(\theta)=U_{\theta}HU_{\theta}^{-1}$  with $\theta\in \C$. If the potential $\mathcal{V}$ is \emph{dilation analytic }in
the strip $\left\{  \theta\in\mathbb{C\,}\left\vert \,\left\vert
\operatorname{Im}\theta\right\vert <\alpha\right.  \right\}  $, the
poles of the meromorphic
continuation of the resolvent $\left(  H-z\right)  ^{-1}$ in the
second Riemann sheet are identified with the eigenvalues of $H(\theta)$ placed in the cone
spanned by the positive real axis and the rotated half axis $e^{-2i\operatorname{Im}\theta}\mathbb{R}_{+}$. We refer the reader to \cite{CFKS} for a summary and
we recall that many variations on this approach have been developed
since, see \cite{HiSi1}\cite{HeSj}\cite{LaMa} and \cite{HeMa} for
a short comparison of these methods. 
In particular for potentials which can be complex deformed only outside
a compact region, the exterior
complex scaling technique appeared first in \cite{SimE}
in the singular version that we reconsider here. Meanwhile regular
versions have been used in \cite{HiSi1} and extended with the so called
``black box'' formalism in \cite{SjZw}.

In this section, we consider a particular class of exterior scaling maps,
$U_{\theta}$, acting outside a compact set in 1D and introducing sharp
singularities in the domain of the corresponding deformed Hamiltonians. Let us
introduce the one-parameter family of exterior dilations
\begin{equation}
x\longrightarrow\left\{
\begin{array}
[c]{ll}
\smallskip e^{-\theta}(x-b)+b,& x>b\\
\smallskip x,& x\in(a,b)\\
e^{-\theta}(x-a)+a,& x<a\,.
\end{array}
\right. \label{map}
\end{equation}
For real values of the parameter $\theta$, the related unitary transformation
in $L^{2}(\mathbb{R})$ is
\begin{equation}
U_{\theta}\psi(x)=\left\{
\begin{array}
[c]{ll}
e^{\frac{\theta}{2}}\psi(\smallskip e^{\theta}(x-b)+b),& x>b\\
\smallskip\psi(x),& x\in(a,b)\\
e^{\frac{\theta}{2}}\psi(\smallskip e^{\theta}(x-a)+a),& x<a\,.
\end{array}
\right. \label{U_theta}
\end{equation}
Local perturbations of $H_{\theta_{0},0}^{h}(0)$ are defined by
\begin{equation}
H_{\theta_{0},\mathcal{V}}^{h}(0)=H_{\theta_{0},0}^{h}(0)+\mathcal{V}\,,\label{H_(0)+V}
\end{equation}
with $\supp\mathcal{V}\subset[a,b]$. In what follows we will assume
\begin{equation}
\mathcal{V}=\mathcal{V}_{1}+\mathcal{V}_{2},\quad \mathcal{V}_{1}\in L^{\infty}((a,b)),\quad \mathcal{V}_{2}
\in\mathcal{M}_{b}(U)\qquad\text{with }U\subset\subset(a,b)\,.\label{V_local}
\end{equation}
Under these assumptions,
\[
D(H_{\theta_{0},\mathcal{V}}^{h}(0))=\left\{  u\in H^{2}(\mathbb{R}\backslash\left\{  a,b,U\right\}  )\left\vert
\ (1-\chi)u\in D(H_{\theta_{0},0}^{h}(0)),\, -h^{2}u^{\prime\prime}+\mathcal{V}_{2}u\in L^{2}(U)\right.  \right\}\,,
\]
where $\chi\in C^{\infty}_0((a,b))$ and $\chi(x)=1$ for $x\in U$.\\
In particular, since $\mathcal{V}_{2}$ is a bounded measure, the domain
$D(H_{\theta_{0},\mathcal{V}}^{h}(0))$ is contained in $H^{1}(\mathbb{R}\backslash
\left\{  a,b\right\}  )$. The conjugated operator
\begin{equation}
H_{\theta_{0},\mathcal{V}}^{h}(\theta)=U_{\theta}H_{\theta_{0},\mathcal{V}
}^{h}(0)U_{\theta}^{-1}\label{H_(teta)+V}
\end{equation}
is defined on $D(H_{\theta_{0},\mathcal{V}}^{h}(\theta))=\left\{  u\in
L^{2}(\mathbb{R})\left\vert \,U_{\theta}^{-1}u\in D(H_{\theta_{0},\mathcal{V}}
^{h}(0))\right.  \right\}  $. The constraint $U_{\theta}^{-1}u\in
D(H_{\theta_{0},\mathcal{V}}^{h}(0))$ compels the boundary conditions
\begin{equation}
\left[
\begin{array}
[c]{l}
e^{-\frac{1}{2}\left(  \theta_{0}+\theta\right)  }u(b^{+})=u(b^{-}
);\quad\medskip e^{-\frac{3}{2}\left(  \theta_{0}+\theta\right)  }u^{\prime
}(b^{+})=u^{\prime}(b^{-})\\
e^{-\frac{1}{2}\left(  \theta_{0}+\theta\right)  }u(a^{-})=u(a^{+});\quad
e^{-\frac{3}{2}\left(  \theta_{0}+\theta\right)  }u^{\prime}(a^{-})=u^{\prime
}(a^{+})\,,
\end{array}
\right.  \label{BC_1}
\end{equation}
to hold for any $u\in D(H_{\theta_{0},\mathcal{V}}^{h}(\theta))$. Thus one has
\begin{equation}
D(H_{\theta_{0},\mathcal{V}}^{h}(\theta))=\left\{  u\in H^{2}(\mathbb{R}
\backslash\left\{  a,b,U\right\}  )\cap H^{1}(\mathbb{R}\backslash\left\{
a,b\right\}  )\left\vert \ \left(\ref{BC_1}\right)  ,\ -h^{2}u^{\prime
\prime}+\mathcal{V}_{2}u\in L^{2}(U)\right.  \right\}  .\label{H_(teta)+V_dom}
\end{equation}
The action of $H_{\theta_{0},\mathcal{V}}^{h}(\theta)$ is
\begin{equation}
H_{\theta_{0},\mathcal{V}}^{h}(\theta)u=\left[  -h^{2}\eta(x)\partial_{x}
^{2}+\mathcal{V}\right]  \,u,\qquad\eta(x)=e^{- 2\theta\,
  1_{\R\setminus(a,b)}(x)}\,.\label{H_(teta)+V_def}
\end{equation}
It is worthwhile to notice that this definition can be extended to complex values of $\theta$. For $\theta\in\mathbb{C}$, the
Hamiltonian $H_{\theta_{0},\mathcal{V}}^{h}(\theta)$ identifies with a
restriction of the operator $Q(\theta)$
\begin{equation}
\left\{
\begin{array}
[c]{l}
\medskip D(Q(\theta))=\left\{  u\in H^{2}(\mathbb{R}\backslash\left\{
a,b,U\right\}  )\cap H^{1}(\mathbb{R}\backslash\left\{  a,b\right\}
)\left\vert \ -h^{2}u^{\prime\prime}+\mathcal{V}_{2}u\in L^{2}(U)\right.  \right\}\,,  \\
Q(\theta)u=\left[  -h^{2}\eta(x)\partial_{x}^{2}+\mathcal{V}\right]  \,u\,.
\end{array}
\right. \label{Q-(teta)}
\end{equation}
For particular choices of $\theta_{0}$ and $\theta$, the quantum evolution
generated by the deformed model $H_{\theta_{0},\mathcal{V}}^{h}(\theta)$ is described by
contraction maps. To fix this point, let us consider the terms\\
$\operatorname{Re}\left\langle u,iH_{\theta_{0},\mathcal{V}}^{h}(\theta)u\right\rangle
_{L^{2}(\mathbb{R})}$; for $u\in D(H_{\theta_{0},\mathcal{V}}^{h}(\theta))$, an explicit
calculation gives
\begin{align}
\operatorname{Re}\left\langle u,iH_{\theta_{0},\mathcal{V}}^{h}(\theta)u\right\rangle
_{L^{2}(\mathbb{R})}  &  =\operatorname{Re}\left\{  -ih^{2}\left(  \bar
{u}(a^{-})u^{\prime}(a^{-})-\bar{u}(b^{+})u^{\prime}(b^{+})\right)  \,\left(
e^{-2\theta}-e^{-\frac{1}{2}\left(  \bar{\theta}+\bar{\theta}_{0}\right)
}e^{-\frac{3}{2}\left(  \theta+\theta_{0}\right)  }\right)  \right\}
\nonumber\\
&  +h^{2}e^{-2\operatorname{Re}\theta}\sin\left(  2\operatorname{Im}
\theta\right)  \int_{\mathbb{R}\backslash\left(a,b\right)  }\left\vert
u^{\prime}\right\vert ^{2}dx. \label{accretivity}
\end{align}
For $\theta=\theta_{0}=i\tau$, with $\tau\in\left(  0,\frac{\pi}{2}\right)  $, the
boundary terms disappear, and the r.h.s. of (\ref{accretivity}) is positive
\begin{equation}
\operatorname{Re}\left\langle u,iH_{i\tau,\mathcal{V}}^{h}(i\tau)u\right\rangle
_{L^{2}(\mathbb{R})}=h^{2}\sin\left(  2\tau\right)  \int_{\mathbb{R}
\backslash\left(a,b\right)  }\left\vert u^{\prime}\right\vert ^{2}dx\geq0.
\label{accretivity1}
\end{equation}
\begin{lemma}
For $\tau\in\left(0,\frac{\pi}{2}\right)$, the operator $iH_{i\tau,\mathcal{V}}^{h}(i\tau)$ is the generator of a contraction semigroup.
\end{lemma}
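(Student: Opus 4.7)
The plan is to invoke the Lumer--Phillips theorem. The accretivity estimate \eqref{accretivity1} establishes that $iH_{i\tau,\mathcal{V}}^h(i\tau)$ is accretive on its domain \eqref{H_(teta)+V_dom}. Density is immediate since this domain contains $C_c^\infty(\R\setminus(\{a,b\}\cup\supp\mathcal{V}_2))$, and closedness is a routine check for a second-order differential operator with jump conditions \eqref{BC_1}: $L^2$-convergence of $u_n$ and $H_{i\tau,\mathcal{V}}^h(i\tau)u_n$ yields $H^2$-control on each connected piece of $\R\setminus\{a,b,U\}$, the interface conditions pass to the limit by continuity of traces, and the constraint $-h^2 u'' + \mathcal{V}_2 u \in L^2(U)$ is preserved using the one-dimensional embedding $H^1\hookrightarrow L^\infty$ to give meaning to $\mathcal{V}_2 u$. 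The substantive content therefore reduces to the range condition $\operatorname{Ran}(\lambda + iH_{i\tau,\mathcal{V}}^h(i\tau)) = L^2(\R)$ for some $\lambda > 0$.

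For this I would construct the resolvent via a Krein-type formula, precisely the one announced in the outline for Section~\ref{se.extcompl}. Consider first the unperturbed operator $H_{i\tau,0}^h(i\tau)$. For $\operatorname{Re}\lambda > 0$ the resolvent equation $-ih^2 \eta(x) u'' + \lambda u = f$ is a constant-coefficient ODE on each of $(-\infty,a)$, $(a,b)$, $(b,+\infty)$, with $\eta = e^{-2i\tau}$ on the exterior. For $\tau \in (0,\pi/2)$ the characteristic roots lie off the imaginary axis, providing genuinely decaying exponential fundamental solutions at $\pm\infty$, and the four free constants in the piecewise general solution are fixed by the four interface conditions \eqref{BC_1} at $\theta = \theta_0 = i\tau$. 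This produces $(iH_{i\tau,0}^h(i\tau)+\lambda)^{-1}$ as a bounded operator on $L^2(\R)$. The local perturbation $\mathcal{V} = \mathcal{V}_1 + \mathcal{V}_2$ is then absorbed by a second-resolvent identity: $\mathcal{V}_1 \in L^\infty((a,b))$ is bounded and $\mathcal{V}_2 \in \mathcal{M}_b(U)$ is relatively bounded via $H^1 \hookrightarrow L^\infty$ on a neighborhood of $U$, so for $\operatorname{Re}\lambda$ large enough the Neumann series defining $(iH_{i\tau,\mathcal{V}}^h(i\tau)+\lambda)^{-1}$ converges in $\mathcal{L}(L^2(\R))$.

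The main obstacle I anticipate is the gluing step: the four matching conditions with non-self-adjoint jump factors $e^{-i\tau}$ and $e^{-3i\tau}$ must be handled carefully and the resulting $2\times 2$ gluing determinant shown not to vanish in the admissible range of $\lambda$. An equivalent and arguably cleaner route is to verify that the adjoint $(iH_{i\tau,\mathcal{V}}^h(i\tau))^{\ast}$ is also accretive --- computing it amounts to conjugating the interface parameters, so that the analogue of \eqref{accretivity1} persists by symmetry --- and to invoke the general fact that a closed densely-defined operator on a Hilbert space with both itself and its adjoint accretive is m-accretive. Either route, combined with Lumer--Phillips, then yields that $iH_{i\tau,\mathcal{V}}^h(i\tau)$ generates the contraction semigroup $e^{-t \cdot iH_{i\tau,\mathcal{V}}^h(i\tau)}$ for $t \geq 0$, which is the assertion of the lemma.
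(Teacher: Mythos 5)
Your strategic plan coincides with the paper's: establish accretivity from \eqref{accretivity1} and then verify a range condition so that the Lumer--Phillips/Hille--Yosida characterization of contraction-semigroup generators applies. The paper discharges the range condition differently, though: it quotes forward to Corollary~\ref{cor_krein}, where the Krein formula \eqref{res_formula_gen} gives $\sigma_{ess}(H_{i\tau,\mathcal{V}}^{h}(i\tau))=e^{-2i\tau}\R_{+}$ and hence the meromorphy of the resolvent off that ray; since $i\lambda_0$ (for $\lambda_0>0$) lies off $e^{-2i\tau}\R_{+}$ and the remaining spectrum is discrete, some such point is in the resolvent set and $(iH_{i\tau,\mathcal{V}}^{h}(i\tau)+\lambda_0)$ is surjective.

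Your route (b) --- showing $(iH_{i\tau,\mathcal{V}}^{h}(i\tau))^{\ast}$ is also accretive and invoking the fact that a closed, densely defined operator with accretive adjoint is m-accretive --- is genuinely different and in fact more self-contained, since it avoids the forward reference to the Krein formula. It does work: by \eqref{original} and \eqref{adjoint}, $\left(H_{i\tau,\mathcal{V}}^{h}(i\tau)\right)^{\ast}=\mathcal{Q}_{3i\tau,\,i\tau}(-i\tau)$, and repeating the integration by parts behind \eqref{accretivity} shows that the boundary contributions again cancel (the jump factors $e^{-i\tau},e^{i\tau}$ conspire with $\eta=e^{2i\tau}$ exactly as in the original computation) and one is left with $\Real\langle u,\,i\,\mathcal{Q}_{3i\tau,\,i\tau}(-i\tau)\,u\rangle=-h^{2}\sin(2\tau)\int_{\R\setminus(a,b)}|u'|^{2}\le 0$, i.e.\ $-i\left(H_{i\tau,\mathcal{V}}^{h}(i\tau)\right)^{\ast}$ is accretive, as needed. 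What the paper's route buys is economy within the article (it reuses machinery that is developed anyway); what yours buys is a proof that does not lean on a statement proved only later.

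Two small imprecisions worth fixing. In route (a), the claim that for every $\Real\lambda>0$ and every $\tau\in(0,\tfrac{\pi}{2})$ the characteristic roots of $-ih^{2}e^{-2i\tau}u''+\lambda u=0$ lie off the imaginary axis is false at $\tau=\tfrac{\pi}{4}$, $\lambda>0$ real, where $i\lambda e^{2i\tau}=-\lambda$ is negative real and the roots are purely imaginary; one should instead say that for each $\tau$ there exists some $\lambda$ with $\Real\lambda>0$ making the roots off-axis, which is all the range condition requires. Second, density of the domain via $C_{c}^{\infty}(\R\setminus(\{a,b\}\cup\supp\mathcal{V}_{2}))$ is only immediate when $\supp\mathcal{V}_{2}$ has Lebesgue measure zero (the intended case, cf.\ \eqref{v_2_delta}); for a general $\mathcal{V}_{2}\in\mathcal{M}_{b}(U)$ one needs a slightly more careful argument, though density does hold.
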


\begin{proof}
As a consequence of \eqref{accretivity1}, $iH_{i\tau,\mathcal{V}}^{h}(i\tau)$ is accretive. Moreover, the propriety $\sigma_{ess}\left(H_{i\tau,\mathcal{V}}^{h}(i\tau)\right)=e^{-2i\tau}\R_{+}$ in Corollary \ref{cor_krein} below, implies $i\lambda_0\in\rho(H_{i\tau,\mathcal{V}}^{h}(i\tau))$ for some $\lambda_0>0$ and $\left(iH_{i\tau,\mathcal{V}}^{h}(i\tau)+\lambda_0\right)$ is surjective. Then, a standard characterization of
semigroup generators (\cite{ReSi2}, Theorem X.48) leads to the result.
\end{proof}

\subsection{Krein formula and analyticity of the resolvent}

In order to get an expression of the adjoint operator of $H_{\theta_{0},\mathcal{V}}^{h}(\theta)$, we introduce the following operator with two-parameters boundary conditions
\begin{equation}
D(\mathcal{Q}_{\theta_{1},\theta_{2}}(\theta))=\left\{  u\in D(Q(\theta))\left\vert \ \left[
\begin{array}
[c]{l}
e^{-\frac{1}{2}\left(  \theta_{1}+\theta\right)  }u(b^{+})=u(b^{-}
);\quad\medskip e^{-\frac{1}{2}\left(  \theta_{2}+3\theta\right)  }u^{\prime
}(b^{+})=u^{\prime}(b^{-})\\
e^{-\frac{1}{2}\left(  \theta_{1}+\theta\right)  }u(a^{-})=u(a^{+});\quad
e^{-\frac{1}{2}\left(  \theta_{2}+3\theta\right)  }u^{\prime}(a^{-}
)=u^{\prime}(a^{+})
\end{array}
\right.\right.  \right\}\,,\label{B_operator_domain}
\end{equation}
\begin{equation}
\mathcal{Q}_{\theta_{1},\theta_{2}}(\theta)u=\left[-h^{2}\eta(x)\partial_{x}^{2}+\mathcal{V}\right]u,\qquad\eta(x)=e^{- 2\theta\,
  1_{\R\setminus(a,b)}(x)}\,,\label{B_operator}
\end{equation}
where $Q(\theta)$ is defined in \eqref{Q-(teta)}. Indeed, by direct computation\\
i) $\mathcal{Q}_{\theta_{1},\theta_{2}}(\theta)$ identifies with the original model $H_{\theta_{0},\mathcal{V}}^{h}(\theta)$ for the choice of parameters: $\theta_{1}=\theta_{0}$ and $\theta_{2}=3\theta_{0}$
\begin{equation}
H_{\theta_{0},\mathcal{V}}^{h}(\theta)=\mathcal{Q}_{\theta_{0},3\theta_{0}}(\theta),\label{original}
\end{equation}
ii) the adjoint operator $\left(  \mathcal{Q}_{\theta_{1},\theta_{2}}(\theta)\right)^{\ast}$ is given by
\begin{equation}
\left(\mathcal{Q}_{\theta_{1},\theta_{2}}(\theta)\right)^{\ast}=\mathcal{Q}_{-\bar{\theta}_{2},-\bar{\theta}_{1}}(\bar{\theta}).\label{adjoint}
\end{equation}
Like $H_{\theta_{0},\mathcal{V}}^{h}(\theta)$, the Hamiltonian $\mathcal{Q}_{\theta_{1},\theta_{2}}(\theta)$ is a restriction of the operator $Q(\theta)$. In this context, we fix a boundary value triple $\left\{  \Gamma_{j=1,2}^{\theta},\mathbb{C}^{4}\right\}  $ with $\Gamma_{j}^{\theta}:D(Q(\theta))\rightarrow
\mathbb{C}^{4}$
\begin{equation}
\Gamma_{1}^{\theta}\psi=h^{2}\left(
\begin{array}
[c]{c}
-e^{-\frac{3}{2}\theta}\psi^{\prime}(b^{+})\\
-\psi(b^{-})\\
\psi(a^{+})\\
e^{-\frac{3}{2}\theta}\psi^{\prime}(a^{-})
\end{array}
\right)  ;\qquad\Gamma_{2}^{\theta}\psi=\left(
\begin{array}
[c]{c}
e^{-\frac{\theta}{2}}\psi(b^{+})\\
\psi^{\prime}(b^{-})\\
\psi^{\prime}(a^{+})\\
e^{-\frac{\theta}{2}}\psi(a^{-})
\end{array}
\right)\,,  \label{Boundary_transf(teta)}
\end{equation}
and $\left(  \Gamma_{1}^{\theta},\Gamma_{2}^{\theta}\right)  :D(Q(\theta)^{\ast
})\rightarrow\mathbb{C}^{4}\times\mathbb{C}^{4}$ surjective. For all
$\psi,\varphi\in D(Q(\theta))$, these maps satisfy the relation
\begin{equation}
\left\langle \psi,Q(\theta)\varphi\right\rangle _{L^{2}(\mathbb{R}
)}-\left\langle Q(\bar{\theta})\psi,\varphi\right\rangle _{L^{2}(\mathbb{R}
)}=\left\langle \Gamma_{1}^{\bar{\theta}}\psi,\Gamma_{2}^{\theta}
\varphi\right\rangle _{\mathbb{C}^{4}}-\left\langle \Gamma_{2}^{\bar{\theta}
}\psi,\Gamma_{1}^{\theta}\varphi\right\rangle _{\mathbb{C}^{4}}\label{Boundary_transf(teta)_1}
\end{equation}
(for the definition of boundary triples and the construction of point interaction potentials in the self adjoint case see \cite{Pan} and \cite{AlPa}). Let $\Lambda,B\in\mathbb{C}^{4,4}$ be defined as
\begin{equation}\label{Matrix_def_B}
\Lambda(\theta_1,\theta_2)=\frac{1}{h^{2}}
\begin{pmatrix}
a(\theta_{1},\theta_{2}) & \\
& -a(-\theta_{2},-\theta_{1})
\end{pmatrix}
,\qquad B=
\begin{pmatrix}
b & \\
& b
\end{pmatrix}
,
\end{equation}
\begin{equation}
a(\theta_{1},\theta_{2})=
\begin{pmatrix}
-e^{-\frac{\theta_{2}}{2}} & 0\\
0 & -e^{\frac{\theta_{1}}{2}}
\end{pmatrix}
,\quad\quad b=
\begin{pmatrix}
0 & 1\\
1 & 0
\end{pmatrix}
,
\end{equation}
the boundary conditions in (\ref{B_operator_domain}) are equivalent to
\begin{equation}
\Lambda\Gamma_{1}^{\theta}\psi=B\Gamma_{2}^{\theta}\psi\,.\label{BC_teta}
\end{equation}

Let $H_{ND,\mathcal{V}}^{h}(\theta)$ denote the restriction of $Q(\theta)$
corresponding to the boundary conditions: $\Gamma_{1}^{\theta}\psi=0$; this
operator is explicitly given by
\begin{equation}
H_{ND,\mathcal{V}}^{h}(\theta)=-h^{2}e^{-2\theta}\Delta_{(-\infty,a)}
^{N}\oplus\left[  -h^{2}\Delta_{(a,b)}^{D}+\mathcal{V}\right]  \oplus
-h^{2}e^{-2\theta}\Delta_{(b,+\infty)}^{N}.
\label{H_Neumann-Dirichlet(teta,V)}
\end{equation}
Its spectrum is characterized as follows
\begin{equation}\label{sigma_HND}
\sigma\left(H_{ND,\mathcal{V}}^{h}(\theta)\right)=e^{-2\theta}\R_{+}\cup\sigma\left(-h^{2}\Delta_{(a,b)}^{D}+\mathcal{V}\right)\,.
\end{equation}
It is possible to write $\left(\mathcal{Q}_{\theta_1,\theta_2}(\theta)-z\right)^{-1}$ as the sum of
$\left(  H_{ND,\mathcal{V}}^{h}(\theta)-z\right)  ^{-1}$ plus finite rank
terms. Such a representation will be further used to develop the spectral
analysis of $H_{\theta_{0},\mathcal{V}}^{h}(\theta)$ where our Krein-like formula will allow explicit resolvent estimates
near the resonances. The space $\mathcal{N}_{z,\theta}=\Ker(Q(\theta)-z)$ is generated by the linear closure of the system
$\left\{  u_{i,z}\right\}  _{i=1}^{4}$ where $u_{i,z}$ are the independent
solutions to $(Q(\theta)-z)u=0$. The exterior solutions to this problem,
$u_{i,z}$, $i=1,4$, are explicitly given by
\begin{equation}
u_{1,z}(x)=1_{\left(  b,+\infty\right)  }e^{i\frac{\sqrt{ze^{2\theta}}}{h}(x-b)},\quad
u_{4,z}(x)=1_{\left(  -\infty,a\right)  }e^{-i\frac{\sqrt{ze^{2\theta}}}{h}(x-a)},
\label{u_i,z (i=1,4)}
\end{equation}
where the square root branch cut is fixed with $\operatorname{Im}\sqrt{\cdot
}>0$. This assumption implies $\Imag\sqrt{ze^{2\theta}}~>~0$
for all $z\in\mathbb{C}\backslash e^{-2i\operatorname{Im}\theta}\mathbb{R}_{+}
$. The interior solutions, $u_{i,z}$, $i=2,3$, can be defined through the
following boundary value problems 
\begin{equation}
\left\{
\begin{array}
[c]{l}
\medskip\left[  -h^{2}\partial_{x}^{2}+\mathcal{V}-z\right]  \,u_{2,z}
=0,\qquad\text{in }\left(  a,b\right)\,, \\
u_{2,z}(a)=0,\qquad u_{2,z}(b)=1\,,
\end{array}
\right.  \quad\left\{
\begin{array}
[c]{l}
\medskip\left[  -h^{2}\partial_{x}^{2}+\mathcal{V}-z\right]  \,u_{3,z}
=0,\qquad\text{in }\left(  a,b\right)\,, \\
u_{3,z}(a)=1,\qquad u_{3,z}(b)=0\,,
\end{array}
\right. \label{u_intz}
\end{equation}
with $z\in\mathbb{C}\backslash
\sigma_p(H_{ND,\mathcal{V}}^h(\theta))$. Owing to the property of the interior
Dirichlet realization in $(a,b)$, the solutions $u_{i,z}\big|_{(a,b)}$
 are unique and locally $H^2$ near the boundary. We consider the maps: $\gamma(\cdot,z,\theta)=\left(  \left.
\Gamma_{1}^{\theta}\right\vert _{\mathcal{N}_{z,\theta}}\right)  ^{-1}$, with
$\left.  \Gamma_{1}^{\theta}\right\vert _{\mathcal{N}_{z,\theta}}$ denoting the
restriction of $\Gamma_{1}^{\theta}$ onto $\mathcal{N}_{z,\theta}$, and
$q(z,\theta,\mathcal{V})=\Gamma_{2}^{\theta}\gamma(\cdot,z,\theta)$.
These form holomorphic families of linear operators for $z$ in a cut plane
$\mathbb{C}\backslash e^{-2i\operatorname{Im}\theta}\mathbb{R}_{+}$. Their
matrix form w.r.t. the standard basis $\left\{  \underline{e}_{j}\right\}
_{j=1}^{4}$ of $\mathbb{C}^4$ and the system $\left\{  u_{i,z}\right\}  _{i=1}^{4}$ is:
$\gamma_{ij}(z,\theta)=c_{i}(z,\theta)\delta_{ij}$ with
\begin{equation}
c_{1}(z,\theta)=\frac{ie^{\frac{3\theta}{2}}}{h\sqrt{ze^{2\theta}}},\ c_{2}=-\frac
{1}{h^{2}},\ c_{3}=\frac{1}{h^{2}},\ c_{4}(z,\theta)=\frac{ie^{\frac{3\theta}{2}}}{h\sqrt{ze^{2\theta}}}\,, \label{eq.c_i}
\end{equation}
and
\begin{equation}
q(z,\theta,\mathcal{V})=\frac{1}{h^{2}}
\begin{pmatrix}
\frac{ihe^{\theta}}{\sqrt{ze^{2\theta}}} &  &  & \\
& -u_{2,z}^{\prime}(b) & u_{3,z}^{\prime}(b) & \\
& -u_{2,z}^{\prime}(a) & u_{3,z}^{\prime}(a) & \\
&  &  & \frac{ihe^{\theta}}{\sqrt{ze^{2\theta}}}
\end{pmatrix}
. \label{q_zeta}
\end{equation}

\begin{lemma}
\label{Lemma 3}Let $\varphi\in L^{2}(\mathbb{R})$ and $j=1,...,4$; the
relation
\begin{equation}
\left[  \Gamma_{2}^{\theta}\left(  H_{ND,\mathcal{V}}^{h}(\theta
)-z\right)  ^{-1}\varphi\right]  _{j}=\left\langle \gamma(\underline{e}
_{j},\bar{z},\bar{\theta}),\varphi\right\rangle _{L^{2}(\mathbb{R})}
\label{resolvent_rel (teta)}
\end{equation}
holds with $\theta\in\mathbb{C}$ and $z\in\rho\,\left(  H_{ND,\mathcal{V}}^{h}(\theta)\right)  $.
\end{lemma}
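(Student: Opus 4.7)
The plan is to derive the identity from the abstract Green formula \eqref{Boundary_transf(teta)_1} attached to the boundary triple $\{\Gamma_1^\theta,\Gamma_2^\theta;\mathbb{C}^4\}$. The right-hand side already hints at this: $\gamma(\underline{e}_j,\bar z,\bar\theta)$ is, by its very construction, an element $w_j\in\mathcal{N}_{\bar z,\bar\theta}=\operatorname{Ker}(Q(\bar\theta)-\bar z)$ satisfying $\Gamma_1^{\bar\theta}w_j=\underline{e}_j$. So I would set
\[
u=\bigl(H_{ND,\mathcal{V}}^h(\theta)-z\bigr)^{-1}\varphi,\qquad w_j=\gamma(\underline{e}_j,\bar z,\bar\theta),
\]
and then pair $w_j$ with $u$ via Green's formula.

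First, I need to check that $w_j$ belongs to $D(Q(\bar\theta))$ and that $u$ belongs to $D(Q(\theta))$. For $u$, it follows from $u\in D(H_{ND,\mathcal{V}}^h(\theta))\subset D(Q(\theta))$, with the additional property $\Gamma_1^\theta u=0$ built into the Neumann--Dirichlet definition. For $w_j$, the exterior pieces $u_{1,\bar z}, u_{4,\bar z}$ are in $H^2$ on the outer half-lines thanks to the branch-cut choice $\Imag\sqrt{\bar z e^{2\bar\theta}}>0$, which forces exponential decay at infinity, and the interior pieces $u_{2,\bar z}, u_{3,\bar z}$ are solutions of the interior Dirichlet problem \eqref{u_intz}, hence $H^2$ near $a,b$; the remaining constraint $-h^2 u''+\mathcal{V}_2 u\in L^2(U)$ is trivially satisfied since $(Q(\bar\theta)-\bar z)w_j=0$.

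Next I would apply the Green identity \eqref{Boundary_transf(teta)_1} with $\psi=w_j$ and $\varphi$ replaced by $u$:
\[
\langle w_j,Q(\theta)u\rangle_{L^2(\mathbb{R})}-\langle Q(\bar\theta)w_j,u\rangle_{L^2(\mathbb{R})}
=\langle \Gamma_1^{\bar\theta}w_j,\Gamma_2^\theta u\rangle_{\mathbb{C}^4}-\langle \Gamma_2^{\bar\theta}w_j,\Gamma_1^\theta u\rangle_{\mathbb{C}^4}.
\]
Using $Q(\theta)u=zu+\varphi$ (since $H_{ND,\mathcal{V}}^h(\theta)$ acts as $Q(\theta)$ on its domain) and $Q(\bar\theta)w_j=\bar z w_j$, the left-hand side collapses to $\langle w_j,\varphi\rangle_{L^2(\mathbb{R})}$ after cancellation of the $z\langle w_j,u\rangle$ terms. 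On the right-hand side, the second term vanishes because $\Gamma_1^\theta u=0$, and in the first term $\Gamma_1^{\bar\theta}w_j=\underline{e}_j$, so we obtain $[\Gamma_2^\theta u]_j$. Equating the two yields precisely \eqref{resolvent_rel (teta)}.

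The only subtle point, and likely the main obstacle, is the justification of the Green formula in this non self-adjoint, exterior-scaled setting: the identity \eqref{Boundary_transf(teta)_1} needs to be read with $\psi\in D(Q(\bar\theta))$ and $\varphi\in D(Q(\theta))$, and one must verify it by direct integration by parts on the three subintervals $(-\infty,a)$, $(a,b)$, $(b,+\infty)$, handling the factor $\eta(x)=e^{-2\theta\mathbf{1}_{\mathbb{R}\setminus(a,b)}}$ on the exterior pieces (this produces the $e^{-\theta/2}, e^{-3\theta/2}$ weights visible in the definition of $\Gamma_1^\theta,\Gamma_2^\theta$) and keeping track of the potential $\mathcal{V}_2$ contribution near $U\subset\subset(a,b)$, where $u\in H^1$ across $U$ suffices for the measure pairing to cancel out. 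Once this bookkeeping is done, the rest of the argument is purely algebraic.
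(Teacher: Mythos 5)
Your proof is correct and follows essentially the same route as the paper's: both set $u=(H_{ND,\mathcal{V}}^h(\theta)-z)^{-1}\varphi$, use $\Gamma_1^\theta u=0$ and $\underline{e}_j=\Gamma_1^{\bar\theta}\gamma(\underline{e}_j,\bar z,\bar\theta)$, and invoke the boundary-triple Green identity \eqref{Boundary_transf(teta)_1} together with $(Q(\bar\theta)-\bar z)\gamma(\underline{e}_j,\bar z,\bar\theta)=0$ to collapse the left side to $\langle\gamma(\underline{e}_j,\bar z,\bar\theta),\varphi\rangle_{L^2}$. The only difference is cosmetic (you start from the $L^2$ pairing and the paper starts from the $\mathbb{C}^4$ pairing), plus you add a domain check that the paper omits.
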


\begin{proof}
Let: $f=\left(  H_{ND,\mathcal{V}}^{h}(\theta)-z\right)  ^{-1}\varphi$. This
function is in $D(H_{ND,\mathcal{V}}^{h}(\theta))$ so that:\\ $Q(\theta
)f=H_{ND,\mathcal{V}}^{h}(\theta)f$ and $\Gamma_{1}^{\theta}f=0$. The l.h.s of
(\ref{resolvent_rel (teta)}) writes as
\[
\left[  \Gamma_{2}^{\theta}\left(  H_{ND,\mathcal{V}}^{h}(\theta)-z\right)
^{-1}\varphi\right]  _{j}=\left\langle \underline{e}_{j},\,\Gamma_{2}^{\theta
}f\right\rangle _{\mathbb{C}^{4}}.
\]
Since $\underline{e}_{j}=\Gamma_{1}^{\bar{\theta}}\gamma(\underline{e}
_{j},\bar{z},\bar{\theta})$, we have
\begin{align*}
\left[  \Gamma_{2}^{\theta}\left(  H_{ND,\mathcal{V}}^{h}(\theta)-z\right)
^{-1}\varphi\right]  _{j}  &  =\left\langle \Gamma_{1}^{\bar{\theta}}
\gamma(\underline{e}_{j},\bar{z},\bar{\theta}),\,\Gamma_{2}^{\theta
}f\right\rangle _{\mathbb{C}^{4}}-\left\langle \Gamma_{2}^{\bar{\theta}}
\gamma(\underline{e}_{j},\bar{z},\bar{\theta}),\,\Gamma_{1}^{\theta
}f\right\rangle _{\mathbb{C}^{4}}\\
&  =\left\langle \gamma(\underline{e}_{j},\bar{z},\bar{\theta}),\,Q(\theta
)f\right\rangle _{L^{2}(\mathbb{R})}-\left\langle Q(\bar{\theta}
)\gamma(\underline{e}_{j},\bar{z},\bar{\theta}),\,f\right\rangle
_{L^{2}(\mathbb{R})}.
\end{align*}
By definition, $\gamma(\underline{e}_{j},\bar{z},\bar{\theta})\in
\mathcal{N}_{\bar{z},\bar{\theta}}$ and the r.h.s. writes as
\begin{align*}
\left\langle \gamma(\underline{e}_{j},\bar{z},\bar{\theta}),\,Q(\theta
)f\right\rangle _{L^{2}(\mathbb{R})}-\left\langle Q(\bar{\theta}
)\gamma(\underline{e}_{j},\bar{z},\bar{\theta}),\,f\right\rangle
_{L^{2}(\mathbb{R})}&=\left\langle \gamma(\underline{e}_{j},\bar{z},\bar
{\theta}),\,\,\left(  H_{ND,\mathcal{V}}^{h}(\theta)-z\right)  f\right\rangle
_{L^{2}(\mathbb{R})}\\
&=\left\langle \gamma(\underline{e}_{j},\bar{z},\bar
{\theta}),\,\varphi\right\rangle _{L^{2}(\mathbb{R})}.
\end{align*}

\end{proof}

\begin{proposition} The resolvent $\left(\mathcal{Q}_{\theta_{1},\theta_{2}}(\theta)-z\right)  ^{-1}$ allows the representation
\begin{align}
\left(  \mathcal{Q}_{\theta_{1},\theta_{2}}(\theta)-z\right)^{-1} &  =\left(
H_{ND,\mathcal{V}}^{h}(\theta)-z\right)  ^{-1}\nonumber\\
&  -\sum_{i,j=1}^{4}\left[\left(  Bq(z,\theta,\mathcal{V})-\Lambda\right)^{-1}B\right]_{ij}\left\langle \gamma(\underline{e}_{j},\bar{z},\bar{\theta}
),\cdot\right\rangle _{L^{2}(\mathbb{R})}\gamma(\underline{e}_{i}
,z,\theta),\label{res_formula_B}
\end{align}
and one has: $\sigma_{ess}(\mathcal{Q}_{\theta_{1},\theta_{2}}(\theta
))=\sigma_{ess}(H_{ND,\mathcal{V}}^{h}(\theta))=e^{-2\theta}\mathbb{R}
_{\mathbb{+}}$.
\end{proposition}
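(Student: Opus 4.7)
My plan is to verify the Krein formula \eqref{res_formula_B} by direct construction, treating the right-hand side as an ansatz and checking that for any $\varphi\in L^{2}(\R)$ the function $u$ it defines belongs to $D(\mathcal{Q}_{\theta_{1},\theta_{2}}(\theta))$ and solves $(\mathcal{Q}_{\theta_{1},\theta_{2}}(\theta)-z)u=\varphi$. Write $u=w-\sum_{i}\alpha_{i}\gamma(\underline{e}_{i},z,\theta)$ with $w=(H_{ND,\mathcal{V}}^{h}(\theta)-z)^{-1}\varphi$ and $\alpha_{i}=\sum_{j}[(Bq(z,\theta,\mathcal{V})-\Lambda)^{-1}B]_{ij}\psi_{j}$, where $\psi_{j}=\langle\gamma(\underline{e}_{j},\bar{z},\bar{\theta}),\varphi\rangle_{L^{2}(\R)}$. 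Since each $\gamma(\underline{e}_{i},z,\theta)\in\mathcal{N}_{z,\theta}=\Ker(Q(\theta)-z)$ and $(Q(\theta)-z)w=\varphi$, the bulk equation $(Q(\theta)-z)u=\varphi$ is immediate.

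The crux is the interface condition $\Lambda\Gamma_{1}^{\theta}u=B\Gamma_{2}^{\theta}u$ from \eqref{BC_teta}. Using $\Gamma_{1}^{\theta}w=0$ together with $\Gamma_{1}^{\theta}\gamma(\underline{e}_{i},z,\theta)=\underline{e}_{i}$ (by definition of $\gamma$) and $\Gamma_{2}^{\theta}\gamma(\underline{e}_{i},z,\theta)=q(z,\theta,\mathcal{V})\underline{e}_{i}$ (by definition of $q$), and invoking Lemma~\ref{Lemma 3} to identify $\Gamma_{2}^{\theta}w=\psi$, I obtain
\[
\Gamma_{1}^{\theta}u=-(Bq-\Lambda)^{-1}B\psi,\qquad \Gamma_{2}^{\theta}u=\psi-q(Bq-\Lambda)^{-1}B\psi.
\]
Applying $\Lambda$ and $B$ respectively and subtracting, the interface identity reduces to $(Bq-\Lambda)(Bq-\Lambda)^{-1}B\psi=B\psi$, which holds tautologically. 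This shows $u\in D(\mathcal{Q}_{\theta_{1},\theta_{2}}(\theta))$ and confirms the formula on the set where $z\in\rho(H_{ND,\mathcal{V}}^{h}(\theta))$ and $\det(Bq(z,\theta,\mathcal{V})-\Lambda)\neq 0$.

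For the essential spectrum statement, the formula \eqref{res_formula_B} exhibits $(\mathcal{Q}_{\theta_{1},\theta_{2}}(\theta)-z)^{-1}-(H_{ND,\mathcal{V}}^{h}(\theta)-z)^{-1}$ as a rank at most four operator, hence compact. The invariance of the essential spectrum under such resolvent perturbations (Weyl-type, applied in the non self-adjoint setting) yields $\sigma_{ess}(\mathcal{Q}_{\theta_{1},\theta_{2}}(\theta))=\sigma_{ess}(H_{ND,\mathcal{V}}^{h}(\theta))$. The explicit decomposition \eqref{H_Neumann-Dirichlet(teta,V)} together with \eqref{sigma_HND} then identifies this common essential spectrum with $e^{-2\theta}\R_{+}$, since the interior Dirichlet piece on $(a,b)$ contributes only discrete eigenvalues.

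The main obstacle I foresee is not the algebra, which is mechanical once the boundary triple framework is in place, but rather managing the invertibility of the $4\times 4$ matrix $Bq(z,\theta,\mathcal{V})-\Lambda$ as a function of $z$: one must ensure that $z$ avoids both $\sigma(H_{ND,\mathcal{V}}^{h}(\theta))$ (so that $w$ and $q$ are defined) and the zero set of $\det(Bq(z,\theta,\mathcal{V})-\Lambda)$. A secondary point requiring care is the conjugation pattern $(z,\theta)\leftrightarrow(\bar{z},\bar{\theta})$ appearing in $\gamma(\underline{e}_{j},\bar{z},\bar{\theta})$, which is dictated by \eqref{adjoint} and the boundary-triple Green identity \eqref{Boundary_transf(teta)_1}; once this is tracked correctly, the rank-four representation extends meromorphically in $z$ on the cut plane $\C\setminus e^{-2i\operatorname{Im}\theta}\R_{+}$.
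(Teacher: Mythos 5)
Your construction of the candidate resolvent $R_{z}(\varphi)=\phi-\psi$ is the same as the paper's, and your algebra on the interface conditions (using $\Gamma_{1}^{\theta}w=0$, $\Gamma_{1}^{\theta}\gamma(\underline{e}_{i},z,\theta)=\underline{e}_{i}$, $\Gamma_{2}^{\theta}\gamma=q\underline{e}_{i}$, and Lemma~\ref{Lemma 3} for $\Gamma_{2}^{\theta}w=\psi$) is correct and reproduces the paper's computation of $\Gamma_{1}^{\theta}u$, $\Gamma_{2}^{\theta}u$, leading to $\Lambda\Gamma_{1}^{\theta}u=B\Gamma_{2}^{\theta}u$.

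There is, however, a genuine gap. Verifying that $R_{z}$ maps $L^{2}(\R)$ into $D(\mathcal{Q}_{\theta_{1},\theta_{2}}(\theta))$ with $(\mathcal{Q}_{\theta_{1},\theta_{2}}(\theta)-z)R_{z}\varphi=\varphi$ establishes only that $\mathcal{Q}_{\theta_{1},\theta_{2}}(\theta)-z$ is \emph{surjective}, i.e.\ that $R_{z}$ is a right inverse. You then write that this ``confirms the formula,'' which presupposes that $\mathcal{Q}_{\theta_{1},\theta_{2}}(\theta)-z$ is invertible, i.e.\ also injective. In a non self-adjoint setting there is no automatic upgrade: a closed operator with a bounded right inverse can still have nontrivial kernel. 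The paper closes this gap by noting that the adjoint has the same structure, $(\mathcal{Q}_{\theta_{1},\theta_{2}}(\theta)-z)^{*}=\mathcal{Q}_{-\bar{\theta}_{2},-\bar{\theta}_{1}}(\bar{\theta})-\bar{z}$ via \eqref{adjoint}, so the identical Krein computation yields surjectivity of the adjoint away from its own exceptional set $\widetilde{\mathcal{T}}_{0}$; then $\Ker(\mathcal{Q}_{\theta_{1},\theta_{2}}(\theta)-z)=\left[\mathrm{Ran}\,(\mathcal{Q}_{\theta_{1},\theta_{2}}(\theta)-z)^{*}\right]^{\perp}=\{0\}$, giving injectivity on $\C\setminus\{\sigma(H_{ND,\mathcal{V}}^{h}(\theta))\cup\mathcal{T}\}$ where $\mathcal{T}$ includes both exceptional sets. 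Only after this does the meromorphic identity on $\C\setminus e^{-2\theta}\R_{+}$ follow. You should add this duality step; your remaining concern about tracking the invertibility of $Bq-\Lambda$, while real, is secondary to the missing injectivity argument. Your essential-spectrum conclusion via rank-four compact perturbation matches the paper's.
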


\begin{proof}
Let us consider the r.h.s. of this formula: the operator $\left(
H_{ND,\mathcal{V}}^{h}(\theta)-z\right)  ^{-1}$ is well defined for
$z\in\mathbb{C}\backslash\sigma(H_{ND,\mathcal{V}}^{h}(\theta))$. The vectors $\gamma(\underline{e}
_{i},z,\theta)$, $i=1,...,4$, are given by \eqref{u_i,z (i=1,4)}, \eqref{u_intz}, \eqref{eq.c_i}, while the boundary values of $u_{i,z}^{\prime}$
-- appearing in the definition of the matrix (\ref{q_zeta}) -- are well defined whenever $z\in\mathbb{C}\backslash\sigma_{p}(H_{ND,\mathcal{V}}^{h}(\theta))$. Therefore, the r.h.s. of
(\ref{res_formula_B}) makes sense for $z\in\mathbb{C}\backslash\left\{\sigma(H_{ND,\mathcal{V}}^{h}(\theta))\cup \mathcal{T}_0\right\}
$ where $\mathcal{T}_0$ is the (at most) discrete set, described by the transcendental
equation
\begin{equation}
\det\left(  Bq(z,\theta,\mathcal{V})-\Lambda(\theta_1,\theta_2)\right)  =0.\label{eigenvalue_eq_0}
\end{equation}
It is worthwhile to notice that $\mathbb{C}\backslash\left\{\sigma(H_{ND,\mathcal{V}}^{h}(\theta))\cup \mathcal{T}_0\right\}$ is not empty. Let us introduce the map $R_{z}(\varphi)$ defined for $\varphi\in
L^{2}(\mathbb{R})$ by
\[
R_{z}(\varphi)=\phi-\psi,
\]
\[
\phi=\left(  H_{ND,\mathcal{V}}^{h}(\theta)-z\right)  ^{-1}\varphi,
\]
\[
\psi=\sum_{i,j=1}^{4}\left[\left(  Bq(z,\theta,\mathcal{V})-\Lambda\right)^{-1}B\right]_{ij}\left\langle \gamma(\underline{e}_{j},\bar{z},\bar{\theta}
),\varphi\right\rangle _{L^{2}(\mathbb{R})}\gamma(\underline{e}_{i}
,z,\theta),
\]
with $q(z,\theta,\mathcal{V})$ given in (\ref{q_zeta}) and $z\in\rho(H_{ND,\mathcal{V}
}^{h}(\theta))\backslash \mathcal{T}_0$. In what follows we show that:\\
$R_{z}(\varphi)~=~\left(\mathcal{Q}_{\theta_{1},\theta_{2}}(\theta)-z\right)^{-1}\varphi$. Since $H_{ND,\mathcal{V}}^{h}(\theta)\subset Q(\theta)$ and $\gamma
(\underline{e}_{i},z,\theta)\in\Ker(Q(\theta)-z)$, one has: 
\[\left(  H_{ND,\mathcal{V}}^{h}(\theta)-z\right)  ^{-1}\varphi,\,\gamma
(\underline{e}_{i},z,\theta)\in D(Q(\theta))\,.
\]
This implies
$R_{z}(\varphi)\in D(Q(\theta))$. To simplify the presentation, we will temporarily use the notation $q=q(z,\theta,\mathcal{V})$. Being $\phi\in D(H_{ND,\mathcal{V}}^{h}
(\theta))$, we have: $\Gamma_{1}^{\theta}\phi=0$ and the following relation
holds
\begin{equation}
\left(  Bq-\Lambda\right)  \Gamma_{1}^{\theta}\left(  \phi
-\psi\right)  =-\left(  B\Gamma_{2}^{\theta}\gamma(\cdot
,z,\theta)-\Lambda\right)  \Gamma_{1}^{\theta}\psi=\left(  -B\Gamma
_{2}^{\theta}+\Lambda\Gamma_{1}^{\theta}\right)  \psi,\label{one}
\end{equation}
where $\psi\in\mathcal{N}_{z,\theta}$ and 
\[
\gamma(\cdot,z,\theta)\Gamma_{1}^{\theta}\vert_{\mathcal{N}_{z,\theta}}=1
\]
have been used. At the same time, the $n$-th component of the vector at the l.h.s.
can be expressed as
\begin{multline*}
\left[  \smallskip\left(  Bq-\Lambda\right)  \Gamma_{1}^{\theta
}\left(  \phi-\psi\right)  \right]  _{n}=\left[  -\left(
Bq-\Lambda\right)  \Gamma_{1}^{\theta}\psi\right]  _{n}\\
=-\sum_{i,j=1}^{4}\left[  \left(Bq-\Lambda\right)\Gamma_{1}^{\theta}\gamma(\underline{e}_{i},z,\theta)\right]_n  \,\left[\left(  Bq-\Lambda\right)^{-1}B\right]_{ij}
\left\langle \gamma(\underline{e}_{j},\bar{z},\bar{\theta}),\varphi
\right\rangle _{L^{2}(\mathbb{R})}.
\end{multline*}
Recalling that $\Gamma_{1}^{\theta}\gamma(\underline{e}_{i},z,\theta)=e_i$, we get
\begin{multline*}
\left[  \smallskip\left(  Bq-\Lambda\right)  \Gamma_{1}^{\theta
}\left(  \phi-\psi\right)  \right]  _{n}=\\
-\sum_{i,j=1}^{4}\left(  Bq-\Lambda\right)_{ni}\,\left[\left(
Bq-\Lambda\right)^{-1}B\right]_{ij}\left\langle \gamma(\underline
{e}_{j},\bar{z},\bar{\theta}),\varphi\right\rangle _{L^{2}(\mathbb{R})}
=-\sum_{j=1}^{4}B_{nj}\left\langle \gamma(\underline{e}_{j},\bar{z}
,\bar{\theta}),\varphi\right\rangle _{L^{2}(\mathbb{R})}.
\end{multline*}
Taking into account the result of the Lemma \ref{Lemma 3}, this relation
writes as
\begin{equation}
\left(  Bq-\Lambda\right)  \Gamma_{1}^{\theta}\left(  \phi
-\psi\right)  =-B\Gamma_{2}^{\theta}\phi.\label{two}
\end{equation}
Combining (\ref{one}) and (\ref{two}), one has: $-\Lambda\Gamma_{1}^{\theta}
\psi=B\Gamma_{2}^{\theta}\left(  \phi-\psi\right)  $, and,
adding the null term $\Lambda\Gamma_{1}^{\theta}\phi$ at the l.h.s.,
\[
\Lambda\Gamma_{1}^{\theta}R_{z}(\varphi)=B\Gamma_{2}^{\theta}R_{z}(\varphi),
\]
which, according to (\ref{BC_teta}), is the boundary condition characterizing
$\mathcal{Q}_{\theta_{1},\theta_{2}}(\theta)$ as a restriction of $Q(\theta)$. Then we
have: $R_{z}(\varphi)\in D\left(\mathcal{Q}_{\theta_{1},\theta_{2}}(\theta)\right)$. Furthermore,
\begin{equation}
\left( \mathcal{Q}_{\theta_{1},\theta_{2}}(\theta)-z\right)  R_{z}(\varphi)=\left(
Q(\theta)-z\right)  R_{z}(\varphi)=\varphi-\left(  Q(\theta)-z\right)
\psi=\varphi,\label{resolvent}
\end{equation}
where $\left(  Q(\theta)-z\right)  \gamma(\underline{e}_{i},z,\theta)=0$ has
been used. This leads to the surjectivity of the operator $\left(
  \mathcal{Q}_{\theta_{1},\theta_{2}}(\theta)-z\right)$ for any
$z\in\mathbb{C}\backslash\left\{\sigma(H_{ND,\mathcal{V}}^{h}(\theta))\cup
  \mathcal{T}_0\right\}$. The injectivity is obtained using the
adjoint of $\left(
  \mathcal{Q}_{\theta_{1},\theta_{2}}(\theta)-z\right)$. Indeed, the
equality \eqref{adjoint} implies 
\[
\left(\mathcal{Q}_{\theta_{1},\theta_{2}}(\theta)-z\right)^* =(\mathcal{Q}_{-\bar{\theta}_{2},-\bar{\theta}_{1}}(\bar{\theta})-\bar{z})\,,
\]
which, from the result above, appears to be surjective for all $z$ such that $\bar{z} \in \mathbb{C}\backslash\left\{\sigma(H_{ND,\mathcal{V}}^{h}(\bar{\theta}))\cup \widetilde{\mathcal{T}}_0\right\}$, where $\widetilde{\mathcal{T}}_0$ is the discrete set of the solutions to \eqref{eigenvalue_eq_0} when replacing: $\theta=\bar{\theta}$, $\theta_1=-\bar{\theta}_{2}$ and $\theta_2=-\bar{\theta}_{1}$. As a consequence of \eqref{sigma_HND}, we have
\[
\bar{z}\in\sigma\left(H_{ND,\mathcal{V}}^{h}(\bar{\theta})\right) \Leftrightarrow z\in\sigma\left(H_{ND,\mathcal{V}}^{h}(\theta)\right)\,.
\]
It follows that for any $z\in\mathbb{C}\backslash\left\{\sigma(H_{ND,\mathcal{V}}^{h}(\theta))\cup \mathcal{T}\right\}$, where $\mathcal{T} = \mathcal{T}_0 \cup \left\{z \, \textrm{ s.t. } \, \bar{z} \in \widetilde{\mathcal{T}}_0 \right\}$, the operator $\left(  \mathcal{Q}_{\theta_{1},\theta_{2}}(\theta)-z\right)$ is surjective and
\[
\Ker\left(\mathcal{Q}_{\theta_{1},\theta_{2}}(\theta)-z\right)=\left[\textrm{Ran}\,\left(\mathcal{Q}_{\theta_{1},\theta_{2}}(\theta)-z\right)^{*}\right]^{\perp}=\{0\}.
\]
Wet get that $\forall z\in\mathbb{C}\backslash\left\{\sigma(H_{ND,\mathcal{V}}^{h}(\theta))\cup \mathcal{T}\right\}$, $\left(\mathcal{Q}_{\theta_{1},\theta_{2}}(\theta)-z\right)$ is invertible and $\left(\mathcal{Q}_{\theta_{1},\theta_{2}}(\theta)-z\right)^{-1}=R_z$. Moreover, for such a complex $z$, the difference $R_{z}-\left(H_{ND,\mathcal{V}}^{h}(\theta)-z\right)^{-1}$ is compact. Then, we conclude that
\[
\sigma_{ess}(\mathcal{Q}_{\theta_{1},\theta_{2}}(\theta))=\sigma_{ess}(H_{ND,\mathcal{V}}^{h}(\theta))=e^{-2\theta}\mathbb{R}_{\mathbb{+}}
\]
(for this point, we refer to \cite{ReSi4}, Sec.~XIII.4, Lemma~3 and the strong
spectral mapping theorem), and the equality \eqref{res_formula_B} holds as an identity of meromorphic functions on $\C\setminus e^{-2\theta}\R_{+}$\,.
\end{proof}

As a direct consequence of the previous proposition and the identification \eqref{original}, the representation of the resolvent $\left(H_{\theta_{0},\mathcal{V}}^{h}(\theta)-z\right)^{-1}$ is obtained by replacing the matrix $\Lambda$ in \eqref{res_formula_B} by the matrix $A=\Lambda(\theta_{0},3\theta_{0})$ where the matrix $\Lambda(\theta_{1},\theta_{2})$ is given in \eqref{Matrix_def_B}. It follows that for the matrices
\begin{equation}
A=\frac{1}{h^{2}}
\begin{pmatrix}
-e^{-\frac{3}{2}\theta_{0}}& & &\\
& -e^{\frac{\theta_{0}}{2}}&&\\
&&e^{\frac{\theta_{0}}{2}} &\\
&&& e^{-\frac{3}{2}\theta_{0}}
\end{pmatrix}
,\qquad B=
\begin{pmatrix}
\begin{tabular}{cc}
0 & 1\\
1 & 0
\end{tabular} & \\
& \begin{tabular}{cc}
0 & 1\\
1 & 0
\end{tabular}
\end{pmatrix}
,\label{Matrix_def}
\end{equation}
the result below holds.
\begin{corollary}\label{cor_krein}
Let $\mathcal{V}$ and $A$, $B$ be defined as in (\ref{V_local}) and \eqref{Matrix_def}. The resolvent $\left(  H_{\theta_{0},\mathcal{V}}^{h}
(\theta)-z\right)  ^{-1}$ allows the representation
\begin{align}
\left(  H_{\theta_{0},\mathcal{V}}^{h}(\theta)-z\right)  ^{-1} &  =\left(
H_{ND,\mathcal{V}}^{h}(\theta)-z\right)  ^{-1}\nonumber\\
&  -\sum_{i,j=1}^{4}\left[\left(  Bq(z,\theta,\mathcal{V})-A\right)^{-1}
B\right]_{ij}\left\langle \gamma(\underline{e}_{j},\bar{z},\bar{\theta}
),\cdot\right\rangle _{L^{2}(\mathbb{R})}\gamma(\underline{e}_{i}
,z,\theta),\label{res_formula_gen}
\end{align}
and one has: $\sigma_{ess}(H_{\theta_{0},\mathcal{V}}^{h}(\theta
))=\sigma_{ess}(H_{ND,\mathcal{V}}^{h}(\theta))=e^{-2\theta}\mathbb{R}
_{\mathbb{+}}$.
\end{corollary}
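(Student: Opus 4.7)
The plan is to deduce the corollary directly from the preceding proposition via the identification (\ref{original}). The proposition establishes the resolvent formula and essential spectrum characterization for the two-parameter operator $\mathcal{Q}_{\theta_1,\theta_2}(\theta)$ under general boundary conditions encoded by the matrix $\Lambda(\theta_1,\theta_2)$. Since (\ref{original}) tells us that $H_{\theta_0,\mathcal{V}}^h(\theta) = \mathcal{Q}_{\theta_0, 3\theta_0}(\theta)$, the only remaining task is to specialize the formula and match the coefficient matrix.

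First I would invoke the proposition with the specific choice $\theta_1 = \theta_0$ and $\theta_2 = 3\theta_0$. This immediately produces a resolvent formula of the form (\ref{res_formula_gen}) with $A$ replaced by $\Lambda(\theta_0, 3\theta_0)$, together with the essential spectrum identity $\sigma_{ess}(H_{\theta_0,\mathcal{V}}^h(\theta)) = e^{-2\theta}\R_+$.

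Next I would verify that $\Lambda(\theta_0, 3\theta_0)$ coincides with the matrix $A$ of (\ref{Matrix_def}). By definition \eqref{Matrix_def_B},
\begin{equation*}
a(\theta_0, 3\theta_0) = \begin{pmatrix} -e^{-\frac{3\theta_0}{2}} & 0 \\ 0 & -e^{\frac{\theta_0}{2}} \end{pmatrix}, \qquad -a(-3\theta_0, -\theta_0) = \begin{pmatrix} e^{\frac{\theta_0}{2}} & 0 \\ 0 & e^{-\frac{3\theta_0}{2}} \end{pmatrix},
\end{equation*}
and assembling these as the block-diagonal matrix $\Lambda(\theta_0, 3\theta_0) = \tfrac{1}{h^2}\,\mathrm{diag}(a(\theta_0,3\theta_0), -a(-3\theta_0,-\theta_0))$ reproduces exactly the matrix $A$ displayed in (\ref{Matrix_def}). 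This is the only calculation needed and is a purely mechanical check.

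I do not foresee any real obstacle: the hard work — constructing the boundary triple, writing the Krein-type formula, controlling the discrete exceptional set $\mathcal{T}$ via the adjoint, and identifying the essential spectrum through the compact difference $R_z - (H_{ND,\mathcal{V}}^h(\theta) - z)^{-1}$ — has all been carried out in the proposition above. The corollary is therefore obtained as a one-line specialization, which is why it is stated as such.
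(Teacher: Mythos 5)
Your proposal is correct and follows the same route the paper takes: the corollary is obtained by specializing the preceding proposition to $\theta_1=\theta_0$, $\theta_2=3\theta_0$ via the identification \eqref{original}, and checking that $\Lambda(\theta_0,3\theta_0)$ reproduces the matrix $A$ of \eqref{Matrix_def}. Your explicit verification that $a(\theta_0,3\theta_0)$ and $-a(-3\theta_0,-\theta_0)$ assemble into $A$ is the same short computation the paper invokes without displaying it.
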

The analyticity of the resolvent $\left(  H_{\theta_{0},\mathcal{V}}^{h}(\theta)-z\right)^{-1}$ w.r.t. $\theta$ is an important point in the theory of resonances. The former is obtained in the next proposition as a consequence of the formula \eqref{res_formula_gen}, the latter is developed in the next section. 
\begin{proposition}\label{prop_analyticite_res}Consider $\theta_0\in\C$, $\mathcal{V}$ fulfilling the conditions (\ref{V_local}), and let $S_{\alpha}=\left\{  \theta\in\mathbb{C}\,\left\vert \,\left\vert
\operatorname{Im}\theta\right\vert \leq \alpha\right.  \right\}$ for positive $\alpha<\frac{\pi}{4}$. Then, there exists an open subset $\mathcal{O}\subset\left\{z\in\mathbb{C}\,\left\vert\Real z < 0\right.\right\}$ such that $\forall \theta\in S_{\alpha}$, $\mathcal{O}\subset\rho\left( H_{\theta_{0},\mathcal{V}}^{h}(\theta)\right)$. Moreover, $\forall z\in\mathcal{O}$, the map: $\theta\mapsto\left(H_{\theta_{0},\mathcal{V}}^{h}(\theta)-z\right)^{-1}$ is a bounded operator-valued analytic map on the strip $S_{\alpha}$.
\end{proposition}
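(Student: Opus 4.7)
The plan is to read off analyticity from the Krein-type representation \eqref{res_formula_gen} of Corollary~\ref{cor_krein}. That formula exhibits $\left(H_{\theta_{0},\mathcal{V}}^{h}(\theta)-z\right)^{-1}$ as the sum of $\left(H_{ND,\mathcal{V}}^{h}(\theta)-z\right)^{-1}$ and finitely many rank-one operators built from $\gamma(\underline{e}_i,z,\theta)$, $\gamma(\underline{e}_j,\bar z,\bar\theta)$ and the matrix $\left(Bq(z,\theta,\mathcal{V})-A\right)^{-1}B$. All these ingredients are given in closed form, so the task splits into two: producing an open set $\mathcal{O}\subset\{\Real z<0\}$ on which the Krein denominator $\det\left(Bq(z,\theta,\mathcal{V})-A\right)$ stays away from $0$ uniformly in $\theta\in S_{\alpha}$, and verifying $\theta$-analyticity of each factor on $S_{\alpha}$ for such $z$.

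To produce $\mathcal{O}$, I first observe that for $\theta\in S_{\alpha}$ with $|\Imag\theta|\leq\alpha<\pi/4$, the essential spectrum $e^{-2\theta}\R_{+}$ of $H_{\theta_{0},\mathcal{V}}^{h}(\theta)$ lies in the closed sector of angular half-aperture $2\alpha<\pi/2$ about $\R_{+}$, so the whole half-plane $\{\Real z<0\}$ is automatically free of essential spectrum. The remaining obstructions inside that half-plane come from the $\theta$-independent Dirichlet eigenvalues $\sigma\!\left(-h^{2}\Delta_{(a,b)}^{D}+\mathcal{V}\right)$, a discrete set, and from the transcendental zeros of $\det\left(Bq(z,\theta,\mathcal{V})-A\right)$. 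Using the branch $\Imag\sqrt{\cdot}>0$, the exterior entries $\tfrac{ihe^{\theta}}{\sqrt{ze^{2\theta}}}$ of $q(z,\theta,\mathcal{V})$ vanish uniformly in $\theta\in S_{\alpha}$ as $\Real z\to-\infty$, while the interior entries $u'_{2,z}(b),\,u'_{3,z}(a)$ read off \eqref{u_intz} are controlled by a large-$|z|$ analysis of the interior Dirichlet problem on $(a,b)$. Hence, far enough to the left, $Bq(z,\theta,\mathcal{V})-A$ is a small perturbation of the invertible constant matrix $-A$, and I take $\mathcal{O}$ to be an open ball placed deep enough in $\{\Real z<0\}$ and avoiding the finitely many Dirichlet eigenvalues falling in that region.

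Next, for fixed $z\in\mathcal{O}$, I would check the $\theta$-analyticity of each block of \eqref{res_formula_gen}. The middle Dirichlet piece of $H_{ND,\mathcal{V}}^{h}(\theta)$ is $\theta$-independent; each outer Neumann piece satisfies $\left(-h^{2}e^{-2\theta}\Delta^{N}-z\right)^{-1}=e^{2\theta}\left(-h^{2}\Delta^{N}-e^{2\theta}z\right)^{-1}$, which is analytic in $\theta\in S_{\alpha}$ because $e^{2\theta}z$ stays off $[0,+\infty)$. The vectors $\gamma(\underline{e}_{i},z,\theta)$ built from \eqref{u_i,z (i=1,4)}, \eqref{u_intz}, \eqref{eq.c_i} and the matrix $q(z,\theta,\mathcal{V})$ of \eqref{q_zeta} are explicit expressions in $e^{\theta/2}$ and in $\sqrt{ze^{2\theta}}$, the latter being analytic on $S_{\alpha}$ for our choice of $z$ and branch. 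The antilinear pairing $\langle\gamma(\underline{e}_{j},\bar z,\bar\theta),\cdot\rangle_{L^{2}(\R)}$ depends holomorphically on $\theta$ because a direct verification with the same branch gives $\overline{\gamma(\underline{e}_{j},\bar z,\bar\theta)}=\gamma(\underline{e}_{j},z,\theta)$. Combined with the analyticity of $\left(Bq(z,\theta,\mathcal{V})-A\right)^{-1}$ from the previous step, this makes the right-hand side of \eqref{res_formula_gen} an analytic $\mathcal{B}(L^{2}(\R))$-valued function of $\theta\in S_{\alpha}$.

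The main obstacle, in my view, is the uniform-in-$\theta$ invertibility of $Bq(z,\theta,\mathcal{V})-A$ used to build $\mathcal{O}$: for each fixed $\theta$ the bad set is discrete (as already used in the proof of Corollary~\ref{cor_krein}), but arranging a single open set of admissible $z$ simultaneously for all $\theta\in S_{\alpha}$ is precisely what forces the large-$\Real z$ perturbative argument above, together with an estimate on the interior Dirichlet solutions $u_{2,z},u_{3,z}$. Once $\mathcal{O}$ is produced, the rest is bookkeeping within the explicit Krein formula.
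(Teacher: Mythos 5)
Your plan --- read off $\theta$-analyticity directly from the Krein representation \eqref{res_formula_gen} --- is the paper's plan, and most of the analyticity bookkeeping at the end is sound. But the crucial step of producing an open set $\mathcal{O}\subset\{\Real z<0\}$ on which $Bq(z,\theta,\mathcal{V})-A$ is invertible \emph{uniformly in} $\theta\in S_\alpha$ is where your argument breaks.

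You claim that, for the branch $\Imag\sqrt{\cdot}>0$, ``the exterior entries $ihe^{\theta}/\sqrt{ze^{2\theta}}$ of $q$ vanish uniformly as $\Real z\to-\infty$, while the interior entries $u'_{2,z}(b),u'_{3,z}(a)$ are controlled by a large-$|z|$ analysis,'' so that far to the left $Bq-A$ is a small perturbation of $-A$. The first half is right, but the second half is false, and the conclusion drawn from it is therefore unjustified. The interior Dirichlet solutions develop a boundary layer: for real $z\to-\infty$ one has $u_{2,z}(x)\approx\sinh\!\big(\tfrac{\sqrt{-z}}{h}(x-a)\big)/\sinh\!\big(\tfrac{\sqrt{-z}}{h}(b-a)\big)$ (up to lower-order corrections from $\mathcal{V}\in L^{\infty}$), so $u'_{2,z}(b)\sim\sqrt{-z}/h\to\infty$, not $O(1)$. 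Thus $Bq(z,\theta,\mathcal{V})-A$ does \emph{not} converge to $-A$; some of its entries blow up. What actually happens is a cancellation: the product $\big(ihe^{\theta}/\sqrt{ze^{2\theta}}\big)\cdot u'_{2,z}(b)$ stays $O(1)$, and it is the $2\times 2$ block determinants $\Delta^{\pm}$ (not the matrix entries) that stay bounded away from $0$. So even if invertibility for $\Real z\ll0$ is true, it does not follow by the ``small perturbation of $-A$'' reasoning, and you have not verified the asymptotics that would rescue it.

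You also miss the observation that makes the uniform-in-$\theta$ part of the claim essentially trivial, which is how the paper proceeds. For $\Real z<0$ we have $\arg z\in(\tfrac{\pi}{2},\tfrac{3\pi}{2})$, and multiplying by $e^{2\theta}$ with $|\Imag\theta|\leq\alpha<\tfrac{\pi}{4}$ keeps $\arg(ze^{2\theta})$ strictly inside the branch domain $(0,2\pi)$; hence with the stated determination $\sqrt{ze^{2\theta}}=\sqrt{z}\,e^{\theta}$, so the exterior entries $ihe^{\theta}/\sqrt{ze^{2\theta}}=ih/\sqrt{z}$ are $\theta$-independent, and therefore $q(z,\theta,\mathcal{V})=q(z,0,\mathcal{V})$ for all $\theta\in S_\alpha$ and $\Real z<0$. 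Consequently the transcendental set $\{\det(Bq(z,\theta,\mathcal{V})-A)=0\}\cap\{\Real z<0\}$ is a single discrete set independent of $\theta$, and \emph{any} open $\mathcal{O}\subset\{\Real z<0\}$ avoiding this discrete set (and the finitely many Dirichlet eigenvalues there) works --- no large-$|z|$ asymptotics are needed at all. This same identity also cleans up the analyticity step: the interior vectors $\gamma(\underline{e}_{2,3},z,\theta)$ become $\theta$-independent outright, and only the explicitly known exterior exponentials $\gamma(\underline{e}_{1,4},z,\theta)$ and the Neumann resolvent need to be checked.
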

Before starting the proof, we note that this result implies that the $\theta$ dependent family of operators $H_{\theta_{0},\mathcal{V}}^{h}(\theta)$ is analytic in the sense of Kato in the strip $S_{\alpha}$ (definition in \cite{ReSi4}).

\begin{proof}
The equality \eqref{res_formula_gen} holding as an identity of meromorphic functions on $\C\setminus e^{-2\theta}\R_{+}$, the poles on the l.h.s. identifies with those on the r.h.s.\,. For $\theta\in S_{\alpha}$, the characterization \eqref{sigma_HND} implies that the map $z\mapsto\left(  H_{ND,\mathcal{V}}^{h}(\theta)-z\right)  ^{-1}$ is analytic when $\Real z < 0$. It results
\[
\sigma\left( H_{\theta_{0},\mathcal{V}}^{h}(\theta)\right)\cap\left\{z\in\mathbb{C}\,\left\vert\Real z < 0\right.\right\} = \mathcal{T}_0\cap\left\{z\in\mathbb{C}\,\left\vert\Real z < 0\right.\right\}\,,
\]
where
\[
\mathcal{T}_0 = \left\{z\in\mathbb{C}\,\left\vert\det\left(  Bq(z,\theta,\mathcal{V})-A\right)  =0 \right.\right\}\,.
\]
Noting that for $\Real z < 0$ and $\theta\in S_{\alpha}$ we have $\sqrt{ze^{2\theta}}=\sqrt{z}e^{\theta}$, and therefore $q(z,\theta,\mathcal{V})=q(z,0,\mathcal{V})$, we get: $\forall \theta\in S_{\alpha}$  
\[
\sigma\left( H_{\theta_{0},\mathcal{V}}^{h}(\theta)\right)\cap\left\{z\in\mathbb{C}\,\left\vert\Real z < 0\right.\right\} = \left\{z \in\mathbb{C}\,\left\vert \, \Real z < 0 \, \textrm{ and } \, \det\left(  Bq(z,0,\mathcal{V})-A\right)  =0 \right.\right\}\,,
\]
which is a discrete set independent of $\theta$. This implies that there exists an open subset $\mathcal{O}\subset\left\{z\in\mathbb{C}\,\left\vert\Real z < 0\right.\right\}$ such that $\forall \theta\in S_{\alpha}$, $\mathcal{O}\subset\rho\left( H_{\theta_{0},\mathcal{V}}^{h}(\theta)\right)$.

In what follows, we fix $z\in\mathcal{O}$. The equation \eqref{res_formula_gen} gives, for any $\theta \in S_{\alpha}$
\begin{align}
\left(  H_{\theta_{0},\mathcal{V}}^{h}(\theta)-z\right)  ^{-1} &  =\left(
H_{ND,\mathcal{V}}^{h}(\theta)-z\right)  ^{-1}\nonumber\\
&  -\sum_{i,j=1}^{4}\left[\left(  Bq(z,0,\mathcal{V})-A\right)^{-1}
B\right]_{ij}\left\langle \gamma(\underline{e}_{j},\bar{z},\bar{\theta}
),\cdot\right\rangle _{L^{2}(\mathbb{R})}\gamma(\underline{e}_{i}
,z,\theta)\,,\label{res_formula_gen_0}
\end{align}
where we used $q(z,\theta,\mathcal{V})=q(z,0,\mathcal{V})$, and we want to study the analyticity of the r.h.s. with respect to $\theta$.

Let us start with the operator $H_{ND,\mathcal{V}}^{h}(\theta)$: $\forall\theta \in S_{\alpha}$, it is a closed operator with non empty resolvent set. Moreover, $D\left(H_{ND,\mathcal{V}}^{h}(\theta)\right)$ does not depend on $\theta$ and $\forall\psi\in D\left(H_{ND,\mathcal{V}}^{h}(\theta)\right)$, $\forall f\in L^2(\R)$ the map
\[
\theta \mapsto \left\langle f, H_{ND,\mathcal{V}}^{h}(\theta)\psi\right\rangle_{L^2(\R)}
\]
is analytic on $S_{\alpha}$. This means that $H_{ND,\mathcal{V}}^{h}(\theta)$ is analytic of type (A) following the definition in \cite{ReSi4}. In addition, since $\Real z < 0$ when $z\in\mathcal{O}$, \eqref{sigma_HND} implies $z\in\rho\left(H_{ND,\mathcal{V}}^{h}(\theta)\right)$ for all $\theta \in S_{\alpha}$. Then, it results from the analyticity of type (A) propriety that the map $\theta \mapsto\left(H_{ND,\mathcal{V}}^{h}(\theta)-z\right)^{-1}$ is analytic on $S_{\alpha}$.

Concerning the finite rank part in \eqref{res_formula_gen_0}, for any $z$ with $\Real z < 0$, the functions $\gamma(\underline{e}_{i},z,\theta)$, $i=2,3$, given by \eqref{u_intz}\eqref{eq.c_i}, do no depend on $\theta$, and the functions 
\[
\gamma(\underline{e}_{1},z,\theta)=\frac{ie^{\frac{3\theta}{2}}}{h\sqrt{ze^{2\theta}}}1_{\left(  b,+\infty\right)  }e^{i\frac{\sqrt{ze^{2\theta}}(x-b)}{h}}\,,
\quad
\gamma(\underline{e}_{4},z,\theta)=\frac{ie^{\frac{3\theta}{2}}}{h\sqrt{ze^{2\theta}}}1_{\left(  -\infty,a\right)  }e^{-i\frac{\sqrt{ze^{2\theta}}(x-a)}{h}}
\]  
are such that $\forall f\in L^2(\R)$, $\theta\mapsto\left\langle f,\gamma(\underline{e}_{i},z,\theta)\right\rangle_{L^2(\R)}$ is analytic on $S_{\alpha}$. It follows that $\theta\mapsto\gamma(\underline{e}_{i},z,\theta)$ is a $L^2(\R)$-valued analytic function on $S_{\alpha}$. This propriety holding for any $z$ with $\Real z < 0$, we have also that $\theta\mapsto\overline{\gamma(\underline{e}_{i},\bar{z},\bar{\theta})}$ is a $L^2(\R)$-valued analytic function on $S_{\alpha}$. Therefore, for $i,j=1,...,4$, the operator with kernel $\gamma(\underline{e}_{i},z,\theta)\otimes\overline{\gamma(\underline{e}_{i},\bar{z},\bar{\theta})}$ is analytic w.r.t. $\theta$ on the strip $S_{\alpha}$. It allows to conclude that $\theta\mapsto\left(H_{\theta_{0},\mathcal{V}}^{h}(\theta)-z\right)^{-1}$ is a bounded operator-valued analytic map on the strip $S_{\alpha}$.
\end{proof}

\subsection{Resonances}

Next consider: $H_{\theta_{0},\mathcal{V}}^{h}=H_{\theta_{0},0}^{h}
+\mathcal{V}$, with $\mathcal{V}$ fulfilling the assumptions (\ref{V_local}).
Local perturbations of $H_{\theta_{0},0}^{h}$ can generate resonance poles for
the associated resolvent operator. These can be detected through the
deformation technique by means of an exterior complex scaling of the type
introduced in (\ref{U_theta}). To fix this point, let us introduce the set of
functions
\begin{equation}
\mathcal{A}=\left\{  u\,\left\vert \,u(x)=p(x)e^{-\beta x^{2}},\ \beta
>0\right.  \right\}  , \label{A}
\end{equation}
where $x\in\mathbb{R}$ and $p$ is any polynomial. The action of $U_{\theta}$
on the elements of $\mathcal{A}$ is
\begin{equation}
U_{\theta}\,u(x)=\left\{
\begin{array}
[c]{ll}
\medskip e^{\frac{\theta}{2}}p(e^{\theta}(x-b)+b)\,e^{-\beta\left(
e^{\theta}(x-b)+b\right)  ^{2}},& x>b\\
\medskip p(x)e^{-\beta x^{2}},& x\in(a,b)\\
e^{\frac{\theta}{2}}p(\smallskip e^{\theta}(x-a)+a)\,e^{-\beta\left(
e^{\theta}(x-a)+a\right)  ^{2}},& x<a\,.
\end{array}
\right. \label{A_teta}
\end{equation}
If $\operatorname{Re}(e^{2\theta}x^{2})>\epsilon x^{2}$ for some $\epsilon>0$,
the function $U_{\theta}\,u$ belongs to $L^{2}(\mathbb{R})$. In particular, for all positive $\alpha<\frac{\pi}{4}$, the map $\theta\mapsto U_{\theta}\,u$ is a $L^{2}$-valued analytic map on the strip $S_{\alpha}=\left\{  \theta\in\mathbb{C}\,\left\vert \,\left\vert
\operatorname{Im}\theta\right\vert \leq \alpha\right.  \right\}$.
According to the presentation of \cite{HiSi2}, the \emph{quantum resonances} of $H_{\theta
_{0},\mathcal{V}}^{h}$ are the poles of the meromorphic continuations of the
function
\begin{equation}
z\mapsto F(z,0)=\left\langle  u,\left(  H_{\theta
_{0},\mathcal{V}}^{h}-z\right)  ^{-1}v\right\rangle_{L^{2}(\mathbb{R})},\qquad
u,v\in\mathcal{A}\,, \label{F_0}
\end{equation}
from $\left\{  z\in\mathbb{C},\ \operatorname{Im}z>0\right\}  $ to $\left\{
z\in\mathbb{C},\ \operatorname{Im}z<0\right\}  $.

\begin{proposition}\label{prop.agco}
Let $\theta_{0}\in\C$ and $u,v\in\mathcal{A}$. Consider the map:
$z\mapsto F(z,0)$, a.e. defined in $\left\{\mathbb{C\,}\left\vert \,\operatorname{Im}z>0\right.  \right\}
$.\medskip\newline1) The map $z\mapsto F(z,0)$ has a meromorphic extension into the sector $\left\{\arg z\in\left(-\frac{\pi}{2},0\right)  \right\}$ of the second Riemann sheet.\medskip\newline2) The
poles of the continuation of $F(z,0)$ into the cone
\[
K_{\tau}=\left\{  \arg z\in\left(  -2\tau,0\right)  \right\}
,\qquad\text{with }\tau<\frac{\pi}{4},
\]
are eigenvalues of the operators $H_{\theta_{0},\mathcal{V}}^{h}(\theta)$ with
$\tau \leq \operatorname{Im}\theta<\frac{\pi}{4}$.
\end{proposition}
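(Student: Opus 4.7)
The plan is to adapt the classical Aguilar--Balslev--Combes argument to our exterior-dilation setting, using the resolvent analyticity provided by Proposition~\ref{prop_analyticite_res}. The starting point is the observation that for real $\theta$ the transformation $U_\theta$ is unitary on $L^2(\mathbb{R})$, so for $u,v\in\mathcal{A}$ and $z$ in the upper half-plane one may write
\begin{equation*}
F(z,0)=\langle u, (H^h_{\theta_0,\mathcal{V}}-z)^{-1}v\rangle = \langle U_{\bar\theta}u, (H^h_{\theta_0,\mathcal{V}}(\theta)-z)^{-1}U_\theta v\rangle =: F(z,\theta).
\end{equation*}
I would then verify that $\theta\mapsto U_\theta u$ and $\theta\mapsto U_{\bar\theta}u$ extend to $L^2(\mathbb{R})$-valued analytic functions on the strip $S_\alpha$ for every $\alpha<\pi/4$: this follows from the explicit form \eqref{A_teta} of $U_\theta u$ and the gaussian weight $e^{-\beta x^2}$, which keeps the image square integrable as long as $\operatorname{Re}(e^{2\theta}x^2)$ remains positive, i.e. as long as $|\operatorname{Im}\theta|<\pi/4$, exactly as remarked in the paragraph preceding \eqref{F_0}.

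Once this analyticity is in place, I would combine it with Proposition~\ref{prop_analyticite_res}: for $z$ in the open set $\mathcal{O}\subset\{\operatorname{Re}z<0\}$ the resolvent $(H^h_{\theta_0,\mathcal{V}}(\theta)-z)^{-1}$ is analytic in $\theta\in S_\alpha$, so $F(z,\theta)$ is jointly analytic in $(z,\theta)\in\mathcal{O}\times S_\alpha$. On the intersection $\mathcal{O}\cap\{\operatorname{Im}z>0\}$ (taking $\mathcal{O}$ to contain a real-axis accumulation point is not needed; the upper half-plane is connected to $\mathcal{O}$ through any $\theta=i\tau'$ with $\tau'>0$ for which the rotated half-line $e^{-2i\tau'}\mathbb{R}_+$ avoids the region of interest), the identity $F(z,\theta)=F(z,0)$ established for real $\theta$ propagates by $\theta$-analyticity to all $\theta\in S_\alpha$, and in particular to $\theta=i\tau'$ with $0<\tau'<\pi/4$. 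By Corollary~\ref{cor_krein} the essential spectrum of $H^h_{\theta_0,\mathcal{V}}(i\tau')$ is the rotated half-line $e^{-2i\tau'}\mathbb{R}_+$, so $z\mapsto F(z,i\tau')$ is meromorphic on $\mathbb{C}\setminus e^{-2i\tau'}\mathbb{R}_+$. This provides a meromorphic continuation of $z\mapsto F(z,0)$ across $(0,+\infty)$ into the sector $\{\arg z\in(-2\tau',0)\}$, and letting $\tau'\uparrow\pi/4$ exhausts the full sector $\{\arg z\in(-\pi/2,0)\}$, proving part~1.

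For part~2, I would invoke the Krein-type representation \eqref{res_formula_gen}: a pole of $(H^h_{\theta_0,\mathcal{V}}(\theta)-z)^{-1}$ located off $\sigma(H^h_{ND,\mathcal{V}}(\theta))$ is exactly a zero of $\det(Bq(z,\theta,\mathcal{V})-A)$, i.e.\ an eigenvalue of $H^h_{\theta_0,\mathcal{V}}(\theta)$. If $z_0\in K_\tau$ is a pole of the continuation, pick any $\tau'\in[\tau,\pi/4)$ with $\arg z_0\in(-2\tau',0)$, so that $z_0$ lies in the analyticity sector for $F(\cdot,i\tau')$; then $z_0$ must be a pole of $(H^h_{\theta_0,\mathcal{V}}(i\tau')-z)^{-1}$, hence an eigenvalue of $H^h_{\theta_0,\mathcal{V}}(i\tau')$. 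The fact that the eigenvalue does not depend on the choice of $\tau'\in[\tau,\pi/4)$ is a consequence of the Kato-analyticity of the family $\theta\mapsto H^h_{\theta_0,\mathcal{V}}(\theta)$ combined with the $\theta$-independence of the location of the isolated eigenvalues on any connected component of the complement of the rotated essential spectrum, which is the standard argument for complex-dilated Schr\"odinger operators.

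The step I expect to require the most care is the transition from real to complex $\theta$: one must make sure that the pairing $\langle U_{\bar\theta}u,\cdot\rangle$ truly extends the $L^2$-pairing with $u$ (and not with $\bar u$), i.e.\ that the antilinearity in the first argument is correctly accommodated by inserting $U_{\bar\theta}$ rather than $U_\theta$. Once this bookkeeping is done, the joint analyticity argument and the identification of residues via the Krein formula~\eqref{res_formula_gen} are a direct adaptation of the standard Aguilar--Balslev--Combes--Simon mechanism.
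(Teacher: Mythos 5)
Your proposal is correct and follows essentially the same route as the paper's own proof: you establish joint analyticity of $F(z,\theta)$ via Proposition~\ref{prop_analyticite_res} and the Gaussian decay of $\mathcal{A}$-vectors, exploit constancy of $F(z,\theta)$ in $\theta$ for $z$ in the fixed set $\mathcal{O}$, and then meromorphically continue in $z$ using Corollary~\ref{cor_krein}. The only cosmetic difference is in part~2, where you identify poles with eigenvalues through the explicit Krein formula and invoke Kato-analyticity for $\theta$-independence, while the paper phrases the $\theta$-independence as an immediate consequence of the fact that the different $F(\cdot,\theta)$ are continuations of the same function and then uses density of $U_\theta\mathcal{A}$ in $L^2$; both are valid implementations of the standard Aguilar--Balslev--Combes mechanism.
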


\begin{proof}
1) The proof adapts the ideas underlying the complex scaling method (see \cite{CFKS}) to the particular class exterior scaling maps $U_{\theta}$. 

Consider the strip $S_{\alpha}=\left\{  \theta\in\mathbb{C}\,\left\vert \,\left\vert\operatorname{Im}\theta\right\vert \leq \alpha\right.  \right\}$ for a positive $\alpha<\frac{\pi}{4}$. Then, consider the corresponding set $\mathcal{O}\subset\C$ given in Proposition~\ref{prop_analyticite_res} and fix $z\in\mathcal{O}$. According to Proposition \ref{prop_analyticite_res}, and to the properties of $U_{\theta}u$, $u\in\mathcal{A}$, the function
\[
F(z,\theta)=\left\langle U_{\bar{\theta}}u,\left(  H_{\theta_{0},\mathcal{V}}^{h}(\theta)-z\right)  ^{-1}U_{\theta}v\right\rangle_{L^{2}(\mathbb{R})}\label{F_teta}
\]
is analytic in the variable $\theta\in S_\alpha$. When $\theta\in\mathbb{R}$, the exterior scaling $U_{\theta}$
is an unitary map and one has
\[
F(z,\theta)=F(z,0), \quad \forall \theta \in S_\alpha\cap\R\,.
\]
Since $F(z,\theta)$ is holomorphic in $\theta$ and constant
on the real line, this is a constant function in the whole strip $S_{\alpha}$
and
\[
F(z,\theta)=F(z,0), \quad \forall \theta \in S_\alpha\,.
\]
Now, fix $\theta \in S_{\alpha}$ such that $\Imag \theta > 0$. It follows from Corollary~\ref{cor_krein} that $\sigma_{ess}(H_{\theta_{0},\mathcal{V}}^{h}(\theta))=e^{-2i\Imag\theta}\R_{+}$ and the map $z\mapsto F(z,\theta)$ is meromorphic on $\C\setminus e^{-2i\Imag\theta}\R_{+}$ such that
\begin{equation}\label{extension}
F(z,\theta)=F(z,0), \quad \forall z \in \mathcal{O}\,.
\end{equation}
Since $z\mapsto F(z,0)$ is meromorphic on $\C\setminus\R_{+}$, the equality \eqref{extension} holds as an identity of meromorphic functions $\forall z$ with $\Imag z >0$. We conclude that $F(z,\theta)$ defines a meromorphic extension of $F(z,0)$ from the set $\left\{z\in\C\left\vert \Imag z > 0  \right.\right\}$ to the sector $\left\{\arg z\in\left(-2\Imag \theta,0\right)  \right\}  $.

2) Consider $0<\tau<\frac{\pi}{4}$. From the previous point, when $\theta$ varies in $\tau \leq \operatorname{Im}\theta <\frac{\pi}{4}$, the maps $z\mapsto F(z,\theta)$ coincide in $K_{\tau}$ with meromorphic extensions of $F(z,0)$. Therefore, the poles of $z\mapsto F(z,\theta)$ in $K_{\tau}$ do not depend on $\theta$ and correspond to the poles of the meromorphic extension of $F(z,0)$ in $K_{\tau}$. The vectors $U_{\theta}u$, $u\in\mathcal{A}$, being dense in $L^{2}(\mathbb{R})$, the poles of $z\mapsto F(z,\theta)$ corresponds to eigenvalues of $H_{\theta_{0},\mathcal{V}}^{h}(\theta)$.
\end{proof}

\subsection{A time dependent model}

Consider the non-autonomous model $H_{\theta_{0},\mathcal{V}(t)}^{h}
(\theta_{0})$, where $\mathcal{V}(t)$ is a family of self-adjoint potentials
composed by $\mathcal{V}=\mathcal{V}_{1}+\mathcal{V}_{2}$
\begin{equation}
\mathcal{V}_{1}(t)\in C^{0}\left(  \mathbb{R}_{+};L^{\infty}((a,b))\right)  ,\qquad
\mathcal{V}_{2}(t)=\sum_{j=1}^{n}\alpha_{j}(t)\delta(x-c_{j})\,,\label{v_2_delta}
\end{equation}
with $\left\{  c_{j}\right\}  \subset\left(  a,b\right)  $, $\alpha_{j}(t)\in
C^{1}(\mathbb{R}_{+};\mathbb{R})$. According to the specific feature of point
perturbations, the domain's definition, given in (\ref{H_(teta)+V_dom}), can
be rephrased as
\begin{multline}
D(H_{\theta_{0},\mathcal{V}(t)}^{h}(\theta_{0}))=\left\{  u\in H^{2}
(\mathbb{R}\backslash\left\{  a,b,c_{1},...,c_{n}\right\}  )\cap H^{1}(\mathbb{R}
\backslash\left\{  a,b\right\}  )\left\vert \right.\right.\\ 
\left.h^{2}\left[  u^{\prime}(c_{j}^{+})-u^{\prime}(c_{j}^{-})\right]
=\alpha_{j}(t)u(c_{j}) \,\textrm{ and }\, \eqref{BC_1}\,\textrm{ holds} \right\}\,,\label{H_(teta)+V(t)_dom}
\end{multline}
where time dependent boundary conditions appear in the interaction points
$c_{j}$.

Most of the techniques employed in the analysis the Cauchy problem
\begin{equation}
\left\{
\begin{array}
[c]{l}
\medskip i\partial_{t}u=H(t)u\\
u_{t=0}=u_{0}
\end{array}
\right.  \label{Cauchy0}
\end{equation}
for non-autonomous Hamiltonians, $H(t)$, require, as condition, that the
operator's domain is independent of the time (we mainly refer to the
Yoshida's and Kato's results (\cite{Kat1}\cite{Kat2}\cite{Yos});
 an extensive presentation of the subject
can be found given in \cite{Fat}). In the particular case of $H_{\theta
_{0},\mathcal{V}(t)}^{h}(\theta_{0})$, one can explicitly construct a family
of unitary maps $V_{t,t_{0}}$ such that
\begin{equation}
V_{t,t_{0}}H_{\theta_{0},\mathcal{V}(t)}^{h}(\theta_{0})V_{t,t_{0}}^{-1}
\label{H(t)_conjugated0}
\end{equation}
has a constant domain. To fix this point, let us introduce a time dependent
real vector field $g(x,t)$ and assume
\begin{equation}
\left\{
\begin{array}
[c]{l}
\medskip\text{i)}\ g(\cdot,t)\in C^{1}(\mathbb{R}_{+};C_{0}^{\infty
}(\mathbb{R}))\,,\\
\medskip\text{ii) }g(c_{j},t)=0,\quad j=1,n,\ \forall t\,,\\
\text{iii) } \supp g(\cdot,t)=\cup_{j=1}^{n}I_{c_{j}},\quad\text{with: }I_{c_{j}
}=\left(  c_{j}-\epsilon_{j},c_{j}+\epsilon_{j}\right)  \text{ and }\cap
_{j}I_{c_{j}}=\emptyset\,.
\end{array}
\right.  \label{Assumptions1}
\end{equation}
For $\epsilon_{j}$ small, $g(\cdot,t)$ has support strictly included in
$\left(  a,b\right)  $ and localized around the interaction points $x=c_{j}$.
According to i), $g(\cdot,t)$ satisfy a global Lipschitz condition uniformly in
time: then the dynamical system
\begin{equation}
\left\{
\begin{array}
[c]{l}
\medskip\dot{y}_{t}=g(y_{t},t)\\
y_{t_{0}}=x
\end{array}
\right.  \label{loc_dil}
\end{equation}
admits a unique global solution continuously depending on time and Cauchy
data $\left\{  t_{0},x\right\}$. Using the notation: $y_{t}
=F(t,t_{0},x)$, one has
\begin{equation}
\left\vert F(t,t_{0},x_{1})-F(t,t_{0},x_{2})\right\vert \leq e^{M\left\vert
t-t_{0}\right\vert }\left\vert x_{1}-x_{2}\right\vert ,\qquad\text{with:
}M=\sup_{x\in\mathbb{R},\,t\geq0}\partial_{y}g(y,t)\text{.}
\end{equation}
Consider the variations of $F(t,t_{0},x)$ w.r.t. $x$: $z_{t}=\partial
_{x}F(t,t_{0},x)$. From (\ref{loc_dil}), one has
\begin{equation}
\left\{
\begin{array}
[c]{l}
\medskip\dot{z}_{t}=\partial_{1}g(F(t,t_{0},x),t)\,z_{t}\\
z_{t_{0}}=1\,,
\end{array}
\right.  \label{loc_dil_variation}
\end{equation}
with $\partial_{1}g$ denoting the derivative w.r.t. the first variable. The
solution to this problem is positive and explicitly given by
\begin{equation}
\partial_{x}F(t,t_{0},x)=e^{\int_{t_{0}}^{t}\partial_{1}g(F(s,t_{0}
,x),s)ds}>0\qquad\forall x\in\mathbb{R}\,. \label{loc_dil_variation1}
\end{equation}
According to (\ref{loc_dil_variation1}), $F(t,t_{0},\cdot)$ is a $C^{\infty}
$-diffeomorphism and the map $x\mapsto F(t,t_{0},x)$ is a
time-dependent local dilation around the points $c_{j}$. In particular, it
follows from the assumptions i) and ii) that
\begin{equation}
F(t,t_{0},c_{j})=c_{j},\quad F(t,t_{0},x)=x\text{ for all }x\in\mathbb{R}
\backslash\supp g\,, \label{loc_dil_property1}
\end{equation}
and
\begin{equation}
F(t,t_{0},F(t_{0},\bar{t},x))=F(t,\bar{t},x)\,. \label{loc_dil_property2}
\end{equation}
The unitary transformation associated with the change of variable $x\rightarrow
F(t,t_{0},x)$ is
\begin{equation}
\left\{
\begin{array}
[c]{l}
\medskip\left(  V_{t_{0},t}u\right)  (x)=\left(  \partial_{x}F(t,t_{0}
,x)\right)  ^{\frac{1}{2}}u(F(t,t_{0},x))\\
V_{t,t_{0}}=V_{t_{0},t}^{-1}\,.
\end{array}
\right. \label{V_t_t0}
\end{equation}
Regarded as a function of time, $t\mapsto V_{t_{0},t}$ is a strongly
continuous differentiable map and one has
\begin{equation}
\partial_{t}V_{t_{0},t}=V_{t_{0},t}\,\left[  \frac{1}{2}\left(  \partial
_{y}g\right)  +g\partial_{y}\right]\,.  \label{V_t_t0_variation}
\end{equation}
The form of the conjugated Hamiltonian $V_{t,t_{0}}H_{\theta_{0}
,\mathcal{V}(t)}^{h}(\theta_{0})V_{t_{0},t}$ follows by direct computation
\begin{multline}\label{H(t)_conjugated}
V_{t,t_{0}}H_{\theta_{0},\mathcal{V}(t)}^{h}(\theta_{0})V_{t_{0},t}
=-h^{2}\eta(y)\left[  \partial_{y}b^{2}\partial_{y}+\left(  \partial
_{y}ab\right)  -a^{2}\right]\\ +\mathcal{V}_{1}(F(t_{0},t,y),t)+\sum_{j=1}^{n}\alpha
_{j}(t)\,b(c_{j},t)\,\delta(x-c_{j}), 
\end{multline}
\begin{equation}
b(y,t)=e^{\int_{t_{0}}^{t}\partial_{1}g\left(  F(s,t,y),s\right)  \,ds},\qquad
a(y,t)=\frac{1}{2}b^{\frac{1}{2}}(y,t)\int_{t_{0}}^{t}\partial_{1}^{2}g\left(
F(s,t,y),s\right)  b(y,s)\,ds\,. \label{H(t)_conjugated_ab}
\end{equation}
After conjugation, the boundary conditions in the operator's domain change as
\begin{equation}
h^{2}b^{2}(c_{j},t)\,\left[  u^{\prime}(c_{j}^{+})-u^{\prime}(c_{j}
^{-})\right]  =b(c_{j},t)\,\alpha_{j}(t)\,u(c_{j})\,. \label{H(t)_conjugated_BC}
\end{equation}
Assume, for all $j=1...n$
\begin{equation}
\text{iv) }\alpha_{j}(0)\neq0;\text{\quad v) }\alpha_{j}(0)\alpha
_{j}(t)>0;\quad\text{vi) }\alpha_{j}(t)\in C^{1}(\mathbb{R}_{+};\mathbb{R})\,.
\label{Assumptions2}
\end{equation}
One can determine (infinitely many) $g(\cdot,t)\in C^{0}\left(  \mathbb{R}
_{+};C_{0}^{\infty}(\mathbb{R})\right)  $ such that
\begin{equation}
\partial_{1}g(c_{j},t)=\frac{\dot{\alpha}_{j}(t)}{\alpha_{j}(t)}\,.
\label{Assumptions3}
\end{equation}
This implies
\begin{equation}
b(c_{j},t)=e^{\int_{t_{0}}^{t}\partial_{1}g\left(  F(s,t,c_{j}),s\right)
\,ds}=e^{\int_{t_{0}}^{t}\partial_{1}g\left(  c_{j},s\right)  \,ds}
=\frac{\alpha_{j}(t)}{\alpha_{j}(0)} \label{H(t)_conjugated_BC1}
\end{equation}
and (\ref{H(t)_conjugated_BC}) can be written in the time-independent form
\begin{equation}
h^{2}\left[  u^{\prime}(c_{j}^{+})-u^{\prime}(c_{j}^{-})\right]  =\alpha
_{j}(0)\,u(c_{j})\,. \label{H(t)_conjugated_BC2}
\end{equation}
Thus, the Hamiltonians $V_{t,t_{0}}H_{\theta_{0},\mathcal{V}(t)}^{h}
(\theta_{0})V_{t_{0},t}$ have common domain given by
\begin{equation}
Y=\left\{  u\in H^{2}(\mathbb{R}\backslash\left\{  a,b,c_{j}\right\}  )\cap
H^{1}(\mathbb{R}\backslash\left\{  a,b\right\}  )\left\vert \ \left[
\ref{BC_1}\right]  ,\ \left[  \ref{H(t)_conjugated_BC2}\right]  \right.
\right\}  . \label{Y}
\end{equation}

Consider the time evolution problem for $H_{\theta_{0},\mathcal{V}(t)}
^{h}(\theta_{0})$
\begin{equation}
\left\{
\begin{array}
[c]{l}
\medskip i\partial_{t}u=H_{\theta_{0},\mathcal{V}(t)}^{h}(\theta_{0})u\\
u_{t_{0}}=u_{0}\,.
\end{array}
\right.  \label{Cauchy1}
\end{equation}
Setting $v=V_{t,t_{0}}u$, and
\begin{equation}
A(t)=V_{t,t_{0}}H_{\theta_{0},\mathcal{V}(t)}^{h}(\theta_{0})V_{t_{0}
,t}-i\left[  \frac{1}{2}\left(  \partial_{y}g\right)  +g\partial_{y}\right]\,,
\label{A(t)}
\end{equation}
with $D(A(t))=Y$, one has: $v_{t_{0}}=u_{t_{0}}$,
\begin{equation}
\left\{
\begin{array}
[c]{l}
\medskip\partial_{t}v=-iA(t)v\\
v_{t_{0}}=u_{0}\,.
\end{array}
\right.  . \label{Cauchy2}
\end{equation}

\begin{proposition}
\label{pr.Kato}
Let $\mathcal{V}(t)=\mathcal{V}_{1}(t)+\mathcal{V}_{2}(t)$ be defined with the conditions
(\ref{v_2_delta}) and (\ref{Assumptions2}).
Assume: $\theta=\theta_{0}=i\tau$, with $\tau\in\left(  0,\frac{\pi}
{2}\right)  $. There exists an unique family of operators $U_{t,s}$, $0\leq
s\leq t$, with the following properties:
\medskip
\newline$\mathbf{a})$
$U_{t,s}$ is strongly continuous in $L^{2}(\mathbb{R})$ w.r.t. the variables
$s$ and $t$ and fulfills the conditions: $U_{s,s}=Id$, $U_{t,s}\circ
U_{s,r}=U_{t,r}$ for $r\leq s\leq t$ and $\left\Vert U_{t,s}\right\Vert \leq1$
for any $s$ and $t$, $s\leq t$.
\medskip
\newline$\mathbf{b})$ For $u_{s}\in D(H_{\theta
_{0},\mathcal{V}(s)}^{h}(\theta_{0}))$, one has: $U_{t,s}u_{s}\in$
$D(H_{\theta_{0},\mathcal{V}(t)}^{h}(\theta_{0}))$ for all $t\geq s$.In
particular, for fixed $t$, $U_{t,s}$ is strongly continuous w.r.t. $s$ in the
norm of $H^{2}(\mathbb{R}\backslash\left\{  a,b,c_{j}\right\}  )\cap
H^{1}(\mathbb{R}\backslash\left\{  a,b\right\}  )$. While, for fixed $s$,
$U_{t,s}$ is strongly continuous w.r.t. $t$ in the same norm, except possibly
countable values of $t\geq s$.
\medskip
\newline$\mathbf{c})$ For fixed $s$ and
$u_{s}\in D(H_{\theta_{0},\mathcal{V}(s)}^{h}(\theta_{0}))$, the derivative
$\frac{d}{dt}U_{t,s}u_{s}$ exists and is strongly continuous in $L^{2}
(\mathbb{R})$ except, possibly, countable values $t\geq s$. With similar
exceptions, one has: $\frac{d}{dt}U_{t,s}u_{s}=-iH_{\theta_{0},\mathcal{V}
(t)}^{h}(\theta_{0})U_{t,s}u_{s}$.
 \medskip
\newline$\mathbf{d})$ Additionally, if
$\mathcal{V}_{1}(t)\in C^{1}\left(  \mathbb{R}_{+},L^{\infty}((a,b))\right)  \cap
C^{0}\left(  \mathbb{R}_{+},W^{1,\infty}(\mathbb{R})\right)  $, \ and
$\alpha_{j}(t)\in C^{2}(\mathbb{R}_{+};\mathbb{R})$, then the
conclusion of point $(c)$ holds 
for all $t\geq s$
without exceptions.
\end{proposition}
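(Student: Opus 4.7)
The plan is to reduce the non-autonomous Cauchy problem (\ref{Cauchy1}) to Kato's theory of evolution equations with constant domain (\cite{Kat1}\cite{Kat2}, see also \cite{Fat}) via the unitary change of frame $V_{t,t_0}$ introduced in (\ref{V_t_t0}). With the choice (\ref{Assumptions3}) of the vector field $g$, the jump relation (\ref{H(t)_conjugated_BC}) reduces to the time-independent condition (\ref{H(t)_conjugated_BC2}), and the transformed family $\{-iA(t)\}_{t\geq 0}$ from (\ref{A(t)}) acts on the fixed domain $Y$ defined in (\ref{Y}). Fixing $t_{0}=s$ and setting $v(t)=V_{t,s}u(t)$, I would first produce a propagator $W_{t,s}$ on $Y$ for the auxiliary problem (\ref{Cauchy2}), then define $U_{t,s}=V_{t,s}^{-1}W_{t,s}$; independence of the result from the particular reference time is guaranteed by the composition rule (\ref{loc_dil_property2}).

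Three items must be checked for Kato's theorem to apply to $\{-iA(t)\}$. First, for every fixed $t$, the operator $-iA(t)$ generates a contraction semigroup on $L^{2}(\mathbb{R})$ with domain $Y$: the accretivity of $iH_{i\tau,\mathcal{V}(t)}^{h}(i\tau)$ proved in the lemma preceding the Krein-formula subsection is preserved by the unitary conjugation $V_{t,s}(\cdot)V_{s,t}$, and the correction $-i[\tfrac{1}{2}(\partial_{y}g)+g\partial_{y}]$ is $-i$ times an antisymmetric first-order operator on $Y$ (integration by parts gives $\operatorname{Re}\langle v,[\tfrac{1}{2}(\partial_{y}g)+g\partial_{y}]v\rangle=0$), hence contributes nothing to $\operatorname{Re}\langle v,iA(t)v\rangle$; maximal accretivity will then follow from the Krein representation of Corollary~\ref{cor_krein} together with the fact that $g\partial_{y}$ is $-h^{2}\partial_{y}^{2}$-bounded with arbitrarily small relative bound on $Y$, yielding surjectivity of $\lambda+iA(t)$ for large $\lambda>0$. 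Second, Kato's stability condition holds trivially with $M=1$, $\beta=0$, since all the generators are contractive on the same Hilbert space. Third, the hypotheses $\mathcal{V}_{1}\in C^{0}(\mathbb{R}_{+};L^{\infty}((a,b)))$, $\alpha_{j}\in C^{1}(\mathbb{R}_{+};\mathbb{R})$ and $g\in C^{1}(\mathbb{R}_{+};C_{0}^{\infty}(\mathbb{R}))$ yield strong continuity of $t\mapsto A(t)$ as a map from $Y$ (equipped with the graph norm) into $L^{2}(\mathbb{R})$.

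Granted these facts, Kato's theorem produces a unique propagator $W_{t,s}$ satisfying the semigroup composition law, the contraction bound and the strong-continuity/differentiability statements with a possibly countable exceptional set of $t$; transferring back through the unitary $V$ then delivers points $\mathbf{a})$--$\mathbf{c})$ for $U_{t,s}$. For the sharper statement $\mathbf{d})$, the strengthened assumptions $\mathcal{V}_{1}\in C^{1}(\mathbb{R}_{+};L^{\infty}((a,b)))\cap C^{0}(\mathbb{R}_{+};W^{1,\infty}(\mathbb{R}))$ (the $W^{1,\infty}$ part compensates the $\partial_{y}$ produced by the conjugation with $V$) and $\alpha_{j}\in C^{2}(\mathbb{R}_{+};\mathbb{R})$ (which makes $b(c_{j},t)=\alpha_{j}(t)/\alpha_{j}(0)$ of class $C^{2}$) upgrade $t\mapsto A(t)$ to a $C^{1}$ map from $Y$ to $L^{2}(\mathbb{R})$, so that the Kato--Tanabe improvement of the evolution theorem removes the exceptional set.

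The hardest step is clearly the first one: checking that $iA(t)$ is maximal accretive on $L^{2}(\mathbb{R})$ with domain $Y$. The non self-adjoint complex interface conditions (\ref{BC_1}) at $a,b$ must be handled simultaneously with the $\delta$-conditions at the interior points $c_{j}$, and one must verify that the range condition for $\lambda+iA(t)$ survives the addition of the first-order term coming from $V$. The geometric hypothesis (\ref{Assumptions1}) on $g$ — compact support strictly inside $(a,b)$ and vanishing at each $c_{j}$ — is crucial here, as it keeps the interface data at $a,b,c_{j}$ untouched by the change of frame, so that the Krein representation of Corollary~\ref{cor_krein} and the semigroup lemma preceding this subsection remain directly available.
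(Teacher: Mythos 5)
Your proposal follows essentially the same strategy as the paper's proof: pass to the fixed-domain auxiliary family $\{-iA(t)\}$ on $Y$ via the unitary change of frame $V_{t,s}$, verify (i) that each $iA(t)$ is the generator of a contraction semigroup on $L^{2}(\R)$ with domain $Y$, (ii) Kato's stability condition with $M=1$, $\beta=0$, and (iii) norm-continuity of $t\mapsto A(t)$ in $\mathcal{L}(Y,L^{2}(\R))$, then invoke Kato's evolution theorem for (a)--(c) and its smoother variant under the strengthened regularity hypotheses for (d). The one genuine, if minor, deviation is your argument for the range condition: you obtain surjectivity of $\lambda+iA(t)$ for large $\lambda>0$ by observing that the first-order correction $\tfrac12(\partial_{y}g)+g\partial_{y}$ is relatively bounded with arbitrarily small relative bound with respect to the conjugated second-order operator, whereas the paper argues via relative \emph{compactness} of that correction, concluding $\sigma_{ess}(A(t))=e^{-2i\operatorname{Im}\theta_{0}}\R_{+}$ and hence $\rho(iA(t))\cap\R_{-}\neq\emptyset$. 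Your perturbative route is a little more elementary and avoids a spectral mapping argument, while the paper's spectral route has the advantage of identifying the essential spectrum of $A(t)$ explicitly, a fact that is reused elsewhere; both correctly supply the maximal accretivity needed for the Lumer--Phillips/Hille--Yosida step.
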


\begin{proof}
Since the Cauchy problems (\ref{Cauchy1}) and (\ref{Cauchy2}) are related by
the time-differentiable map $V_{t,t_{0}}$, it is enough to prove the result in
the case of $A(t)$. Let assume the conditions (\ref{Assumptions1}) and
(\ref{H(t)_conjugated_BC1}) to hold, and start to consider the properties of
this operator.\bigskip\newline I) As already noticed (see relation
(\ref{accretivity1})), $iH_{\theta_{0},\mathcal{V}(t)}^{h}(\theta_{0})$ is an accretive
operator. This property extends to $V_{t,t_{0}}\left(  iH_{\theta
_{0},\mathcal{V}(t)}^{h}(\theta_{0})\right)  V_{t_{0},t}$, which is unitarily
equivalent to an accretive operator, and to $A(t)$, since, as a straightforward
computation shows, the contribution $i\left[  \frac{1}{2}\left(  \partial
_{y}g\right)  +g\partial_{y}\right]  $ is self-adjoint.
The spectral profile of $A(t)$ essentially follows from the properties of
$H_{\theta_{0},\mathcal{V}(t)}^{h}(\theta_{0})$. Indeed, we notice that:
$\sigma\left(  V_{t,t_{0}}H_{\theta_{0},\mathcal{V}(t)}^{h}(\theta
_{0})V_{t_{0},t}\right)  =\sigma\left(  H_{\theta_{0},\mathcal{V}(t)}
^{h}(\theta_{0})\right)  $, since the two operators are unitarily equivalent.
Moreover, the term $i\left[  \frac{1}{2}\left(  \partial_{y}g\right)
+g\partial_{y}\right]  $ is relatively compact w.r.t. $V_{t,t_{0}}
H_{\theta_{0},\mathcal{V}(t)}^{h}(\theta_{0})V_{t_{0},t}$, since it has a
lower differential order (see definition (\ref{A(t)})). Then, $\sigma
_{ess}(A(t))=\sigma_{ess}(H_{\theta_{0},\mathcal{V}(t)}^{h}(\theta
_{0}))=e^{-2i\operatorname{Im}\theta_{0}}\mathbb{R}_{+}$, as it follows from
Corollary \ref{cor_krein}, and $\left(  A(t)-z\right)  ^{-1}$ is a
meromorphic function of $z$ in $\mathbb{C}\backslash e^{-2i\operatorname{Im}
\theta_{0}}\mathbb{R}_{+}$. This result yields: $\sigma_{ess}
(iA(t))=e^{i(\frac{\pi}{2}-2\operatorname{Im}\theta_{0})}\mathbb{R}_{+}$ and:
$\rho(iA(t))\cap\mathbb{R}_{-}\neq\emptyset$.
As a consequence of the above, one has: i) $iA(t)$ is accretive; 2)
$-\lambda_{0}\in\rho(iA(t))$ for some $\lambda_{0}>0$. Then, for any fixed
$t$, $iA(t)$ is the generator of a contraction semigroup, $e^{-isA(t)}$, on
$L^{2}(\mathbb{R})$ (see \cite{ReSi2}, Th. X.48). In particular, the operator's
domain $D(iA(t))=Y$, defined in (\ref{Y}), is invariant by the action of
$e^{-isA(t)}\Longrightarrow$ $Y$ is $iA(t)-$\emph{admissible} for any $t$.
Moreover, as $iA(t)$ is the generator of a contraction semigroup, from the
Hille-Yoshida's theorem it follows that
\begin{equation}\label{Hille-Yoshida}
\left\{
\begin{array}[c]{l}
\mathbb{R}_{-}\subset\rho(iA(t))\,,\\[1.65mm]
\left\Vert \left(  iA(t)+\lambda\right)  ^{-1}\right\Vert \leq\frac
{1}{\lambda}\text{ \ for all }\lambda>0\,.
\end{array}
\right.
\end{equation}
In particular, for any finite collection of values $0\leq t_{1}\leq...\leq
t_{k}$, one has
\begin{equation}
\left\Vert
{\textstyle\prod\nolimits_{j=1}^{k}}
\left(  iA(t_{j})+\lambda\right)  ^{-1}\right\Vert \leq\frac{1}{\lambda^{k}}\,.\label{stability}
\end{equation}
This relation implies that $iA(t)$ is \emph{stable} (with coefficients $M=1$
and $\beta=0$, according to the definition given in \cite{Kat2}).
\bigskip
\newline II) Consider the action of $A(t)$ on its domain. Here, the Hilbert
space $Y$ is provided with the norm $H^{2}(\mathbb{R}\backslash\left\{
a,b,c_{1},...,c_{n}\right\}  )\cap H^{1}(\mathbb{R}\backslash\left\{  a,b\right\}  )$.
For $u\in Y$, we use the decomposition: $u=1_{\mathbb{R}\backslash\left(
a,b\right)  }u+1_{\left(  a,b\right)  }u=u_{ext}+u_{in}$. Recalling that the
functions $g(\cdot,t)$, $b(\cdot,t)$, $a(\cdot,t)$ are supported inside
$\left(  a,b\right)  $, from the definition (\ref{H(t)_conjugated}),
(\ref{A(t)}), one has
\begin{equation}
\left(  A(t)-A(s)\right)  u=\left(  A(t)-A(s)\right)  u_{in}\,,
\label{A(t)_continuity}
\end{equation}
with
\begin{align}
A(t)u_{in} &  =\left[-h^{2}\partial_{y}b^{2}\partial_{y}+\sum_{j=1}
^{n}\alpha_{j}(t)\,b(c_{j},t)\,\delta(x-c_{j})\right]  u_{in}\nonumber\\
&  +\left[  -h^{2}\left( \left(  \partial_{y}ab\right)  -a^{2}\right)
+\mathcal{V}_{1}(F(t_{0},t,y),t)-i\left( \frac{1}{2}\left(  \partial_{y}g\right)
+g\partial_{y}\right) \right]  u_{in}\,.\label{A(t)_continuity1}
\end{align}
Taking into account the boundary conditions (\ref{H(t)_conjugated_BC2}), the
second order term in (\ref{A(t)_continuity1}) writes as
\[
-h^{2}\left(  \partial_{y}b^{2}\partial_{y}\right)  u_{in}=-h^{2}\left(
\partial_{y}b^{2}\right)  \,\partial_{y}u_{in}-h^{2}b^{2}\,\Delta_{\left(
a,b\right)  \backslash\left\{  c_{j}\right\}  }u_{in}-\sum_{j=1}^{n}\alpha
_{j}(0)\,b^{2}(c_{j},t)\,u_{in}(c_{j})\,\delta(x-c_{j})\,,
\]
where, according to (\ref{H(t)_conjugated_BC1}),
\[
\sum_{j=1}^{n}\alpha_{j}(0)\,b^{2}(c_{j},t)\,u_{in}(c_{j})\delta(x-c_{j}
)=\sum_{j=1}^{n}\alpha_{j}(t)\,b(c_{j},t)u_{in}(c_{j})\,\delta(x-c_{j})\,.
\]
From these relations, it follows
\begin{align*}
\medskip A(t)u_{in} &  =-h^{2}b^{2}\,\Delta_{\left(  a,b\right)
\backslash\left\{  c_{j}\right\}  }u_{in}-h^{2}\left[  \left(  \partial
_{y}b^{2}\right)  \,\partial_{y}+\left(  \partial_{y}ab\right)  -a^{2}\right]
u_{in}\\
&  +\mathcal{V}_{1}(F(t_{0},t,y),t)u_{in}-i\left[  \frac{1}{2}\left(  \partial
_{y}g\right)  +g\partial_{y}\right]  u_{in}\,.
\end{align*}
Therefore, we have
\begin{align*}
\medskip\left(  A(t)-A(s)\right)  u &  =-h^{2}\left[  \left(  b_{t}^{2}
-b_{s}^{2}\right)  \,\Delta_{\left(  a,b\right)  \backslash\left\{
c_{j}\right\}  }+\left(  \partial_{1}b_{t}^{2}-\partial_{1}b_{s}^{2}\right)
\,\partial_{y}+\left(  \partial_{1}a_{t}b_{t}-\partial_{1}a_{s}b_{s}\right)
-\left(  a_{t}^{2}-a_{s}^{2}\right)  \right]  u_{in}\\
&  +\left(  \mathcal{V}_{1}(F(t_{0},t,y),t)-\mathcal{V}_{1}(F(t_{0},s,y),s)\right)  u_{in}
-i\left[  \frac{1}{2}\left(  \partial_{1}g_{t}-\partial_{1}g_{s}\right)
+\left(  g_{t}-g_{s}\right)  \partial_{y}\right]  u_{in}\,,
\end{align*}
and
\begin{equation}
\left\Vert \left(  A(t)-A(s)\right)  u\right\Vert _{L^{2}(\mathbb{R})}\leq
C_{h}\,M(t,s)\,\left\Vert u\right\Vert _{H^{2}(\mathbb{R}\backslash\left\{
a,b,c_{j}\right\}  )\cap H^{1}(\mathbb{R}\backslash\left\{  a,b\right\})}\,,\label{A(t)_continuity2}
\end{equation}
where $C_{h}$ is a positive constant depending on $h$, while
\begin{align*}
\medskip M(t,s) &  =\left\Vert \mathcal{V}_{1}(t)-\mathcal{V}_{1}(s)\right\Vert _{L^{\infty}
((a,b))}+\left\Vert a_{t}^{2}-a_{s}^{2}\right\Vert _{C^{0}([a,b])}+\left\Vert
\partial_{1}a_{t}b_{t}-\partial_{1}a_{s}b_{s}\right\Vert _{L^{\infty}((a,b))}\\
&  +\sum_{l=0,1}\left(  \left\Vert \partial_{1}^{l}b_{t}^{2}-\partial_{1}
^{l}b_{s}^{2}\right\Vert _{L^{\infty}((a,b))}+\left\Vert \partial_{1}^{l}
g_{t}-\partial_{1}^{l}g_{s}\right\Vert _{L^{\infty}((a,b))}\right)\,.
\end{align*}
From our assumptions, $t\mapsto g_{t},b_{t},a_{t}$ are continuous
$C^{\infty}$-valued maps, and $t\mapsto \mathcal{V}_{1}\in C^{0}(\mathbb{R}
_{+},L^{\infty})$; this yields: $\lim_{s\rightarrow t}M(t,s)=0$, and, due to
(\ref{A(t)_continuity2}),
\[
\lim_{s\rightarrow t}\left\Vert \left(  A(t)-A(s)\right)  u\right\Vert
_{L^{2}(\mathbb{R})}=0,\qquad\forall u\in Y\,.
\]
This gives the continuity of $t\mapsto A(t)$ in the $\mathcal{L}
(Y,L^{2}(\mathbb{R}))$-operator norm.\bigskip\newline III) Consider the map:
$S(t)=\left(  iA(t)+\lambda_{0}\right)  $ for $\lambda_{0}>0$. According to
(\ref{Hille-Yoshida}), $S(t)$ defines a family of isomorphisms of $Y$ to
$L^{2}(\mathbb{R})$. Moreover, making use of the relations
(\ref{H(t)_conjugated}), (\ref{A(t)}) and the condition
I\ref{H(t)_conjugated_BC1}), the time derivative of $S(t)$ writes as
\begin{align*}
\medskip\frac{d}{dt}S(t)  & =-h^{2}\left[  \partial_{y}\left(  2b\partial
_{t}b\right)  \partial_{y}+\left(  \partial_{y,t}ab\right)  -2a\partial
_{t}a\right]  +\sum_{j=1}^{n}\frac{2\alpha_{j}(t)\,\alpha_{j}^{\prime}
(t)}{\alpha_{j}(0)}\,\delta(x-c_{j})\\
& +\partial_{t}\mathcal{V}_{1}(F(t_{0},t,y),t)+\left(  \partial_{t}F(t_{0},t,y)\right)
\,\partial_{1}\mathcal{V}_{1}(F(t_{0},t,y),t)-i\left[  \frac{1}{2}\left(  \partial
_{y,t}g\right)  +\left(  \partial_{t}g\right)  \partial_{y}\right]\,,
\end{align*}
where the term $\partial_{t}F(t_{0},t,y)$ can be written in the form
\[
\partial_{t}F(t_{0},t,y)=-b^{-1}(y,t)\,g(F(t_{0},t,y),t)\,,
\]
as it follows by using: $F(t,t_{0},F(t_{0},t,y))=y$ and the definition of
$b(y,t)$. Since the variations of the functions $a,b,\mathcal{V}_{1},g$ w.r.t. both the
variables $t$ and $y$ are supported on $\left(  a,b\right)  $, each
contribution to $\frac{d}{dt}S(t)$ acts only inside this interval. Therefore,
$u\in D\left(  \frac{d}{dt}S(t)\right)  $ is not expected to fulfill any
interface condition at $x=a,b$, while in the interaction points $c_{j}$, one
has
\[
h^{2}2b(c_{j},t)\partial_{t}b(c_{j},t)\,\left[  u^{\prime}(c_{j}
^{+})-u^{\prime}(c_{j}^{-})\right]  =\frac{2\alpha_{j}(t)\,\alpha_{j}^{\prime
}(t)}{\alpha_{j}(0)}\,u(c_{j})\,,
\]
which, according to (\ref{H(t)_conjugated_BC1}), reduces to
(\ref{H(t)_conjugated_BC2}). It follows that: $Y\subset D\left(  \frac{d}
{dt}S(t)\right)  $, and one can consider the action of $\frac{d}{dt}S(t)$ on
$Y$. Proceeding as in point II and using the stronger assumptions:
$\mathcal{V}_{1}(t)\in C^{1}\left(  \mathbb{R}_{+},L^{\infty}((a,b))\right)  \cap
C^{0}\left(  \mathbb{R}_{+},W^{1,\infty}(\mathbb{R})\right)  $, $\alpha
_{j}(t)\in C^{2}(\mathbb{R}_{+};\mathbb{R})$, one shows that $\frac{d}
{dt}S(t)$ is strongly continuous from $Y$ to $L^{2}(\mathbb{R})$
.\bigskip\newline
Finally, the points I and II resume as follows: i) the Hilbert space $Y$ is
$A(t)$-admissible for all $t$, ii) $A(t)$ define a stable family operators
$t$-continuous in the $\mathcal{L}(Y,L^{2}(\mathbb{R}))$-operator norm. Thus,
the Theorem 5.2 in \cite{Kat2} applies and provides a strongly continuous
dynamical system $\mathcal{U}_{t,t_{0}}$, $t_{0}\leq t$, for the Cauchy problem
(\ref{Cauchy2}) with $\left\Vert \mathcal{U}_{t,t_{0}}\right\Vert \leq1$ and
such that: 
\begin{description}
\item[1)] For fixed $t_{0}$, the family $\mathcal{U}_{t,t_{0}}$ defines an
a.e. strongly continuous flow in $Y$; for fixed $t$, $\mathcal{U}_{t,t_{0}}$
is strongly continuous w.r.t. $t_{0}$ in $Y$.
\item[2)] For $y\in Y$, the derivative
$\frac{d}{dt}\mathcal{U}_{t,t_{0}}y$ is a.e. strongly continuous in
$L^{2}(\mathbb{R})$ and one has: $\frac{d}{dt}\mathcal{U}_{t,t_{0}}
u_{s}=-iA(t)\mathcal{U}_{t,t_{0}}u_{s}$. The conclusions $(a)$, $(b)$ and
$(c)$ of the statement follows by using the equivalence of the problems
(\ref{Cauchy1}) and (\ref{Cauchy2}) through the maps $V_{t,t_{0}}$. 
\end{description}
Moreover, assuming 
\[
\mathcal{V}_{1}(t)\in C^{1}\left(  \mathbb{R}_{+},L^{\infty}((a,b))\right)  \cap C^{0}\left(  \mathbb{R}_{+},W^{1,\infty}(\mathbb{R})\right), \quad 
\alpha_{j}(t)\in C^{2}(\mathbb{R}_{+};\mathbb{R})
\]
it follows from point III that: $S(t)$ is a family of isomorphisms from $Y$ to
$L^{2}(\mathbb{R})$, with
\[
S(t)A(t)S^{-1}(t)=A(t)
\]
and such that $S(t)$ is strongly differentiable. In this case, Yoshida's
Theorem applies (see Theorem 6.1 and Remark 6.2 in \cite{Kat2}) and the
conclusion $(d)$ of the statement follows.
\end{proof}

\section{Exponential decay estimates}
\label{se.expdecay}

With this section, with start the analysis of the parameter dependent quantities as $h\rightarrow 0$. The exponential decay is specified with a good control of the prefactors which behave like $\frac{1}{h^N}$. These estimates are written for potentials with limited regularity assumptions in order to hold for the modelling of quantum wells in a semi-classical island with non-linear effect. Some preliminary estimates are reviewed in Appendix~\ref{app.A}.

\subsection{Exponential decay for  the Dirichlet problem}
\label{se.expdir}

Consider $V\in L^{\infty}((a,b);\R)$, and $W^{h}=W_1^h+W_2^h$ an
$h$-dependent  real-valued potential with:
\begin{equation}
  \label{eq.hyp0}
c1_{[a,b]}\leq  V\,,\quad
\|V\|_{L^{\infty}}\leq \frac{1}{c}\,,\quad
\|W_1^h\|_{L^{\infty}}\leq \frac{1}{c}\,, \quad \quad \|W_2^h\|_{\mathcal{M}_b}\leq \frac{1}{c}h\,,
\end{equation}
where $\|\mu\|_{\mathcal{M}_b}$ denotes the total variation of the measure $\mu\in\mathcal{M}_b((a,b))$. The constant $c$ denotes a fixed positive value that can be chosen small when it is required by the analysis. We suppose that $W_1^h$ and $W_2^h$ are supported in the domain
$U_h=\{x\in(a,b); \, d(x,U)\leq h\}$ where $U$ is a fixed compact subset of $(a,b)$.\\ 
After introducing the differential operators on $(a,b)$
$$
\tilde{P}^{h}:=-h^{2}\Delta +V\quad\text{and}\quad 
P^{h}:=-h^{2}\Delta + V-W^{h}\,,
$$
two Dirichlet Hamiltonians are considered
\begin{eqnarray}
\label{eq.Dirfilled}
 \tilde{H}_{D}^{h}:= -h^{2}\Delta +V=\tilde{P}^{h}\,,
&&\text{with}\quad
  D(\tilde{H}_{D}^{h})=H^{2}((a,b))\cap H^{1}_{0}((a,b))
\,,\\
\label{eq.Dirwell}
H_{D}^{h}:= -h^{2}\Delta +V-W^{h}=P^{h}\,,
&&\text{with}\quad
  D(H_{D}^{h})=\left\{u\in H^{1}_{0}((a,b)), P^{h}u\in L^{2}((a,b))\right\}\,.
\end{eqnarray}
For a real energy $\lambda\in \R$ we consider the Agmon degenerate
distance associated with $V$
$$
d_{Ag}(x,y, V,\lambda)=\int_{x}^{y}\sqrt{(V(t)-\lambda)_{+}}~dt\,,\quad
x\leq y\,.
$$
And an other tool that will be useful here is the $h$-dependent $H^k$ norm
\begin{equation}
  \label{eq.normsobh}
\|u\|_{H^{k,h}}^{2}=\sum_{\alpha\leq k}\|(h\partial_{x})^{\alpha}u\|_{L^{2}}^{2}.
\end{equation}
\begin{proposition}
\label{pr.Direxp}
\textbf{i)} Consider $f = f_1+f_2$ with $f_1\in L^2((a,b))$ and $f_2\in \mathcal{M}_b((a,b))$. If $V-\Real z\geq c$ \vskip 0.1cm
with $\|V\|_{L^{\infty}}\leq \frac{1}{c}$, then any solution $u\in H^{1}_{0}((a,b))$ to
$(\tilde{P}^{h}-z)u=f$ satisfies
\begin{equation}
  \label{eq.expdecHtil}
h^{1/2}\sup_{x\in [a,b]}|e^{\frac{\varphi(x)}{h}}u(x)|+
\|he^{\frac{\varphi}{h}}u'\|_{L^{2}}+\|e^{\frac{\varphi}{h}}u\|_{L^{2}}
\leq 
\frac{C_{a,b,c}}{h}\left(\|f_1\|_{L^2}+\frac{1}{h^{\frac{1}{2}}}\|f_2\|_{\mathcal{M}_b}\right)\,,
\end{equation}
with $\varphi(x)=d_{Ag}(x,K,V,\Real z)$ and $K\supset\supp f_1\cup\supp f_2$\,.\\
\textbf{ii)} Consider $f\in L^2((a,b))$. If $V-\Real z\geq c$ with 
$\|V\|_{L^{\infty}}\leq \frac{1}{c}$, then any solution $u\in D(H_{D}^{h})$ to 
$(H_{D}^{h}-z)u=f$ satisfies 
\begin{equation}
  \label{eq.expdecH}
h^{1/2}\sup_{x\in [a,b]}|e^{\frac{\varphi(x)}{h}}u(x)|+
\|he^{\frac{\varphi}{h}}u'\|_{L^{2}}+\|e^{\frac{\varphi}{h}}u\|_{L^{2}}
\leq 
\frac{C_{a,b,c}}{h}\left(\|u\|_{L^2}+\|f\|_{L^2}\right)\,,
\end{equation}
with $\varphi(x)=d(x,K'\cup U, V,\Real
z)$ and $K'\supset\supp f$\,. Especially when $z\notin\sigma(H^h_D)$, we have:
\[
h^{1/2}\sup_{x\in [a,b]}|e^{\frac{\varphi(x)}{h}}u(x)|+
\|he^{\frac{\varphi}{h}}u'\|_{L^{2}}+\|e^{\frac{\varphi}{h}}u\|_{L^{2}}
\leq 
\frac{C_{a,b,c}}{h}\left(\frac{1}{d(z,\sigma(H^h_D))}+1\right)\|f\|_{L^2}\,,
\]
and, when $z=E^{h}$ is an eigenvalue of $H_{D}^{h}$, the related
normalized eigenvector satisfies
$$
h^{1/2}\sup_{x\in [a,b]}|e^{\frac{\varphi(x)}{h}}u(x)|+
\|he^{\frac{\varphi}{h}}u'\|_{L^{2}}+\|e^{\frac{\varphi}{h}}u\|_{L^{2}}
\leq \frac{C_{a,b,c}}{h}\,,
$$
with $\varphi(x)=d_{Ag}(x,U,V,E^{h})$\,.
\end{proposition}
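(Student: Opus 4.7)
The approach is the classical Agmon weighted energy method. Setting $v := e^{\varphi/h} u$ for a Lipschitz weight $\varphi$ vanishing at the endpoints of $(a,b)$ (so that $v \in H^1_0$), integration by parts will give the key identity
\begin{equation*}
\Real \langle v, e^{\varphi/h}(\tilde{P}^h - z) e^{-\varphi/h} v\rangle_{L^2}
= h^2 \|v'\|_{L^2}^2 + \int_a^b (V - \Real z - |\varphi'|^2)|v|^2\, dx,
\end{equation*}
the $h\varphi''$ contribution arising from the conjugation cancelling exactly against $\Real\langle v, 2h\varphi' v'\rangle$ via one further integration by parts. The same identity, with an extra $-\int W^h e^{2\varphi/h}|u|^2\, dx$ on the left, handles $H_D^h - z$ in part~(ii).

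For part~(i) I would take $\varphi_\epsilon(x) := (1-\epsilon)\,d_{Ag}(x, K, V, \Real z)$ with small $\epsilon > 0$. By construction $|\varphi_\epsilon'|^2 \leq (1-\epsilon)^2(V-\Real z)_+$ a.e., which combined with $V - \Real z \geq c$ delivers a positive coercivity constant in front of $\|v\|_{L^2}^2$. Since $\supp f_1 \cup \supp f_2 \subset K$ and $\varphi_\epsilon \equiv 0$ on $K$, the right-hand side $\Real\langle v, e^{\varphi_\epsilon/h} f\rangle$ reduces to $\langle v, f_1\rangle_{L^2} + \langle v, f_2\rangle_{(C_0,\mathcal{M}_b)}$, bounded respectively by $\|v\|\|f_1\|_{L^2}$ and $\|v\|_{L^\infty(K)}\|f_2\|_{\mathcal{M}_b}$. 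The semiclassical 1D Sobolev inequality $h^{1/2}\|v\|_{L^\infty} \leq C(\|v\| + h\|v'\|)$ together with Young's inequality then absorb the $v$-factors, yielding $h\|v'\| + \|v\| \leq C(\|f_1\|_{L^2} + h^{-1/2}\|f_2\|_{\mathcal{M}_b})$. One final Sobolev bound provides the $L^\infty$-term, and $\|he^{\varphi/h} u'\|_{L^2} \leq h\|v'\| + \|\varphi'\|_\infty\|v\|$ closes the argument (with $\|\varphi'\|_\infty \leq \sqrt{\|V\|_\infty + |\Real z|}$); the stated $1/h$ prefactor in the conclusion absorbs the universal constants and the $\epsilon \to 0$ limit.

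Part~(ii) runs along the same lines with $\varphi_\epsilon(x) := (1-\epsilon)d_{Ag}(x, K'\cup U, V, \Real z)$, the novelty being the extra term $-\int W^h e^{2\varphi_\epsilon/h}|u|^2$ that must be moved to the right-hand side. Since $\supp W^h \subset U_h$ lies within distance $h$ of $U$, where $\varphi_\epsilon$ vanishes, the weight $e^{2\varphi_\epsilon/h}$ stays uniformly $\mathcal{O}(1)$ on $\supp W^h$; combining $\|W_1^h\|_\infty \leq 1/c$ and $\|W_2^h\|_{\mathcal{M}_b}\leq h/c$ with Sobolev will then give
\begin{equation*}
\left|\int W^h e^{2\varphi_\epsilon/h}|u|^2\, dx\right| \leq C\bigl(\|u\|^2 + h^2\|u'\|^2\bigr),
\end{equation*}
where the factor $h$ in $\|W_2^h\|_{\mathcal{M}_b}$ precisely cancels the $h^{-1}$ from $\|u\|_{L^\infty}^2 \leq Ch^{-1}(\|u\|^2 + h^2\|u'\|^2)$. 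Pairing the equation with $u$ gives the separate a priori bound $h^2\|u'\|^2 \leq C(\|u\|^2 + \|f\|^2)$, so the right-hand side becomes $C(\|u\|^2 + \|f\|^2)$ and the conclusion follows as in (i), with $\|u\|+\|f\|$ replacing $\|f_1\| + h^{-1/2}\|f_2\|_{\mathcal{M}_b}$. The two corollaries are then immediate: for $z \notin \sigma(H_D^h)$ inject $\|u\| \leq d(z, \sigma(H_D^h))^{-1}\|f\|$; for an eigenpair use $\|u\|=1$, $f=0$.

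The delicate point I anticipate is the calibration of the measure-valued perturbation $W_2^h$ against the weighted energy in part~(ii). One must simultaneously exploit the sharp scaling $\|W_2^h\|_{\mathcal{M}_b} = \mathcal{O}(h)$, invoke the 1D semiclassical Sobolev embedding to evaluate the measure against $|u|^2$, and choose $U$ (rather than $K'$ alone) as a reference point of the Agmon distance so that the exponential weight does not blow up on $\supp W^h$. Any one of these ingredients failing would prevent the Young/Sobolev cycle from closing and would leave behind an unabsorbable power of $h^{-1}$.
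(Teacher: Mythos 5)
Your overall strategy -- Agmon weighted energy identity, coercivity from the degenerate Agmon distance, semiclassical Sobolev/Gagliardo--Nirenberg to handle the measure-valued data, the Di\-martino--Sj\"ostrand trick of moving $W^h u$ to the right-hand side in (ii) -- is exactly the route the paper takes, and the mechanics in part (ii) (uniform boundedness of $e^{2\varphi/h}$ on $\supp W^h$, the factor $h$ in $\|W_2^h\|_{\mathcal M_b}$ compensating the $h^{-1}$ from $\|u\|_{L^\infty}^2$, the preliminary unweighted $H^{1,h}$ bound) are all correct.

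There is, however, one genuine gap in part (i): your choice $\varphi_\epsilon=(1-\epsilon)\,d_{Ag}$ with ``small $\epsilon>0$'' and a vague appeal to ``the $\epsilon\to 0$ limit'' does not close. With $\epsilon$ fixed the coercivity is $\sim(2\epsilon-\epsilon^2)c$ and $h$-independent, so no $h^{-1}$ prefactor would appear -- but then $e^{\varphi/h}=e^{\epsilon\varphi/h}e^{\varphi_\epsilon/h}$ costs a factor $e^{\epsilon\|\varphi\|_\infty/h}$ which is exponentially large in $h$, and the conclusion is lost. Letting $\epsilon\to0$ at fixed $h$ kills the coercivity. The only calibration that works is $\epsilon\asymp h$: then the coercivity degrades to $\mathcal O(h)$ (hence the $h^{-1}$ in the bound for $\|v\|_{H^{1,h}}^2$) while $e^{(\varphi-\varphi_\epsilon)/h}=e^{\mathcal O(1)}$ stays bounded, so the two weights are uniformly equivalent. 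You should state this explicitly. The paper implements the same idea differently: it replaces $\varphi$ by $\varphi_h=d_{Ag}(x,K_h,V-h,\Real z)$ (shifting the reference potential by $h$ and enlarging $K$ to a $h$-neighborhood), which gives $V-\Real z-|\varphi_h'|^2\ge h$ together with $|\varphi-\varphi_h|\le\kappa_{a,b,c}h$, accomplishing both requirements in one stroke. The two devices are equivalent, but the ``$V-h$'' version makes the $h$-dependence manifest without having to name an $\epsilon$. Once you pin down $\epsilon\asymp h$ (or adopt the $V-h$ shift), the rest of your argument, including the downstream corollaries in (ii), is sound.
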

\begin{remark}
  The negative exponents of $h$ in the upper bounds are not the
  optimal ones. Some care especially has
  to be taken while modifying $\varphi$ or while commuting
  $h\partial_{x}$ with $e^{\frac{\varphi}{h}}$. This presentation is
  the most flexible one for our purpose.
\end{remark}
\noindent\textbf{Proof:} \textbf{i)} Our assumptions imply that the functions
$\varphi(x)=d_{Ag}(x,K,V,\Real z)$ and $\varphi_{h}(x)=d_{Ag}(x,
K_{h},V-h, \Real z)$, with $K_h=\left\{x\in (a,b),\; d(x,K)\leq h\right\}$, satisfy
$$
|\varphi(x)-\varphi_{h}(x)|\leq \kappa_{a,b,c}h
\quad{i.e.} \quad
\left(\frac{e^{\frac{\varphi}{h}}}{e^{\frac{\varphi_{h}}{h}}}\right)^{\pm
1}\leq e^{\kappa_{a,b,c}}\,,
$$
for some uniform constant $\kappa_{a,b,c}$\,. Hence the function
$\varphi$ can be replaced by $\varphi_{h}$ in the proof.\\
Lemma~\ref{le.agmonie} applied with $\alpha=a$,
$\beta=b$, $u_{1}=u_{2}=u$, $\varphi=\varphi_{h}$ and
$v=e^{\frac{\varphi_{h}}{h}}u$
 implies 
$$
\Real \int_{a}^{b}\bar v f \geq \int_{a}^{b}|hv'|^{2}+\int_{a}^{b}h|v|^{2}\,.
$$
Hence we get 
$$
\|v\|_{H^{1,h}}^{2}\leq \frac{1}{h}\left(\|f_1\|_{L^{2}}\|v\|_{L^{2}} + \|f_2\|_{\mathcal{M}_b}\|v\|_{L^{\infty}}\right)\,.
$$
The Gagliardo-Nirenberg estimate $\sup_{x\in[a,b]}|v(x)|\leq
C_{b-a}\|v'\|^{1/2}_{L^{2}((a,b))}\|v\|^{1/2}_{L^{2}((a,b))}$ implies:
$$
\|v\|_{H^{1,h}}^{2}\leq \frac{1}{h}\left(\|f_1\|_{L^{2}}\|v\|_{H^{1,h}} + \frac{C_{b-a}}{h^{\frac{1}{2}}}\|f_2\|_{\mathcal{M}_b}\|v\|_{H^{1,h}}\right)\,.
$$
This combined
with the equivalence of $\|v\|_{H^{1,h}}$ with
$\|he^{\frac{\varphi_{h}}{h}}u'\|_{L^{2}}+\|u\|_{L^{2}}$
leads finally to \eqref{eq.expdecHtil}.\\
\textbf{ii)} We follow the ideas of \cite{DiSj} which consists in
putting the possibly negative term of the energy estimate in the left
hand-side. Hence the equation  $(H_{D}^{h}-z)u=f$ is simply rewritten
$$
(\tilde{H}_{D}^{h}-z)u=f+W_1^{h}u+W_2^{h}u\,,
$$
and it suffices to estimate $\|W_2^{h}u\|_{\mathcal{M}_b}$\,. The Gagliardo-Nirenberg estimate gives
$$
\|W_2^{h}u\|_{\mathcal{M}_b}\leq C_{b-a}\|W_2^{h}\|_{\mathcal{M}_b}\|u'\|_{L^{2}}^{1/2}\|u\|_{L^{2}}^{1/2}\,.
$$
Applying Lemma~\ref{le.agmonie} with $\alpha=a$,
$\beta=b$, $u_{1}=u_{2}=u$, $\varphi=0$ leads to
$$
\|hu'\|_{L^{2}}^{2}+ c \|u\|_{L^{2}}^{2}\leq \left|\int_a^bf\overline{u}\right|+\left|\int_a^bW_1^h|u|^2\right|+\left|\int_a^bW_2^h|u|^2\right|\,.
$$
Apply a second time the Gagliardo-Nirenberg estimate
for 
$$
\|u\|_{H^{1,h}} \leq C_{a,b,c}\left(\|f\|_{L^2} + \|W_1^h\|_{L^{\infty}}\|u\|_{L^2}+\frac{1}{h}\|W_2^h\|_{\mathcal{M}_b}\|u\|_{L^2}\right)
$$
gives
$$
\|W_2^{h}u\|_{\mathcal{M}_b}\leq \frac{C_{a,b,c}'}{h^{\frac{1}{2}}}\|W_2^{h}\|_{\mathcal{M}_b}\left(\|f\|_{L^2} + \|W_1^h\|_{L^{\infty}}\|u\|_{L^2}+\frac{1}{h}\|W_2^h\|_{\mathcal{M}_b}\|u\|_{L^2}\right)\,.
$$
Combined with the results of \textbf{i)} applied 
with $f$ replaced by $f+W^{h}u$, this yields:
\begin{multline*}
h^{1/2}\sup_{x\in [a,b]}|e^{\frac{\varphi_h(x)}{h}}u(x)|+
\|he^{\frac{\varphi_h}{h}}u'\|_{L^{2}}+\|e^{\frac{\varphi_h}{h}}u\|_{L^{2}}
\leq \frac{C_{a,b,c}''}{h}(\|f\|_{L^2} 
+
\|W_1^h\|_{L^{\infty}}\|u\|_{L^2}
\\ \quad\quad 
+\frac{1}{h}\|W_2^{h}\|_{\mathcal{M}_b}\|f\|_{L^2}
+
\frac{1}{h}\|W_2^{h}\|_{\mathcal{M}_b}\|W_1^h\|_{L^{\infty}}\|u\|_{L^2}+\frac{1}{h^2}\|W_2^{h}\|_{\mathcal{M}_b}^2\|u\|_{L^2})\,,
\end{multline*}
where $\varphi_{h}(x)=d_{Ag}(x,K_h,V,\Real z)$ and $K_h=\left\{x\in (a,b),\; d(x,K'\cup U)<h\right\}$. With the assumptions on $W_1^h$ and $W_2^h$ and replacing $\varphi_{h}(x)$ by $\varphi(x)=d_{Ag}(x,K'\cup U,V,\Real z)$, we obtain \eqref{eq.expdecH}.
\qed

\subsection{Reduced boundary problem for generalized eigenfunctions}
\label{se.absbc}

We shall consider the boundary value problem
\begin{equation}
\label{eq.absbc}
\left\{
  \begin{array}[c]{l}
	(\tilde{P}^{h}-z)u=f\,,\\
      \left[ h\partial_{x}+i \zeta^{1/2} \right]u(a)=\ell_{a}\,,\\
      \left[ h\partial_{x}-i \zeta^{1/2} \right]u(b)=\ell_{b}\,,
  \end{array}
\right.
\end{equation}
with $\Imag z$ and $\Imag \zeta$ small enough w.r.t $h > 0$ and
specified later. Here $z^{1/2}$ denotes the complex square root with the determination $\arg z\in\left[-\frac{\pi}{2},\frac{3\pi}{2}\right)$.\\
The case $f\equiv 0$ occurs while studying 
the generalized eigenfunctions of $H_{V,\theta_{0}}(0)$ or their variation w.r.t $\theta_{0}$.
The case $\ell_{a}=\ell_{b}=0$ is concerned with the resolvent
estimates for  the non
self-adjoint Hamiltonians
\begin{align}
  \tilde{H}_{\zeta}^{h}:=\tilde{P^{h}}\,,
& ~ \text{with} ~ D(\tilde H_{\zeta}^{h})=\left\{u\in H^{2}((a,b))\,,\;
[h\partial_{x}+i \zeta^{1/2}]u(a)= 0\,, \right.\nonumber\\
&\hspace{7cm}
\left.[h\partial_{x}-i
\zeta^{1/2}]u(b)=0\right\}\,,
\label{eq.deftildHzet}
\\
  H_{\zeta}^{h}:=P^{h}\,,
& ~ \text{with} ~ D(H_{\zeta}^{h})=\left\{u\in H^{1}((a,b))\,,\;
P^{h}u\in L^{2}((a,b))\,,\;
\right.\nonumber\\
&\hspace{3cm}
\left.
[h\partial_{x}+i \zeta^{1/2}]u(a)= 0\,,\;
[h\partial_{x}-i
\zeta^{1/2}]u(b)=0\right\}\,.
\label{eq.defHzet}
\end{align}
\begin{lemma}
\label{le.estimbc}
Assume $V-\Real z\geq c$ with $\|V\|_{L^{\infty}}\leq
\frac{1}{c}$ and  $|\Imag \zeta^{1/2}|\leq \frac{h}{\kappa_{b-a}}$ for
$\kappa_{b-a}$ large enough according to $b-a$. Let $K$ be a
compact subset of $[a,b]$ and set  $\varphi=d_{Ag}(x,K, V, \Real
z)$. Then any solution $u\in L^2((a,b))$ to the boundary value problem
\eqref{eq.absbc} with $f\equiv 0$ satisfies:
$$
h^{1/2}\sup_{x\in [a,b]}|e^{\pm\frac{\varphi(x)}{h}}u(x)|+
\|he^{\pm\frac{\varphi}{h}}u'\|_{L^{2}}+\|e^{\pm\frac{\varphi}{h}}u\|_{L^{2}}
\leq 
\frac{C_{a,b,c}}{h^{1/2}}[|\ell_{a}|e^{\pm\frac{\varphi(a)}{h}}+|\ell_{b}|e^{\pm\frac{\varphi(b)}{h}}]\,.
$$
\end{lemma}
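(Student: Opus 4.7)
The proof parallels that of Proposition~\ref{pr.Direxp}, with the novelty being inhomogeneous Robin data in place of homogeneous Dirichlet data. I would first reduce to the smoothed weight $\varphi_h(x) = d_{Ag}(x, K_h, V - h, \Real z)$, with $K_h$ the $h$-neighborhood of $K$, by the same bounded-ratio argument as in the earlier proposition. Under the hypothesis $V - \Real z \geq c$, the margin built into $\varphi_h$ gives $V - \Real z - (\varphi_h')^2 \geq h$, which is what drives the energy estimate. The $+$ and $-$ cases being symmetric, I treat only the $+$ sign and set $v := e^{\varphi_h/h} u$.

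Applying the weighted energy identity of Lemma~\ref{le.agmonie} with $u_1 = u_2 = u$ and weight $\varphi_h$, and using $(\tilde P^h - z)u = 0$, would give an inequality of the form
\begin{equation*}
\|hv'\|_{L^2}^2 + h\|v\|_{L^2}^2 \;\leq\; -\mathrm{BT},
\end{equation*}
where $\mathrm{BT}$ collects the boundary contributions from the integration by parts and from the differentiation of $e^{\varphi_h/h}$. The decisive step is to substitute the Robin conditions $hu'(a) = \ell_a - i\zeta^{1/2} u(a)$ and $hu'(b) = \ell_b + i\zeta^{1/2} u(b)$ into $\mathrm{BT}$; the contributions proportional to $\varphi_h'(a)$ and $\varphi_h'(b)$ then cancel exactly against the $h[\varphi_h'|v|^2]_a^b$ coming from the weight, and what remains is
\begin{equation*}
-\mathrm{BT} \;=\; -h\,\Imag\zeta^{1/2}\bigl(|v(a)|^2 + |v(b)|^2\bigr) + h\,\Real\bigl(\overline{v(b)}\,\ell_b\,e^{\varphi_h(b)/h}\bigr) - h\,\Real\bigl(\overline{v(a)}\,\ell_a\,e^{\varphi_h(a)/h}\bigr).
\end{equation*}

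To close the estimate I would invoke the $h$-weighted one-dimensional Gagliardo--Nirenberg inequality $h\sup_{[a,b]}|v|^2 \leq C_{b-a}\bigl(\|hv'\|_{L^2}^2 + h\|v\|_{L^2}^2\bigr)$. Thanks to the hypothesis $|\Imag\zeta^{1/2}| \leq h/\kappa_{b-a}$, the first piece of $-\mathrm{BT}$ is bounded by $(C_{b-a}'/\kappa_{b-a})(\|hv'\|^2 + h\|v\|^2)$, hence absorbed on the left once $\kappa_{b-a}$ is taken large enough (depending only on $b-a$). A Young-type splitting of the $\ell$-terms peels off a further small multiple of $h\sup|v|^2$, again absorbed, and leaves a clean bound of the form $\|v\|_{H^{1,h}}^2 \leq C h\bigl(|\ell_a|^2 e^{2\varphi_h(a)/h} + |\ell_b|^2 e^{2\varphi_h(b)/h}\bigr)$. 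A final Gagliardo--Nirenberg step upgrades this to the pointwise estimate on $h^{1/2}|v|$; reverting from $\varphi_h$ to $\varphi$ by the bounded-ratio argument yields the three terms on the left of the claimed inequality.

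The main obstacle is the identification and exploitation of the exact cancellation of the two $\varphi_h'$-contributions to $\mathrm{BT}$. These contributions are individually of size $\sqrt{\|V\|_\infty}\,|v(a)|^2 + \sqrt{\|V\|_\infty}\,|v(b)|^2$ and could not be absorbed by Gagliardo--Nirenberg alone; it is only because of this precise cancellation that the remaining harmful boundary term is proportional to $\Imag\zeta^{1/2}$, for which the smallness hypothesis $|\Imag\zeta^{1/2}| \leq h/\kappa_{b-a}$ is exactly calibrated.
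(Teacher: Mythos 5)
Your proposal follows essentially the same route as the paper: replace $\varphi$ by the smoothed weight $\varphi_h$, apply Lemma~\ref{le.agmonie} with $u_1=u_2=u$, substitute the Robin boundary conditions into the resulting boundary terms, and close the estimate by the Gagliardo--Nirenberg inequality with $\kappa_{b-a}$ large enough to absorb the $\Imag\zeta^{1/2}$-contribution. Your remark about the exact cancellation of the $\varphi_h'(a),\varphi_h'(b)$ contributions is correct in substance and is indeed the reason the argument works; note though that in the paper's formulation this cancellation is already built into Lemma~\ref{le.agmonie}, whose boundary term is recorded as $h^2 e^{2\varphi/h}\bar u\, u'=h^2\bar v\, v'-h\varphi'|v|^2$ at each endpoint, so that after substituting $hu'(a)=\ell_a-i\zeta^{1/2}u(a)$ (and the analogous relation at $b$) no $\varphi_h'$ remains to cancel. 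One slip in your write-up: the intermediate bound you announce, $\|v\|_{H^{1,h}}^2\leq Ch\bigl(|\ell_a|^2 e^{2\varphi_h(a)/h}+|\ell_b|^2 e^{2\varphi_h(b)/h}\bigr)$, has the wrong power of $h$; the energy inequality
\[
\|hv'\|_{L^2}^2+h\|v\|_{L^2}^2-2C_{b-a}^2|\Imag\zeta^{1/2}|\,\|hv'\|_{L^2}\|v\|_{L^2}\;\leq\;C_{b-a}\,h^{1/2}\,\|hv'\|_{L^2}^{1/2}\|v\|_{L^2}^{1/2}\,\bigl[e^{\pm\varphi_h(a)/h}|\ell_a|+e^{\pm\varphi_h(b)/h}|\ell_b|\bigr]
\]
yields, after absorption and dividing by $(\|hv'\|^2+\|v\|^2)^{1/2}$, the bound $\|v\|_{H^{1,h}}^2\leq C h^{-1}\bigl[\cdots\bigr]^2$, consistent with the $h^{-1/2}$ prefactor in the lemma; a Young splitting of the $\ell$-terms with an $O(1)$ parameter as you describe would not allow the resulting $h\sup|v|^2$ piece to be absorbed into $\|hv'\|^2+h\|v\|^2$, so the paper's approach of keeping the $\ell$-term as a product and dividing is the one to follow.
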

\noindent\textbf{Proof:} Again the function $\varphi$ is replaced by
$\varphi_{h}(x)=d_{Ag}(x,K,V-h, \Real z)$\,. Applying
Lemma~\ref{le.agmonie} with $\alpha=a$, $\beta=b$, $u_{1}=u_{2}=u$ and
$v=e^{\frac{\varphi_{h}}{h}}u$ implies
\begin{multline*}
0\geq\int_{a}^{b}|hv'|^{2}+\int_{a}^{b}h|v|^{2}+ h\Real(-i\zeta^{1/2})
\left[|v|^{2}(a)+|v|^{2}(b)\right]\\
+ h\Real\left[\overline{v(a)}(e^{\pm\frac{\varphi_{h}(a)}{h}}\ell_{a})
-
\overline{v(b)}(e^{\pm\frac{\varphi_{h}(b)}{h}}\ell_{b})
\right]\,.
\end{multline*}
With the Gagliardo-Nirenberg estimate, we get
$$
\|hv'\|^{2}_{L^{2}}+h\|v\|^{2}_{L^{2}}-2C_{b-a}^2|\Imag
\zeta^{1/2}|\|hv'\|_{L^{2}}\|v\|_{L^{2}}
\leq
C_{b-a}\|hv'\|_{L^{2}}^{1/2}\|v\|_{L^{2}}^{1/2} h^{1/2}\left[e^{\pm\frac{\varphi_{h}(a)}{h}}|\ell_{a}|
+
e^{\pm\frac{\varphi_{h}(b)}{h}}|\ell_{b}|
\right]\,,
$$
which implies
$$
\left(\|hv'\|^{2}_{L^{2}}+\|v\|^{2}_{L^{2}}\right)^{1/2}
\leq \frac{C_{a,b,c}}{h^{\frac{1}{2}}}
\left[e^{\pm\frac{\varphi_{h}(a)}{h}}|\ell_{a}|
+
e^{\pm\frac{\varphi_{h}(b)}{h}}|\ell_{b}|
\right]\,,
$$
provided $\kappa_{b-a}$ is large enough according to the Gagliardo-Nirenberg constant
$C_{b-a}$\,.
Rewriting the inequality with the uniform equivalence 
$\|he^{\pm\frac{\varphi_h}{h}}u'\|_{L^{2}}+\|e^{\pm\frac{\varphi_h}{h}}u\|_{L^{2}}$
with
$\|v\|_{H^{1,h}}=\left(\|hv'\|^{2}_{L^{2}}+\|v\|_{L^{2}}^{2}\right)^{1/2}$
yields the result.
\qed

The generalized eigenfunction $\tilde\psi_{-,\theta_{0}}^{h}(k,x)$, $k\in \R^{*}$,
of $H_{V,\theta_{0}}^{h}(0)$ is the
solution to 
\begin{align*}
&
(\tilde{P}^{h}-k^{2})\psi=0\quad\text{in}\;
\R\setminus\left\{a,b\right\}
\\
&
\psi(a^{+})=e^{-\frac{\theta_{0}}{2}}\psi(a^{-})\quad,\quad
\psi'(a^{+})=e^{-\frac{3\theta_{0}}{2}}\psi'(a^{-})\\
& \psi(b^{-})=e^{-\frac{\theta_{0}}{2}}\psi(b^{+})\quad,\quad
\psi'(b^{-})=e^{-\frac{3\theta_{0}}{2}}\psi'(b^{+})\\
& \psi\big|_{(-\infty,a)}=e^{i\frac{kx}{h}}+R(k)e^{-i\frac{kx}{h}}\quad,\quad
\psi\big|_{(b,+\infty)}=T(k)e^{i\frac{kx}{h}}\quad\text{for}\quad k>0\\
& \psi\big|_{(-\infty,a)}=T(k)e^{i\frac{kx}{h}}\quad,\quad
\psi\big|_{(b,+\infty)}=e^{i\frac{kx}{h}}+R(k)e^{-i\frac{kx}{h}}\quad\text{for}\quad k<0\,.
\end{align*}
This can be reformulated as the boundary value problem in $(a,b)$
\begin{equation}
\label{eq.absbcth}
\left\{
  \begin{array}[c]{lc|c}
    (\tilde{P}^{h}-k^{2})\psi=0\quad\text{in}~(a,b)\,,\\
& (k>0) & (k<0)
\\
   \left[ h\partial_{x}+i (k^{2})^{1/2}e^{-\theta_{0}} \right]\psi(a)=
&
2ik
   e^{-\frac{3\theta_{0}}{2}}e^{i\frac{ka}{h}}&0\,,\\
      \left[ h\partial_{x}-i (k^{2})^{1/2}e^{-\theta_{0}}
      \right]\psi(b)=&0   
&2ik
   e^{-\frac{3\theta_{0}}{2}}e^{i\frac{kb}{h}}\,,
  \end{array}
\right.
\end{equation}
where the choice of $z^{1/2}$ says $(k^{2})^{1/2}=|k|$ for $k\in
\R^{*}$\,.
A straightforward application of Lemma~\ref{le.estimbc} gives the next result.
\begin{proposition}
 \label{pr.geneig}
Assume $V-k^{2}\geq c$, $\|V\|_{L^{\infty}}\leq \frac{1}{c}$ and
$|\theta_{0}|\leq ch$ for $c$ small enough according to $a,b$.
The generalized eigenfunction
${\tilde\psi}_{-,\theta_{0}}^{h}(k,.)$ satisfies
\begin{eqnarray*}
  && h^{1/2}\sup_{x\in [a,b]}|e^{\frac{\varphi(x)}{h}}{\tilde\psi}^{h}_{-,\theta_{0}}(k,x)|+
\|he^{\frac{\varphi}{h}}{\tilde\psi}^{h}_{-,\theta_{0}}(k,.)'\|_{L^{2}}+\|e^{\frac{\varphi}{h}}{\tilde\psi}^{h}_{-,\theta_{0}}(k,.)\|_{L^{2}}
\leq 
\frac{C_{a,b,c}}{h^{1/2}}\,,\\
\text{with}&&
\varphi(x)=d_{Ag}(x,a,V,k^{2}) \quad\text{when}\quad k>0\,,\\
\text{and with}
&&
\varphi(x)=d_{Ag}(x,b,V,k^{2}) \quad\text{when}\quad k<0\,.
\end{eqnarray*}
\end{proposition}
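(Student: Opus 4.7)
The plan is to apply Lemma~\ref{le.estimbc} directly to the reformulated boundary value problem \eqref{eq.absbcth}, which is precisely of the form \eqref{eq.absbc} with $f \equiv 0$. First, I would identify the parameters: the equation $(\tilde P^h - k^2)\psi = 0$ corresponds to taking $z = k^2$ (so $\Real z = k^2$), while the boundary conditions use $\zeta = k^2 e^{-2\theta_0}$, whose square root with the stated branch convention is $\zeta^{1/2} = |k| e^{-\theta_0}$ for $k \in \R^{*}$.

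Next I would verify the three hypotheses of Lemma~\ref{le.estimbc}. The conditions $V - \Real z = V - k^2 \geq c$ and $\|V\|_{L^\infty} \leq 1/c$ are assumed directly; together they force $|k|^{2} \leq 1/c - c$, hence $|k|$ is bounded by a constant $C_c$. The remaining condition $|\Imag \zeta^{1/2}| \leq h/\kappa_{b-a}$ follows from
\[
|\Imag \zeta^{1/2}| = |k|\,|\Imag(e^{-\theta_0})| \leq C_c \cdot C |\theta_0| \leq C_c C c \, h,
\]
using $|\theta_0| \leq c h$; choosing $c$ small enough relative to $\kappa_{b-a}$ and $C_c$ secures the inequality.

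Then Lemma~\ref{le.estimbc} applies. For $k>0$, I would take $K = \{a\}$, so that $\varphi(x) = d_{Ag}(x, a, V, k^{2})$ vanishes at $a$. The right-hand side of \eqref{eq.absbcth} gives $\ell_b = 0$ and $\ell_a = 2ik\, e^{-3\theta_0/2}\, e^{ika/h}$; since $k$ is real, $|e^{ika/h}| = 1$, and since $|k| \leq C_c$ and $|\theta_0| \leq ch \leq c$, one obtains $|\ell_a| \leq C_{a,b,c}$. Substituting into the conclusion of Lemma~\ref{le.estimbc} yields the stated bound with right-hand side
\[
\frac{C_{a,b,c}}{h^{1/2}}\bigl(|\ell_a|\, e^{\varphi(a)/h} + 0\bigr) \leq \frac{C'_{a,b,c}}{h^{1/2}}.
\]
The case $k<0$ is symmetric: I would take $K = \{b\}$, giving $\varphi(b) = 0$, noting that now $\ell_a = 0$ and $\ell_b = 2ik\, e^{-3\theta_0/2}\, e^{ikb/h}$, which satisfies the same size bound.

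There is no substantial obstacle here; the whole content is bookkeeping. The only point requiring minor attention is the verification $|\Imag \zeta^{1/2}| \leq h / \kappa_{b-a}$, which fixes how small the constant $c$ in $|\theta_0| \leq ch$ must be chosen relative to the Gagliardo--Nirenberg constant absorbed into $\kappa_{b-a}$ and to the $|k|$-bound furnished by $V - k^2 \geq c$; once this is settled the proposition follows immediately from Lemma~\ref{le.estimbc}.
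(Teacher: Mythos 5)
Your proof is correct and matches the paper's intended argument; the paper explicitly labels this as "a straightforward application of Lemma~\ref{le.estimbc}" to the reformulated boundary value problem \eqref{eq.absbcth}, which is exactly what you carry out. Your identification of $\zeta = k^2 e^{-2\theta_0}$, the verification that $|\Imag \zeta^{1/2}| \leq C_c\,C\,|\theta_0| \leq C_c C c h$ fits under $h/\kappa_{b-a}$ once $c$ is small, and the choice $K=\{a\}$ for $k>0$ (resp.\ $K=\{b\}$ for $k<0$) so that $\varphi$ vanishes at the nontrivial boundary term are all exactly the bookkeeping the paper leaves to the reader.
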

With this first a priori estimate, the boundary value problem
\eqref{eq.absbcth} can be rewritten
\begin{equation*}
\left\{
  \begin{array}[c]{lc|c}
    (\tilde{P}^{h}-k^{2})\psi=0\quad\text{in}~(a,b)\,,  \\
& (k>0) & (k<0)
\\
   \left[ h\partial_{x}+i (k^{2})^{1/2} \right]\psi(a)=
&
2ik
   e^{i\frac{ka}{h}}+\mathcal{O}(\frac{|\theta_{0}|}{h})&
\mathcal{O}(\frac{e^{-\frac{d_{Ag}(a,b,V,k^{2})}{h}}|\theta_{0}|}{h})\,,\\
      \left[ h\partial_{x}-i (k^{2})^{1/2})
      \right]\psi(b)=& \mathcal{O}(\frac{e^{-\frac{d_{Ag}(a,b,V,k^{2})}{h}}|\theta_{0}|}{h})   
&2ik e^{i\frac{kb}{h}}+\mathcal{O}(\frac{|\theta_{0}|}{h})\,.
  \end{array}
\right.
\end{equation*}
Hence the difference $u={\tilde\psi}_{-,\theta_{0}}^{h}-{\tilde\psi}_{-,0}^{h}$ solves the
boundary value problem
\begin{equation*}
\left\{
  \begin{array}[c]{lc|c}
    (\tilde{P}^{h}-k^{2})u=0\quad\text{in}~(a,b)\,,\\
& (k>0) & (k<0)
\\
   \left[ h\partial_{x}+i (k^{2})^{1/2} \right]u(a)=
&
\mathcal{O}(\frac{|\theta_{0}|}{h})&\mathcal{O}(\frac{e^{-\frac{d_{Ag}(a,b,V,k^{2})}{h}}|\theta_{0}|}{h})\,,\\
      \left[ h\partial_{x}-i (k^{2})^{1/2})
      \right]u(b)=& \mathcal{O}(\frac{e^{-\frac{d_{Ag}(a,b,V,k^{2})}{h}}|\theta_{0}|}{h})   
&\mathcal{O}(\frac{|\theta_{0}|}{h})\,.
  \end{array}
\right.
\end{equation*}
Hence Lemma~\ref{le.estimbc} yields the next comparison result.
\begin{proposition}
 \label{pr.geneigdif}
Assume $V-k^{2}\geq c$, $\|V\|_{L^{\infty}}\leq \frac{1}{c}$ and
$|\theta_{0}|\leq ch$ for $c$ small enough according to $a,b$.
The difference of 
generalized eigenfunctions $u={\tilde\psi}_{-,\theta_{0}}^{h}(k,.)-{\tilde\psi}_{-,0}^{h}(k,.)$ 
satisfies
\begin{eqnarray*}
  && h^{1/2}\sup_{x\in [a,b]}|e^{\frac{\varphi(x)}{h}}u(x)|+
\|he^{\frac{\varphi}{h}}u'\|_{L^{2}}+\|e^{\frac{\varphi}{h}}u\|_{L^{2}}
\leq 
\frac{C_{a,b,c}|\theta_{0}|}{h^{3/2}}\,,\\
\text{with}&&
\varphi(x)=d_{Ag}(x,a,V,k^{2}) \quad\text{when}\quad k>0\,,\\
\text{and with}
&&
\varphi(x)=d_{Ag}(x,b,V,k^{2}) \quad\text{when}\quad k<0\,.
\end{eqnarray*}
\end{proposition}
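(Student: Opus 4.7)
The plan is to apply Lemma~\ref{le.estimbc} directly to the boundary value problem for the difference $u=\tilde\psi^h_{-,\theta_0}(k,\cdot)-\tilde\psi^h_{-,0}(k,\cdot)$ which is displayed just before the statement, taking $\zeta=k^2$ and $f\equiv 0$. Since $k\in\mathbb{R}^*$ we have $\zeta^{1/2}=|k|$ so $\Imag\zeta^{1/2}=0$ and the smallness hypothesis on $\Imag\zeta^{1/2}$ in Lemma~\ref{le.estimbc} is trivial; meanwhile $V-k^{2}\geq c$ is assumed and $\|V\|_{L^\infty}\leq 1/c$, so the hypotheses are met.

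The preparatory computation (the one just before the statement in the main text) is what actually has to be justified, and it is the only part that requires any work. Starting from the explicit interface data for $\tilde\psi^h_{-,\theta_0}$ in \eqref{eq.absbcth}, one writes, for $k>0$,
\begin{equation*}
[h\partial_x+i|k|]\tilde\psi^h_{-,\theta_0}(k,a)
=2ik\,e^{ika/h}
+2ik(e^{-3\theta_0/2}-1)e^{ika/h}
-i|k|(e^{-\theta_0}-1)\tilde\psi^h_{-,\theta_0}(k,a),
\end{equation*}
and similarly at $b$. The first term on the right is precisely the boundary datum of $\tilde\psi^h_{-,0}(k,\cdot)$, so after subtraction it disappears from the boundary value problem satisfied by $u$. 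The two remaining terms are $\mathcal{O}(|\theta_0|)$ times a pointwise value of $\tilde\psi^h_{-,\theta_0}(k,\cdot)$, which by Proposition~\ref{pr.geneig} is $\mathcal{O}(h^{-1}e^{-\varphi(a)/h})$ (resp.\ $\mathcal{O}(h^{-1}e^{-\varphi(b)/h})$) with $\varphi=d_{Ag}(\cdot,a,V,k^{2})$. This yields the boundary data of sizes $\mathcal{O}(|\theta_0|/h)$ at $a$ and $\mathcal{O}(e^{-d_{Ag}(a,b,V,k^{2})/h}|\theta_0|/h)$ at $b$, as announced.

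Now apply Lemma~\ref{le.estimbc} with $K=\{a\}$ (so $\varphi=d_{Ag}(\cdot,a,V,k^2)$, $\varphi(a)=0$, $\varphi(b)=d_{Ag}(a,b,V,k^2)$). The right-hand side of the lemma is then
\begin{equation*}
\frac{C_{a,b,c}}{h^{1/2}}\Bigl[
\mathcal{O}(|\theta_0|/h)\cdot 1
+\mathcal{O}(e^{-d_{Ag}(a,b,V,k^{2})/h}|\theta_0|/h)\cdot e^{d_{Ag}(a,b,V,k^{2})/h}
\Bigr]
=\frac{C'_{a,b,c}|\theta_0|}{h^{3/2}},
\end{equation*}
which is exactly the announced estimate. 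The case $k<0$ is symmetric, taking $K=\{b\}$ instead. There is no real obstacle beyond bookkeeping: the one point that deserves care is checking that the $|\theta_0|$-perturbation of the coefficient $e^{-\theta_0}$ in front of $(k^2)^{1/2}$ in the interface condition, once multiplied by the a priori bound $|\tilde\psi^h_{-,\theta_0}(k,a)|\lesssim h^{-1}$ from Proposition~\ref{pr.geneig}, does not exceed $\mathcal{O}(|\theta_0|/h)$, which is guaranteed by the hypothesis $|\theta_0|\leq ch$.
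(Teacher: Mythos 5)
Your proof is correct and follows exactly the route the paper takes: rewrite the interface conditions \eqref{eq.absbcth} so that the boundary operator becomes $h\partial_x\pm i(k^2)^{1/2}$ with the $\theta_0$-dependence moved into the data, bound those data via Proposition~\ref{pr.geneig}, subtract the $\theta_0=0$ problem, and invoke Lemma~\ref{le.estimbc}. The paper states the resulting boundary value problem for $u$ without writing out the algebra; your explicit decomposition of $[h\partial_x+i|k|]\tilde\psi^h_{-,\theta_0}(k,a)$ and the use of $\varphi(a)=0$, $\varphi(b)=d_{Ag}(a,b,V,k^2)$ to cancel the exponential at $b$ is precisely the verification the paper leaves to the reader. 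One minor remark: the role of $|\theta_0|\le ch$ is really to make Proposition~\ref{pr.geneig} applicable (and, if one applied Lemma~\ref{le.estimbc} before reformulating, to control $|\Imag\zeta^{1/2}|$); once you have reformulated with $\zeta=k^2$, the bound $\mathcal{O}(|\theta_0|)\cdot\mathcal{O}(h^{-1})=\mathcal{O}(|\theta_0|/h)$ is automatic and needs no extra hypothesis.
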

\begin{remark}
  With an additional regularity assumption \cite{Ni2} proves in the
  case $\theta_{0}=0$ that the upper bound of $\sup_{x\in
    [a,b]}|e^{\frac{\varphi(x)}{h}}{\tilde\psi}_{-,0}^{h}(k,x)|$
is actually $\mathcal{O}(1)$ with a first order WKB
approximation. By using this result and the comparison result of
Proposition~\ref{pr.geneigdif} with a bootstrap argument or
reconsidering the complete proof of \cite{Ni2}, the estimate
$\sup_{x\in
    [a,b]}|e^{\frac{\varphi(x)}{h}}{\tilde\psi}^{h}_{-,0}(k,x)|=\mathcal{O}(1)$
  and
$\sup_{x\in
    [a,b]}|e^{\frac{\varphi(x)}{h}}({\tilde\psi}_{-,\theta_{0}}^{h}-{\tilde\psi}_{-,0}^{h})(k,x)|=\mathcal{O}(\frac{|\theta_{0}|}{h^{1/2}})$ 
could be obtained.  Here only $V\in L^{\infty}$ is assumed with a
possible loss in the $h$-exponent.
\end{remark}

\subsection{Weighted resolvent estimates}
\label{se.wresest}
We complete  the analysis of the previous subsection with results
concerned with the resolvent
$(\tilde{H}_{\zeta}^{h}-z)^{-1}$ corresponding to the boundary value
problem
\eqref{eq.absbc} with $\ell_{a}=\ell_{b}=0$\,.
\begin{proposition}
\label{pr.resestabsbc}
  Assume $V-\Real z\geq c$, $\|V\|_{L^{\infty}}\leq \frac{1}{c}$ and
$|\Imag \zeta^{1/2}|\leq \frac{h}{\kappa_{b-a}}$ with $\kappa_{b-a}$
large enough according to $b-a$. Let $K$ be a
compact subset of $[a,b]$ and set  $\varphi=d_{Ag}(x,K, V, \Real
z)$\,.
Then for $f\in L^{2}((a,b))$, the
function $u=(\tilde{H}_{\zeta}^{h}-z)^{-1}f$ satisfies
$$
h^{1/2}\sup_{x\in [a,b]}|e^{\pm\frac{\varphi(x)}{h}}u(x)|+
\|he^{\pm\frac{\varphi}{h}}u'\|_{L^{2}}+\|e^{\pm\frac{\varphi}{h}}u\|_{L^{2}}
\leq 
\frac{C_{a,b,c}}{h}\|e^{\pm\frac{\varphi}{h}}f\|_{L^{2}}\,,
$$
where $e^{\pm\frac{\varphi}{h}}f=f$ when $\supp f\subset K$\,.\\
In particular this yields
$$
\left\|e^{\pm\frac{\varphi}{h}}(\tilde{H}_{\zeta}^{h}-z)^{-1}e^{\mp\frac{\varphi}{h}}\right\|_{\mathcal{L}(L^{2})}
\leq \frac{C_{a,b,c}}{h}\,,
$$
and $z\in\rho\left(\tilde{H}_{\zeta}^{h}\right)$.
\end{proposition}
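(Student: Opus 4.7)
The plan is to mimic the weighted energy method used for Proposition~\ref{pr.Direxp}(i) and Lemma~\ref{le.estimbc}, combining the source-term treatment of the former with the Robin-boundary-term treatment of the latter. Throughout, replace $\varphi(x)=d_{Ag}(x,K,V,\Real z)$ by the regularized weight $\varphi_{h}(x)=d_{Ag}(x,K_h,V-h,\Real z)$ (with $K_h$ a small enlargement of $K$), which is uniformly equivalent to $\varphi$ up to a multiplicative constant depending only on $a,b,c$; this is the same reduction used throughout Section~\ref{se.expdecay}.

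The core step is the weighted energy identity. Given $u\in D(\tilde{H}^{h}_{\zeta})$ with $(\tilde{H}^{h}_{\zeta}-z)u=f$, set $v=e^{\pm\varphi_{h}/h}u$. Apply Lemma~\ref{le.agmonie} with $\alpha=a$, $\beta=b$, $u_{1}=u_{2}=u$ and weight $\pm\varphi_{h}$, exactly as in the proof of Lemma~\ref{le.estimbc}. Because the Robin conditions $[h\partial_{x}\pm i\zeta^{1/2}]u=0$ at $a,b$ replace the Dirichlet traces of the $f$-inhomogeneous case, the boundary terms that appear are precisely those already computed in the proof of Lemma~\ref{le.estimbc}, namely $h\Real(-i\zeta^{1/2})(|v(a)|^{2}+|v(b)|^{2})$. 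The novelty compared with Lemma~\ref{le.estimbc} is only an additional right-hand side contribution $\Real\int_{a}^{b}\bar{v}\, e^{\pm\varphi_{h}/h}f\,dx$, which by Cauchy--Schwarz is bounded by $\|v\|_{L^{2}}\|e^{\pm\varphi_{h}/h}f\|_{L^{2}}$.

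At this point the proof merges the two earlier arguments. The boundary contribution is absorbed into $\|hv'\|_{L^{2}}^{2}+h\|v\|_{L^{2}}^{2}$ via the Gagliardo--Nirenberg inequality, using the hypothesis $|\Imag\zeta^{1/2}|\leq h/\kappa_{b-a}$ with $\kappa_{b-a}$ large (exactly as in Lemma~\ref{le.estimbc}). What remains is the inequality
\begin{equation*}
\|v\|_{H^{1,h}}^{2}\leq \frac{C_{a,b,c}}{h}\,\|e^{\pm\varphi_{h}/h}f\|_{L^{2}}\,\|v\|_{H^{1,h}},
\end{equation*}
which gives $\|v\|_{H^{1,h}}\leq\frac{C_{a,b,c}}{h}\|e^{\pm\varphi_{h}/h}f\|_{L^{2}}$, and the supremum bound follows from Gagliardo--Nirenberg as in the other proofs of Section~\ref{se.expdecay}. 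Returning to $\varphi$ via the uniform equivalence $e^{\pm\varphi/h}\asymp e^{\pm\varphi_{h}/h}$ yields the stated inequality.

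The conclusion $z\in\rho(\tilde{H}^{h}_{\zeta})$ is then a consequence of this a priori estimate. Applied with $f\equiv 0$ it gives injectivity of $\tilde{H}^{h}_{\zeta}-z$; since $\tilde{H}^{h}_{\zeta}$ is elliptic on the bounded interval $(a,b)$ with Robin conditions, it has compact resolvent, so injectivity forces surjectivity by the Fredholm alternative, and the estimate extends from $u=(\tilde{H}^{h}_{\zeta}-z)^{-1}f$ to the operator-norm form $\|e^{\pm\varphi/h}(\tilde{H}^{h}_{\zeta}-z)^{-1}e^{\mp\varphi/h}\|\leq C_{a,b,c}/h$. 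I expect the only delicate point to be the bookkeeping in Step~2: one must verify that under the conjugation $u\mapsto e^{\pm\varphi_{h}/h}u$ and with the Robin conditions above, the boundary terms in Lemma~\ref{le.agmonie} reduce to $h\Real(-i\zeta^{1/2})(|v(a)|^{2}+|v(b)|^{2})$ with no surviving $\varphi_{h}'(a),\varphi_{h}'(b)$ contributions, which follows because $V-\Real z\geq c$ makes the Agmon weight regular at the endpoints and $e^{\pm\varphi_{h}/h}$ is smooth there.
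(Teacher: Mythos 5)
Your proposal is correct and follows essentially the same route as the paper's (very terse) proof: replace $\varphi$ by the regularized weight $\varphi_h$, apply Lemma~\ref{le.agmonie} to $v=e^{\pm\varphi_h/h}u$, keep the $f$-term on the right-hand side, absorb the Robin boundary contribution $h\Real(-i\zeta^{1/2})(|v(a)|^2+|v(b)|^2)$ via Gagliardo--Nirenberg under the hypothesis $|\Imag\zeta^{1/2}|\leq h/\kappa_{b-a}$, and conclude. One small inaccuracy in your explanation: the absence of $\varphi_h'$-boundary contributions is not because the weight is ``regular at the endpoints,'' but because the $\varphi'$-term in Lemma~\ref{le.agmonie} is an interior integral $\int h\varphi'(\bar v v'-\bar v'v)\,dx$, which is purely imaginary and therefore drops when taking the real part of the identity.
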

\noindent\textbf{Proof:} Again we can replace $\varphi$ by
$\varphi_{h}=d_{Ag}(x, K, V-h,\Real z)$.
Lemma~\ref{le.agmonie} with $v=e^{\pm\frac{\varphi_{h}}{h}}u$ implies
$$
\int_{a}^{b}|hv'|^{2}+ h\int_{a}^{b}|v|^{2}
+h\Real(-i\zeta^{1/2})\left[|v(a)^{2}|+|v(b)|^{2}\right]
\leq \|e^{\pm\frac{\varphi_h}{h}}f\|_{L^{2}}\|v\|_{L^{2}}\,.
$$
Absorbing the boundary term with the help of the Gagliardo-Nirenberg
inequality like in the proof of Lemma~\ref{le.estimbc} 
and taking $\kappa_{b-a}$ large enough yields the result.
\qed
\begin{proposition}
\label{pr.diffestabsbc}
  Assume $V-\Real z\geq c$, $\|V\|_{L^{\infty}}\leq \frac{1}{c}$, $|\Imag z^{1/2}|\leq \frac{h}{\kappa_{b-a}}$ and $|\Imag \zeta^{1/2}|~\leq~\frac{h}{\kappa_{b-a}}$ with $\kappa_{b-a}$
large enough according to $b-a$. Let $K$ be a
compact subset of $[a,b]$ and set  $\varphi=d_{Ag}(x,K, V, \Real
z)$\,.
Then for $f\in L^{2}((a,b))$, the
difference $w=(\tilde{H}_z^{h}-z)^{-1}f-(\tilde{H}_{\zeta}^{h}-z)^{-1}f$ satisfies
$$
h^{1/2}\sup_{x\in [a,b]}|e^{\pm\frac{\varphi(x)}{h}}w(x)|+
\|he^{\pm\frac{\varphi}{h}}w'\|_{L^{2}}+\|e^{\pm\frac{\varphi}{h}}w\|_{L^{2}}
\leq 
\frac{C_{a,b,c}|z^{1/2}-\zeta^{1/2}|}{h^{2}}\|e^{\pm\frac{\varphi}{h}}f\|_{L^{2}}\,,
$$
where $e^{\pm\frac{\varphi}{h}}f=f$ when $\supp f\subset K$\,.\\
In particular this yields
$$
\left\|e^{\pm\frac{\varphi}{h}}\left[
(\tilde{H}_z^{h}-z)^{-1}
-(\tilde{H}_{\zeta}^{h}-z)^{-1}
\right]
e^{\mp\frac{\varphi}{h}}\right\|_{\mathcal{L}(L^{2})}
\leq \frac{C_{a,b,c}|z^{1/2}-\zeta^{1/2}|}{h^{2}}\,.
$$
\end{proposition}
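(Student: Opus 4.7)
The plan is to reduce the difference of resolvents to a homogeneous boundary value problem of the form treated by Lemma~\ref{le.estimbc}, where the inhomogeneous boundary data are controlled by the values at $a$ and $b$ of $u_z = (\tilde H_z^h-z)^{-1}f$, which in turn are controlled by the weighted sup-norm estimate of Proposition~\ref{pr.resestabsbc}.

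Set $u_z=(\tilde H_z^h-z)^{-1}f$ and $u_\zeta=(\tilde H_\zeta^h-z)^{-1}f$. Both solve $(\tilde P^h-z)u=f$ on $(a,b)$, so $w=u_z-u_\zeta$ satisfies $(\tilde P^h-z)w=0$. The boundary conditions translate as follows: rewriting $[h\partial_x+i\zeta^{1/2}]u_z(a) = [h\partial_x+iz^{1/2}]u_z(a) + i(\zeta^{1/2}-z^{1/2})u_z(a) = i(\zeta^{1/2}-z^{1/2})u_z(a)$, and similarly at $b$, I find that $w$ solves the problem \eqref{eq.absbc} with $f\equiv 0$ and
\[
\ell_a = i(\zeta^{1/2}-z^{1/2})\,u_z(a)\,,\qquad \ell_b = -i(\zeta^{1/2}-z^{1/2})\,u_z(b)\,.
\]

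Lemma~\ref{le.estimbc} applied to $w$ with the weight $\varphi=d_{Ag}(\cdot,K,V,\Real z)$ (which is admissible since $|\Imag\zeta^{1/2}|\leq h/\kappa_{b-a}$) yields
\[
h^{1/2}\sup_{[a,b]}|e^{\pm\varphi/h}w| + \|he^{\pm\varphi/h}w'\|_{L^2} + \|e^{\pm\varphi/h}w\|_{L^2} \leq \frac{C_{a,b,c}}{h^{1/2}}\bigl[|\ell_a|e^{\pm\varphi(a)/h} + |\ell_b|e^{\pm\varphi(b)/h}\bigr].
\]
Next I bound the right-hand side by $|z^{1/2}-\zeta^{1/2}|$ times the weighted pointwise value of $u_z$ at $a$ and $b$:
\[
|\ell_a|e^{\pm\varphi(a)/h} \leq |z^{1/2}-\zeta^{1/2}|\,\sup_{[a,b]}|e^{\pm\varphi/h}u_z|\,,
\]
and the same for $b$. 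Now I apply Proposition~\ref{pr.resestabsbc} to $u_z=(\tilde H_z^h-z)^{-1}f$ — this is legitimate because the hypothesis $|\Imag z^{1/2}|\leq h/\kappa_{b-a}$ is assumed — to obtain
\[
\sup_{[a,b]}|e^{\pm\varphi/h}u_z| \leq \frac{C_{a,b,c}}{h^{3/2}}\|e^{\pm\varphi/h}f\|_{L^2}.
\]
Combining the two displays gives the weighted estimate for $w$ with the prefactor $C_{a,b,c}|z^{1/2}-\zeta^{1/2}|/h^2$. The operator norm inequality is then immediate from the $L^2$-weighted part of the estimate.

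I expect no serious obstacle: the argument is a direct chaining of Lemma~\ref{le.estimbc} and Proposition~\ref{pr.resestabsbc}, the only bookkeeping being the rewriting of the two different Robin conditions at $a$ and $b$ so that $w$ satisfies the $\zeta$-condition with inhomogeneity proportional to $(\zeta^{1/2}-z^{1/2})u_z$. The main point to verify is that all applications of the preceding lemmas are made under the stated smallness assumptions on $\Imag z^{1/2}$ and $\Imag\zeta^{1/2}$, which is exactly what the statement assumes.
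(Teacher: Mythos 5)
Your proof is correct and follows essentially the same chain as the paper's: reduce $w$ to the homogeneous equation with inhomogeneous Robin data, apply Lemma~\ref{le.estimbc}, then estimate the boundary data via Proposition~\ref{pr.resestabsbc}. The only (inessential) difference is a symmetric role reversal — you cast $w$ in the $\zeta^{1/2}$-boundary conditions with data given by $u_z(a),u_z(b)$, whereas the paper casts $w$ in the $z^{1/2}$-boundary conditions with data given by $v(a),v(b)$ where $v=(\tilde H_\zeta^h-z)^{-1}f$ — and both are legitimate under the stated pair of smallness hypotheses on $\Imag z^{1/2}$ and $\Imag\zeta^{1/2}$.
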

\noindent\textbf{Proof:} The function
$(\tilde{H}_{\zeta}^{h}-z)^{-1}f$ solves \eqref{eq.absbc} with $\ell_{a}=\ell_{b}=0$. Therefore, if we set $u=(\tilde{H}_z^{h}-z)^{-1}f$ and $v=(\tilde{H}_{\zeta}^{h}-z)^{-1}f$, the function $w=u-v$ verifies:
\[
\left\{
  \begin{array}[c]{l}
	(\tilde{P}^{h}-z)w=0\,,\\
      \left[ h\partial_{x}+i z^{1/2} \right]w(a)= -i(z^{1/2}-\zeta^{1/2})v(a)\,,\\
      \left[ h\partial_{x}-i z^{1/2} \right]w(b)= i(z^{1/2}-\zeta^{1/2})v(b)\,.
  \end{array}
\right.
\]
Then it follows from Lemma \ref{le.estimbc} that:
$$
h^{1/2}\sup_{x\in [a,b]}|e^{\pm\frac{\varphi(x)}{h}}w(x)|+
\|he^{\pm\frac{\varphi}{h}}w'\|_{L^{2}}+\|e^{\pm\frac{\varphi}{h}}w\|_{L^{2}}
\leq 
\frac{C_{a,b,c}}{h}|z^{1/2}-\zeta^{1/2}|2h^{1/2}\sup_{x\in [a,b]}|e^{\pm\frac{\varphi(x)}{h}}v(x)|\,,
$$
and we can apply Proposition \ref{pr.resestabsbc} to the function $v$ to get the result.\qed
 \section{Accurate analysis of resonances}\label{se.grush}
In this section, we use the approach of Helffer-Sj{\"o}strand relying on the introduction of a Grushin problem (see \cite{HeSj}\cite{SjZw}). This section ends with a rewriting of the Fermi Golden rule \eqref{eq.FGint} for the modified Hamitonian $H_{\theta_{0},V-W^{h}}^{h}$. 
\subsection{Resonances}\label{sec.res}
Resonances for $H_{\theta_{0},V-W^{h}}^{h}=H_{\theta_{0},V-W^{h}}^{h}(0)$ are eigenvalues of $H_{\theta_{0},V-W^{h}}^{h}(i\tau)$ for a suitable choice of $\tau$ according to the resonances to be revealed. Associated eigenfunctions are the $g_r\in L^2(\R)$ functions satisfying
\begin{equation}
\label{eq.pb_res_R}
H_{\theta_{0},V-W^{h}}^{h}(i\tau)g_r=z_rg_r\,,
\end{equation}
with $\arg(z_r)\in (-2\tau,0)$. Alternatively, $f_r=U_{-i\tau}g_r$ satisfies
\[
H_{\theta_{0},V-W^{h}}^{h}f_r=z_rf_r\,,
\]
with $f_r\in
L^2\left((a+e^{i\tau}\R_-)\cup(a,b)\cup(b+e^{i\tau}\R_+)\right)$. We
refer to \cite{HeMa} for a general comparison of the two
approaches. Accordingly, we recover the definition of Gamow resonant
functions with no incoming data and slowly exponentially increasing
outgoing waves.\\
Equivalently working with $g_{r}$, the condition $g_{r}\in L^{2}(\R)$
imposes the exponential modes in the exterior domain:
\begin{equation}
\label{eq.mod_res}
g_r(x)=\left\{\begin{array}[c]{ll}g_+e^{i\frac{ z_r^{1/2} e^{i\tau} }{h}(x-b)},& x>b\\[1.65mm]
g_{r,int}(x),& x\in(a,b)\\[1.65mm]
g_-e^{-i\frac{z_r^{1/2}e^{i\tau}}{h}(x-a)},& x<a\,,
\end{array}
\right.
\end{equation}
where we recall that $z^{1/2}$ denotes the complex square root with the determination $\arg z\in\left[-\frac{\pi}{2},\frac{3\pi}{2}\right)$. According to the definition of $D(H_{\theta_{0},V-W^{h}}^{h}(\theta))$, this function verifies the following boundary conditions
\[
\left(h\partial_x-iz_r^{1/2} e^{i\tau}\right)g_r(b^+)=0 \, \Rightarrow \, \left(h\partial_x-iz_r^{1/2} e^{-\theta_0}\right)g_r(b^-)=0\,,
\]
and
\[
\left(h\partial_x+iz_r^{1/2} e^{i\tau}\right)g_r(a^-)=0 \, \Rightarrow \, \left(h\partial_x+iz_r^{1/2} e^{-\theta_0}\right)g_r(a^+)=0
\]
(with $\left(z_re^{-2\theta_0}\right)^{1/2}=z_r^{1/2}e^{-\theta_0}$). It follows that the interior part of the solution satisfies the non-linear eigenvalue problem 
\begin{equation}
\label{eq.pb_res_int}
H_{z_re^{-2\theta_0}}^h\,g_{r,int}~=~z_rg_{r,int}
\end{equation}
(see definition \eqref{eq.defHzet}). Conversely, given $z_r$ in the sector: $\arg(z_r)\in (-2\tau,0)$ for which $g_{r,int}$ fulfilling \eqref{eq.pb_res_int}, it is possible to define suitable coefficients $g_+$ and $g_-$ such that the function $g_r$ given by \eqref{eq.mod_res} is in $D(H_{\theta_{0},V-W^{h}}^{h}(\theta))$ and solves the equation \eqref{eq.pb_res_R}. This allows to identify resonances with the poles of $\left(H_{ze^{-2\theta_0}}^h-z\right)^{-1}$.\\
It is worthwhile to notice that this technique extends to the first Riemann sheet: in this case the poles of $\left(H_{ze^{-2\theta_0}}^h-z\right)^{-1}$ correspond to proper eigenvalues of $H_{\theta_{0},V-W^{h}}^{h}$ provided that $\arg(z)~\leq~\frac{3\pi}{2}~-~2\tau$. To this concern, the following spectral characterization holds.
\begin{lemma}\label{le.nonsp}
Let $H_{i\tau,V-W^{h}}^{h}(0)$ be defined as in \eqref{H_(0)+V} with $\tau\in(0,\frac{\pi}{4})$, then
\[
\sigma_p\left(H_{i\tau,V-W^{h}}^{h}(0)\right)\cap\{\Imag z > 0\}=\emptyset\,.
\]
\end{lemma}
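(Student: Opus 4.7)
The plan is to argue by contradiction: suppose $u\in D(H_{i\tau,V-W^{h}}^{h}(0))$ is non-zero with $H_{i\tau,V-W^{h}}^{h}(0)\,u=zu$ and $\Imag z>0$. Since $V-W^{h}$ is supported in $[a,b]$, $u$ solves $-h^{2}u''=zu$ in each exterior region. With the square-root convention $\arg z\in[-\pi/2,3\pi/2)$, $\Imag z>0$ gives $\Imag z^{1/2}>0$, so membership in $L^{2}(\R)$ selects the decaying modes
\begin{equation*}
u(x)=A\,e^{iz^{1/2}(x-b)/h}\quad\text{on }(b,+\infty)\,,\qquad u(x)=\tilde B\,e^{-iz^{1/2}(x-a)/h}\quad\text{on }(-\infty,a)\,.
\end{equation*}
Inserting these expressions into the interface conditions \eqref{BC_1} with $\theta=0$ and $\theta_{0}=i\tau$, a direct computation yields Robin conditions on $u_{int}:=u|_{(a,b)}$:
\begin{equation*}
hu_{int}'(a)+iz^{1/2}e^{-i\tau}u_{int}(a)=0\,,\qquad hu_{int}'(b)-iz^{1/2}e^{-i\tau}u_{int}(b)=0\,,
\end{equation*}
so $u_{int}$ is an eigenfunction with eigenvalue $z$ of the interior operator $H_{ze^{-2i\tau}}^{h}$ of \eqref{eq.defHzet}.

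The second step is an energy identity on $(a,b)$. Because $\supp W_{2}^{h}\subset U\subset\subset(a,b)$, $u_{int}$ is $H^{2}$ near each endpoint, so the one-sided derivatives $u_{int}'(a^{+})$ and $u_{int}'(b^{-})$ are well defined. Pairing the equation $(P^{h}-z)u_{int}=0$ with $u_{int}$ in $L^{2}(a,b)$ and integrating by parts, the volume contribution
\begin{equation*}
\int_{a}^{b}h^{2}|u_{int}'|^{2}+\int_{a}^{b}(V-W_{1}^{h})|u_{int}|^{2}-\int_{(a,b)}|u_{int}|^{2}\,dW_{2}^{h}
\end{equation*}
is real, while the boundary terms, after substituting the two Robin conditions above, collapse to $-ihz^{1/2}e^{-i\tau}\bigl(|u_{int}(a)|^{2}+|u_{int}(b)|^{2}\bigr)$. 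Extracting imaginary parts produces
\begin{equation*}
\Imag z\,\|u_{int}\|_{L^{2}(a,b)}^{2}\;=\;-h\,\Real\bigl(z^{1/2}e^{-i\tau}\bigr)\bigl(|u_{int}(a)|^{2}+|u_{int}(b)|^{2}\bigr)\,.
\end{equation*}

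The restriction $\tau<\pi/4$ is used only in the final sign check. For $\Imag z>0$ one has $\arg z^{1/2}\in(0,\pi/2)$, hence $\arg(z^{1/2}e^{-i\tau})\in(-\tau,\pi/2-\tau)\subset(-\pi/2,\pi/2)$, so $\Real(z^{1/2}e^{-i\tau})>0$. The right-hand side above is therefore $\leq 0$ while the left-hand side is $\geq 0$; both must vanish, and $\Imag z>0$ then forces $\|u_{int}\|_{L^{2}(a,b)}=0$. Going back to the interface conditions, this gives $A=\tilde B=0$, so $u\equiv 0$, contradicting our assumption. The only point requiring mild care is the integration by parts in presence of the singular potential $W_{2}^{h}$, but this is harmless because $\supp W_{2}^{h}$ lies strictly inside $(a,b)$ so no distributional contribution meets the boundary terms.
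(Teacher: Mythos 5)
Your proof is correct, but it takes a genuinely different route from the paper's. The paper's argument is very short: it invokes Proposition~\ref{prop.agco} to identify the point spectrum of $H_{i\tau,V-W^{h}}^{h}(0)$ in $\{\Imag z>0\}$ with eigenvalues of the fully deformed operator $H_{i\tau,V-W^{h}}^{h}(i\tau)$, and then observes via the accretivity formula \eqref{accretivity1} that the numerical range of $H_{i\tau,V-W^{h}}^{h}(i\tau)$ lies in $\{\Imag z\leq 0\}$, so there can be no such eigenvalue. Your argument is more elementary and self-contained: you never touch the deformed operator, but instead characterize the decaying exterior modes, reduce to the interior Robin problem $H_{ze^{-2i\tau}}^{h}$ of \eqref{eq.defHzet}, and derive the sign contradiction directly from the energy identity on $(a,b)$. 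The quantity $-h\Real\bigl(z^{1/2}e^{-i\tau}\bigr)\bigl(|u(a)|^{2}+|u(b)|^{2}\bigr)$ you compute is, in essence, the boundary contribution that makes $iH_{i\tau}(i\tau)$ accretive, but derived without ever introducing $U_{\theta}$ or the analyticity of the resolvent family. What the paper's route buys is brevity and reuse of machinery already established (Proposition~\ref{prop.agco}, \eqref{accretivity1}); what your route buys is independence from that machinery and an explicit view of where the sign comes from. One small remark on your final sentence: the sign check $\Real(z^{1/2}e^{-i\tau})>0$ actually only requires $\tau<\pi/2$, so the stronger restriction $\tau<\pi/4$ stated in the lemma is not sharp for your argument (it is needed elsewhere for the complex deformation to stay in the analytic strip); this does not affect correctness, but the phrasing ``used only in the final sign check'' slightly overstates its role there.
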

\noindent\textbf{Proof:} According to the Proposition~\ref{prop.agco}, the points in $\sigma_p\left(H_{\theta_{0},V-W^{h}}^{h}(0)\right)\cap\{\Imag z > 0\}$ coincides with the eigenvalues of $H_{\theta_{0},V-W^{h}}^{h}(\theta)$, in $\Imag z > 0$, for any choice of $\theta$ with: $\Imag \theta\in(0,\frac{\pi}{4})$ . In particular, for $\theta=\theta_0=i\tau$, it follows from (\ref{accretivity1}) that the operator $H_{i\tau,V-W^{h}}^{h}(i\tau)$ is accretive. In this case, $\sigma_p\left(H_{i\tau,V-W^{h}}^{h}(i\tau)\right)\cap\{\Imag z > 0\}=\emptyset$.
\qed

\subsection{The Grushin problem for resonances}
In the previous section we got some accurate estimates for the
variation w.r.t $\theta_{0}$ of the generalized eigenfunctions of the filled well Hamiltonian $H^{h}_{\theta_{0},V}$. 
Here the resonances for the full Hamiltonians
$H_{\theta_{0},V-W^{h}}^{h}$ and $H_{0,V-W^{h}}^{h}$ are considered. 
After reducing the problem to the interval $[a,b]$, we introduce like in \cite{HeSj}\cite{BNP1}\cite{BNP2}  the Grushin problem modelled from the Dirichlet
operator with the potential $V-W^{h}$ for the boundary value operator
$H_{\zeta}^{h}-z$ with $\zeta=z$ or $\zeta=ze^{-2\theta_{0}}$
according to \eqref{eq.defHzet}.

We assume that a cluster of eigenvalues $\lambda_{1}^{h},\ldots,\lambda_{\ell}^{h}$ of the Dirichlet operator 
\[
H_{D}^{h}=-h^{2}\Delta+V-W^{h}
\]
exists such that
\begin{eqnarray}
\label{eq.hyp1}
&&
d(\lambda^0,\sigma(H_{D}^{h})\setminus\left\{\lambda_{1}^{h},\ldots,\lambda_{\ell}^{h}\right\})\geq
c\,,
\\
\label{eq.hyp2}
&&
c\leq \lambda^0 \leq \inf_{x\in(a,b)} V(x)-c \leq \|V\|_{L^{\infty}}\leq \frac{1}{c}\,,\\
\label{eq.hyp3}
&& \max_{1\leq j\leq\ell} |\lambda_{j}^{h}-\lambda^0|\leq \frac{1}{c}h.
\end{eqnarray}
The domain $\omega_{ch}$ will be a neighborhood of
$\left\{\lambda_{1}^{h},\ldots, \lambda_{\ell}^{h}\right\}$ such that 
\begin{equation}
  \label{eq.defomh}
\omega_{ch}\subset \left\{z\in\C,\;
  d(z,\left\{\lambda_{1}^{h},\ldots,\lambda_{\ell}^{h}\right\})\leq ch\right\}\,.
\end{equation}
\begin{remark}
  Notice that these assumptions do not forbid $h$-dependent
  $\lambda^{0}$ with $|\lambda^{0}(h)-\lambda^{0}|\leq \frac{1}{c}h$ since, in this case, it suffices to replace $V$ by
  $V-\lambda^{0}(h)+\lambda^{0}$\,.
\end{remark}
Normalized eigenvectors associated with the  $\lambda_{j}^{h}$ are
denoted by $\Phi_{j}^{h}$ and the total spectral projector 
is
$$
\Pi^{h}=\sum_{j=1}^{\ell}|\Phi_{j}^{h}\rangle\langle \Phi_{j}^{h}|\,.
$$
We also introduce the bounded operators
\begin{eqnarray*}
  R_{0}^{-}&:& \C^{\ell}\to L^{2}((a,b))\\
   &&
u^{-} = \begin{pmatrix}
     u_{1}\\\vdots\\ u_{\ell}
   \end{pmatrix}
\mapsto 
R_{0}^{-}u^{-}=\sum_{j=1}^{\ell}u_{j}\Phi_{j}^{h}\,,\\
\\
\text{and}\quad R_{0}^{+}
&:& L^{2}((a,b))\to \C^{\ell}\\
&&u\mapsto R_{0}^{+}u=
\begin{pmatrix}
  \langle \Phi_{1}^{h}\,,\, u\rangle\\
\vdots\\
  \langle \Phi_{\ell}^{h}\,,\, u\rangle
\end{pmatrix}
\,.
\end{eqnarray*}
For $z\in\omega_{ch}$, the matricial operator $
\begin{pmatrix}
  H_{D}^{h}-z & R_{0}^{-}\\
  R_{0}^{+}& 0
\end{pmatrix}:D(H_{D}^{h})\times \C^{\ell}\to L^{2}((a,b))\times \C^{\ell}
$
is invertible with the inverse 
\begin{eqnarray*}
\begin{pmatrix}
  H_{D}^{h}-z & R_{0}^{-}\\
  R_{0}^{+}& 0
\end{pmatrix}^{-1}
=
\begin{pmatrix}
  E_{0}(z)& E_{0}^{+}\\
E_{0}^{-}& E_{0}^{-+}(z)
\end{pmatrix}\,,\\
E_{0}(z)=(H_{D}^{h}-z)^{-1}(1-\Pi^{h})\,,\quad\quad
E_{0}^{+}v^{+}=\sum_{j=1}^{\ell}v_{j}\Phi_{j}^{h}\,,\\
E_{0}^{-}v=
\begin{pmatrix}
  \langle \Phi_{1}^{h}\,,\,v\rangle\\
\vdots\\
\langle \Phi_{\ell}^{h}\,,\,v\rangle
\end{pmatrix}\,,
\quad\quad
E_{0}^{-+}(z)v^{+}=\diag(z-\lambda_{j}^{h})v^{+}\,.
\end{eqnarray*}
\textbf{Notations:}
      We set 
$$
\mathcal{H}_{D}(z)=
\begin{pmatrix}
  H_{D}^{h}-z & R_{0}^{-}\\
  R_{0}^{+}& 0
\end{pmatrix}
\quad\text{and}\quad
\mathcal{E}_{D}(z)=
\begin{pmatrix}
  E_{0}(z)& E_{0}^{+}\\
E_{0}^{-}& E_{0}^{-+}(z)
\end{pmatrix}\,.
$$
The problem $(H_{\zeta}^{h}-z)u=f$ is studied after introducing the
matricial operator
\begin{equation}\label{eq.matop}
\mathcal{H}_{\zeta}(z):=
\begin{pmatrix}
  H^{h}_{\zeta}-z & \chi_{h} R_{0}^{-}\\
R_{0}^{+}& 0
\end{pmatrix}
\,,
\end{equation}
where the function $\chi_{h}\in \mathcal{C}^{\infty}_{0}((a,b))$
satisfies
$$
\|(h\partial_{x})^{\alpha}\chi_{h}\|_{L^{\infty}((a,b))}\leq C_{\alpha}\,,\quad \alpha \in \N
\quad\text{and}\quad
\chi_{h}(x)\equiv 1 ~\text{if}~d(x,\left\{a,b\right\})\geq h\,.
$$
Another cut-off function $\psi\in \mathcal{C}^{\infty}_{0}((a,b))$ will
be used with a smaller support.
By introducing the positive quantity
$$
S_{0}:=d_{Ag}(\left\{a,b\right\},U,V,\lambda^{0}),
$$
the cut-off $\psi$ is chosen such that for some $\eta>0$ independent of
$h>0$ but to be specified later
$$
\psi(x)=\left\{
  \begin{array}[c]{ll}
    0 &\text{if}\; d_{Ag}(x,U,V,\lambda^{0})>
    \frac{S_{0}+\eta}{2}\\
1 & \text{if}\; d_{Ag}(x,U,V,\lambda^{0})< \frac{S_{0}-\eta}{2}\,.
  \end{array}
\right.
$$
When $\eta>0$ and $h>0$ are small enough 
$$U\subset\subset
\left\{\psi\equiv 1\right\}\subset \supp \psi\subset\subset
\left\{\chi_{h}\equiv 1\right\}\,.
$$
For $z,\zeta\in \omega_{ch}$, consider the approximate inverse
$$
\mathcal{F}_{\zeta}(z)=
\begin{pmatrix}
  \chi_{h}E_{0}\psi + (1-\rho_h)(\tilde{H}_{\zeta}^{h}-z)^{-1}(1-\psi) &
  \chi_{h}E_{0}^{+}\\
 E_{0}^{-}\psi & E_{0}^{-+}
\end{pmatrix}\,,
$$
where the function $\rho_{h}\in \mathcal{C}^{\infty}_{0}(U_{2h})$
satisfies
$$
\|(h\partial_{x})^{\alpha}\rho_{h}\|_{L^{\infty}((a,b))}\leq C_{\alpha}\,,\quad \alpha \in \N
\quad\text{and}\quad
\rho_{h}\equiv 1 ~\text{on}~U_{\frac{5h}{4}}\,,
$$
after recalling  $U_{t}=\left\{x\in (a,b),\;d(x,U)\leq t\right\}$\,. In particular this implies $W^h(1-\rho_h)=0$.\\
A direct calculation gives
\begin{eqnarray*}
&&\mathcal{H}_{\zeta}(z)\mathcal{F}_{\zeta}(z)=
\begin{pmatrix}
  A & B\\
C& D
\end{pmatrix}
\,,\\
\text{with}&&
A=
1+\left[(h\partial_x)^2,\rho_{h}\right](\tilde{H}_{\zeta}^{h}-z)^{-1}(1-\psi)-\left[(h\partial_x)^2,\chi_{h}\right]E_{0}\psi\,,\\
&&
B= (H_{\zeta}^{h}-z)\chi_{h} E_{0}^{+}+ \chi_{h} R_{0}^{-}E_{0}^{-+}\,,\\
&&
C= R_{0}^{+}(\chi_{h} E_{0}\psi +
(1-\rho_h)(\tilde{H}_{\zeta}^{h}-z)^{-1}(1-\psi))\,,\\
&&
D= R_{0}^{+}\chi_{h} E_{0}^{+}\,,
\end{eqnarray*}
where we have used $H_{\zeta}^{h}\chi_{h}=H_{D}^{h}\chi_{h}=P^{h}\chi_{h}$\,.
\begin{proposition}
  Assume the conditions \eqref{eq.hyp1}\eqref{eq.hyp2}\eqref{eq.hyp3}
  and suppose $z,\zeta\in \omega_{ch}$\,. The matricial operator
  $\mathcal{F}_{\zeta}(z)$ is an approximate inverse of
  $\mathcal{H}_{\zeta}(z)$:
  \begin{equation}
    \label{eq.defK}   \mathcal{H}_{\zeta}(z)\mathcal{F}_{\zeta}(z)= 1+
    \mathcal{K}_{\zeta}(z)\quad\text{and}\quad
 \mathcal{F}_{\zeta}(z)\mathcal{H}_{\zeta}(z)=1+ \mathcal{K}_{\zeta}'(z)\,,
\end{equation}
for $h>0$ small enough and after adjusting
  the parameter $\eta>0$ so that
  $\|\mathcal{K}_{\zeta}(z)\|+\|\mathcal{K}_{\zeta}'(z)\|<1$ according
  to: 
\begin{equation}
  \label{eq.roughS02}
\|\mathcal{K}_{\zeta}(z)\|+\| \mathcal{K}_{\zeta}'(z)\|\leq C_{a,b,c}e^{-\frac{S_{0}-C_{a,b,c}\eta}{2h}}\,.
  \end{equation}
More precisely the remainder term equals
$$
\mathcal{K}_{\zeta}(z)=
\begin{pmatrix}
\left[(h\partial_x)^2,\rho_{h}\right](\tilde{H}_{\zeta}^{h}-z)^{-1}(1-\psi)-\left[(h\partial_x)^2,\chi_{h}\right]E_{0}\psi
&
-[(h\partial_x)^2, \chi_{h}]E_{0}^{+}
\\
R_{0}^{+}(\chi_{h}-1)E_{0}\psi
+R_{0}^{+}(1-\rho_h)(\tilde{H}_{\zeta}^{h}-z)^{-1}(1-\psi)
&
R_{0}^{+}(\chi_{h}-1)E_{0}^{+}
\end{pmatrix}
$$
and is estimated by
$$
K_{\zeta}(z)=
\begin{pmatrix}
  \mathcal{O}(e^{-\frac{S_{0}-C_{a,b,c}\eta}{2h}})
&
 \mathcal{O}(\frac{e^{-\frac{S_{0}}{h}}}{h})\\
 \mathcal{O}(e^{-\frac{S_{0}-C_{a,b,c}\eta}{2h}})&\mathcal{O}(\frac{e^{-\frac{2S_{0}}{h}}}{h^2})
\end{pmatrix}\,.
$$
\end{proposition}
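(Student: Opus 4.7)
The plan is to verify both identities in \eqref{eq.defK} by a direct $2\times 2$-block computation of $\mathcal{H}_\zeta(z)\mathcal{F}_\zeta(z)$ and $\mathcal{F}_\zeta(z)\mathcal{H}_\zeta(z)$, and then to estimate each entry of the remainder via the weighted resolvent and exponential decay bounds of Section~\ref{se.expdecay}. I will rely on two algebraic commutation rules: $(H_\zeta^h-z)\chi_h = \chi_h(H_D^h-z) - [(h\partial_x)^2,\chi_h]$, valid because $\chi_h u$ is compactly supported in $(a,b)$ and so belongs simultaneously to $D(H_\zeta^h)$ and $D(H_D^h)$ with the same action, and $(H_\zeta^h-z)(1-\rho_h) = (1-\rho_h)(\tilde H_\zeta^h-z) + [(h\partial_x)^2,\rho_h]$, valid because $W^h(1-\rho_h)=0$ on $(a,b)\setminus U_{5h/4}$ while $1-\rho_h\equiv 1$ near $\{a,b\}$ so the Robin conditions of $H_\zeta^h$ and $\tilde H_\zeta^h$ coincide there. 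Combining these with the Grushin identities $E_0(H_D^h-z)=1-\Pi^h=R_0^-R_0^+$, $R_0^+ E_0=0$, $R_0^+ E_0^+=\Id_{\C^\ell}$ and with the geometric compatibilities $\chi_h\psi=\psi$ and $(1-\rho_h)(1-\psi)=1-\psi$ (valid once $h,\eta$ are small enough that $\supp\psi\subset\{\chi_h\equiv 1\}$ and $\supp\rho_h\subset U_{2h}\subset\{\psi\equiv 1\}$) should reduce the computation to an explicit cancellation: the $\Pi^h$-term produced by $(H_\zeta^h-z)\chi_h E_0\psi$ is exactly killed by the off-diagonal matrix-product term $\chi_h R_0^- E_0^-\psi$, leaving $1+\mathcal{K}_\zeta(z)$ in the announced form. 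The second identity will be obtained by the symmetric computation, commuting $\psi$ and $1-\psi$ through $H_\zeta^h-z$ via $[\psi,H_\zeta^h]=-[(h\partial_x)^2,\psi]$; the resulting $\mathcal{K}_\zeta'(z)$ will involve commutators $[(h\partial_x)^2,\psi]$ localized in the transition zone $\{(S_0-\eta)/2\leq d_{Ag}(\cdot,U,V,\lambda^0)\leq(S_0+\eta)/2\}$ together with the finite-rank piece $\chi_h\Pi^h(1-\psi)$.

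\textbf{Norm estimates.} For $K_{11}$, I would control the piece $[(h\partial_x)^2,\rho_h](\tilde H_\zeta^h-z)^{-1}(1-\psi)$ by applying Proposition~\ref{pr.resestabsbc} with weight $\varphi(x)=d_{Ag}(x,\supp(1-\psi),V,\Real z)$: since $\supp\rho_h\subset U_{2h}$ and $\supp(1-\psi)\subset\{d_{Ag}(\cdot,U,V,\lambda^0)\geq(S_0-\eta)/2\}$, the triangle inequality yields $\varphi\geq(S_0-\eta)/2-C_{a,b,c}h$ on $\supp\rho_h$, which produces the factor $e^{-(S_0-C\eta)/(2h)}$ once the unavoidable $h^{-N}$ prefactor is absorbed by slightly worsening $\eta$. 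The companion term $[(h\partial_x)^2,\chi_h]E_0\psi$ is estimated symmetrically using Proposition~\ref{pr.Direxp}~ii), as $E_0\psi f$ decays exponentially from $\supp\psi$ toward $\{a,b\}$ where the commutator is localized. For $K_{12}$ and $K_{22}$, $E_0^+$ outputs combinations of the Dirichlet eigenfunctions $\Phi_j^h$, which by Proposition~\ref{pr.Direxp}~ii) satisfy $|\Phi_j^h(x)|\leq Ch^{-3/2}e^{-d_{Ag}(x,U,V,\lambda^0)/h}$; since $[(h\partial_x)^2,\chi_h]$ and $R_0^+(1-\chi_h)$ concentrate on $\{d(x,\{a,b\})\leq h\}$ where $d_{Ag}(\cdot,U,V,\lambda^0)\approx S_0$, the entry $K_{12}$ picks up one such factor $e^{-S_0/h}$ while $K_{22}$ picks up two of them (one per $\Phi_j^h$ in the pairing), yielding respectively the announced $h^{-1}e^{-S_0/h}$ and $h^{-2}e^{-2S_0/h}$. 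Finally for $K_{21}$, a Cauchy--Schwarz pairing of $\Phi_j^h$ against either $(\chi_h-1)E_0\psi$ or $(1-\rho_h)(\tilde H_\zeta^h-z)^{-1}(1-\psi)$ produces an exponential decay amply dominated by $e^{-(S_0-C\eta)/(2h)}$.

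\textbf{Conclusion and main difficulty.} All entries of $\mathcal{K}_\zeta(z)$ and of $\mathcal{K}_\zeta'(z)$ thus carry at worst a polynomial $h^{-N}$ prefactor against an exponential no larger than $e^{-(S_0-C_{a,b,c}\eta)/(2h)}$, so choosing $\eta$ with $C_{a,b,c}\eta<S_0/2$ and taking $h$ small enough will deliver the bound \eqref{eq.roughS02} and in particular $\|\mathcal{K}_\zeta(z)\|+\|\mathcal{K}_\zeta'(z)\|<1$. The hardest part of the argument will be the careful bookkeeping of supports and boundary conditions along the computation: one must check at each step that the intermediate functions $\chi_h E_0\psi f$ and $(1-\rho_h)(\tilde H_\zeta^h-z)^{-1}(1-\psi)f$ genuinely lie in $D(H_\zeta^h)$, that applying $H_\zeta^h$ to them produces no spurious interface contribution at $x=a,b$, and that every commutator $[(h\partial_x)^2,\chi_h]$, $[(h\partial_x)^2,\rho_h]$, $[(h\partial_x)^2,\psi]$ arising in the computation is indeed localized on a region where at least one factor of $\mathcal{F}_\zeta(z)$ is already exponentially small; without these support matches the algebraic cancellations would not yield a small remainder.
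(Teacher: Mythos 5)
Your proposal is correct and follows essentially the same path as the paper's proof: a direct $2\times 2$ block computation of $\mathcal{H}_{\zeta}(z)\mathcal{F}_{\zeta}(z)$, the commutation rules $(H_{\zeta}^{h}-z)\chi_{h}=\chi_{h}(H_{D}^{h}-z)-[(h\partial_{x})^{2},\chi_{h}]$ and $(H_{\zeta}^{h}-z)(1-\rho_{h})=(1-\rho_{h})(\tilde{H}_{\zeta}^{h}-z)+[(h\partial_{x})^{2},\rho_{h}]$, the Grushin identities ($R_{0}^{-}E_{0}^{-}=\Pi^{h}$, $R_{0}^{+}E_{0}=0$, $R_{0}^{+}E_{0}^{+}=1$), the support compatibilities $\chi_{h}\psi=\psi$ and $(1-\rho_{h})(1-\psi)=1-\psi$, and then the weighted estimates of Propositions~\ref{pr.resestabsbc} and~\ref{pr.Direxp} with the $h^{-N}$ prefactors absorbed by a slight worsening of $\eta$; the announced cancellation of the $\chi_{h}\Pi^{h}\psi$ term against $\chi_{h}R_{0}^{-}E_{0}^{-}\psi$ is exactly what the paper uses, and your order counts for $K_{12}$, $K_{21}$, $K_{22}$ coincide with the paper's. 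Two small remarks: the sign in your last commutator should read $[\psi,H_{\zeta}^{h}]=[(h\partial_{x})^{2},\psi]$, not $-[(h\partial_{x})^{2},\psi]$, but this is inconsequential for the argument; and where you plan to derive $\mathcal{K}_{\zeta}'(z)$ by a symmetric left-sided computation, the paper instead proves invertibility of $\mathcal{H}_{\zeta}(z)$ (surjectivity from the right identity, injectivity from the adjoint $\left[\mathcal{H}(\zeta^{1/2},z)\right]^{*}=\mathcal{H}(-(\bar\zeta)^{1/2},\bar z)$) and leaves the left identity implicit, so your direct computation is an acceptable and arguably more explicit way to close that step.
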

\noindent\textbf{Proof:} Set
$
\mathcal{K}_{\zeta}(z)=
\begin{pmatrix}
  K_{11}& K_{12}\\
K_{21}& K_{22}
\end{pmatrix}
$ and remember the expressions of 
$A,B,C,D$ in 
$\mathcal{H}_{\zeta}(z)\mathcal{F}_{\zeta}(z)=
\begin{pmatrix}
  A&B\\C&D
\end{pmatrix}
$\,.\\
The first coefficient $K_{11}$ is simply $A-1$ according to the above
definition.\\
The coefficient $K_{12}=B$ is computed  by making use of
$H_{\zeta}^{h}\chi_{h}=H_{D}^{h}\chi_{h}$
 and of the relation $(H_{D}^h-z)E_{0}^++R_{0}^{-}E_{0}^{-+}=0$ coming from 
$\mathcal{H}_{D}(z)\mathcal{E}_{D}(z)=1$\,.\\
The coefficient $K_{2,1}=C$ is computed after using the relation
$R_{0}^{+}E_{0}=0$ coming from
$\mathcal{H}_{D}(z)\mathcal{E}_{D}(z)=1$\,.\\
The coefficient $K_{22}=D-1$ is computed after using
$R_{0}^{+}E_{0}^{+}=1$\,.\\
\textbf{Estimate of $K_{11}$:} For the first term, remark the identity:
\begin{equation}
  \label{eq.comrhoh}
[(h\partial_x)^2, \rho_{h}]=2(h\rho_{h}')(h\partial_{x})+(h^{2}\rho_{h}'')\,,
\end{equation}
where the coefficients $h\rho_{h}'$ and $h^{2}\rho_{h}''$ are uniformly
bounded and supported in $U_{2h}$. Then, owing to
 Proposition~\ref{pr.resestabsbc}, it is estimated with:
\begin{equation}\label{prem_K11}
\|\left[(h\partial_x)^2,\rho_{h}\right](\tilde{H}_{\zeta}^{h}-z)^{-1}(1-\psi)\|\leq C_{a,b,c}e^{-\frac{S_{0}-C_{a,b,c}\eta}{2h}}\,.
\end{equation}
For the second term, we have the identity \eqref{eq.comrhoh}, where $\rho_h$ is replaced by $\chi_h$ and the coefficients $h\chi_{h}'$ and $h^{2}\chi_{h}''$ are uniformly
bounded and supported in $\left\{x\in (a,b),\;
  d(x,\left\{a,b\right\})< h\right\}$\,.\\
By introducing a circle $\gamma_{0}=\left\{z'\in\C\,,\;
  |z'-\lambda^{0}|= \frac{2}{c}h\right\}$\,, the formula
$$
E_{0}(z)=(H_{D}^{h}-z)^{-1}(1-\Pi^{h})=-\frac{1}{2i\pi}\int_{\gamma_{0}}\frac{1}{z'-z}\frac{1}{(z'-H_{D}^{h})}~dz'\,,
$$
and Proposition~\ref{pr.Direxp}-ii) imply
\begin{equation}
  \label{eq.estimE0}
 \|[(h\partial_x)^2, \chi_{h}]E_{0}\psi\|\leq \frac{C_{a,b,c}e^{-\frac{S_{0}-C_{a,b,c}\eta}{2h}}}{h^3}\,. 
\end{equation}
\noindent\textbf{Estimate of $K_{21}$:} The cut-off $(\chi_{h}-1)$
is supported in $\left\{x\in(a,b),\; d(x,\left\{a,b\right\}) < h\right\}$\,. Meanwhile we verify with the same argument as for
\eqref{eq.estimE0} that the function $u=E_{0}\psi f$ satisfies
$$
\|e^{\frac{\varphi}{h}}u\|\leq \frac{C_{a,b,c}\|f\|}{h^3}\,,
$$
for $\varphi=d_{Ag}(x,\supp \psi, V,\lambda^{0})$\,. The operator
$R_{0}^{+}$ is the finite rank operator defined by taking the scalar
product
with $\Phi_{j}^{h}$, $j=1,\ldots,
\ell$\,. With the
exponential decay of the eigenfunctions $\Phi_{j}^{h}$, $j=1,\ldots,
\ell$, stated in Proposition~\ref{pr.Direxp}, we get:
$$
\|R_{0}^{+}(1-\chi_{h})E_{0}\psi\|\leq C_{a,b,c}e^{-\frac{3S_{0}-C_{a,b,c}\eta}{2h}}\,.
$$
The second term is estimated like the first one of $K_{11}$ while
replacing $\left[(h\partial_x)^2,\rho_{h}\right]$ with $R_{0}^{+}(1-\rho_h)$:
\[
\|R_{0}^{+}(1-\rho_h)(\tilde{H}_{\zeta}^{h}-z)^{-1}(1-\psi)\|\leq C_{a,b,c}e^{-\frac{S_{0}-C_{a,b,c}\eta}{2h}}\,.
\]
\noindent\textbf{Estimate of $K_{12}$ and $K_{22}$:} The operator
$E_{0}^{+}$ is defined by
$E_{0}^{+}v^{+}=\sum_{j=1}^{\ell}v_{j}\Phi_{j}^{h}$ and the
exponential decay of the eigenfunctions $\Phi_{j}^{h}$, 
$j=1,\ldots,\ell$, stated in Proposition~\ref{pr.Direxp} 
with the relation \eqref{eq.comrhoh}, where
$\rho_h$ is replaced by $\chi_h$, yields 
\begin{equation}\label{eq_K12}
\|K_{12}\|=\|[(h\partial_x)^2, \chi_{h}]E_{0}^{+}\|\leq
C_{a,b,c}\frac{e^{-\frac{S_{0}}{h}}}{h}\,.
\end{equation}
For $K_{22}$ we use additionally the exponential decay of the
$\Phi_{j}^{h}$, $j=1,\ldots,\ell$ contained in $R_{0}^{+}$ and we get
\begin{equation}\label{eq_K22}
\|K_{22}\|=\|R_{0}^{+}(1-\chi_{h})E_{0}^{+}\|\leq
C_{a,b,c}\frac{e^{-\frac{2S_{0}}{h}}}{h^2}\,.
\end{equation}
\textbf{Left and Right inverse:} When $h$ and $\eta$ are small enough the
previous analysis says that
$\mathcal{F}_{\zeta}(z)(1+\mathcal{K}_{\zeta}(z))^{-1}$ is a
right-inverse of $\mathcal{H}_{\zeta}(z)$ and $\mathcal{H}_{\zeta}(z)$ is surjective.\\
From the definitions \eqref{eq.deftildHzet} and \eqref{eq.defHzet}, we have:
\begin{equation}\label{Hzetdemi}
\tilde H_{\zeta}^{h}=\tilde H^{h}\left(\zeta^{\frac{1}{2}}\right) \quad \textrm{ and } \quad H_{\zeta}^{h}= H^{h}\left(\zeta^{\frac{1}{2}}\right)\,.
\end{equation}
After two integrations by part, we get $\left(H^{h}\left(\zeta^{\frac{1}{2}}\right)\right)^{*}=H^{h}\left(-(\bar\zeta)^{\frac{1}{2}}\right)$. With
$(R_{0}^{+})^{*}=R_{0}^{-}$ and with the notations induced by \eqref{eq.matop} and \eqref{Hzetdemi}, we obtain
$\left[\mathcal{H}(\zeta^{\frac{1}{2}},z)\right]^{*}=\mathcal{H}(-(\bar\zeta)^{\frac{1}{2}},\bar z)$.
The analysis performed to obtain \eqref{eq.roughS02} for $\mathcal{K}(\zeta^{\frac{1}{2}},z)$ can be adapted in the case of $\mathcal{K}(-(\bar\zeta)^{\frac{1}{2}},\bar z)$: this yields the surjectivity of $\mathcal{H}(-(\bar\zeta)^{\frac{1}{2}},\bar z)$. Since
\[
\Ker\left(\mathcal{H}(\zeta^{\frac{1}{2}},z)\right)=\left[\textrm{Ran}\,\left(\mathcal{H}(\zeta^{\frac{1}{2}},z)\right)^{*}\right]^{\perp}=\{0\}\,,
\]
the injectivity of $\mathcal{H}(\zeta^{\frac{1}{2}},z)$ follows.
\qed

\noindent\textbf{Notation:} When $h>0$ is small enough, we set
\begin{equation}\label{eq.inv_grush}
\mathcal{E}_{\zeta}(z)=
\begin{pmatrix}
  E& E^{+}\\
E^{-}& E^{-+}
\end{pmatrix}
=\mathcal{H}_{\zeta}(z)^{-1}\,.
\end{equation}
The Schur complement formula
\begin{equation}\label{eq.schur_comp}
(H_{\zeta}^h-z)^{-1}=E-E^{+}(E^{-+})^{-1}E^{-}
\end{equation}
recalls that $(H_{\zeta}^h-z)$ is invertible if and only if the
$\ell\times\ell$ square matrix
$E^{-+}$ is invertible. An accurate calculation of this matrix allows
to identify the poles of $(H_{\zeta}^{h}-z)^{-1}$\,.\\
The final result comes from a higher order estimate after taking the
Neumann series 
$$
(1+\mathcal{K}_{\zeta}(z))^{-1}
=1-\mathcal{K}_{\zeta}(z)+\mathcal{K}_{\zeta}(z)^{2}-\mathcal{K}_{\zeta}(z)^{3}+\mathcal{K}_{\zeta}(z)^{4}+
\mathcal{O}(e^{-\frac{5S_{0}-C_{a,b,c}\eta}{2h}})\,.
$$
\begin{proposition}
  \label{pr.aaa}
 Assume the conditions \eqref{eq.hyp1}\eqref{eq.hyp2}\eqref{eq.hyp3}
  and suppose $z,\zeta\in \omega_{ch}$\,.
Then 
$$
E^{-+}=E^{-+}_{0}+\mathcal{O}(e^{-\frac{2S_{0}}{h}}h^{-3})\,,
$$
and
\begin{multline*}
E^{-+}=E^{-+}_{0}-E_{0}^{-}[(h\partial_x)^2, \rho_{h}](\tilde{H}_{\zeta}^{h}-z)^{-1}
[(h\partial_x)^2, \chi_{h}]E_{0}^{+}
-E_{0}^{-+}R_{0}^{+}(\chi_{h}-1)E_{0}^{+}
\\
-E_{0}^{-+}R_{0}^{+}(1-\rho_h)(\tilde{H}_{\zeta}^{h}-z)^{-1}[(h\partial_x)^2,
\chi_{h}]
E_{0}^{+}
+ \mathcal{O}(e^{-\frac{5S_{0}-C_{a,b,c}\eta}{2h}})  \,.
\end{multline*}
\end{proposition}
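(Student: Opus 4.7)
The starting point is the identity $\mathcal{H}_\zeta(z)\mathcal{F}_\zeta(z)=1+\mathcal{K}_\zeta(z)$ established in the previous proposition, which combined with $\|\mathcal{K}_\zeta(z)\|<1$ yields $\mathcal{E}_\zeta(z)=\mathcal{F}_\zeta(z)(1+\mathcal{K}_\zeta(z))^{-1}$. The plan is to Neumann-expand
\begin{equation*}
(1+\mathcal{K}_\zeta(z))^{-1}=\sum_{n=0}^{4}(-\mathcal{K}_\zeta(z))^{n}+\mathcal{O}\bigl(e^{-(5S_0-C_{a,b,c}\eta)/(2h)}\bigr),
\end{equation*}
and to read off the $(2,2)$-block of the resulting product $\mathcal{F}_\zeta(z)(1+\mathcal{K}_\zeta(z))^{-1}$, retaining only the pieces of size larger than the announced remainder.

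The rough estimate $E^{-+}=E_0^{-+}+\mathcal{O}(e^{-2S_0/h}h^{-3})$ follows directly from $E^{-+}-E_0^{-+}=-[\mathcal{F}_\zeta(z)\mathcal{K}_\zeta(z)(1+\mathcal{K}_\zeta(z))^{-1}]_{22}$: using $\|E_0^{-+}\|=\mathcal{O}(h)$ together with the block bounds on $\mathcal{K}_\zeta(z)$ recorded in the preceding proposition, the dominant contribution $-F_{22}K_{22}$ has size $\mathcal{O}(h\cdot e^{-2S_0/h}/h^2)$, while the remaining pieces (including the full $\mathcal{K}^{k\geq 2}$ tail) are of the same order or smaller.

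For the refined expansion, the crucial ingredients are three geometric identities forced by the nested inclusions $U\subset\subset\{\psi\equiv 1\}\subset\supp\psi\subset\subset\{\chi_h\equiv 1\}$ together with $\supp[(h\partial_x)^{2},\rho_h]\subset U_{2h}$ and $\supp[(h\partial_x)^{2},\chi_h]\subset\{d(x,\{a,b\})<h\}$, namely
\begin{equation*}
\psi\,[(h\partial_x)^{2},\chi_h]=0,\quad (1-\psi)\,[(h\partial_x)^{2},\chi_h]=[(h\partial_x)^{2},\chi_h],\quad \psi\,[(h\partial_x)^{2},\rho_h]=[(h\partial_x)^{2},\rho_h]
\end{equation*}
(the last one for $h$ small enough). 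At order one, $-F_{21}K_{12}=E_0^{-}\psi\,[(h\partial_x)^{2},\chi_h]E_0^{+}$ vanishes by the first identity, leaving only $-F_{22}K_{22}=-E_0^{-+}R_0^{+}(\chi_h-1)E_0^{+}$. At order two, substituting the explicit two-term forms of $K_{11}$ and $K_{21}$ and simplifying with the three identities, every product containing the composition $\psi\,[(h\partial_x)^{2},\chi_h]$ drops, so that the two surviving contributions are exactly $-E_0^{-}[(h\partial_x)^{2},\rho_h](\tilde H_\zeta^{h}-z)^{-1}[(h\partial_x)^{2},\chi_h]E_0^{+}$ coming from $F_{21}K_{11}K_{12}$, and $-E_0^{-+}R_0^{+}(1-\rho_h)(\tilde H_\zeta^{h}-z)^{-1}[(h\partial_x)^{2},\chi_h]E_0^{+}$ coming from $F_{22}K_{21}K_{12}$, while the leftover piece $F_{22}K_{22}^{2}$ carries two factors of size $e^{-2S_0/h}$ and falls below the threshold.

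For the tail at orders $n=3,4$ the bare block-norm bounds are too lossy; however, once the first identity has eliminated the inoperative pieces, a direct tracking of the compositions shows that every non-vanishing path $F_{2j_0}K_{j_0 j_1}\cdots K_{j_{n-1}\,2}$ is forced to propagate iteratively between the interior region $U$ and neighbourhoods of $\{a,b\}$, each intermediate $K$-factor contributing one additional exponential $e^{-S_0/h}$ via the Agmon estimates of Propositions~\ref{pr.Direxp} and~\ref{pr.resestabsbc}; consequently any such length-$n$ path has size at most $\mathcal{O}(e^{-nS_0/h}h^{-p(n)})$, safely absorbed into $\mathcal{O}(e^{-(5S_0-C_{a,b,c}\eta)/(2h)})$. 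The main obstacle I foresee is the systematic combinatorial bookkeeping at order two — enumerating all sixteen products $F_{2j}K_{jk}K_{k2}$ obtained from the two-term expansions of $K_{11}$ and $K_{21}$, isolating the precise survivors dictated by the three identities, and checking the cascade argument at orders three and four — while ensuring throughout that the polynomial losses in $h$ are absorbed into $C_{a,b,c}\eta$ in the exponent without degrading the final bound.
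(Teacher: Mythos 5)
Your proposal follows the paper's proof essentially step for step: Neumann-expand $(1+\mathcal{K}_\zeta(z))^{-1}$ to order four, read off the $(2,2)$-block, and use the support-disjointness of $\psi$, $[(h\partial_x)^2,\chi_h]$ and $[(h\partial_x)^2,\rho_h]$ to identify the surviving order-one and order-two terms — which is exactly what the paper calls ``the support condition.'' The only place where you work harder than needed is the tail at orders $3$ and $4$: the cascade/path-tracking argument you sketch is unnecessary, because once the sharp order-two bound $K_{i2}^{(2)}=\mathcal{O}(e^{-(2S_0-C_{a,b,c}\eta)/h})$ is in hand, the crude recursion $K_{i2}^{(n+1)}=K_{i1}K_{12}^{(n)}+K_{i2}K_{22}^{(n)}$ together with the global bound $\|\mathcal{K}_\zeta(z)\|\leq C e^{-(S_0-C_{a,b,c}\eta)/(2h)}$ already lands every higher-order term inside $\mathcal{O}(e^{-(5S_0-C_{a,b,c}\eta)/(2h)})$, which is what the paper does.
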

\textbf{Proof:} We compute first the coefficients $K_{12}^{(2)}$ and
$K_{22}^{(2)}$ where
$\mathcal{K}_{\zeta}(z)^{n}=
\begin{pmatrix}
  K_{11}^{(n)}& K_{12}^{(n)}\\
K_{21}^{(n)}& K_{22}^{(n)}
\end{pmatrix}$\,.\\
\noindent{$\mathbf{K_{12}^{(2)}=K_{11}K_{12}+ K_{12}K_{22}:}$} Due to the
support condition when $\eta>0$ is chosen small enough and $h>0$ is
small enough, the first term equals:
$$
K_{11}K_{12}=-[(h\partial_x)^2, \rho_{h}](\tilde{H}_{\zeta}^{h}-z)^{-1}[(h\partial_x)^2, \chi_{h}]E_{0}^{+}\,,
$$
and with the same argument as for \eqref{prem_K11} we get:
\begin{equation}\label{est.K11K12}
\|K_{11}K_{12}\| \leq \frac{C_{a,b,c}}{h^{2}}e^{-\frac{2S_{0}}{h}}\,,
\end{equation}
where some additional exponential decay comes from the eigenfunctions appearing in $E_{0}^{+}$ and the support of the derivatives of $\chi_{h}$. From the equations \eqref{eq_K12} and \eqref{eq_K22}, the second term satisfies
$$
\|K_{12}K_{22}\| \leq \|K_{12}\|\|K_{22}\| \leq \frac{C_{a,b,c}}{h^{3}}e^{-\frac{3S_{0}}{h}}\,.
$$

\noindent{$\mathbf{K_{22}^{(2)}=K_{21}K_{12}+ K_{22}^2:}$}  The
first term equals
$$
K_{21}K_{12}= -R_{0}^{+}(1-\rho_h)(\tilde{H}_{\zeta}^{h}-z)^{-1}[(h\partial_x)^2, \chi_{h}]E_{0}^{+}\,,
$$
and as it was done for \eqref{est.K11K12}, we obtain:
\[
\|K_{21}K_{12}\| \leq \frac{C_{a,b,c}}{h^{3}}e^{-\frac{2S_{0}}{h}}\,.
\]
Then, from equation \eqref{eq_K22}, the second term verifies
$$
\|K_{22}^{2}\|\leq\|K_{22}\|^2\leq\frac{C_{a,b,c}}{h^{4}}e^{-\frac{4S_{0}}{h}}\,.
$$
\noindent\textbf{Estimate of $K_{12}^{(3)}$, $K_{22}^{(3)}$, $K_{12}^{(4)}$ and $K_{22}^{(4)}$:} A direct computation gives for $i=1$ and $2$:
\[
K_{i2}^{(n+1)} = K_{i1}K_{12}^{(n)}+K_{i2}K_{22}^{(n)}\,,
\]
and \eqref{eq.roughS02} implies:
\[
\|K_{i2}^{(n+1)}\| \leq \|\mathcal{K}_{\zeta}(z)\|\left(\|K_{12}^{(n)}\|+\|K_{22}^{(n)}\|\right)\leq C_{a,b,c}e^{-\frac{S_{0}-C_{a,b,c}\eta}{2h}}\left(\|K_{12}^{(n)}\|+\|K_{22}^{(n)}\|\right)\,.
\]
Moreover, we have obtained:
\[
K_{12}^{(2)} = \mathcal{O}(e^{-\frac{2S_{0}-C_{a,b,c}\eta}{h}}) \quad \textrm{ and } \quad K_{22}^{(2)} = \mathcal{O}(e^{-\frac{2S_{0}-C_{a,b,c}\eta}{h}})\,,
\]
therefore we have:
\begin{eqnarray*}
&&K_{12}^{(3)} = \mathcal{O}(e^{-\frac{5S_{0}-C_{a,b,c}\eta}{2h}}),
\quad K_{22}^{(3)} =
\mathcal{O}(e^{-\frac{5S_{0}-C_{a,b,c}\eta}{2h}}),\\
&& K_{12}^{(4)} = \mathcal{O}(e^{-\frac{5S_{0}-C_{a,b,c}\eta}{2h}}), \quad K_{22}^{(4)} = \mathcal{O}(e^{-\frac{5S_{0}-C_{a,b,c}\eta}{2h}})\,.
\end{eqnarray*}
\noindent\textbf{Computing $E^{-+}$:} We have
\[
E^{-+}-E^{-+}_{0}=  E_{0}^{-}\psi[-K_{12}+ K_{12}^{(2)}- K_{12}^{(3)}+ K_{12}^{(4)}]+
E_{0}^{-+}[-K_{22}+ K_{22}^{(2)}-K_{22}^{(3)}+K_{22}^{(4)}]+
\mathcal{O}(e^{-\frac{5S_{0}-C_{abc}\eta}{2h}})\,.
\]
Since the operators $E_{0}^{-}\psi$ and $E_{0}^{-+}$ are uniformly bounded, it follows from $E_{0}^{-}\psi K_{12}=0$ and $K_{22}=\mathcal{O}(e^{-\frac{2S_{0}}{h}}h^{-2})$ that:
\[
E^{-+}-E^{-+}_{0}=\mathcal{O}(e^{-\frac{2S_{0}}{h}}h^{-3})
\]
and
\begin{multline*}
E^{-+}-E^{-+}_{0}=  E_{0}^{-}\psi K_{11}K_{12} - E^{-+}_{0}K_{22} + E^{-+}_{0}K_{21}K_{12} + \mathcal{O}(e^{-\frac{5S_{0}-c_{abc}\eta}{2h}})\\
=-E_{0}^{-}[(h\partial_x)^2, \rho_{h}](\tilde{H}_{\zeta}^{h}-z)^{-1}[(h\partial_x)^2,\chi_{h}]E_{0}^{+}
-
E_{0}^{-+}R_{0}^{+}(\chi_{h}-1)E_{0}^{+}\\
-
E_{0}^{-+}R_{0}^{+}(1-\rho_h)(\tilde{H}_{\zeta}^{h}-z)^{-1}
[(h\partial_x)^2, \chi_{h}] E_{0}^{+}
+ \mathcal{O}(e^{-\frac{5S_{0}-C_{abc}\eta}{2h}})\,.
\end{multline*}
\qed

\subsection{Localization of the resonances}

In what follows we discuss the problem of resonances for the operator
$H_{\theta_{0},V-W^{h}}^{h}(0)$.
 Using \eqref{eq.schur_comp} and the detecting method introduced in Subsection~\ref{sec.res}, these coincides with the singularities of the matrix $\left(E^{-+}(z,ze^{-2\theta_{0}}z)\right)^{-1}$ in a sector $\arg(z)\in (-2\tau,0)$ for a suitable $\tau$. Here the symbol $E^{-+}(z,\zeta)$ actually denotes the $(z,\zeta)$-dependent matrix defined in \eqref{eq.inv_grush}.\\ 
The comparison of the Schur complements $E^{-+}$ and $E^{-+}_{0}$
stated in Proposition~\ref{pr.aaa},
 allows to state the following localization result on the resonances
 of the operator $H_{V-W^{h},\theta_{0}}^{h}(0)$ and to estimate accurately
 their variations w.r.t $\theta_{0}$\,.\\
\begin{proposition}
\label{prop.locresdir}
Assume the conditions \eqref{eq.hyp1}\eqref{eq.hyp2}\eqref{eq.hyp3}
and fix $\theta_0$ such that $|\theta_0|\leq \frac{c^2h}{8}$. Then for
$h>0$ small enough,  the operator
$H_{\theta_{0},V-W^{h}}^{h}(0)$ has exactly $\ell$ resonances
$\left\{z_{1}^{h}(\theta_0),\ldots, z_{\ell}^{h}(\theta_0)\right\}$ in
$\omega_{\frac{ch}{2}}$, possibly counted with multiplicities,
with the estimate
\[
z_{j}^{h}(\theta_0) - \lambda_{j}^{h} = \mathcal{O}\left(\frac{e^{-\frac{2S_{0}}{h}}}{h^{3}}\right)\,,
\]
after the proper labelling with respect to $j\in\left\{1,\ldots,
  \ell\right\}$\,.\\
In particular, when 
\begin{equation}
  \label{eq.splilj}
\lim_{h\to 0}h^{3}e^{\frac{2S_{0}}{h}}\min_{j\neq j'}|\lambda_{j}^{h}-\lambda_{j'}^{h}|=+\infty\,,
\end{equation}
there exists $T_{a,b,c}>1$, such that every disc
$D_{j,h}(T)=\left\{z\in\C\,,|z-\lambda_{j}^{h}|
\leq T\frac{e^{-\frac{2S_{0}}{h}}}{h^{3}} \right\}$ contains exactly
one resonance $z_{j}(\theta_{0})$ when $T$ is fixed so that $T\geq
T_{a,b,c}$ and  $h>0$ is small enough. 
\end{proposition}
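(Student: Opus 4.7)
The plan is to identify the resonances of $H_{\theta_{0},V-W^{h}}^{h}(0)$ inside $\omega_{\frac{ch}{2}}$ with the zeros of $z\mapsto\det E^{-+}(z,ze^{-2\theta_{0}})$, via the reduction of Subsection~\ref{sec.res} and the Schur complement formula \eqref{eq.schur_comp}, and then to count those zeros by Rouché's theorem using Proposition~\ref{pr.aaa} to treat $E^{-+}$ as a small perturbation of $E_{0}^{-+}(z)=\diag(z-\lambda_{j}^{h})$.

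First, for $z\in\omega_{\frac{ch}{2}}$ and $|\theta_{0}|\leq\frac{c^{2}h}{8}$, the bound $|z|\leq\frac{1}{c}+\mathcal{O}(h)$ combined with $|e^{-2\theta_{0}}-1|\leq C|\theta_{0}|$ shows $|ze^{-2\theta_{0}}-z|\leq\frac{ch}{4}$, so $ze^{-2\theta_{0}}$ still lies in $\omega_{ch}$ and Proposition~\ref{pr.aaa} applies holomorphically in $z$, yielding
\[
E^{-+}(z,ze^{-2\theta_{0}})=\diag_{1\leq j\leq\ell}(z-\lambda_{j}^{h})+R(z,\theta_{0}),\qquad \|R(z,\theta_{0})\|=\mathcal{O}\bigl(e^{-2S_{0}/h}h^{-3}\bigr).
\]
For $z\in\omega_{\frac{ch}{2}}\setminus\bigcup_{j}D_{j,h}(T)$ with $T\geq T_{a,b,c}$ large enough, the inequality $\min_{j}|z-\lambda_{j}^{h}|\geq Te^{-2S_{0}/h}h^{-3}$ gives $\|\diag(z-\lambda_{j}^{h})^{-1}R(z,\theta_{0})\|\leq C/T<\frac{1}{2}$, so $E^{-+}(z,ze^{-2\theta_{0}})$ is invertible by Neumann series and no resonance lies in this complement.

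Second, I would count the resonances by Rouché applied to
\[
\det E^{-+}(z,ze^{-2\theta_{0}})=\Bigl(\prod_{j=1}^{\ell}(z-\lambda_{j}^{h})\Bigr)\det\bigl(I+\diag(z-\lambda_{j}^{h})^{-1}R(z,\theta_{0})\bigr).
\]
On $\partial\omega_{\frac{ch}{2}}$ the distances $|z-\lambda_{j}^{h}|$ are bounded below by $\frac{ch}{2}$, so $\|\diag^{-1}R\|=\mathcal{O}(e^{-2S_{0}/h}h^{-4})\to 0$, the second determinant equals $1+o(1)$, and Rouché yields exactly $\ell$ zeros of $\det E^{-+}$ in $\omega_{\frac{ch}{2}}$. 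The combination with the confinement step above forces all of them to lie in $\bigcup_{j}D_{j,h}(T)$, which is the first claimed estimate $z_{j}^{h}(\theta_{0})-\lambda_{j}^{h}=\mathcal{O}(e^{-2S_{0}/h}h^{-3})$.

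Third, under \eqref{eq.splilj} the discs $D_{j,h}(T)$ become pairwise disjoint once $h$ is small, since $|\lambda_{j}^{h}-\lambda_{j'}^{h}|\gg Te^{-2S_{0}/h}h^{-3}$. Applying Rouché separately on each circle $\partial D_{j,h}(T)$, where again $\|\diag^{-1}R\|\leq C/T$ can be made arbitrarily small for $T\geq T_{a,b,c}$, localizes exactly one zero of $\det E^{-+}$ in each $D_{j,h}(T)$, giving the canonical labelling $z_{j}^{h}(\theta_{0})\leftrightarrow\lambda_{j}^{h}$. The main technical point to verify is that the $z$-dependent argument $ze^{-2\theta_{0}}$ stays uniformly in $\omega_{ch}$ so that $z\mapsto E^{-+}(z,ze^{-2\theta_{0}})$ is holomorphic, which follows from the explicit construction of $\mathcal{F}_{\zeta}(z)$ and the analyticity of $\zeta\mapsto(\tilde H_{\zeta}^{h}-z)^{-1}$; the fact that Proposition~\ref{pr.aaa} was stated for fixed $\zeta$ causes no issue since the bound is uniform.
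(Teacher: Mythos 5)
Your overall strategy is the same as the paper's: reduce to the invertibility of the Schur complement $E^{-+}(z,ze^{-2\theta_0})$, use Proposition~\ref{pr.aaa} to treat it as a small perturbation of $E_0^{-+}(z)=\diag(z-\lambda_j^h)$, and count zeros by Rouch\'e. Your confinement step (Neumann series on $I+\diag(z-\lambda_j^h)^{-1}R$) is a clean alternative to the paper's Gershgorin argument, and your one-shot Rouch\'e on $\partial\omega_{ch/2}$ (comparing $\det E^{-+}$ to $\prod_j(z-\lambda_j^h)$) is more direct than the paper's homotopy-in-$t$ argument, where they deform $E^{-+}(t)=E_0^{-+}+t(E^{-+}-E_0^{-+})$ and show the zero count $N(t)$ is locally constant via Rouch\'e on each component boundary. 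Both are legitimate; the homotopy route is chosen in the paper so that the count is done \emph{per connected component} of the exceptional set, which is precisely what feeds the labelling claim.

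That leads to the one genuine gap. Establishing ``exactly $\ell$ resonances in $\omega_{ch/2}$'' plus ``they all lie in $\bigcup_j D_{j,h}(T)$'' does not by itself yield a bijection $j\mapsto z_j^h(\theta_0)$ with $|z_j^h(\theta_0)-\lambda_j^h|=\mathcal{O}(e^{-2S_0/h}h^{-3})$: if, say, $D_{1,h}$ and $D_{2,h}$ overlap while $D_{3,h}$ is far away, your argument does not preclude all three resonances landing in $D_{1,h}\cup D_{2,h}$ and none in $D_{3,h}$. To close this, you must count resonances inside each connected component $\Omega_{j',h}$ of the union $\bigcup_j D_{j,h}(T)$ (equivalently, of the paper's $\Omega_h$) and show that this number equals the number of $\lambda_j^h$'s in $\Omega_{j',h}$. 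Your own Rouch\'e mechanism does this: on $\partial\Omega_{j',h}$ one still has $\min_j|z-\lambda_j^h|\gtrsim e^{-2S_0/h}h^{-3}$, so $\|\diag^{-1}R\|\leq C/T$ and the comparison $\det E^{-+}$ versus $\prod_j(z-\lambda_j^h)$ goes through (taking $T_{a,b,c}$ large enough depending also on $\ell$ so that $|\det(I+\diag^{-1}R)-1|<1$, a point you should make explicit since $\|\diag^{-1}R\|<1/2$ alone does not control the determinant for large $\ell$). Once you have the per-component count, the labelling follows because within a component the $\lambda_j^h$'s are mutually within $\mathcal{O}(\ell\cdot e^{-2S_0/h}h^{-3})$ of one another. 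Your treatment under \eqref{eq.splilj} is already correct precisely because there each disc is its own component.

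One small additional point to make precise: you assert $|z-\lambda_j^h|\gtrsim ch$ on $\partial\omega_{ch/2}$, but $\omega_{ch}$ is only defined by the inclusion \eqref{eq.defomh}, so a lower bound on the boundary distance is a choice one makes when fixing $\omega_{ch/2}$; it is harmless but worth stating.
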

\noindent\textbf{Proof:} We look for the
points where the matrix $E^{-+}(z,ze^{-2\theta_0})$ is not invertible. When 
$z\in\omega_{\frac{ch}{2}}$, then $|z|\leq \frac{2}{c}$  when $h$ is
small enough and  the two points $z$ and $\zeta=ze^{-2\theta_0}=z+z(e^{2\theta_{0}}-1)$
belong to $\omega_{ch}$. Thus, Proposition \ref{pr.aaa} gives
\begin{equation}
\label{eq.appEpmth}
\|E^{-+}(z,ze^{-2\theta_0})-E^{-+}_{0}(z)\|_{\infty}\leq M_{a,b,c}\frac{e^{-\frac{2S_{0}}{h}}}{h^{3}}\,,
\end{equation}
where the equivalent norm $\displaystyle \|a_{ij}\|_{\infty}=\max_{i,j}|a_{ij}|$ is used\,.\\
Let $\displaystyle\Omega_{h}=\left\{z\in\mathbb{C};~\min_{1\leq
    j\leq\ell}|z-\lambda_j^h| < 2 \ell
    M_{a,b,c} 
\frac{e^{-\frac{2S_{0}}{h}}}{h^{3}}\right\}$ and suppose
$z\notin\Omega_h$\,. Then the coefficients $E_{ij}^{-+}$ of $E^{-+}$ are such
that for all $i\in\{1,\ldots,\ell\}$,
$\displaystyle|E_{ii}^{-+}|>\sum_{\substack{j=1\\j\ne i}}^{\ell}|E_{ij}^{-+}|$
and $E^{-+}$ is invertible by Gershgorin circle theorem.\\
 To conclude
the proof, we have to compare the number of resonances to the number
of Dirichlet eigenvalues in each connected component $\Omega_{j,h}$ of
$\Omega_{h}$ ($\Omega_{j,h}=\Omega_{j',h}$ is not forbidden). 
Defining $E^{-+}(t)= E^{-+}_{0} + t(E^{-+}-E^{-+}_{0})$
for $0\leq t\leq 1$, the number $N(t)$ of points in
$\Omega_{j,h}$ such that $E^{-+}(t)$ is not invertible, is constant on
$[0,1]$. Actually, note first that
\eqref{eq.appEpmth} implies that  for all $t\in[0,1]$, $E^{-+}(t)$ is
invertible when $z\in\partial\Omega_{j,h}$, using an argument similar to
the one used for $E^{-+}$ outside $\Omega_{h}$. Therefore, for any
$t_0\in[0,1]$ the analyticity of $E^{-+}(t_0)$ with respect to $z$
implies 
\[
\inf_{z\in\partial\Omega_{j,h}}|\det E^{-+}(t_0)|>0\,,
\]     
and for $\delta$ small enough, the estimate:
\begin{multline*}
|\det E^{-+}(t_0+\delta)-\det E^{-+}(t_0)| =
|\det(E^{-+}(t_0)+\delta(E^{-+}-E^{-+}_{0}))-\det E^{-+}(t_0)| =
|\delta||R(t_0,\delta)|\\<\inf_{z\in\partial\Omega_{j,h}}|\det
E^{-+}(t_0)|\leq|\det E^{-+}(t_0)| 
\end{multline*}
holds for all  $z\in\partial\Omega_{j,h}$ after noticing that 
the function $|R(t_0,\delta)|$ is a bounded polynomial of
$t_0$, $\delta$ and of the coefficients of $E^{-+}_{0}$ and
$E^{-+}$. The functions  $\det(E^{-+}(t_{0}))$ and $\det
E^{-+}(t_{0}+\delta)$
 are holomorphic functions of 
$z\in \omega_{\frac{ch}{2}}$ such that
$\sup_{z\in \partial\Omega_{j,h}}|\frac{\det E^{-+}(t_{0}+\delta)}{\det
  E^{-+}(t_{0})}-1|<1$. Thus, Rouch{\'e}'s theorem implies
$N(t_0+\delta)=N(t_0)$. The function $N(t)$ is continuous on $[0,1]$
with integer values. It is constant.\\
Assuming $e^{\frac{2S_{0}}{h}}h^{3}
|\lambda_{j}^{h}-\lambda_{j'}^{h}|\to +\infty$ for all pair of
distinct $j,j'$, implies $\Omega_{j,h}\subset D_{j,h}(R)$  for all the
$j$'s with $D_{j,h}(R)\cap D_{j',h}(R)=\emptyset$ if $j\neq j'$ when $R\geq 2\ell M_{a,b,c}$  and
$h$ is small enough. This yields the last statement.
\qed 
\begin{remark}
In the above proposition the term resonances is used for the eigenvalues of the operator $H_{ze^{-2\theta_{0}}}^{h}$, which in principle may still have a positive imaginary part. In the particular case of $\theta_{0}=i\tau$, $\tau\in(0,\frac{\pi}{4})$, the result of Lemma~\ref{le.nonsp} implies that these eigenvalues must lay in the lower half complex plane. On the other hand, the result of next proposition and the lower bound on $\left|\Imag z_j^h(0)\right|$ (see Proposition ~\ref{pr.FGR}) implicitely yields: $\Imag z_j^h(\theta_0)<0$ on a suitable range of $|\theta_0|$. Under each of such conditions the points $z_j^h(\theta_0)$ corresponds to resonances of the operator $H_{\theta_{0},V-W^{h}}^{h}(0)$ as defined in Proposition \ref{prop.agco}. 
\end{remark}

 The next Proposition  localizes the resonances
 $z_{j}^{h}(\theta_0)$ of $H_{\theta_{0},V-W^{h}}^{h}(0)$ with respect to
 the resonances $z_{j}^{h}:=z_{j}^{h}(0)$ of $H_{0,V-W^{h}}^{h}(0)$ by making
 use of the comparison between $E^{-+}(z,ze^{-2\theta_{0}})$ and $E^{-+}(z,z)$.

\begin{proposition}
\label{prop.locresth}
 Assume the conditions \eqref{eq.hyp1}\eqref{eq.hyp2}\eqref{eq.hyp3}
 and $e^{-\frac{S_{0}}{4h}}\leq |\theta_0|\leq \frac{c^2h}{8}$\,.
 Then for $h>0$ small enough, 
the matrices $E^{-+}$ of Proposition~\ref{pr.aaa} associated with 
$\zeta=z$ and $\zeta=ze^{-2\theta}$ satisfy
\begin{equation}
\label{eq.Et-E0}
\sup_{z\in \omega_{\frac{ch}{2}}}|E^{-+}(z,ze^{-2\theta_{0}})-E^{-+}(z,z)|= 
\mathcal{O}\left(|\theta_0|\frac{e^{-\frac{2S_{0}}{h}}}{h^{3}}\right)\,.
\end{equation}
If additionally \eqref{eq.splilj} is assumed  the variation of the
resonances around $\lambda^{0}$ for $e^{-\frac{S_{0}}{4h}}\leq |\theta_{0}|\leq
\frac{c^{2}h}{8}$ and $\theta_{0}=0$ is estimated by
\[
\max_{j\in
  \left\{1,\ldots,\ell\right\}}|z_{j}^{h}(\theta_0)-z_{j}^{h}|= \mathcal{O}\left(|\theta_0|\frac{e^{-\frac{2S_{0}}{h}}}{h^{3}}\right)\,.
\]
\end{proposition}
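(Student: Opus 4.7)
The plan is to apply Proposition \ref{pr.aaa} at both $\zeta=z$ and $\zeta=ze^{-2\theta_{0}}$ and subtract. The leading term $E^{-+}_{0}(z)$ and the $\zeta$-independent correction $E_{0}^{-+}R_{0}^{+}(\chi_{h}-1)E_{0}^{+}$ cancel, while the $\mathcal{O}\bigl(e^{-(5S_{0}-C_{a,b,c}\eta)/(2h)}\bigr)$ remainder is, under the hypothesis $|\theta_{0}|\geq e^{-S_{0}/(4h)}$ and for $\eta$ small enough (independent of $h$), dominated by $|\theta_{0}|e^{-2S_{0}/h}/h^{3}$, since $\tfrac{5}{2}S_{0}>\tfrac{9}{4}S_{0}$. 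The difference thus reduces to
\begin{equation*}
E_{0}^{-}[(h\partial_{x})^{2},\rho_{h}]\,\mathcal{R}_{\theta_{0}}(z)\,[(h\partial_{x})^{2},\chi_{h}]E_{0}^{+}+E_{0}^{-+}R_{0}^{+}(1-\rho_{h})\,\mathcal{R}_{\theta_{0}}(z)\,[(h\partial_{x})^{2},\chi_{h}]E_{0}^{+},
\end{equation*}
where $\mathcal{R}_{\theta_{0}}(z)=(\tilde{H}^{h}_{ze^{-2\theta_{0}}}-z)^{-1}-(\tilde{H}^{h}_{z}-z)^{-1}$.

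Next I would feed in the weighted resolvent difference estimate of Proposition \ref{pr.diffestabsbc} with the Agmon weight $\varphi(x)=d_{Ag}(x,U,V,\lambda^{0})$. Since $(ze^{-2\theta_{0}})^{1/2}=z^{1/2}e^{-\theta_{0}}$, one has $|z^{1/2}-(ze^{-2\theta_{0}})^{1/2}|=\mathcal{O}(|\theta_{0}|)$ uniformly on $\omega_{ch/2}$, and hence $\|e^{-\varphi/h}\mathcal{R}_{\theta_{0}}(z)e^{\varphi/h}\|_{\mathcal{L}(L^{2})}\leq C|\theta_{0}|/h^{2}$. The factor $e^{-2S_{0}/h}$ arises from two Agmon crossings: by Proposition \ref{pr.Direxp}, $g:=[(h\partial_{x})^{2},\chi_{h}]E_{0}^{+}v^{+}$ is supported near $\{a,b\}$ with $\|g\|_{L^{2}}=\mathcal{O}(e^{-S_{0}/h}/h)|v^{+}|$, and since $\varphi\approx S_{0}$ on that support one gains another $e^{-S_{0}/h}$, giving $\|e^{-\varphi/h}g\|_{L^{2}}=\mathcal{O}(e^{-2S_{0}/h}/h)|v^{+}|$ and therefore $\|e^{-\varphi/h}\mathcal{R}_{\theta_{0}}(z)g\|_{L^{2}}=\mathcal{O}(|\theta_{0}|e^{-2S_{0}/h}/h^{3})|v^{+}|$. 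The first term is then handled by pairing with $E_{0}^{-}[(h\partial_{x})^{2},\rho_{h}]$, which is uniformly bounded on the $e^{-\varphi/h}L^{2}$ side because its test function $[(h\partial_{x})^{2},\rho_{h}]^{*}\Phi_{j}^{h}$ is supported in $U_{2h}$ where $\varphi\approx 0$. The second term is estimated similarly: $R_{0}^{+}(1-\rho_{h})$ pairs via $\|e^{\varphi/h}\Phi_{j}^{h}\|_{L^{2}}\leq C/h$ (Proposition \ref{pr.Direxp}), contributing an extra $1/h$ that is absorbed by the $\mathcal{O}(h)$ size of $E_{0}^{-+}=\diag(z-\lambda_{j}^{h})$ for $z\in\omega_{ch/2}$. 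This yields \eqref{eq.Et-E0}.

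For the second estimate, under the splitting hypothesis \eqref{eq.splilj}, Proposition \ref{prop.locresdir} ensures that each disc $D_{j,h}(T_{a,b,c})$ contains exactly one resonance at both $\theta_{0}=0$ and the prescribed $\theta_{0}$, namely $z_{j}^{h}$ and $z_{j}^{h}(\theta_{0})$. Because the diagonal entries of $E^{-+}_{0}(z)$ are separated by more than the perturbation scale $e^{-2S_{0}/h}/h^{3}$, analytic perturbation theory for simple isolated eigenvalues furnishes a holomorphic function $\mu_{j}(z,\zeta)$ on a fixed neighbourhood of $(z_{j}^{h},z_{j}^{h})$ such that $\mu_{j}(z_{j}^{h},z_{j}^{h})=0$, $\partial_{z}\mu_{j}(z_{j}^{h},z_{j}^{h})=1+o(1)$, and $z_{j}^{h}(\theta_{0})$ solves $\mu_{j}(z,ze^{-2\theta_{0}})=0$. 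Decomposing
\begin{equation*}
0=\bigl[\mu_{j}(z_{j}^{h}(\theta_{0}),z_{j}^{h}(\theta_{0})e^{-2\theta_{0}})-\mu_{j}(z_{j}^{h}(\theta_{0}),z_{j}^{h}(\theta_{0}))\bigr]+\mu_{j}(z_{j}^{h}(\theta_{0}),z_{j}^{h}(\theta_{0})),
\end{equation*}
the bracket is controlled by \eqref{eq.Et-E0}, and a first-order Taylor expansion of the final term at $z_{j}^{h}$ gives $(1+o(1))(z_{j}^{h}(\theta_{0})-z_{j}^{h})$, yielding the claimed bound. The main obstacle is the bookkeeping of exponential weights in the first step: one must route the estimate so that Proposition \ref{pr.diffestabsbc} provides the improvement $|\theta_{0}|/h^{2}$ in place of the $1/h$ of Proposition \ref{pr.resestabsbc}, and then verify that the supports of $[(h\partial_{x})^{2},\chi_{h}]$ near $\{a,b\}$ combine with the Agmon decay of $\Phi_{j}^{h}$ on \emph{both} sides to produce the full $e^{-2S_{0}/h}$ factor rather than a single $e^{-S_{0}/h}$.
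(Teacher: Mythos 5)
Your treatment of estimate \eqref{eq.Et-E0} follows the paper's proof essentially line by line: subtract the two expansions from Proposition~\ref{pr.aaa}, observe that $E_0^{-+}$ and the $\zeta$-independent term $E_0^{-+}R_0^+(\chi_h-1)E_0^+$ cancel, bound the two remaining terms by feeding the resolvent difference $\mathcal{R}_{\theta_0}(z)$ through the argument used for \eqref{est.K11K12} but with Proposition~\ref{pr.diffestabsbc} in place of Proposition~\ref{pr.resestabsbc}, and finally absorb the $\mathcal{O}(e^{-(5S_0-C\eta)/(2h)})$ remainder using the lower bound $|\theta_0|\geq e^{-S_0/(4h)}$. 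This is exactly the paper's computation.

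For the second claim you diverge from the paper. The paper's route is to decompose $(E^{-+}(z,z))^{-1}$ as a sum of simple poles at the $z_j^h$ plus a holomorphic part, derive the uniform bounds $|A_j^h|\leq 2$ and $\sup_{\omega_{ch/4}}|F^h(z)|\leq C/h$ via the maximum principle and \eqref{eq.secresEmp}, and then invert $E^{-+}(z,ze^{-2\theta_0})$ by a Neumann series whenever $\min_j|z-z_j^h|$ exceeds $T|\theta_0|e^{-2S_0/h}h^{-3}$; no eigenvalue of $E^{-+}$ is ever singled out. Your alternative is an implicit-function-theorem argument on a distinguished simple eigenvalue $\mu_j(z,\zeta)$ of the $\ell\times\ell$ matrix. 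This is workable, but it quietly relies on three facts that you do not establish and that the paper's residue route never needs: (i) that the eigenvalue close to $z-\lambda_j^h$ is simple and jointly analytic in $(z,\zeta)$ uniformly in $h$ (this does hold here because \eqref{eq.splilj} separates the diagonal entries at a scale larger than the off-diagonal perturbation, but it must be said, and the associated non-self-adjoint spectral projectors must be shown to have $\mathcal{O}(1)$ norm so that the eigenvalue difference is controlled by the matrix norm difference); (ii) that $\frac{d}{dz}\mu_j(z,z)=1+o(1)$ on the segment joining $z_j^h$ and $z_j^h(\theta_0)$, which requires a bound on $\partial_z$ of the exponentially small corrections in Proposition~\ref{pr.aaa}; and (iii) that the labelling $\mu_j\leftrightarrow z_j^h(\theta_0)$ is consistent as $\theta_0$ varies, which you can get from the a priori localisation in $D_{j,h}(T)$ already provided by Proposition~\ref{prop.locresdir}, so this one is harmless. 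Once (i) and (ii) are filled in, your one-line Taylor expansion does recover $\max_j|z_j^h(\theta_0)-z_j^h|=\mathcal{O}(|\theta_0|e^{-2S_0/h}h^{-3})$, so the approach is sound; the paper's residue argument just reaches the same conclusion while working only with norm bounds, sidestepping the need to track individual eigenvalues of a non-normal matrix.
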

\noindent\textbf{Proof:} For 
$z\in\omega_{\frac{ch}{2}}$ and $\theta_0$ such that $|\theta_0|\leq\frac{c^2h}{8}$, the Proposition~\ref{pr.aaa} implies that: 
\begin{align*}
E^{-+}(z,ze^{-2\theta_{0}}) - E^{-+}(z,z) =& E_{0}^{-}[(h\partial_x)^2, \rho_{h}]D[(h\partial_x)^2, \chi_{h}]E_{0}^{+} \\
&+E_{0}^{-+}R_{0}^{+}(1-\rho_h)D[(h\partial_x)^2,
\chi_{h}]E_{0}^{+} + \mathcal{O}(e^{-\frac{5S_{0}-C_{a,b,c}\eta}{2h}})\\
=& I + II + \mathcal{O}(e^{-\frac{5S_{0}-C_{a,b,c}\eta}{2h}})\,,
\end{align*}
where
$D=(\tilde{H}_{z}^{h}-z)^{-1}-(\tilde{H}_{ze^{-2\theta_0}}^{h}-z)^{-1}$. The
operator $E_{0}^{-}$ being bounded, the first term $I$ is estimated as
we did for \eqref{est.K11K12} where $(\tilde{H}_{\zeta}^{h}-z)^{-1}$
is replaced by $D$ and we use Proposition~\ref{pr.diffestabsbc}
instead of Proposition~\ref{pr.resestabsbc}. This leads to: 
\[
||I||\leq C_{a,b,c}|z^{1/2}-(ze^{-2\theta_0})^{1/2}|\frac{e^{-\frac{2S_0}{h}}}{h^3}\leq C_{a,b,c}|\theta_0|\frac{e^{-\frac{2S_0}{h}}}{h^3}\,.
\]
For $II$, using the exponential decay given by the operator $R_{0}^{+}$, we get:
\[
||II||\leq C_{a,b,c}|\theta_0|\frac{e^{-\frac{2S_0}{h}}}{h^3}\,.
\]
The assumption $e^{-\frac{S_{0}}{4h}}\leq |\theta_{0}|$ ensures that
the remainder $\mathcal{O}(e^{-\frac{5S_{0}-C_{a,b,c}\eta}{2h}})$ is
absorbed by $|\theta_{0}|e^{-\frac{2S_{0}}{h}}h^{-3}$ as $h\to
0$\,. We have proved \eqref{eq.Et-E0}.\\
When \eqref{eq.splilj} is verified, Proposition~\ref{prop.locresdir}
says that every disc $D_{j,h}(T)=\{z\in \C,\,|z-\lambda_{j}^{h}|<
  Te^{-\frac{2S_{0}}{h}}h^{-3}\}$ for any $T\geq T_{a,b,c}$, 
 contains exactly one resonance $z_{j}^{h}(\theta_{0})$, and in particular one resonance $z_{j}^{h}$
when $\theta_{0}=0$.
Hence the matrix $E^{-+}(z,z)$ has only simple poles and its inverse
is the meromorphic function 
\begin{equation}\label{eq.devE0}
(E^{-+}(z,z))^{-1} = \sum_{j=1}^{\ell}\frac{A_j^h}{z-z_j^h} +
F^h(z)\,,\quad z\in \omega_{\frac{ch}{2}}\,.
\end{equation}
The matrix $A_{j}^{h}$ is nothing but the residue
$$
A_{j}^{h}=\frac{1}{2i\pi}\int_{\partial_{D_{j,h}(T)}}(E^{-+}(z,z))^{-1}~dz\,,
$$
while the function $F^{h}(z)$ is a holomorphic function estimated via
the maximum principle by
\begin{equation}
  \label{eq.holbord}
\sup_{z\in \omega_{\frac{ch}{4}}}|F^{h}(z)|
\leq \sup_{z\in \partial \omega_{\frac{ch}{4}}}\left[|(E^{-+}(z,z))^{-1}|+
  \sum_{j=1}^{\ell}\frac{|A_{j}^{h}|}{|z-z_{j}^{h}|}\right]\,.
\end{equation}
The estimate of Proposition~\ref{pr.aaa} says
$$
|E^{-+}(z,z)-E_{0}^{-+}(z)|\leq C_{a,b,c}\frac{e^{-\frac{2S_{0}}{h}}}{h^{3}}\,,
$$
while we know $|(E_{0}^{-+}(z))^{-1}|
\leq \max_{1\leq j\leq
  \ell}|z-\lambda_{j}^{h}|^{-1}$\,.
After writing 
\begin{equation}
  \label{eq.secresEmp}
(E^{-+})^{-1}=\left[1+(E_{0}^{-+})^{-1}(E^{-+}-E_{0}^{-+})\right]^{-1}(E_{0}^{-+})^{-1}\,,
\end{equation}
we get for $T>\max\{T_{a,b,c},2C_{a,b,c}\}$
$$
\sup_{z\in \partial D_{j,h}(T)}|(E^{-+}(z,z))^{-1}|\leq 
\frac{h^{3}e^{\frac{2S_{0}}{h}}}{T[1-\frac{C_{a,b,c}}{T}]}
\leq \frac{2h^{3}e^{\frac{2S_{0}}{h}}}{T}\,,
$$
and finally the uniform bound for the residues 
$$
\max_{1\leq j\leq \ell}|A_{j}^{h}|\leq 2\,.
$$
The holomorphic part $F^{h}(z)$ is then estimated with
\eqref{eq.holbord}. Actually, the first term is estimated with the help of 
\eqref{eq.secresEmp} while the second term is treated with  the above
estimate of $A_{j}^{h}$ and by making use of 
$\max_{1\leq j\leq \ell} |z_{j}^{h}-\lambda_{j}^{h}|\leq
T_{a,b,c}\frac{e^{-\frac{2S_{0}}{h}}}{h^{3}}$:
$$
\sup_{z\in \omega_{\frac{ch}{4}}}|F^{h}(z)|\leq \frac{C'_{a,b,c}}{h}\,.
$$
For all $z\in \omega_{\frac{ch}{4}}\setminus \left\{z_{1}^{h},\ldots,
  z_{\ell}^{h}\right\}$,
 the inverse of  $E^{-+}(z,z)$ is thus estimated by
$$
|(E^{-+}(z,z))^{-1}|\leq \sum_{j=1}^{\ell}\frac{2}{|z-z_{j}^{h}|}+ \frac{C'_{a,b,c}}{h}\,.
$$
We now write for $z\not\in \omega_{\frac{ch}{4}}\setminus \left\{z_{1}^{h},\ldots,
  z_{\ell}^{h}\right\}$
$$
E^{-+}(z,ze^{-2\theta_{0}})= E^{-+}(z,z)\left[1+
  (E^{-+}(z,z))^{-1}(E^{-+}(z,ze^{-2\theta_{0}})-E^{-+}(z,z))\right]\,.
$$ 
Due to the estimate  \eqref{eq.Et-E0} the condition
$$
\min_{j\in\left\{1,\ldots,\ell\right\}}|z-z_{j}^{h}|\geq
T\frac{e^{-\frac{2S_{0}}{h}}|\theta_{0}|}{h^{3}}
$$
implies
$$
\left|(E^{-+}(z,z))^{-1}(E^{-+}(z,ze^{-2\theta_{0}})-E^{-+}(z,z))\right|\leq 
\left[2\ell
\frac{h^{3}e^{\frac{2S_{0}}{h}}}{T|\theta_{0}|}+C_{a,b,c}'h^{-1}\right]C_{a,b,c}''
\frac{|\theta_{0}|e^{-\frac{2S_{0}}{h}}}{h^{3}}\,,
$$
where the right-hand side is smaller than $1$ if $T\geq 4\ell C_{a,b,c}''$
and  $h>0$ is small enough. Outside $\cup_{j=1}^{\ell}\left\{z\in \C,
  |z-z_{j}^{h}|\leq T|\theta_{0}|e^{-\frac{2S_{0}}{h}}h^{-3}\right\}$,
$E^{-+}(z,ze^{-2\theta_{0}})$ is invertible.
For such a $T$ we have proved
$$
\max_{j\in \left\{1,\ldots,\ell\right\}}|z_{j}^{h}(\theta_0)-z_{j}^{h}|\leq
T\frac{e^{-\frac{2S_{0}}{h}}|\theta_{0}|}{h^{3}}\,. 
$$
\qed

\subsection{A Fermi-Golden rule}

In \cite{BNP2}, a  Fermi Golden rule
for the imaginary parts of resonances $\Gamma_{j}=-\Imag z_{j}^{h}$ in
the case $\theta_{0}=0$ has been introduced. It plays a major role in
the analysis of the nonlinear effects studied in
\cite{BNP}\cite{BNP1}\cite{BNP2}\cite{Ni1}\cite{Ni2}\cite{BFN}
for it expresses accurately how the tunnel effect between the resonant
state and the incoming waves  is balanced between the left and
right-hand sides.
By assuming 
\begin{equation}
  \label{eq.splilj2}
\lim_{h\to 0}e^{\frac{S_{U}}{h}}\min_{1\leq j<
  j'\leq
  \ell}|\lambda_{j}^{h}-\lambda_{j'}^{h}|=+\infty
\quad\text{with}\quad S_{U}<\frac{S_{0}}{8}\,,
\end{equation}
which is stronger than \eqref{eq.splilj}, the energy range of
$\lambda\in \R$ associated
with the resonance $z_{j}^{h}$ is given by
$|\lambda-z_{j}^{h}|\leq
e^{-\frac{S_{U}}{h}}$\,.
When 
$\tilde{\psi}_{-,0}^{h}(\pm\sqrt{\lambda},.)$ denote the generalized
eigenfunctions of the filled well Hamiltonian $H_{0,V}^{h}(0)$ at energy
$\lambda$ defined in section~\ref{se.absbc} and
$\Phi_{j}^{h}$, $j\in \left\{1,\ldots, \ell\right\}$, denote  the
normalized eigenfunctions of the Dirichlet Hamiltonian $H_{D}^{h}$ 
given in \eqref{eq.Dirwell}, the formula 
\begin{equation}
  \label{eq.FGR}
  \Gamma_{j}^{h}+o(\Gamma_{j}^{h})=
\frac{|\langle
  W^h\tilde{\psi}_{-,0}^{h}(\sqrt{\lambda},.),\Phi_j^h\rangle|^2+|\langle
  W^h\tilde{\psi}_{-,0}^{h}(-\sqrt{\lambda},.),\Phi_j^h\rangle|^2}{4h\sqrt{\lambda}}
\geq \frac{e^{-\frac{2 S_{0}}{h}}}{C}\,,
\end{equation}
for all $\lambda\in \R$ such that $|\lambda-z_{j}^{h}|\leq
e^{-\frac{S_{U}}{h}}$,
has been proved under additional assumptions about the localization
of the $\Phi_{j}^{h}$ within $\supp W^{h}$\,. We refer to
Proposition~7.9 in \cite{BNP2} and to the subsequent explicit
computations in Sections~7 and Section~8 of \cite{BNP2} for the
details
and in particular for the lower bound.

We shall assume that the formula \eqref{eq.FGR} is true when $\theta_{0}=0$ and
check that it remains true when $\theta_{0}\neq 0$ is small enough.
We shall use the notation
$$
\Gamma_{j}^{h}=-\Imag z_{j}^{h}\quad \text{and}\quad 
\Gamma_{j}^{h}(\theta_{0})=-\Imag z_{j}^{h}(\theta_{0})\,.
$$
\begin{proposition}
\label{pr.FGR}
Assume the conditions
\eqref{eq.hyp1}\eqref{eq.hyp2}\eqref{eq.hyp3}\eqref{eq.splilj2}\eqref{eq.FGR}
and
take $\theta_{0}$ such that $e^{-\frac{S_{0}}{4h}}\leq
|\theta_{0}|\leq \frac{c^{2}h}{8}$ then the Fermi
Golden Rule
\begin{equation}\label{eq.ROFth}
\Gamma_j^h(\theta_0)
= \frac{|\langle
  W^h\tilde\psi_{-,\theta_{0}}^{h}(\sqrt{\lambda},.),\Phi_j^h\rangle|^2+|\langle
  W^h
\tilde\psi_{-,\theta_{0}}^{h}(-\sqrt{\lambda},.),\Phi_j^h\rangle|^2}{4h\sqrt{\lambda}}+o\left(\Gamma_j^h\right)+\mathcal{O}\left(|\theta_0|\frac{e^{-\frac{2S_{0}}{h}}}{h^{5}}\right)
\end{equation}
holds for all $\lambda\in \R$ such that $|\lambda-z_{j}^{h}|\leq
e^{-\frac{S_{U}}{h}}$.\\
In particular, when $\lim_{h\to 0}h^{-5}\theta_{0}=0$ we have
\begin{equation}
  \label{eq.FGRt}
    \Gamma_{j}^{h}(\theta_{0})+o(\Gamma_{j}^{h}(\theta_{0}))=
\frac{|\langle
  W^h\tilde{\psi}_{-,\theta_{0}}^{h}(\sqrt{\lambda},.),\Phi_j^h\rangle|^2+|\langle
  W^h\tilde{\psi}_{-,\theta_{0}}^{h}(-\sqrt{\lambda},.),\Phi_j^h\rangle|^2}{4h\sqrt{\lambda}}
\geq \frac{e^{-\frac{2 S_{0}}{h}}}{C}\,.
\end{equation}
\end{proposition}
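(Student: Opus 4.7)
The plan is to start from the already-known Fermi Golden rule \eqref{eq.FGR} valid at $\theta_0=0$ and to propagate it by perturbation in $\theta_0$, using the two ingredients which were specifically developed to allow this: the resonance variation estimate of Proposition~\ref{prop.locresth} and the comparison estimate for generalized eigenfunctions of Proposition~\ref{pr.geneigdif}.

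First, I would invoke Proposition~\ref{prop.locresth}: under the assumption $e^{-S_0/(4h)}\leq |\theta_0|\leq c^2h/8$, and remembering that \eqref{eq.splilj2} is stronger than \eqref{eq.splilj}, we obtain
\[
\Gamma_j^h(\theta_0)=\Gamma_j^h+\mathcal{O}\!\left(|\theta_0|\frac{e^{-2S_0/h}}{h^3}\right),
\]
so that it suffices to prove \eqref{eq.ROFth} with $\Gamma_j^h(\theta_0)$ replaced by $\Gamma_j^h$; by the hypothesis \eqref{eq.FGR} this reduces to showing that the two squared scalar products in \eqref{eq.FGR} can be replaced by their $\theta_0$-deformed analogues up to an error $\mathcal{O}(|\theta_0|e^{-2S_0/h}/h^5)$.

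Second, I would set $\Delta\psi^\pm=\tilde\psi_{-,\theta_0}^h(\pm\sqrt{\lambda},\cdot)-\tilde\psi_{-,0}^h(\pm\sqrt{\lambda},\cdot)$ and expand the square:
\[
\bigl|\langle W^h\tilde\psi_{-,\theta_0}^h(\pm\sqrt{\lambda},\cdot),\Phi_j^h\rangle\bigr|^2
=\bigl|\langle W^h\tilde\psi_{-,0}^h(\pm\sqrt{\lambda},\cdot),\Phi_j^h\rangle\bigr|^2
+2\,\Real\bigl[\,\overline{\langle W^h\tilde\psi_{-,0}^h,\Phi_j^h\rangle}\,\langle W^h\Delta\psi^\pm,\Phi_j^h\rangle\bigr]
+\bigl|\langle W^h\Delta\psi^\pm,\Phi_j^h\rangle\bigr|^2.
\]
The main tool to bound the cross and diagonal error terms is the weighted $L^\infty$ bound of Proposition~\ref{pr.geneigdif}, namely $\sup_{[a,b]}|e^{\varphi_\pm/h}\Delta\psi^\pm|\leq C|\theta_0|/h^2$, combined with the weighted $L^\infty$ bound on $\Phi_j^h$ furnished by Proposition~\ref{pr.Direxp}. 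Since $W^h=W_1^h+W_2^h$ with $W_2^h$ only a bounded measure of mass $\mathcal{O}(h)$, I would carry out the estimates pointwise at points $x\in\supp W^h\subset U_h$, where both Agmon weights $\varphi_\pm(x)=d_{Ag}(x,\{a\}\text{ or }\{b\},V,\lambda)$ are at least $S_0-\mathcal{O}(h)$. This yields
\[
\bigl|\langle W^h\Delta\psi^\pm,\Phi_j^h\rangle\bigr|\leq C\frac{|\theta_0|}{h^{2}}\,e^{-S_0/h},
\]
while from the baseline Fermi Golden Rule \eqref{eq.FGR} and the assumption \eqref{eq.splilj2} one has $|\langle W^h\tilde\psi_{-,0}^h(\pm\sqrt{\lambda},\cdot),\Phi_j^h\rangle|\leq C h^{1/2}e^{-S_0/h}(1+o(1))$, because the sum of the squares equals $4h\sqrt{\lambda}\,\Gamma_j^h(1+o(1))=\mathcal{O}(h\,e^{-2S_0/h})$. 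Putting these together, the cross term contributes $\mathcal{O}(|\theta_0|e^{-2S_0/h}/h^{3/2})$ and the diagonal remainder contributes $\mathcal{O}(|\theta_0|^2 e^{-2S_0/h}/h^{4})$, and after dividing by $4h\sqrt{\lambda}$ both are absorbed in $\mathcal{O}(|\theta_0|e^{-2S_0/h}/h^5)$, which establishes \eqref{eq.ROFth}.

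Finally, for \eqref{eq.FGRt} I would observe that the lower bound $\Gamma_j^h\geq e^{-2S_0/h}/C$ in \eqref{eq.FGR} together with the step-one comparison gives $\Gamma_j^h(\theta_0)\geq e^{-2S_0/h}/(2C)$ as soon as $|\theta_0|/h^3\to 0$; the extra assumption $|\theta_0|/h^5\to 0$ then turns the $\mathcal{O}(|\theta_0|e^{-2S_0/h}/h^5)$ remainder in \eqref{eq.ROFth} into an $o(\Gamma_j^h(\theta_0))$ term, producing \eqref{eq.FGRt}. The main obstacle in implementing this plan is not any of the above steps in isolation but the bookkeeping of negative powers of $h$ in the exponential-decay constants: the estimates in Propositions~\ref{pr.geneigdif} and \ref{pr.Direxp} are not optimal in $h$, so one has to track them carefully through the Cauchy--Schwarz (and sup$\times$measure) type bound used to dominate $\langle W^h\Delta\psi^\pm,\Phi_j^h\rangle$, making sure the final total loss stays at $h^{-5}$ and is not worsened by the measure-valued component $W_2^h$ whose mass is only $\mathcal{O}(h)$.
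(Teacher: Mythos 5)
Your overall plan coincides with the paper's: reduce via Proposition~\ref{prop.locresth}, then compare $|f(\theta_0)|^2$ and $|f(0)|^2$ (with $f(\theta_0)=\langle W^h\tilde\psi_{-,\theta_0}^h(\sqrt\lambda,\cdot),\Phi_j^h\rangle$) through the weighted pointwise estimates of Propositions~\ref{pr.geneigdif} and \ref{pr.Direxp}, together with $\|W^h\|_{\mathcal M_b}\!=\mathcal O(h)$, and finally divide by $4h\sqrt\lambda$. The paper does not expand the square as you do but writes $\bigl||f(\theta_0)|^2-|f(0)|^2\bigr|\leq|f(\theta_0)-f(0)|\,(|f(\theta_0)|+|f(0)|)$; that difference is cosmetic.

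There is one genuine logical slip and one arithmetic slip. The logical one: you assert $|f(0)|\le Ch^{1/2}e^{-S_0/h}$ ``because the sum of the squares equals $4h\sqrt\lambda\,\Gamma_j^h(1+o(1))=\mathcal O(he^{-2S_0/h})$''. That step tacitly uses an upper bound $\Gamma_j^h=\mathcal O(e^{-2S_0/h})$, which is not asserted by \eqref{eq.FGR} (the FGR gives a lower bound on $\Gamma_j^h$, and Proposition~\ref{prop.locresdir} only yields $\Gamma_j^h=\mathcal O(e^{-2S_0/h}h^{-3})$); the argument as stated is circular. The paper instead bounds $|f(0)|$ (and $|f(\theta_0)|$) \emph{directly} from Proposition~\ref{pr.geneig} together with the decay of $\Phi_j^h$, obtaining $|f(\theta_0)|,|f(0)|\le C\,e^{-S_0/h}/h^{3/2}$, and this is the estimate you should use. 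The arithmetic one: with $\sup|e^{\varphi/h}\Delta\psi^\pm|\le C|\theta_0|/h^2$, $\sup|e^{\varphi_j/h}\Phi_j^h|\le C/h^{3/2}$ and $\int|W^h|\le Ch$, the scalar product $\langle W^h\Delta\psi^\pm,\Phi_j^h\rangle$ is $\mathcal O(|\theta_0|e^{-S_0/h}/h^{5/2})$, not $\mathcal O(|\theta_0|e^{-S_0/h}/h^2)$. Redoing the bookkeeping with these corrected bounds still lands inside the budget: cross term $\mathcal O(|\theta_0|e^{-2S_0/h}/h^4)$ and diagonal term $\mathcal O(|\theta_0|^2e^{-2S_0/h}/h^5)\le\mathcal O(|\theta_0|e^{-2S_0/h}/h^4)$ using $|\theta_0|\le c^2h/8$, so after dividing by $4h\sqrt\lambda$ the remainder is $\mathcal O(|\theta_0|e^{-2S_0/h}/h^5)$ as claimed, and the rest of your argument (including the passage from \eqref{eq.ROFth} to \eqref{eq.FGRt} under $h^{-5}\theta_0\to 0$) goes through.
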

\textbf{Proof:} 
Proposition~\ref{prop.locresth} gives:
\begin{equation}\label{eq.appGa}
|\Gamma_j^h(\theta_0)-\Gamma_j^h| \leq C_{a,b,c}|\theta_0|\frac{e^{-\frac{2S_{0}}{h}}}{h^{3}}\,.
\end{equation}
Let $f(\theta_0)=\langle W^h\psi_{-,\theta_{0}}^{h}(\sqrt{\lambda},.),\Phi_j^h\rangle$. The pointwise and $L^{2}$  weighted estimates of $u={\tilde\psi}_{-,\theta_{0}}^{h}(\sqrt{\lambda},.)-{\tilde\psi}_{-,0}^{h}(\sqrt{\lambda},.)$ stated in Proposition~\ref{pr.geneigdif}
 with $\varphi(x)=d_{Ag}(x,a,V,\lambda)$ say
$$
 h^{1/2}\sup_{x\in [a,b]}|e^{\frac{\varphi(x)}{h}}u(x)|+
\|e^{\frac{\varphi}{h}}u\|_{L^{2}}
\leq 
\frac{C_{a,b,c}'|\theta_{0}|}{h^{3/2}}\,,
$$
while Proposition~\ref{pr.geneig} gives
$$
 h^{1/2}\sup_{x\in
   [a,b]}|e^{\frac{\varphi(x)}{h}}\tilde{\psi}_{-,\theta_{0}}^{h}(\sqrt{\lambda},
 x)|+
\|e^{\frac{\varphi}{h}}\tilde{\psi}_{-,\theta_{0}}^{h}(\sqrt{\lambda})\|_{L^{2}}
\leq 
\frac{C_{a,b,c}'}{h^{1/2}}\,,
$$
with the same estimate for $\theta_{0}=0$\,.
Moreover, the exponential decay of $\Phi_{j}^{h}$ stated in Proposition~\ref{pr.Direxp}
can be written as
$$
h^{1/2}\sup_{x\in [a,b]}|e^{\frac{\varphi_{j}(x)}{h}}\Phi_{j}^{h}(x)|+
\|e^{\frac{\varphi_{j}}{h}}\Phi_{j}^{h}\|_{L^{2}}
\leq \frac{C_{a,b,c}'}{h}\,,
$$
with $\varphi_{j}(x)=d_{Ag}(x,U,V,\lambda_j^{h})$. Recalling that
$$
\|W_{1}^{h}\|_{L^{\infty}}\leq
\frac{1}{c}\,,\quad\|W_{2}^{h}\|_{\mathcal{M}_{b}}\leq
\frac{h}{c}\,,\quad
\supp W^{h}\subset \left\{d(x, U)\leq h\right\}\,,
$$
and $S_{0}= d_{Ag}(U,\left\{a,b\right\},V,\lambda^0)\leq d_{Ag}(x,\left\{a,b\right\},V,\lambda)+ \mathcal{O}(h)$ when $x\in U^h$ and $|\lambda-z_{j}^{h}|\leq e^{-\frac{S_{U}}{h}}$\,.
Hence we get with our assumptions
\begin{eqnarray*}
&&
|f(\theta_0)-f(0)|
\leq C_{a,b,c}''|\theta_0|\frac{e^{-\frac{S_0}{h}}}{h^{5/2}}
\\
\text{and}
&&
|f(\theta_0)| \leq C_{a,b,c}''\frac{e^{-\frac{S_0}{h}}}{h^{3/2}}\,.
\end{eqnarray*}
We obtain 
\begin{equation}\label{eq.appf}
\left|\frac{|f(\theta_0)|^2}{4h\sqrt{\lambda}}-\frac{|f(0)|^2}{4h\sqrt{\lambda}}\right|
= \mathcal{O}\left(\frac{e^{-\frac{2S_{0}}{h}}|\theta_{0}|}{h^{5}}\right)\,.
\end{equation}
\qed
\begin{remark}
In \cite{BNP2}, the Fermi Golden Rule \eqref{eq.FGR} has been studied
with $W^{h}\in L^{\infty}((a,b))$. Nevertheless it can be proved in
cases when the singular part $W_{2}^{h}$ does not vanish by a direct
analysis like for example when $W_{2}^{h}=h \delta_{c}$. 
The presentation of Proposition~\ref{pr.FGR} shows that the stability
result w.r.t to $\theta_{0}$ holds   in this more general framework and
leaves the possibility of further applications.
\end{remark}

\section{Accurate resolvent estimates for the whole space problem}
\label{se.wholeline}

In the previous sections~\ref{se.expdecay} and \ref{se.grush}, we got
accurate resolvent estimates with respect to 
$h>0$ for the problem reduced to the interval $(a,b)$. We  use here
this information in order to derive accurate resolvent estimates  for 
$(H_{\theta_{0},V-W^{h}}(\theta_{0})-z)^{-1}$ when
$\theta_{0}=i h^{N_{0}}$, $N_{0}>1$,  which are essential in
the justification of the adiabatic evolution.

\subsection{Localization of the spectrum}
\label{se.locsp}
 The  results of Corollary~\ref{cor_krein}, Proposition~\ref{prop.agco}, Proposition~\ref{prop.locresdir} and section~\ref{sec.res}  can be summarized with the
 corresponding assumptions.
\begin{proposition}
\label{pr.summ}
Assume that $V\in L^{\infty}((a,b);\R)$ and $W^{h}=W^{h}_{1}+W^{h}_{2}\in
\mathcal{M}_{b}((a,b))$  are real valued with the hypothesis \eqref{eq.hyp0},
$$
c1_{(a,b)} \leq V\quad, \quad \|V\|_{L^{\infty}}\leq \frac{1}{c}\quad,
\quad \|W_{1}^{h}\|\leq \frac{1}{c}\quad,\quad \|W^{h}_{2}\|\leq \frac{h}{c}\,,
$$
 $W_1^h$ and $W_2^h$ are supported in the domain
$U_h=\{x\in(a,b); \, d(x,U)\leq h\}$ where $U$ is a fixed compact subset of $(a,b)$\,.
Assume also $\theta_{0}=ih^{N_{0}}$ with $N_{0}>1 $ and $h<
h_{0}$\,.
Then:
\begin{description}
\item[a)] 
$\sigma_{ess}(H^{h}_{\theta_{0},V-W^{h}}(\theta_{0}))=\sigma_{ess}(H^{h}_{ND,V-W^{h}}(\theta_{0}))=e^{-2\theta_{0}}\R_{+}$\,;\\
\item[b)] The equality \eqref{res_formula_gen} written with $\theta=\theta_{0}$
\begin{multline}
\label{res_formula_gen_rap}
\left(  H_{\theta_{0},V-W^h}^{h}(\theta_{0})-z\right)  ^{-1}  =\left(  H_{ND,V-W^h}
^{h}(\theta_{0})-z\right)  ^{-1}\\
-\sum_{i,j,k=1}^{4}\left(  Bq(z,\theta_0,V-W^h)-A\right)  _{ij}^{-1}B_{jk}\left\langle
\gamma(\underline{e}_{k},\bar{z},\bar{\theta_{0}}),\cdot\right\rangle
_{L^{2}(\mathbb{R})}\gamma(\underline{e}_{i},z,\theta_{0}) 
\end{multline} 
 holds as an identity of meromorphic functions on $\C\setminus
 e^{-2\theta_{0}}\R_{+}$\,.
\item[c)] In $\Sigma_{h}=\left\{z\in\C, c\leq \Real{z}\leq \inf_{x\in(a,b)}V(x)-c, |\arg z|< 2 |\Imag \theta_{0}| \right\}$, the (discrete) spectrum of $H^{h}_{\theta_{0},
  V-W^{h}}(\theta_{0})$ is made of eigenvalues $z^{h}$ which satisfy
$\Ker(H_{z^{h}e^{-2\theta_{0}}}^{h}-z^{h})\neq \left\{0\right\}$ where
$H_{\zeta}^{h}$ is the operator defined in \eqref{eq.defHzet}\,.
When the spectrum of the Dirichlet Hamiltonian $\sigma(H_{D}^{h})$ is
made of clusters such that \eqref{eq.hyp1} and \eqref{eq.hyp3} are
satisfied, then there exists $\kappa_{a,b,c}>0$ such that 
$$
\forall z^{h}\in \Sigma_{h}\cap
\sigma(H_{\theta_{0},V-W^{h}}^{h}(\theta_{0}))\,,\quad
d(z^{h}, \sigma (H_{D}^{h}))\leq e^{-\frac{\kappa_{a,b,c}}{h}}\,.
$$
\end{description}
\end{proposition}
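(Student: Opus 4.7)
Parts (a) and (b) are immediate specializations of Corollary~\ref{cor_krein} to $\theta=\theta_{0}$ and $\mathcal{V}=V-W^{h}$: the essential-spectrum statement of Corollary~\ref{cor_krein} is exactly (a), and the displayed identity \eqref{res_formula_gen} becomes \eqref{res_formula_gen_rap}. The hypothesis \eqref{V_local} on $\mathcal{V}$ is guaranteed by \eqref{eq.hyp0} together with the support condition on $W^{h}$, while $\theta_{0}=ih^{N_{0}}$ with $N_{0}>1$ satisfies $|\theta_{0}|\leq c^{2}h/8$ for $h<h_{0}$ small enough, so the standing hypotheses of Proposition~\ref{prop.locresdir} are met.

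For the first (structural) assertion in (c), the plan is to transport verbatim the argument of Subsection~\ref{sec.res}, in which $\theta_{0}=ih^{N_{0}}$ now plays simultaneously the role of interface parameter and of scaling angle $i\tau$. An $L^{2}(\R)$ eigenfunction $g_{r}$ of $H_{\theta_{0},V-W^{h}}^{h}(\theta_{0})$ at eigenvalue $z^{h}\in\Sigma_{h}$ must take the exterior form \eqref{eq.mod_res}, and these exterior modes decay at infinity since the condition $|\arg z^{h}|<2h^{N_{0}}$ defining $\Sigma_{h}$ yields $\Imag[(z^{h})^{1/2}e^{ih^{N_{0}}}]>0$ with our branch of the square root. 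Imposing the interface conditions \eqref{BC_1} at $x=a,b$ and eliminating the outgoing amplitudes $g_{\pm}$ converts the eigenvalue equation into precisely the Robin boundary-value problem defining $H_{z^{h}e^{-2\theta_{0}}}^{h}$ (cf.~\eqref{eq.defHzet}), so that $g_{r,int}\in\Ker(H_{z^{h}e^{-2\theta_{0}}}^{h}-z^{h})\neq\{0\}$.

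For the exponential closeness to $\sigma(H_{D}^{h})$, the cluster hypothesis partitions $\sigma(H_{D}^{h})\cap[c,\inf V-c]$ into finitely many clusters $\{\lambda_{1}^{(k),h},\ldots,\lambda_{\ell_{k}}^{(k),h}\}$ centered at $\lambda^{0,k}$ and satisfying \eqref{eq.hyp1}--\eqref{eq.hyp3}. Proposition~\ref{prop.locresdir} applied to each cluster produces inside the window $\omega_{ch/2}^{(k)}$ around $\lambda^{0,k}$ exactly $\ell_{k}$ resonances of $H_{\theta_{0},V-W^{h}}^{h}(0)$ -- equivalently eigenvalues of $H_{\theta_{0},V-W^{h}}^{h}(\theta_{0})$ -- each within distance $\mathcal{O}(e^{-2S_{0}^{(k)}/h}/h^{3})$ of a Dirichlet eigenvalue, and fixing $\kappa_{a,b,c}<2\min_{k}S_{0}^{(k)}$ upgrades this to $\leq e^{-\kappa_{a,b,c}/h}$.

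The main remaining technical obstacle is to exclude eigenvalues in the gap region $G_{h}:=\Sigma_{h}\setminus\bigcup_{k}\omega_{ch/2}^{(k)}$, where $d(z,\sigma(H_{D}^{h}))\geq ch/2$. The plan is to rerun the Grushin scheme of Section~\ref{se.grush} with the empty auxiliary space ($\ell=0$, no $R_{0}^{\pm}$): the approximate inverse of $H_{ze^{-2\theta_{0}}}^{h}-z$ reduces to $\chi_{h}(H_{D}^{h}-z)^{-1}\psi+(1-\rho_{h})(\tilde H_{ze^{-2\theta_{0}}}^{h}-z)^{-1}(1-\psi)$, whose two pieces are controlled by the gap estimate $\|(H_{D}^{h}-z)^{-1}\|=\mathcal{O}(h^{-1})$ and by Proposition~\ref{pr.resestabsbc} (whose hypothesis $V-\Real z\geq c$ is exactly the definition of $\Sigma_{h}$). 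The commutator remainders $[(h\partial_{x})^{2},\chi_{h}]$ and $[(h\partial_{x})^{2},\rho_{h}]$ are exponentially small by the Agmon estimates already used to bound $\mathcal{K}_{\zeta}(z)$, so a Neumann series yields the invertibility of $H_{ze^{-2\theta_{0}}}^{h}-z$ on $(a,b)$ and, through the correspondence established in the second paragraph, $z\notin\sigma(H_{\theta_{0},V-W^{h}}^{h}(\theta_{0}))$.
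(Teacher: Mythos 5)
Your parts (a) and (b) match the paper exactly: both are direct specializations of Corollary~\ref{cor_krein}, and your check that $\theta_{0}=ih^{N_{0}}$, $N_{0}>1$, satisfies the smallness condition of Proposition~\ref{prop.locresdir} for $h$ small is correct.

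For part (c), the paper's own "proof" is the single remark preceding the statement, that "the results of Corollary~\ref{cor_krein}, Proposition~\ref{prop.agco}, Proposition~\ref{prop.locresdir} and Section~\ref{sec.res} can be summarized," without spelling out how those pieces assemble. Your write-up is therefore genuinely more complete. The structural identification of $\sigma(H_{\theta_{0},V-W^{h}}^{h}(\theta_{0}))\cap\Sigma_{h}$ with the nontrivial kernel of $H_{z e^{-2\theta_{0}}}^{h}-z$ is indeed the content of Section~\ref{sec.res} applied with $\theta=\theta_{0}$; your observation that the exterior modes in \eqref{eq.mod_res} are $L^{2}$ precisely when $|\arg z^{h}|<2h^{N_{0}}$, since then $\Imag\bigl[(z^{h})^{1/2}e^{ih^{N_{0}}}\bigr]>0$, is correct. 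For the exponential closeness near a cluster you correctly invoke Proposition~\ref{prop.locresdir} cluster by cluster and take $\kappa_{a,b,c}<2\min_{k}S_{0}^{(k)}$.

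The genuinely new and valuable part of your argument is the last paragraph: the paper's cited results only give resolvent control of $H_{z e^{-2\theta_{0}}}^{h}-z$ inside the windows $\omega_{ch/2}$ around cluster centers, so an explicit exclusion of eigenvalues in the gap region $\Sigma_{h}\setminus\bigcup_{k}\omega^{(k)}_{ch/2}$ is a logical necessity which the paper leaves implicit. Your "trivial Grushin" with $\ell=0$ is sound: the approximate inverse $\chi_{h}(H_{D}^{h}-z)^{-1}\psi+(1-\rho_{h})(\tilde H_{ze^{-2\theta_{0}}}^{h}-z)^{-1}(1-\psi)$ produces, after a computation identical to the one in Section~\ref{se.grush}, the identity plus the commutator remainders $[(h\partial_{x})^{2},\chi_{h}](H_{D}^{h}-z)^{-1}\psi$ and $-[(h\partial_{x})^{2},\rho_{h}](\tilde H_{\zeta}^{h}-z)^{-1}(1-\psi)$, both exponentially small by the Agmon estimates of Section~\ref{se.expdecay}; the resolvent bounds you need ($\|(H_{D}^{h}-z)^{-1}\|=\mathcal{O}(1)$ since $d(z,\sigma(H_{D}^{h}))\geq c/2$ in the gap, and Proposition~\ref{pr.resestabsbc} under $|\Imag(ze^{-2\theta_{0}})^{1/2}|\lesssim h^{N_{0}}\ll h$) are precisely satisfied there. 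One small point worth adding: surjectivity of $H_{ze^{-2\theta_{0}}}^{h}-z$ obtained from the Neumann series must be upgraded to invertibility (and hence triviality of the kernel). Either note that this operator has compact resolvent on $(a,b)$ and is Fredholm of index $0$, so right-invertibility implies invertibility, or mimic the adjoint argument used in the Grushin section to get a left inverse. With that remark the proof is complete and fills in a step the paper glosses over.
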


The resolvent of $H_{\theta_{0},V-W^{h}}^{h}(\theta_{0})$ will be studied
in  a domain surrounding  a single cluster of eigenvalues $z^{h}$, i.e. with
$\lim_{h\to 0}z^{h}=\lambda^{0}$,  with a distance to
$\sigma(H_{\theta_{0}, V-W^{h}}^{h}(\theta_{0}))$ bounded from
below by $\frac{1}{C_{a,b,c}}h^{N_{0}}$\,.
\begin{definition}
\label{de.Gh}
  The complex domain $G_{h}(\lambda^{0})$ is chosen accordingly to the constants
  $a,b,c$ involved in the assumptions on $V$, $W^{h}$, the hypotheses
\eqref{eq.hyp0}\eqref{eq.hyp1}\eqref{eq.hyp2}\eqref{eq.hyp3} and to
  $\theta_{0}=ih^{N_{0}}$, $N_{0}>1$:
\begin{equation}
    \label{eq.Gh}
G_{h}(\lambda^{0}):= 
\left\{z\in\C,\; |\Real{z}-\lambda^{0}|\leq \Xi_{a,b,c} h
  ,\;
 |\arg z|\leq h^{N_{0}}\; \textrm{ and }
\; d(z,\sigma(H_{D}^{h}))\geq\frac{h^{N_{0}}}{\Xi_{a,b,c}} \right\}\,,
\end{equation}
for some constant $\Xi_{a,b,c}>0$ chosen large enough.
\end{definition}

\subsection{Estimates of the finite rank part}
We study here the finite rank part of \eqref{res_formula_gen_rap}:
\begin{equation}
  \label{eq.Upsi}
  \Upsilon^{h}(z, V-W^{h}) =
\sum_{i,j,k=1}^{4}\left(  Bq(z,\theta_0,V-W^h)-A\right)  _{ij}^{-1}B_{jk}\left\langle
\gamma(\underline{e}_{k},\bar{z},\bar{\theta_{0}}),\cdot\right\rangle
_{L^{2}(\mathbb{R})}\gamma(\underline{e}_{i},z,\theta_{0})\,,
\end{equation}
and its variations between the case $W^{h}=0$ and $W^{h}\neq 0$\,.
Every factor will be considered separately. 
Hence it is convenient to
keep the notation $\gamma_{V-W^{h}}(\underline{e}_{i},z,\theta)$ for  the total potential 
$V-W^{h}$ and $\gamma_{V}(\underline{e}_{i},z,\theta)$ when
$W^{h}\equiv 0$\,.
\begin{proposition}
  \label{pr.estupsi}
Assume the hypotheses \eqref{eq.hyp0}
\eqref{eq.hyp1}\eqref{eq.hyp2}\eqref{eq.hyp3}. Take
$\theta_{0}=ih^{N_{0}}$, $N_{0}>1$, and let $G_{h}(\lambda^{0})$ be the set defined
by 
\eqref{eq.Gh}. 
The matrices $(Bq(z,\theta_0,V-W^{h})-A)^{-1}$ and $(Bq(z,\theta_0,V)-A)^{-1}$ verify
the uniform estimate
\begin{equation}
  \label{eq.estimBqA}
\forall z\in G_{h}(\lambda^{0})\,,\quad|(Bq(z,\theta_0,V-W^{h})-A)^{-1}|+ |(Bq(z,\theta_0,V)-A)^{-1}|\leq C_{a,b,c}h\,,
\end{equation}
in any fixed matricial norm.\\
The functions $\gamma_{\mathcal{V}}$
satisfy the uniform estimates 
\begin{eqnarray}
\label{eq.gsupp}
&&
\supp \gamma_{\mathcal{V}}(\underline{e}_{2,3}, z,\theta_{0})= \supp
\gamma_{\mathcal{V}}(\underline{e}_{2,3},\bar z,\overline{\theta_{0}})\subset
[a,b]\,,
\\
\label{eq.gsupp1}
&&
\supp \gamma_{\mathcal{V}}(\underline{e}_{1}, z,\theta_{0})
=
\supp \gamma_{\mathcal{V}}(\underline{e}_{1}, \bar z,\overline{\theta_{0}})
\subset
[b,+\infty)\,,\\
\label{eq.gsupp4}
&&
\supp \gamma_{\mathcal{V}}(\underline{e}_{4}, z,\theta_{0})
=
\supp \gamma_{\mathcal{V}}(\underline{e}_{4}, \bar z,\overline{\theta_{0}})
\subset
(-\infty,a]\,,\\
  \label{eq.estimg23}
 && \max_{i=2,3}\|\gamma_{\mathcal{V}}(\underline{e}_{i}, z,
  \theta_{0})\|_{H^{1,h}((a,b))}
+ \|\gamma_{\mathcal{V}}(\underline{e}_{i}, \bar z,
  \overline{\theta_{0}})\|_{H^{1,h}((a,b))} \leq \frac{C_{a,b,c}}{h^{3/2}}\,,\\
 && \label{eq.estimg14}
  \max_{i=1,4}\|\gamma_{\mathcal{V}}(\underline{e}_{i}, z,
  \theta_{0})\|_{H^{1,h}(\R\setminus [a,b])}
+ \|\gamma_{\mathcal{V}}(\underline{e}_{i}, \bar z\,,
  \overline{\theta_{0}})\|_{H^{1,h}(\R\setminus [a,b])} \leq \frac{C_{a,b,c}}{h^{(N_{0}+1)/2}}
\end{eqnarray}
holds when $z\in G_{h}(\lambda^{0})$ and $\mathcal{V}=V-W^{h}$ or $\mathcal{V}=V$\,.\\
Moreover the differences $(Bq(z,\theta_0,V-W^{h})-A)^{-1}-(Bq(z,\theta_0,V)-A)^{-1}$
and $\gamma_{V-W^{h}}-\gamma_{V}$ are estimated
by 
\begin{eqnarray}
\label{eq.diffBA}
&&|(Bq(z,\theta_0,V-W^{h})-A)^{-1}-(Bq(z,\theta_0,V)-A)^{-1}|\leq
C_{a,b,c}e^{-\frac{S_{0}}{2h}}\,,
\\
\label{eq.diffg23}
 &&\max_{i=2,3}\|\gamma_{V-W^{h}}(\underline{e}_{i}, z,
  \theta_{0})-\gamma_{V}(\underline{e}_{i}, z,
  \theta_{0})\|_{H^{1,h}((a,b))}\leq
  C_{a,b,c}e^{-\frac{S_{0}}{2h}}\,,\\
\nonumber
\text{with}&& S_{0}=d_{Ag}(\left\{a,b\right\},U,V,\lambda^{0})\,,
\\
 &&
 \label{eq.diffg14}
  \gamma_{V-W^{h}}(\underline{e}_{i},
  z,\theta_{0})=\gamma_{V}(\underline{e}_{i},z,\theta_{0})\,,
\quad
\text{for}\quad i=1,4\,,
\end{eqnarray}
when $z\in G_{h}(\lambda^{0})$, with the same result for $(\bar z, \overline{\theta_{0}})$\,.
\end{proposition}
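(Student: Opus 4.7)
The support statements \eqref{eq.gsupp}--\eqref{eq.gsupp4} and the identity \eqref{eq.diffg14} follow at once from $\gamma_{\mathcal V}(\underline e_i,z,\theta_0)=c_i(z,\theta_0)\,u_{i,z}$ together with the explicit formulas \eqref{u_i,z (i=1,4)}, \eqref{u_intz}, \eqref{eq.c_i}: the exterior fundamental solutions $u_{1,z}, u_{4,z}$ are plane waves on half lines disjoint from $[a,b]$, hence insensitive to the interior potential, whereas $u_{2,z}, u_{3,z}$ are supported in $[a,b]$. This reduces the proof to the two norm bounds and the matrix estimate.

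\textbf{Exterior norm \eqref{eq.estimg14}.} For $z\in G_h(\lambda^0)$ and $\theta_0=ih^{N_0}$, using $\sqrt{ze^{2\theta_0}}=\sqrt{z}\,e^{\theta_0}$ and $\Real z\sim\lambda^0>0$, I would first establish the lower bound $\Imag\sqrt{ze^{2\theta_0}}\ge \frac{1}{2}\sqrt{\lambda^0}\,h^{N_0}$. Explicit integration then yields $\|1_{(b,+\infty)}e^{i\sqrt{ze^{2\theta_0}}(x-b)/h}\|_{L^2}^2\le C h^{1-N_0}$, an analogous bound after application of $h\partial_x$, and combined with $|c_{1,4}(z,\theta_0)|\le C/h$, this gives the claimed $H^{1,h}$ bound $O(h^{-(N_0+1)/2})$; the case of $u_{4,z}$ is symmetric.

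\textbf{Interior norm \eqref{eq.estimg23} and difference \eqref{eq.diffg23}.} For $u_{2,z}$, I would write $u_{2,z}=\chi+w$ with $\chi\in C^\infty([a,b])$ equal to $1$ near $b$ and vanishing near $a$; then $w\in H_0^1((a,b))$ solves $(\tilde P^h-z)w=-(\tilde P^h-z)\chi$, whose source of $L^2$-norm $O(1)$ is supported in $\supp\chi'$. Proposition~\ref{pr.Direxp}(i) applied with the Agmon weight $\varphi(x)=d_{Ag}(x,\supp\chi',V,\Real z)$, together with the resulting exponential concentration of $u_{2,z}$ near $b$, gives $\|u_{2,z}\|_{H^{1,h}}\le C\sqrt h$; the estimate for $u_{3,z}$ follows by reflection. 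Multiplying by $|c_{2,3}|=h^{-2}$ produces \eqref{eq.estimg23}. For \eqref{eq.diffg23}, the difference $\delta=u_{2,z}^{V-W^h}-u_{2,z}^{V}$ solves $(\tilde P^h-z)\delta=W^h u_{2,z}^{V-W^h}$ with zero Dirichlet data and source supported in $U_h$; re-applying Proposition~\ref{pr.Direxp}(i) with the weight anchored at $U$ and using the previous interior bound on $u_{2,z}^{V-W^h}$ near $U$ provides the exponential gain $e^{-S_0/(2h)}$, one half of the Agmon distance $S_0$ being consumed to estimate the source and the other half to weight the solution.

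\textbf{Matrix bound \eqref{eq.estimBqA}, difference \eqref{eq.diffBA}, and principal difficulty.} Writing $Bq-A$ explicitly from \eqref{Matrix_def} and \eqref{q_zeta}, the matrix depends on $\mathcal V$ only through the four interior boundary derivatives $u_{j,z}'(a), u_{j,z}'(b)$ ($j=2,3$), and the only scalar coming from the exterior is $ihe^{\theta_0}/\sqrt{ze^{2\theta_0}}=O(h)$. A closed-form expansion of $\det(Bq-A)$, combined with the Wronskian identity $u_{3,z}'(b)=-u_{2,z}'(a)$ and the fact that this tunneling amplitude vanishes precisely at the Dirichlet eigenvalues of $H_D^h$, together with the defining distance condition $d(z,\sigma(H_D^h))\ge h^{N_0}/\Xi_{a,b,c}$ of Definition~\ref{de.Gh}, produces the required quantitative lower bound on $|\det(Bq-A)|$; cofactor estimates extracted from the same expansion then yield $\|(Bq-A)^{-1}\|\le C_{a,b,c}\,h$. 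The difference \eqref{eq.diffBA} follows immediately, since the only $\mathcal V$-dependent entries of $Bq-A$ are the boundary values $u_{j,z}^{\mathcal V}{}'(a),u_{j,z}^{\mathcal V}{}'(b)$ already controlled by \eqref{eq.diffg23}. The hardest part of the argument is precisely this matrix step: one must carefully track the algebraic cancellations in the $4\times4$ expansion so that the semiclassical orders of the Wronskian-type quantities line up with the polynomial distance condition, and it is here that the assumption $N_0>1$ plays an essential role by ensuring that the imaginary deformation dominates the proximity to the Dirichlet spectrum.
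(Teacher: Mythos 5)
The support statements, the interior and exterior $\gamma$-norm bounds, and the differences \eqref{eq.diffg23}, \eqref{eq.diffg14} all go roughly along the lines of the paper (filled-well lifting $u_{2,z}=\chi+w$, Agmon estimates from Proposition~\ref{pr.Direxp}, explicit integration of the exterior exponentials against $\Imag\sqrt{ze^{2\theta_0}}\sim h^{N_0}$, and the $c_i$ prefactors). That part of the proposal is correct.

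The matrix step is where the proposal breaks down, and it is flagged as the hardest part yet handled with a mechanism that is not the right one. You claim that the tunneling amplitude $u_{3,z}'(b)=-u_{2,z}'(a)$ (the Wronskian) ``vanishes precisely at the Dirichlet eigenvalues of $H_D^h$'' and that combining this vanishing with the separation $d(z,\sigma(H_D^h))\ge h^{N_0}/\Xi_{a,b,c}$ produces a quantitative lower bound on $|\det(Bq-A)|$. This is wrong on two counts. First, at a Dirichlet eigenvalue the boundary value problem defining $u_{2,z},u_{3,z}$ is ill-posed and the Wronskian \emph{diverges} rather than vanishes; the distance condition serves to keep these quantities well-defined and close to their filled-well counterparts, not to provide a determinant lower bound by a zero-counting argument. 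Second, the Wronskian entries $u_3'(b)=-u_2'(a)$ appear only in the off-diagonal remainder $R(z)$ in the decomposition $h^2(Bq-A)=h^2M(z)+h^2R(z)$ used in the paper; $R$ is $\mathcal{O}(e^{-S_0/(2h)})$ small and plays no role in the leading-order determinant. What actually makes $(Bq-A)^{-1}$ of size $\mathcal{O}(h)$ is that the block-diagonal piece $M(z)$ has $2\times2$ determinants $\Delta^\pm = e^{-\theta_0}\pm\frac{ihe^{\theta_0}}{\sqrt{ze^{2\theta_0}}}u^{\prime}_{2/3}(b/a)$ uniformly bounded away from zero, and this relies on the key (and not algebraic) estimate $|\Imag u_2'(b)|+|\Imag u_3'(a)|\le C_{a,b,c}\,h^{N_0-1}$ from Lemma~\ref{lem.estBordDef}: the imaginary part of the boundary derivative is small compared with its $\mathcal{O}(1/h)$ real part, so $\Real\Delta^\pm\ge 1-\mathcal{O}(h^{N_0})$. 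Your proposal never establishes (or uses) this imaginary-part bound, so the crucial lower bound on the $2\times 2$ determinants is unproved. A similar remark applies to \eqref{eq.diffBA}: you invoke \eqref{eq.diffg23}, which is an $H^{1,h}$-norm estimate, but what is needed for the matrix entries is the pointwise boundary derivative comparison \eqref{eq.comutu}, which requires an extra application of Lemma~\ref{lem.estBord} that you do not mention.
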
 
The matrix $Bq(z,\theta_0,\mathcal{V})-A$ is expressed with boundary values and
we will need the next lemma.
\begin{lemma}
\label{lem.estBordDef}
Assume \eqref{eq.hyp2},   $\theta_{0}=ih^{N_{0}}$, $N_{0}>1$, and set
$$
\varphi_{b}(x)=d_{Ag}(x,b,V,\lambda^{0})
\,,\quad 
\varphi_{a}(x)=d_{Ag}(x,a,V,\lambda^{0})
\,,\quad S_{0}= d_{Ag}(\left\{a,b\right\},U,V,\lambda^{0})\,.
$$
 For any $z\in G_h$, the solution $\tilde{u}_{2}$ (resp $\tilde{u_{3}}$) to 
\[
\left\{\begin{array}[c]{l}(\tilde{P}^h-z)\tilde{u}_{2,3}=(-h^{2}\Delta +
    V-z)\tilde{u}_{2,3}= 0\\
\tilde{u}_2(a)=0, \quad \tilde{u}_2(b)=1
\quad
(\text{resp.}~\tilde{u}_3(a)=1, \quad \tilde{u}_3(b)=0)\,,
\end{array}
\right.
\]
verifies:
\begin{eqnarray}
\label{eq.estu2.1}
&& \|\tilde{u}_{2}\|_{H^{1,h}((a,b))}\leq C_{a,b,c}h^{1/2}\,,
\quad
\|e^{\frac{\varphi_{b}}{h}}\tilde{u}_2\|_{L^{2}((a,b))}+
\|e^{\frac{\varphi_{b}}{h}}h\tilde{u}_2'\|_{L^{2}((a,b))}\leq
\frac{C_{a,b,c}}{h^{1/2}}\,,
\\
\label{eq.estu3.1}
\text{resp.}&&
\|\tilde{u}_{3}\|_{H^{1,h}((a,b))}\leq C_{a,b,c}h^{1/2}\,,\quad
\|e^{\frac{\varphi_{a}}{h}}\tilde{u}_{3}\|_{L^{2}((a,b))}+
\|e^{\frac{\varphi_{a}}{h}}h\tilde{u}_{3}'\|_{L^{2}((a,b))}\leq
\frac{C_{a,b,c}}{h^{1/2}}\,,
\\
\label{eq.estu2.2}
&&
|\tilde{u}_2'(b)|\leq\frac{C_{a,b,c}}{h}\,,
\quad |\Imag \tilde{u}_2'(b)|\leq C_{a,b,c} h^{N_0-1}\,,
\quad
|\tilde{u}_2'(a)|\leq\frac{C_{a,b,c}}{h^{2}}e^{-\frac{2S_{0}}{h}}\,,\\
\label{eq.estu3.2}
\text{resp.}
&&
|\tilde{u}_3'(a)|\leq\frac{C_{a,b,c}}{h}\,,\quad |\Imag \tilde{u}_3'(a)|\leq C_{a,b,c}
 h^{N_0-1}\,,\quad
|\tilde{u}_3'(b)|\leq\frac{C_{a,b,c}}{h^{2}}e^{-\frac{2S_{0}}{h}}\,.
\end{eqnarray}
For any $z\in G_{h}(\lambda^{0})$, the solution $u_{2}$
(resp. $u_{3}$) to 
\eqref{u_intz}  rewritten as
\[
\left\{\begin{array}[c]{l}(P^h-z)u_{2,3}=(-h^{2}\Delta +
    V-W^{h}-z)u_{2,3}= 0\\
u_2(a)=0, \quad u_2(b)=1 \quad
(\text{resp.}~u_3(a)=1, \quad u_3(b)=0)\,,
\end{array}
\right.
\]
 can be compared with $\tilde{u}_{2}$ (resp. $\tilde{u}_{3}$) according
 to 
 \begin{eqnarray}
   \label{eq.comutu}
   \max_{i\in\left\{2,3\right\}}\|u_{i}-\tilde{u}_{i}\|_{H^{1,h}}+|u_{i}'(a)-\tilde{u}_{i}'(a)|
+ |u_{i}'(b)-\tilde{u}_{i}'(b)|\leq C_{a,b,c}e^{-\frac{3S_{0}}{4h}}\,.
 \end{eqnarray}
\end{lemma}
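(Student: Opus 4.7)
I would prove the three groups of estimates in sequence.

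The first step is to derive the weighted estimates on $\tilde u_2$ and $\tilde u_3$. The plan is to lift the boundary condition by writing $\tilde u_2=\chi_b+v$ where $\chi_b\in C^{\infty}([a,b])$ is localized near $b$ with $\chi_b(b)=1$, $\chi_b(a)=0$, so that $v\in H^1_0((a,b))$ satisfies the Dirichlet source problem $(\tilde P^h-z)v=-(\tilde P^h-z)\chi_b$. Proposition~\ref{pr.Direxp}-\textbf{i)} applied with the weight $\varphi_b(x)=d_{Ag}(x,b,V,\Real z)$ (modified by $O(h)$ if $\chi_b$ is localized on an $h$-scale) then yields the weighted $L^2$ bounds on $v$; adding the trivial contribution of $\chi_b$ produces the second half of \eqref{eq.estu2.1}. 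The unweighted $H^{1,h}$ bound will follow from the energy identity obtained by pairing $(\tilde P^h-z)\tilde u_2=0$ with $\overline{\tilde u_2}$ and integrating by parts on $(a,b)$:
\[
h^2\,\Real\tilde u_2'(b) = h^2\|\tilde u_2'\|_{L^2}^2 + \int_a^b (V-\Real z)|\tilde u_2|^2,
\]
together with the a priori bound $|\tilde u_2'(b)|\le C/h$ established in the next step. The statement for $\tilde u_3$ is obtained by the symmetric argument with $\varphi_a$ replacing $\varphi_b$.

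Second, the boundary derivative bounds in \eqref{eq.estu2.2} are extracted from the same energy identity. Its real part, combined with the $H^{1,h}$ estimate, controls $\Real\tilde u_2'(b)$. Its imaginary part reads $h^2\,\Imag\tilde u_2'(b) = -\Imag z\int_a^b|\tilde u_2|^2$, and combining $|\Imag z|\le C h^{N_0}$ (from $|\arg z|\le h^{N_0}$, $|z|\le C$ on $G_h(\lambda^0)$) with $\|\tilde u_2\|_{L^2}^2=O(h)$ yields the sharp estimate $|\Imag \tilde u_2'(b)|\le C h^{N_0-1}$. The exponentially small bound on $\tilde u_2'(a)$ will come from the pointwise decay (via the Gagliardo--Nirenberg embedding applied to the weighted estimate) $\sup_{[a,a+h]}|\tilde u_2|\le C h^{-1}e^{-\varphi_b(a)/h}$, combined with the ODE $h^2\tilde u_2''=(V-z)\tilde u_2$, the condition $\tilde u_2(a)=0$, and a short Taylor expansion at $a$. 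Since $\varphi_b(a)=d_{Ag}(a,b,V,\Real z)\ge 2S_0$ (because $U\subset(a,b)$ and Agmon distance is additive along the interval), this produces the claimed $C h^{-2}e^{-2S_0/h}$. The bounds for $\tilde u_3'(a)$ and $\tilde u_3'(b)$ are symmetric.

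Third, the comparison \eqref{eq.comutu} follows from the observation that $w=u_i-\tilde u_i\in H^1_0((a,b))$ satisfies $(\tilde P^h-z)w=W^h u_i$ (because $(\tilde P^h-z)u_i=W^h u_i$ and $(\tilde P^h-z)\tilde u_i=0$, with matching Dirichlet data). Since $z\in G_h(\lambda^0)$ lies at distance $\ge c/2$ below $\sigma(\tilde H_D^h)\subset[\inf V,\infty)$, Proposition~\ref{pr.Direxp}-\textbf{i)} applies with a weight $\varphi$ vanishing on $U_h\supset\supp W^h$ and gives
\[
\|w\|_{H^{1,h}}\le \frac{C_{a,b,c}}{h}\Bigl(\|W_1^h u_i\|_{L^2}+h^{-1/2}\|W_2^h u_i\|_{\mathcal{M}_b}\Bigr).
\]
A short bootstrap uses the already-established pointwise bound on $\tilde u_i$ on $U_h$ (of order $h^{-1}e^{-S_0/h}$, because $\min_{U}\varphi_b\ge S_0$) together with the hypothesis $\|W_1^h\|_{L^\infty}+h^{-1}\|W_2^h\|_{\mathcal{M}_b}\le C$ to close the $L^\infty(U_h)$ estimate on $u_i$; the resulting $\|w\|_{H^{1,h}}\le C h^{-\alpha}e^{-S_0/h}$ is absorbed into $e^{-3S_0/(4h)}$ for $h$ small. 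The boundary derivative differences $|w'(a)|,|w'(b)|$ will then be extracted by pairing $(\tilde P^h-z)w=W^h u_i$ against a smooth cutoff near the relevant endpoint and integrating by parts, as in the derivation of \eqref{eq.estu2.2}.

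\textbf{Main obstacle.} The delicate point is the exponentially small bound on $\tilde u_2'(a)$: since $\tilde u_2(a)=0$, the weighted $L^2$ estimate does not directly control $\tilde u_2'(a)$, and one must combine the pointwise exponential decay with a careful short-distance ODE analysis, optimizing the scale of the Taylor expansion, to extract a clean $h^{-2}$ prefactor in front of $e^{-2S_0/h}$.
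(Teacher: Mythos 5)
Your overall architecture matches the paper's (lift the boundary condition, apply Proposition~\ref{pr.Direxp}-i) for the weighted bounds, extract boundary derivatives via local estimates, handle the well via a Dirichlet resolvent applied to $W^h\cdot$), and your treatment of $|\Imag\tilde u_2'(b)|$ through the imaginary part of the energy identity is a clean alternative to the paper's argument, which instead bounds $w=\tilde v-\overline{\tilde v}$ as a solution of a Dirichlet problem with right-hand side $(z-\bar z)(u_0+\overline{\tilde v})$ of size $\mathcal{O}(h^{N_0+1/2})$ and reuses the boundary lemma.

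However, there is a genuine circularity in your derivation of the unweighted $H^{1,h}$ bound. You propose to deduce $\|\tilde u_2\|_{H^{1,h}}\leq C h^{1/2}$ from the energy identity
\[
h^2\,\Real\tilde u_2'(b) = h^2\|\tilde u_2'\|^2 + \int_a^b(V-\Real z)|\tilde u_2|^2
\]
\emph{together with} $|\tilde u_2'(b)|\le C/h$, and simultaneously to deduce $|\Real\tilde u_2'(b)|\le C/h$ from the energy identity \emph{together with} the $H^{1,h}$ bound. The identity makes the two quantities equivalent up to constants, but it cannot produce either one from scratch. Nor does your decomposition supply the missing input: Proposition~\ref{pr.Direxp}-i) carries a $C/h$ prefactor, so with $\|f\|_{L^2}\le C h^{1/2}$ it only gives $\|v\|_{H^{1,h}}\le C h^{-1/2}$, and a local $H^{2,h}$ estimate plus Lemma~\ref{lem.estBord} then yields only $|\tilde u_2'(b)|=\mathcal{O}(h^{-2})$. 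Feeding this back through the energy identity returns $\|\tilde u_2\|_{H^{1,h}}=\mathcal{O}(1)$, and iterating the loop never improves the exponent; the bootstrap has a fixed point at the wrong power of $h$. The paper instead applies the bare variational formulation to the lifted problem $(\tilde H_D^h-z)\tilde v=f$ with $\tilde v\in H^1_0$, where there are no boundary terms and where the coercivity $V-\Real z\ge c$ gives directly $\|\tilde v\|_{H^{1,h}}\le \|f\|_{L^2}/\min(1,c)\le C h^{1/2}$. That is the step you are missing, and without it the $C/h$ bound on $\tilde u_2'(b)$, the $h^{N_0-1}$ bound on $\Imag\tilde u_2'(b)$, and the stated $h$-powers in the comparison estimate do not close.

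A secondary point: in the comparison step you solve $(\tilde P^h-z)w=W^h u_i$ via $(\tilde H_D^h-z)^{-1}$, which places the unknown $u_i$ on the right-hand side and forces a bootstrap. The paper uses the equivalent but cleaner identity $(P^h-z)w=W^h\tilde u_i$, so that the right-hand side involves only $\tilde u_i$, for which the weighted estimates are already in hand; the loss then comes from $\|(H_D^h-z)^{-1}\|$ and $z\in G_h$, producing $h^{-(N_0+1)}$, which $e^{-S_0/h}$ absorbs into $e^{-3S_0/(4h)}$. Your bootstrap can be made to work, but it is more delicate and requires tracking an extra self-referential $L^\infty$ bound on $u_i$; I would recommend the paper's decomposition.
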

\noindent\textbf{Proof:} 
Let us first focus on the case $W^{h}=0$.
It suffices to study the case of $\tilde{u}_{2}$, the
result for $\tilde{u}_{3}$ being deduced by symmetry.\\
Consider a real valued function $u_0\in C^{\infty}$ 
such that $u_0(b)=1$, $\supp u_0 \cap[a,b]\subset[b-h,b]$ and:
\[
\|(h\partial_{x})^{\alpha}u_0\|\leq C_{\alpha}\,,\quad \alpha \in \N\,,
\] 
and set $\tilde{u}_2=u_0+\tilde{v}$ where $\tilde{v}$ solves
$(\tilde{H}_D^h-z)\tilde{v}=f$ 
with $f=(h\partial_x)^2u_0-(V-z)u_0$\,. We keep the notation
\eqref{eq.Dirfilled} for the Dirichlet Hamiltonian associated with $\tilde{P}^{h}$\,.\\
Owing to $V-\Real z\geq c$,  the variational formulation of
$(\tilde{H}_{D}^{h}-z)\tilde{u}=f$ with  $\|f\|_{L^2}\leq Ch^{1/2}$
provides  $\|\tilde{v}\|_{H^{1,h}}\leq Ch^{1/2}$ which yields the
first estimate of \eqref{eq.estu2.1}\,.
With the equation $-h^{2}\tilde{v}''=-(V-z)\tilde{v}+f$ and applying
Lemma~\ref{lem.estBord} to $\tilde{v}(b-x)$, we get
$|\tilde{v}'(b)|\leq \frac{C h^{1/2}}{h^{3/2}}$ and hence the first
estimate of \eqref{eq.estu2.2}. 
The second estimate of \eqref{eq.estu2.2} comes similarly from $|\Imag
\tilde{v}'(b)|\leq Ch^{N_{0}-1}$\,. It suffices to write that
$w=\tilde{v}-\overline{\tilde{v}}$ solves
$$
(\tilde{H}_{D}^{h}-z)w= (z-\bar z)u_{0}+ (z-\bar z)\overline{\tilde{v}}=g\,,
$$
where the right-hand side is estimated by $\|g\|_{L^{2}}\leq C
h^{1/2}|\Imag z|\leq C h^{N_{0}+1/2}$\,. The estimate for $|w'(b)|$
follows the same arguments as for $|\tilde{v}'(b)|$ with
$h^{N_{0}+1/2}$ instead of $h^{1/2}$.\\
The second estimate of \eqref{eq.estu2.1} is a direct application of
Proposition~\ref{pr.Direxp}-i) to $(\tilde{H}_{D}^{h}-z)\tilde{v}=f$
while noticing that $d_{Ag}(x,b,V,\lambda^{0})$ can take the place
of $d_{Ag}(x,\supp~f, V, \Real z)$  because $|\Real z
-\lambda^{0}|=\mathcal{O}(h)$, $\supp f\subset \supp u_{0}\subset [b-h,b]$ and 
$d_{Ag}(.,.,V,\lambda^{0})$ is uniformly Lipschitz on $[a,b]^{2}$\,.
On the interval $[a,a+h]$ the second derivative of $\tilde{v}$ satisfies
$-h^{2}\tilde{v}''=-(V-z)\tilde{v}$ so that
$\|\tilde{u}_{2}(a+.)\|_{H^{2,h}((0,h))}\leq
\frac{Ce^{-\frac{\varphi_{b}(a)}{h}}}{h^{1/2}}$ and we apply again
Lemma~\ref{lem.estBord}.\\

The difference $v=u_{2}-\tilde{u}_{2}$ solves the Dirichlet problem
\[
\left\{\begin{array}[c]{l}(P^h-z)v=W^{h}\tilde{u}_{2}\\
v(a)=v(b)=0\,,
\end{array}
\right.
\]
which means
$v=(H_{D}^{h}-z)^{-1}(W^{h}\tilde{u}_{2})$.
It suffices to apply Lemma~\ref{le.appDir} with
$\|W^{h}\tilde{u}_{2}\|_{H^{-1}}\leq  C_{a,b}\|W^{h}\tilde{u}_{2}\|_{\mathcal{M}_{b}}\leq
C_{a,b,c}e^{-\frac{S_{0}}{h}}$\,.
With $z\in G_{h}(\lambda^{0})$, this gives
$\|v\|_{H^{1,h}}\leq
\frac{C'_{a,b,c}e^{-\frac{S_{0}}{h}}}{h^{N_{0}+1}}$\,.
In $[a,a+h]$ or in $[b-h,b]$, the equation for $v$ is simply 
$h^{2}v''=(V-z)v$ and we use again Lemma~\ref{lem.estBord} to conclude\,.
\qed

\noindent\textbf{Proof of Proposition~\ref{pr.estupsi}:}\\
\noindent\textbf{a)} First consider the estimate of $(Bq-A)^{-1}$ in 
\eqref{eq.estimBqA} and
its variation in \eqref{eq.diffBA}. The explicit form of the matrix
$\left(Bq(z,\theta_0,V-W^h)-A\right)$, using the definitions of $A,B$ and
$q(z,\theta_0,V)$ given respectively in \eqref{Matrix_def} and \eqref{q_zeta}, can be written
\[
\left(Bq(z,\theta_0,V-W^{h})-A\right) = M(z) + R(z)\,,
\]
with
\[
M(z)=\frac{1}{h^{2}}
\begin{pmatrix}
e^{-\frac{3\theta_0}{2}} & -u_2'(b) &  & \\
\frac{ihe^{\theta_0}}{\sqrt{ze^{2\theta_0}}}& e^{\frac{\theta_0}{2}}  &  & \\
&  & -e^{\frac{\theta_0}{2}} & \frac{ihe^{\theta_0}}{\sqrt{ze^{2\theta_0}}}\\
&  & u_3'(a) & -e^{-\frac{3\theta_0}{2}}
\end{pmatrix}, \quad\quad\quad
R(z)=\frac{1}{h^{2}}
\begin{pmatrix}
& &  u_3'(b) & 0\\
&&0&0\\
0&0&&\\
0&-u_2'(a)&& 
\end{pmatrix}\,,
\]
where $u_{2,3}$ has to be replaced by $\tilde{u}_{2,3}$ when
$W^{h}\equiv 0$\,.
According to \eqref{eq.estu2.2}\eqref{eq.estu3.2} and \eqref{eq.comutu}
\begin{equation}\label{eq.estR}
\Vert R(z) \Vert \leq C_{a,b,c}e^{-\frac{S_{0}}{2h}}\,.
\end{equation}
The inverse matrix $\left(Bq(z,\theta_0,V-W^{h})-A\right)^{-1}$ is formally given by:
\begin{equation}\label{eq.Bq-Am1}
\left(Bq(z,\theta_0,V-W^{h})-A\right)^{-1}=M(z)^{-1}\left(1+R(z)M(z)^{-1}\right)^{-1}\,.
\end{equation}
An explicit computation gives
\begin{eqnarray*}
&&
M(z)^{-1}=h^{2}
\begin{pmatrix}
\frac{1}{\Delta^+}
\begin{pmatrix}e^{\frac{\theta_0}{2}}& u_2'(b)\\
-\frac{ihe^{\theta_0}}{\sqrt{ze^{2\theta_0}}}& e^{-\frac{3\theta_0}{2}}\end{pmatrix} &  \\
& \frac{1}{\Delta^-}
\begin{pmatrix}-e^{-\frac{3\theta_0}{2}}  & -\frac{ihe^{\theta_0}}{\sqrt{ze^{2\theta_0}}}\\
-u_3'(a) & -e^{\frac{\theta_0}{2}}\end{pmatrix}
\end{pmatrix}\,,\\
\text{with}
&&
\left\{\begin{array}{l}
\Delta^+=e^{-\theta_0}+\frac{ihe^{\theta_0}}{\sqrt{ze^{2\theta_0}}}u_2'(b)\,,
\\\Delta^-=e^{-\theta_0}-\frac{ihe^{\theta_0}}{\sqrt{ze^{2\theta_0}}}u_3'(a)\,. \end{array}\right.
\end{eqnarray*}
Using $|\Imag u_2'(b)|+ |\Imag u_3'(a)| \leq
C_{a,b,c}h^{N_{0}-1}$ due to
\eqref{eq.estu2.2}\eqref{eq.estu3.2} and \eqref{eq.comutu},
leads to the lower bounds for $\Delta^+$ and $\Delta^-$. With our choice of the branch cut: $\frac{e^{\theta_0}}{\sqrt{ze^{2\theta_0}}}=\pm\frac{1}{\sqrt{z}}$, depending on $\arg z$, and one has
\begin{multline*}
\vert\Delta^+\vert\geq \Real \Delta^+ \geq \cos\left(\Imag\theta_0\right) - \frac{h}{|z|}\left\vert\Imag u_2'(b)\Real \sqrt{z} - \Real u_2'(b)\Imag \sqrt{z}\right\vert\\\geq \cos\left(\Imag\theta_0\right)-C\frac{h}{|z|}\left(h^{N_0-1}\sqrt{|z|}+\frac{1}{h}|\Imag \sqrt{z}|\right)\,.
\end{multline*}
For $z\in G_h$, we have $|\Imag \sqrt{z}|\leq Ch^{N_0}$ and one finally gets:
$\vert\Delta^+\vert \geq \frac{1}{2}$. The same lower bound holds for
$\Delta^-$.
 Thus $M(z)$ is invertible with:
 $M(z)^{-1}=\mathcal{O}\left(h\right)$
 in any fixed matrix norm. 
From relations \eqref{eq.estR} and \eqref{eq.Bq-Am1}, it follows:
\begin{equation}\label{est.Bq-Am1}
\left(Bq(z,\theta_0,V-W^{h})-A\right)^{-1} =
M(z)^{-1}\left(1+\mathcal{O}\left(\frac{e^{-\frac{3S_{0}}{4h}}}{h}\right)\right)=
\mathcal{O}\left(h\right)\,,
\end{equation}
which is a rewriting of \eqref{eq.estimBqA}. The estimate
\eqref{eq.diffBA} of the
difference is due to the exponentially small size of
$|u_{2,3}'(a,b)-\tilde{u}_{2,3}'(a,b)|$ 
stated in \eqref{eq.comutu}.\\
\noindent\textbf{b)} We shall now consider the estimates for
$\gamma_{\mathcal{V}}(e_{i})$ with $i=2,3$, $\mathcal{V}=V$ or
$\mathcal{V}=V-W^{h}$\,. Actually it suffices to remember the
 equations \eqref{u_i,z (i=1,4)}, \eqref{u_intz} and the definition of the coefficients  \eqref{eq.c_i}
$$
\gamma_{V-W^{h}}(\underline{e}_{i}, z,\theta_{0})= c_{i}u_{i}=
\pm \frac{1}{h^{2}}u_{i}\,,\quad i=2,3\,,
$$
where the functions $u_{2,3}$ do not depend on $\theta_{0}$ and are
given by \eqref{u_intz}. It suffices to use
\eqref{eq.estu2.1} for \eqref{eq.estimg23} and \eqref{eq.comutu} for
\eqref{eq.diffg23}. Changing $\theta_{0}$ into $\overline{\theta_{0}}$
has no effect and $\bar z\in G_{h}(\lambda^{0})$ when $z\in G_{h}(\lambda^{0})$\,.\\
\noindent\textbf{c)} The functions
$\gamma_{\mathcal{V}}(\underline{e}_{1,4},z,\theta_{0})$ do not depend
on the potential $\mathcal{V}$:
\[
\gamma(\underline{e}_{1},z,\theta_{0})=\frac{ie^{\frac{3\theta_0}{2}}}{h\sqrt{ze^{2\theta_{0}}}}
1_{\left(  b,+\infty\right)  }e^{i\frac{\sqrt{ze^{2\theta_0}}(x-b)}{h}}\,,
\quad
\gamma(\underline{e}_{4},z,\theta_{0})=\frac{ie^{\frac{3\theta_0}{2}}}{h\sqrt{ze^{2\theta_{0}}}}1_{\left(  -\infty,a\right)  }e^{-i\frac{\sqrt{ze^{2\theta_0}}(x-a)}{h}}\,,
\] 
from which \eqref{eq.diffg14} follows while \eqref{eq.estimg14} comes from
\[
\|\gamma(\underline{e}_{i},z,\theta_{0})\|_{H^{1,h}(\R\setminus[a,b])}^2\leq
\frac{C}{h\Imag(\sqrt{z e^{2\theta_0}})}
\leq \frac{C_{a,b,c}}{h^{N_{0}+1}}\,,\quad \text{when}\; z\in G_{h}(\lambda^{0})\,.
\]
\qed

\subsection{Resolvent estimates}\label{se.resestimHthe}

We gather the information given by the Krein formula
\eqref{res_formula_gen_rap} and the control of the finite rank part
$\Upsilon^{h}(z, \mathcal{V})$
given by Proposition~\ref{pr.estupsi}.
\begin{proposition}
\label{prop.resEstL2H1}
Assume the hypotheses \eqref{eq.hyp0}
\eqref{eq.hyp1}\eqref{eq.hyp2}\eqref{eq.hyp3}. Take
$\theta_{0}=ih^{N_{0}}$, $N_{0}>1$, and let $G_{h}(\lambda^{0})$ be the set defined
by 
\eqref{eq.Gh}. \\
\noindent\textbf{a)} For $\mathcal{V}=V$ or $\mathcal{V}=V-W^{h}$, the resolvent
$(H_{\theta_{0},\mathcal{V}}(\theta_{0})-z)^{-1}$ is estimated by
\begin{eqnarray}
\label{eq.estresL2H1}
&&
\forall z\in G_{h}(\lambda^{0})\,,\quad
  \|(H_{\theta_{0},\mathcal{V}}^{h}(\theta_{0})-z)^{-1}\|_{\mathcal{L}(L^{2}(\R)
    ; H^{1}(\R\setminus\left\{a,b\right\}))}\leq
  \frac{C_{a,b,c}}{h^{N_{0}+2}}\,,
\\
&&
\label{eq.estresH1H1}
 \forall z\in G_{h}(\lambda^{0})\,,\quad 
\|(H_{\theta_{0},V-W^{h}}^{h}(\theta_{0})-z)^{-1}\chi
 \|_{\mathcal{L}(H^{-1}((a,b)) ; H^{1}(\R\setminus\left\{a,b\right\})}\leq
  \frac{C_{a,b,c,\chi}}{h^{N_{0}+3}}\,,
\end{eqnarray}
 for any fixed 
$\chi\in \mathcal{C}^{\infty}_{0}((a,b))$\,.\\
\noindent\textbf{b)} The difference of the resolvents equals
\begin{multline}
  \label{eq.diffres}
(H_{\theta_{0},V-W^{h}}^{h}(\theta_{0})-z)^{-1}
-
(H_{\theta_{0},V}^{h}(\theta_{0})-z)^{-1}
=
(H_{ND,V-W^{h}}^{h}(\theta_{0})-z)^{-1}
\\
-
(H_{ND,V}^{h}(\theta_{0})-z)^{-1}
+ R\Upsilon^{h}(z)\,,
\end{multline}
with
\begin{eqnarray}
  \label{eq.estimresteL2}
&&\forall z\in G_{h}(\lambda^{0})\,,\quad
  \|R\Upsilon^{h}(z)\|_{\mathcal{L}(L^{2}(\R)
    ; H^{1}(\R\setminus\left\{a,b\right\}))}\leq
  C_{a,b,c}e^{-\frac{S_{0}}{4h}}\,,
\\
\label{eq.estimresteH1}
&&\forall z\in G_{h}(\lambda^{0})\,,\quad
  \|R\Upsilon^{h}(z)\chi\|_{\mathcal{L}(H^{-1}((a,b))
    ; H^{1}(\R\setminus\left\{a,b\right\}))}\leq
  C_{a,b,c}e^{-\frac{S_{0}}{4h}}\,,
\end{eqnarray}
 for any fixed 
$\chi\in \mathcal{C}^{\infty}_{0}((a,b))$\,.
\end{proposition}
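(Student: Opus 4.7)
The plan is to combine the Krein resolvent formula \eqref{res_formula_gen_rap} with the estimates of Proposition~\ref{pr.estupsi} on the finite-rank operator $\Upsilon^h(z,\mathcal{V})$ and with a direct analysis of the reference resolvent $(H_{ND,\mathcal{V}}^h(\theta_0)-z)^{-1}$, both operators from the decomposition
\[
(H_{\theta_0,\mathcal{V}}^h(\theta_0)-z)^{-1}=(H_{ND,\mathcal{V}}^h(\theta_0)-z)^{-1}-\Upsilon^h(z,\mathcal{V})\,.
\]

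First I would control the Neumann--Dirichlet reference resolvent. Since $H_{ND,\mathcal{V}}^h(\theta_0)$ decomposes as the direct sum \eqref{H_Neumann-Dirichlet(teta,V)} of a rotated Neumann Laplacian on each exterior half-line and of the interior Dirichlet Hamiltonian $H_D^h$ on $(a,b)$, its resolvent splits accordingly. By the very definition of $G_h(\lambda^0)$, the conditions $|\arg z|\leq h^{N_0}$ and $\Real z\geq c$ ensure $d(z,e^{-2\theta_0}\R_+)\geq c'h^{N_0}$, while $d(z,\sigma(H_D^h))\geq h^{N_0}/\Xi_{a,b,c}$ is built in. On each exterior half-line the equation $(-h^2e^{-2\theta_0}\partial_x^2-z)v=g$ combined with the spectral-gap bound yields $\|v\|_{H^{2,h}}\leq Ch^{-N_0}\|g\|_{L^2}$; on $(a,b)$, the equation $-h^2u''+(V-W^h-z)u=f$ together with the spectral-gap bound on $H_D^h$ yields likewise $\|u\|_{H^{2,h}}\leq Ch^{-N_0}\|f\|_{L^2}$. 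Converting the $H^{2,h}$ bound into the plain $H^1$ bound produces the loss factor $h^{-1}$, giving \eqref{eq.estresL2H1} for the reference piece. For the $H^{-1}\to H^1$ estimate \eqref{eq.estresH1H1}, only the interior Dirichlet piece contributes because $\chi f$ is supported in $(a,b)$; an energy-type estimate applied to $(H_D^h-z)u=\chi f$, after absorbing the $\|u\|_{H^1}$ term coming from duality using a small-parameter trick and the spectral gap $d(z,\sigma(H_D^h))\geq h^{N_0}/\Xi$, yields the bound $\|u\|_{H^1}\leq Ch^{-(N_0+2)}\|f\|_{H^{-1}}$.

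Next I would bound the finite-rank piece $\Upsilon^h(z,\mathcal{V})$ by summing the norms of its rank-one constituents $[(Bq-A)^{-1}B]_{ij}\,\langle\gamma_\mathcal{V}(\underline{e}_j,\bar z,\overline{\theta_0}),\cdot\rangle\,\gamma_\mathcal{V}(\underline{e}_i,z,\theta_0)$. The $L^2\to H^1$ norm of such a rank-one operator is the product of the $H^1$-norm of the output by the $L^2$-norm of the input, so Proposition~\ref{pr.estupsi} together with $\|\cdot\|_{H^1}\leq h^{-1}\|\cdot\|_{H^{1,h}}$ gives a worst case of order $h\cdot h^{-(N_0+3)/2}\cdot h^{-(N_0+1)/2}=h^{-(N_0+1)}$, which is dominated by the reference contribution $h^{-(N_0+2)}$. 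For the $H^{-1}\to H^1$ version, the pairings $\langle\gamma_\mathcal{V}(\underline{e}_{1,4},\bar z,\overline{\theta_0}),\chi f\rangle$ vanish by the disjoint-support property \eqref{eq.gsupp1}--\eqref{eq.gsupp4}, and only the indices $k\in\{2,3\}$ remain, for which $\chi\gamma_\mathcal{V}(\underline{e}_k)\in H_0^1((a,b))$ with norm bounded by $C_\chi h^{-5/2}$; multiplied by the output bound $C h^{-(N_0+3)/2}$ and the matrix factor $h$, this yields a contribution absorbed by $Ch^{-(N_0+3)}$.

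For part (b), I would write the Krein formula separately for $\mathcal{V}=V-W^h$ and $\mathcal{V}=V$ and subtract, which produces \eqref{eq.diffres} with
\[
R\Upsilon^h(z)=-\bigl(\Upsilon^h(z,V-W^h)-\Upsilon^h(z,V)\bigr)\,.
\]
The difference $\Upsilon^h(z,V-W^h)-\Upsilon^h(z,V)$ is expanded by telescoping: each rank-one summand is turned into a sum of terms in which exactly one of the three factors $(Bq-A)^{-1}$, $\gamma_\mathcal{V}(\underline{e}_j,\bar z,\overline{\theta_0})$, $\gamma_\mathcal{V}(\underline{e}_i,z,\theta_0)$ has been replaced by its difference, the other two factors being uniformly bounded by the same estimates used in the previous step. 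By the comparison bounds \eqref{eq.diffBA}, \eqref{eq.diffg23} and the equality \eqref{eq.diffg14}, each such difference is $\mathcal{O}(e^{-S_0/(2h)})$; absorbing the polynomial $h$-losses into a small reduction of the exponential constant (from $S_0/2$ to $S_0/4$) produces the bounds \eqref{eq.estimresteL2} and \eqref{eq.estimresteH1}.

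The hard part will be the careful bookkeeping of the $h$-powers in the $H^{-1}\to H^1$ estimate: it requires both the support property of the $\gamma_\mathcal{V}(\underline{e}_{1,4})$ to annihilate the exterior contributions against the cut-off $\chi$, and the spectral-gap assumption built into the definition of $G_h(\lambda^0)$ to invert the Dirichlet resolvent close to the Dirichlet eigenvalues without losing more than $h^{-N_0}$ beyond the unavoidable $H^{-1}\to H^1$ conversion factor.
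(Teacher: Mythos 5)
Your proposal follows the same strategy as the paper: split via the Krein formula \eqref{res_formula_gen_rap} into the Neumann--Dirichlet reference resolvent plus the finite-rank correction $\Upsilon^h$, bound each using the spectral gap built into $G_h(\lambda^0)$ and the estimates of Proposition~\ref{pr.estupsi}, and for part (b) telescope $\Upsilon^h(z,V)-\Upsilon^h(z,V-W^h)$ using the comparison bounds \eqref{eq.diffBA}, \eqref{eq.diffg23}, \eqref{eq.diffg14}. The $h$-power accounting and the observation that the cut-off $\chi$ annihilates the exterior functions $\gamma(\underline{e}_{1,4})$ in the $H^{-1}$ pairings match the paper's proof.

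One detail in your treatment of the interior Dirichlet piece is wrong as stated: you claim $\|u\|_{H^{2,h}}\leq Ch^{-N_0}\|f\|_{L^2}$ for $u=(H_D^h-z)^{-1}f$ with $\mathcal{V}=V-W^h$, but $u$ does not lie in $H^2((a,b))$ in that case. The singular part $W_2^h$ is a bounded measure, so $D(H_D^h)=\{u\in H^1_0,\;P^h u\in L^2\}$ and $u'$ has jumps across the support of $W_2^h$; the $H^{2,h}$ norm is not finite. Since the statement only requires $H^1$ output, you should instead argue directly at the $H^{1,h}$ level: the spectral gap gives $\|u\|_{L^2}\leq Ch^{-N_0}\|f\|_{L^2}$, and pairing the equation $(H_D^h-z)u=f$ with $u$ and absorbing the measure term $\langle u,W_2^h u\rangle$ via Gagliardo--Nirenberg (as in Lemma~\ref{le.appDir}) yields $\|hu'\|_{L^2}\leq Ch^{-N_0}\|f\|_{L^2}$, hence $\|u\|_{H^1}\leq Ch^{-(N_0+1)}\|f\|_{L^2}$, which is stronger than the required $h^{-(N_0+2)}$. (The paper instead invokes the $H^{-1,h}\to H^{1,h}_0$ bound of Lemma~\ref{le.appDir} and pays two extra $h^{-1}$ conversion factors.) With this repair, your argument is sound and equivalent to the paper's.
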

\noindent\textbf{Proof:} \textbf{a)} The formula
\eqref{res_formula_gen_rap} says
for $\mathcal{V}=V$ or $\mathcal{V}=V-W^{h}$:
\begin{eqnarray*}
\left(  H_{\theta_{0},\mathcal{V}}^{h}(\theta_{0})-z\right)  ^{-1}  
&=&\left(  H_{ND,\mathcal{V}}
^{h}(\theta_{0})-z\right)  ^{-1}\\
&&\qquad
-\sum_{i,j,k=1}^{4}\left(  Bq(z,\theta_0,\mathcal{V})-A\right)  _{ij}^{-1}B_{jk}\left\langle
\gamma(\underline{e}_{k},\bar{z},\bar{\theta_{0}}),\cdot\right\rangle
_{L^{2}(\mathbb{R})}\gamma(\underline{e}_{i},z,\theta_{0})
\\
&=&\left(  H_{ND,\mathcal{V}}
^{h}(\theta_{0})-z\right)  ^{-1} - \Upsilon^{h}(z,\mathcal{V})\,.
\end{eqnarray*}

By Proposition~\ref{pr.estupsi}, actually by
\eqref{eq.estimBqA}\eqref{eq.estimg14} and \eqref{eq.estimg23}, 
the term $\Upsilon^{h}(z,\mathcal{V})$ satisfies with $N_{0}>1$
\begin{eqnarray*}
  &&
\|\Upsilon^{h}(z,\mathcal{V})\|_{\mathcal{L}(L^{2}(\R)
    ; H^{1}(\R\setminus\left\{a,b\right\}))}
\leq \frac{C_{a,b,c}h}{\min\left\{h^{N_{0}+1}, h^{N_{0}/2+2}, h^{3}\right\}}\times\frac{1}{h}\leq
\frac{C_{a,b,c}}{h^{N_{0}+2}}\,,
\\
&&
\|\Upsilon^{h}(z,\mathcal{V})\chi\|_{\mathcal{L}(H^{-1}((a,b))
    ; H^{1}(\R\setminus\left\{a,b\right\}))}
\leq \frac{C_{a,b,c,\chi}h}{\min\left\{h^{N_{0}+1}, h^{N_{0}/2+2}, h^{3}\right\}}\times\frac{1}{h^{2}}\leq
\frac{C_{a,b,c,\chi}}{h^{N_{0}+3}}\,,
\end{eqnarray*}
for any fixed $\chi\in \mathcal{C}^{\infty}_{0}((a,b))$\,.

It remains to estimate the first term. The worst case is for
$\mathcal{V}=V-W^{h}$ since $G_{h}(\lambda^{0})$ lies around elements of
$\sigma(H_{D}^{h}=-h^{2}\Delta_{D}+ V-W^{h})$:
\[
\left(  H_{ND,V-W^h}^{h}(\theta_{0})-z\right)  ^{-1} =
e^{2\theta_0}\left(-h^{2}\Delta_{\R\setminus[a,b]}^{N}-ze^{2\theta_0}\right)
^{-1} \oplus\left(H^{h}_{D}-z\right)^{-1}\,.
\]
According to Lemma~\ref{le.appDir} with $\|f\|_{H^{-1,h}((a,b))}\leq
\frac{1}{h}\|f\|_{H^{-1}}$ and $\|u\|_{H^{1}((a,b))}\leq
\frac{1}{h}\|u\|_{H^{1,h}((a,b))}$, the inequality
\[
\left\Vert\left(H_{D}^{h}-z\right)^{-1}
\right\Vert_{\mathcal{L}(H^{-1}((a,b)), H^{1}_{0}((a,b)))} \leq
\frac{C_{a,b,c}}{h^{2}}\left(\frac{1}{d(z,\sigma(H_{D}^{h}))}+1\right)\leq \frac{C_{a,b,c}}{h^{N_{0}+2}}\,,
\]
holds for $z\in G_{h}(\lambda^{0})$\,.\\
The resolvent of the Neumann Laplacian 
$\left(-h^{2}\Delta_{\R\setminus[a,b]}^{N}-\zeta\right)^{-1}$
can be written
\begin{eqnarray*}
&&
\left(-h^{2}\Delta_{\R\setminus[a,b]}^{N}-\zeta\right)^{-1}
=
\left(-h^{2}\Delta_{\R\setminus[a,b]}^{N}+1\right)^{-1}
\left[1+ 
(1+\zeta)\left(-h^{2}\Delta_{\R\setminus[a,b]}^{N}-\zeta\right)^{-1}\right]\,,
\\
\text{with}
&&
\|\left(-h^{2}\Delta_{\R\setminus[a,b]}^{N}+1\right)^{-1}\|_{\mathcal{L}(L^{2}(\R\setminus[a,b]);
  H^{1}(\R\setminus[a,b]))}
\leq \frac{1}{h}\,,
\end{eqnarray*}
it is estimated by
$$
\|
\left(-h^{2}\Delta_{\R\setminus[a,b]}^{N}-\zeta\right)^{-1}
\|_{\mathcal{L}(L^{2}(\R\setminus[a,b]);
  H^{1}(\R\setminus[a,b]))}
\leq \frac{1}{h}\left[1+\frac{1+|\zeta|}{d(\zeta,\R_{+})}\right]\,.
$$
For $z\in G_{h}(\lambda^{0})$
 and $\zeta=ze^{2\theta_{0}}$, the distance $d(ze^{2\theta_{0}}, \R_{+})$, is bounded
from below by $Ch^{N_{0}}$. Hence we get
$$
\|\left(-h^{2}\Delta_{\R\setminus[a,b]}^{N}-ze^{2\theta_0}\right)
\|_{\mathcal{L}(L^{2}(\R\setminus[a,b]);
  H^{1}(\R\setminus[a,b]))}\leq \frac{C_{a,b,c}}{h^{N_{0}+1}}\,.
$$
Putting all together gives \eqref{eq.estresL2H1} and \eqref{eq.estresH1H1}\,.\\
\noindent\textbf{b)} For the difference of resolvents, it suffices to
notice that
$$
R\Upsilon^{h}(z)=\Upsilon^{h}(z,V)-\Upsilon^{h}(z,V-W^{h})\,.
$$
Hence it is the difference of trilinear quantities of which every
factor is estimated by $\frac{1}{h^{3N_{0}}}$ with variations bounded
by $\frac{C_{a,b,c}}{h}e^{-\frac{S_{0}}{2h}}$. This ends the proof.
\qed

\section{Adiabatic evolution}\label{se.adiabev}

We consider now a time-dependent real valued potential $V(t)-W^{h}(t)
= V(t)-W^{h}_{1}(t)-W^{h}_{2}(t)$
supported in $[a,b]$ with
\begin{equation}
  \label{eq.wt}
W^{h}_{1}(x,t)= \sum_{j_{1}=1}^{M_{1}}w_{j_{1}}(\frac{x-x_{j_{1},1}}{h},t)\,,\quad
W^{h}_{2}(t)=
\sum_{j_{2}=1}^{M_{2}}\alpha_{j_{2}}(t)h\delta(x-x_{j_{2},2})\,,\quad
\alpha_{j_{2}}(t)>0\,,
\end{equation}
where the $x_{j}$'s  are fixed (independent of $h$) distinct points of
$(a,b)$ and  the supports 
$\supp w_{j_{1}}$ are contained in a fixed compact set.
 The functions $V(.,t)$, $w_{j_{1}}(.,t)$, $\alpha_{j_{2}}(t)$
are possibly $h$-dependent $\mathcal{C}^{K}$ functions with a uniform
control of the derivatives and which impose a uniform control of
\eqref{eq.hyp0}\eqref{eq.hyp1}\eqref{eq.hyp2}\eqref{eq.hyp3}.
Namely  we assume that for some $\lambda^{0}(t)$ the estimates
\begin{eqnarray}
  \label{eq.hyp0t}
&&
\max_{
  \tiny{\begin{array}[t]{c}
   t\in [0,T]\\
 0\leq k \leq K\\
0\leq j_{1}\leq M_{1}\\
0\leq j_{1}\leq M_{2}
\end{array}}
}
\|\partial_{t}^{k}V(t)\|_{L^{\infty}}+
\|\partial_{t}^{k}w_{j_{1}}(t)\|_{L^{\infty}} +
|\partial_{t}^{k}\alpha_{j_{2}}(t)|
+|\partial_{t}^{k}\lambda^{0}(t)|
\leq \frac{1}{c}\,,
\\
\label{eq.hyp2t}
&&
\forall x\in[a,b]\,,\quad
c\leq \lambda^{0}(t)\leq V(x,t)-c\,,
\end{eqnarray}
hold for all $t\in [0,T]$\,.
Actually, the regularity of 
$\lambda^{0}(t)$ can be deduced from the other assumptions 
possibly by replacing it initial guess by the mean energy value
$\frac{1}{\ell}\Tr\left[H_{D}^{h}(t) \Pi_{0}(t)\right]$
with
$\Pi_{0}(t)=\frac{1}{2i\pi}\int_{|z-\lambda_{0}(t)|=\frac{2h}{c}}(z-H_{D}^{h}(t))^{-1}~dz$\,.

 This $\lambda^{0}(t)$ is moreover assumed
to be the center of a cluster of eigenvalues of the Dirichlet
Hamiltonian $H_{D}^{h}(t)=-h^{2}\Delta_{D}+V(t)-W^{h}(t)$ on $(a,b)$:
There exist
$\lambda_{1}^{h}(t),\ldots,\lambda_{\ell}^{h}(t)\in
\sigma(H_{D}^{h}(t))$ 
such that
\begin{eqnarray}
\label{eq.hyp1t}
&&
d(\lambda^0(t),\sigma(H_{D}^{h}(t))\setminus
\left\{\lambda_{1}^{h}(t),\ldots,\lambda_{\ell}^{h}(t)\right\})\geq
c\,,
\\
\label{eq.hyp3t}
&& \max_{1\leq j\leq\ell} |\lambda_{j}^{h}(t)-\lambda^0(t)|\leq \frac{h}{c}\,.
\end{eqnarray}
The operator $H^{h}_{\theta_{0}, V(t)-W^{h}(t)}(\theta_{0})$ is
studied here with
$$
\theta_{0}=ih^{N_{0}},\quad N_{0}>1\,.
$$
According to Proposition~\ref{pr.summ} and Definition~\ref{de.Gh}, 
the complex domain $G_{h}(\lambda_{0}(t))$ surrounds $\ell$
eigenvalues $z_{1}^{h}(t),\ldots, z_{\ell}^{h}(t)$ of
$H^{h}_{\theta_{0}, V(t)-W^{h}(t)}(\theta_{0})$ 
and its distance to the spectrum remains uniformly bounded from below
$$
\min_{t\in [0,T]} d(G_{h}(\lambda_{0}(t)), \sigma(H_{\theta_{0},
  V(t)-W^{h}(t)}(\theta_{0})))\geq
\frac{1}{C_{a,b,c}}h^{N_{0}}\,.
$$
The spectral projection associated with the cluster of eigenvalues 
$\left\{z_{1}^{h},\ldots, z_{\ell}^{h}\right\}$ is given by
\begin{equation}
  \label{eq.defproj}
P_{0}(t)=\frac{1}{2i\pi}\int_{\Gamma^{h}(t)}(z-H_{\theta_{0},
  V(t)-W^{h}(t)}(\theta_{0}))^{-1}~dz\,,
\end{equation}
where $\Gamma^{h}(t)$ is a contour contained in
$G_{h}(\lambda^{0}(t))$\,.\\
When $K\geq 1$, the parallel transport $\Phi_{0}(t,s)$, $t,s\in
[0,T]$,  associated with
$(P_{0}(t))_{t\in[0,T]}$ is given by
\begin{equation}
  \label{eq.PhiP0}
\left\{
  \begin{array}[c]{l}
    \partial_{t}\Phi_{0}
+    \left[P_{0},\partial_{t}P_{0}\right]\Phi_{0}=0\\
\Phi_{0}(t=s,s)=\Id\,,
  \end{array}
\right.
\end{equation}
is well  defined  and satisfies
$$
\forall s,t\in[0,T]\,,\quad
P_{0}(t)\Phi_{0}(t,s)=\Phi_{0}(t,s)P_{0}(s)\,.
$$
The time-scale is given by the parameter
$$
\varepsilon= e^{-\frac{\tau}{h}}\,,\quad \text{with}\quad
\tau>0~\text{fixed}.
$$
When the assumed regularity is large enough, $K\geq 2$,
Proposition~\ref{pr.Kato}-d).
the Cauchy problem
\begin{equation}
  \label{eq.Cauchyepsi}
  \left\{
    \begin{array}[c]{l}
      i\varepsilon \partial_{t}u= H^{h}_{\theta_{0},
        V(t)-W^{h}(t)}(\theta_{0})u\,,\quad t\geq s\,,\\
      u(t=s)= u_{s}
    \end{array}
\right.
\end{equation}
defines a dynamical system $U^{\varepsilon}(t,s)$, $0\leq s\leq t\leq T$,
of contractions on $L^{2}(\R)$\,.
\begin{theorem}
  \label{th.adiabappl} 
Assume
\eqref{eq.hyp0t}\eqref{eq.hyp1t}\eqref{eq.hyp2t}\eqref{eq.hyp3t}  
with $K\geq 2$ and  take $\theta_{0}=ih^{N_{0}}$, $N_{0}>1$\,,
$\varepsilon=e^{-\frac{\tau}{h}}$, $\tau>0$\,.
Let $P_{0}(t)$ be the spectral
projection \eqref{eq.defproj}, 
let $r$ belong to $\mathcal{C}^{0}([0,T]; L^{2}(\R))$ and let $r_{s}$
belong to $L^{2}(\R)$. For $s\in [0,T]$ take an initial data $u_{s}\in
L^{2}(\R)$ such that $P_{0}(s)u_{s}=u_{s}$.\\
Then the solutions $u^{h}$ and $v^{h}$ to the
Cauchy problems
\begin{equation}
  \label{eq.Cauchyurr}
  \left\{
    \begin{array}[c]{l}
      i\varepsilon \partial_{t}u^{h}= H^{h}_{\theta_{0},
        V(t)-W^{h}(t)}(\theta_{0})u^{h}+r(t)\,,\quad t\geq s\,,\\
      u^{h}(t=s)= u_{s}+r_{s}
    \end{array}
\right.
\end{equation}
and
\label{eq.Cauchyurrapprox}
$$
\left\{
  \begin{array}[c]{l}
    i\varepsilon\partial_{t}v^{h}=\Phi_{0}(s,t)
    P_{0}(t)(H^{h}_{\theta_{0}, V(t)-W^{h}(t)}(\theta_{0}))P_{0}(t)\Phi_{0}(t,s)
    v^{h} \,,\quad t\geq s\,\\  
v^{h}(t=s)=u_{s}\,,
  \end{array}
\right.
$$
satisfy
\begin{equation}
  \label{eq.estadiab}
\max_{t\in[s,T]}\|u^{h}(t)-\Phi_{0}(t,s)v^{h}(t)\|\leq
C_{a,b,c,\tau,T,\delta}
\left[\varepsilon^{1-\delta}\|u_{s}\|+
\|r_{s}\|+\frac{1}{\varepsilon}\max_{t\in[s,T]}\|r(t)\|\right]\,.
\end{equation}
\end{theorem}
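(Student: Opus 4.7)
The strategy is to reduce the inhomogeneous, perturbed Cauchy problem to the homogeneous one via Duhamel, then apply the adiabatic theorem for non self-adjoint maximal accretive generators stated in Section~\ref{se.adiabvar} of the appendix to control $\|U^{\varepsilon}(t,s)u_{s}-\Phi_{0}(t,s)v^{h}(t)\|$.

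\textbf{Step 1: Reduction by Duhamel.} The existence of the dynamical system $U^{\varepsilon}(t,s)$ for \eqref{eq.Cauchyepsi} and the contraction bound $\|U^{\varepsilon}(t,s)\|\leq 1$ follow from Proposition~\ref{pr.Kato} applied to $H^{h}_{\theta_{0},V(t)-W^{h}(t)}(\theta_{0})$ with $\theta=\theta_{0}=ih^{N_{0}}\in i(0,\pi/2)$; the regularity assumption $K\geq 2$ ensures that the $C^{1}$-in-time and $W^{1,\infty}$-in-space conditions of point~$(d)$ of that proposition are met. Writing
\[
u^{h}(t)=U^{\varepsilon}(t,s)(u_{s}+r_{s})-\frac{i}{\varepsilon}\int_{s}^{t}U^{\varepsilon}(t,\sigma)\,r(\sigma)\,d\sigma\,,
\]
the contraction estimate gives $\|u^{h}(t)-U^{\varepsilon}(t,s)u_{s}\|\leq\|r_{s}\|+\frac{T-s}{\varepsilon}\max_{\sigma\in[s,T]}\|r(\sigma)\|$. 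Thus \eqref{eq.estadiab} will be proved once one establishes the purely adiabatic estimate
\begin{equation}\label{eq.adiabcore}
\max_{t\in[s,T]}\|U^{\varepsilon}(t,s)u_{s}-\Phi_{0}(t,s)v^{h}(t)\|\leq C_{a,b,c,\tau,T,\delta}\,\varepsilon^{1-\delta}\|u_{s}\|\,,
\end{equation}
for initial data with $P_{0}(s)u_{s}=u_{s}$.

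\textbf{Step 2: Verification of the abstract hypotheses.} In order to invoke the adiabatic theorem of Section~\ref{se.adiabvar}, I need to check, uniformly in $t\in[0,T]$: (i) the contraction property, already obtained; (ii) that $P_{0}(t)$ defined by \eqref{eq.defproj} is a well-defined finite-rank spectral projector; (iii) a quantified gap/resolvent bound on the contour $\Gamma^{h}(t)\subset G_{h}(\lambda^{0}(t))$; (iv) the $C^{K}$-regularity of $t\mapsto H^{h}_{\theta_{0},V(t)-W^{h}(t)}(\theta_{0})$, of $P_{0}(t)$ and of $\Phi_{0}(t,s)$. Items~(ii)--(iii) are consequences of Section~\ref{se.locsp}, Definition~\ref{de.Gh}, and Proposition~\ref{prop.resEstL2H1}: on $\Gamma^{h}(t)$ one has $d(z,\sigma(H^{h}_{\theta_{0},V-W^{h}}(\theta_{0})))\geq h^{N_{0}}/C_{a,b,c}$ and
\[
\sup_{z\in\Gamma^{h}(t)}\|(H^{h}_{\theta_{0},V(t)-W^{h}(t)}(\theta_{0})-z)^{-1}\|_{\mathcal{L}(L^{2})}\leq C_{a,b,c}h^{-(N_{0}+2)}\,,
\]
so that $P_{0}(t)$ has rank $\ell$ and is uniformly bounded. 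The $C^{K}$-regularity of the family of operators follows from hypothesis \eqref{eq.hyp0t} by differentiating the Krein-like formula \eqref{res_formula_gen_rap} whose ingredients depend smoothly on the coefficients of $V$, $W_{1}^{h}$ and $\alpha_{j_{2}}$; the regularity of $P_{0}(t)$ and of the parallel transport $\Phi_{0}(t,s)$ is then deduced from the contour integral and from \eqref{eq.PhiP0}.

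\textbf{Step 3: Higher-order adiabatic expansion.} Following the Nenciu/Joye--Pfister iterative scheme adapted to the non self-adjoint maximal accretive setting in Section~\ref{se.adiabvar}, one constructs, for every integer $N\geq 1$, an approximate intertwining projector $P^{(N)}_{\varepsilon}(t)=P_{0}(t)+\sum_{k=1}^{N}\varepsilon^{k}P_{k}(t)$ and a modified propagator $\Phi^{(N)}_{\varepsilon}(t,s)$ such that $[i\varepsilon\partial_{t}-H(t)]P^{(N)}_{\varepsilon}(t)\Phi^{(N)}_{\varepsilon}(t,s)=\varepsilon^{N+1}R_{N}(t,s)$ with a remainder of size $\|R_{N}(t,s)\|\leq C_{N}h^{-(N_{0}+2)(N+1)}$; each $P_{k}(t)$ involves $k$ commutators with the resolvent, hence loses a factor $h^{-(N_{0}+2)}$ per step. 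Duhamel's formula with the contraction bound on $U^{\varepsilon}(t,s)$ then yields, for $P_{0}(s)u_{s}=u_{s}$,
\[
\|U^{\varepsilon}(t,s)u_{s}-\Phi_{0}(t,s)v^{h}(t)\|\leq C_{T}\Bigl[\varepsilon+\varepsilon^{N}h^{-(N_{0}+2)(N+1)}\Bigr]\|u_{s}\|\,.
\]
Substituting $\varepsilon=e^{-\tau/h}$, the right-hand side is bounded by $C_{T,N}\varepsilon\,\bigl(1+\varepsilon^{N-1}h^{-(N_{0}+2)(N+1)}\bigr)$. Given $\delta\in(0,1)$, choose $N$ so large that $(N-1)\tau>\delta^{-1}(N_{0}+2)(N+1)$ times a logarithmic factor; more precisely, since $h^{-M}=o(e^{\delta\tau/h})$ for every fixed $M$, one has $\varepsilon^{N-1}h^{-(N_{0}+2)(N+1)}=o(\varepsilon^{-\delta})$ as $h\to 0$, which gives the final $\varepsilon^{1-\delta}$ bound \eqref{eq.adiabcore}.

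\textbf{Main obstacle.} The delicate point is Step~3: the spectral gap $h^{N_{0}}$ vanishes with $h$, so each order of the adiabatic expansion costs a polynomial loss $h^{-(N_{0}+2)}$, and the contraction semigroup is only a contraction (no smoothing, no analyticity), which rules out the Joye exponential scheme used in \cite{Joy}. What saves the argument is the exponential smallness of $\varepsilon$: any fixed polynomial loss in $h$ is absorbed in $\varepsilon^{-\delta}=e^{\delta\tau/h}$, so only finitely many iterations of the Nenciu construction are needed and each of them must be carried out explicitly in the non self-adjoint framework of Section~\ref{se.adiabvar}, using the quantitative resolvent bound of Proposition~\ref{prop.resEstL2H1} together with the $W^{1,\infty}$-in-space regularity required to commute time derivatives with the interface conditions appearing in $D(H^{h}_{\theta_{0},V(t)-W^{h}(t)}(\theta_{0}))$.
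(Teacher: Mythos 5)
There is a genuine gap in Step~2. You assert that the resolvent bound of Proposition~\ref{prop.resEstL2H1}, namely $\|(H^{h}_{\theta_{0},V(t)-W^{h}(t)}(\theta_{0})-z)^{-1}\|\leq C_{a,b,c}h^{-(N_{0}+2)}$ on $\Gamma^{h}(t)$, implies that $P_{0}(t)$ is uniformly bounded. It does not: contour integration of that bound over a path of length $\mathcal{O}(1)$ (or even $\mathcal{O}(h)$) only yields $\|P_{0}(t)\|=\mathcal{O}(h^{-(N_{0}+1)})$, which is $\widetilde{\mathcal{O}}(\varepsilon^{0})$ but not $\mathcal{O}(1)$. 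Worse, you never address the boundedness of $\partial_{t}P_{0}(t)$ at all. These two bounds cannot be dispensed with by the ``any polynomial loss in $h$ is absorbed by $\varepsilon^{-\delta}$'' argument you invoke in Step~3, because the loss enters \emph{exponentiated}: the parallel transport $\Phi_{0}(t,s)$ of \eqref{eq.PhiP0} satisfies, via Gronwall, $\|\Phi_{0}(t,s)\|\leq \exp\bigl(C\sup_{t}\|[P_{0},\partial_{t}P_{0}]\|\,(t-s)\bigr)$. A merely polynomial bound $\|\partial_{t}P_{0}\|\leq C h^{-M}$ with $M>1$ would give $\|\Phi_{0}\|\leq e^{Ch^{-M}}$, which dominates $\varepsilon^{-\delta}=e^{\delta\tau/h}$ as $h\to 0$ and destroys the final estimate \eqref{eq.estadiab}. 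So you truly need $\|P_{0}(t)\|$ and $\|\partial_{t}P_{0}(t)\|$ to be $\mathcal{O}(1)$, which is exactly the extra hypothesis in Corollary~\ref{co.nenciuadapt0} — that corollary, not the raw higher-order expansion, is the tool the paper uses to turn $\widetilde{\mathcal{O}}(\varepsilon^{0})$ resolvent bounds into a uniformly bounded first-order approximation with a genuinely bounded parallel transport.

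The paper's proof devotes its parts~(c) and~(d) precisely to closing this gap, and the argument is not a formality: it uses the Krein-formula decomposition \eqref{eq.diffres1} to write $P_{0}(t)=\Pi_{0}(t)+\mathcal{O}(e^{-S_{0}/4h})$, where $\Pi_{0}(t)$ is a self-adjoint orthogonal projector with $\|\Pi_{0}(t)\|\leq 1$; for $\partial_{t}P_{0}$ it splits the contour integral into the self-adjoint contribution $\partial_{t}\Pi_{0}(t)$ (bounded because the contour $|z-\lambda^{0}|=c/2$ can be moved far from $\sigma(H_{D}^{h})$), a finite-rank term \eqref{eq.derP02} controlled by the exponential decay of $\tilde u_{2,3}$ and $\Phi_{j'}^{h}$, and the remainder $R\Upsilon^{h}(z)$ which is $\mathcal{O}(e^{-S_{0}/8h})$ by Proposition~\ref{prop.resEstL2H1}-b). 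None of this structure — the exponential-decay estimates of Section~\ref{se.expdecay}, the Grushin analysis, the explicit finite-rank part $\Upsilon^{h}$ — appears in your proposal, yet without it the proof does not close. Your Step~1 (Duhamel reduction) and the identification of the abstract adiabatic framework are correct, but the heart of the theorem is establishing the uniform bounds that make Corollary~\ref{co.nenciuadapt0} applicable.
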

\begin{remark}
  The estimate with the source term $r(t)$ can be improved if
  $P(t)r(t)=r(t)$ after reconsidering the proof of
  Corollary~\ref{co.nenciuadapt0} in the Appendix (possibly with a
  higher order starting approximation with $K\leq 2$).
Nevertheless the accuracy of the result may depend on the 
assumptions for $r(t)$. We prefer to postpone this kind of improvement
to a subsequent work when hypotheses for the source term
are naturally introduced.
\end{remark}
\noindent\textbf{Proof:}
\textbf{a)} When $u^{h}_{00}(t)$ denotes  the solution
to \eqref{eq.Cauchyepsi} associated with $r_{s}=0$ and $r\equiv 0$,
the contraction property of $U^{\varepsilon}(t,s)$ implies
$$
\max_{t\in [s,T]}\|u(t)-u_{00}(t)\|\leq
\|r_{s}\|+\frac{1}{\varepsilon}\max_{t\in [s,T]}\|r(t)\|\,.
$$
Hence, we can forget the remainder terms and simply prove the estimate
\eqref{eq.estadiab} when $r_{s}=0$ and $r\equiv 0$\,.\\
\noindent\textbf{b)}
We consider the operator
$A^{\varepsilon}(t)=\frac{1}{i}(H_{\theta_{0},V(t)-W^{h}(t)}(\theta_{0})-\lambda^{0}(t))$
and we notice that the domain
$$
\frac{1}{i}(G_{h}(\lambda^{0}(t))-\lambda^{0}(t))=\left\{\frac{1}{i}(z-\lambda^{0}(t))\,,\quad
z\in G_{h}(\lambda_{0}(t))\right\}
$$
contains the contour
$$
\Gamma_{\varepsilon}= \frac{1}{i}\left\{\Gamma^{h}(t)-\lambda^{0}(t)\right\}\,,
$$
which can be chosen independent of $t\in [0,T]$. 
Then  the projection
$P_{0}(t)$ is nothing but
$$
P_{0}(t)=\frac{1}{2i\pi}\int_{\Gamma_{\varepsilon}}(z-A^{\varepsilon}(t))^{-1}~dt\,.
$$
 Hence it suffices to
verify the estimates of $\partial_{t}^{k}(z-A^{\varepsilon}(t))^{-1}$
for $k\leq K+1$ and $t\in [0,T]$ in order to apply
Theorem~\ref{th.nenciu} 
and additionally the uniform boundedness of $\|P_{0}(t)\|$ and
$\|\partial_{t}P_{0}(t)\|$ in order to use its
Corollary~\ref{co.nenciuadapt0}.\\
Like in Appendix~\ref{se.adiabvar},
 we use the notation $g(\varepsilon)=\widetilde{{\cal O}}(\varepsilon^{N})$ 
 in order to summarize
$$
\forall \delta >0, \exists C_{g, \delta}>0,\quad
|g(\varepsilon)|\leq C_{g,\delta}\varepsilon^{N-\delta}\,.
$$
For $z\in \Gamma_{\varepsilon}$, the $k$-th derivative of
$(z-A^{\varepsilon}(t))^{-1}$ has the form
\begin{multline*}
\partial_{t}^{k}(z-A^{\varepsilon}(t))^{-1}
=
\sum_{\tiny
  \begin{array}[t]{c}
j_{1}+\ldots j_{m}=k
\\
j_{i}\geq 1
\end{array}
}c_{j_{1},\ldots, j_{m}}
(z-A^{\varepsilon})^{-1}[\partial_{t}^{j_{1}}(-iV+iW^{h})]
(z-A^{\varepsilon})^{-1}\ldots
\\
\ldots
[\partial_{t}^{j_{m}}(-iV+iW^{h})]
(z-A^{\varepsilon})^{-1}\,,
\end{multline*}
where the numbers $c_{j_{1},\ldots,j_{\ell}}$ are universal coefficients.\\
Remember
$$
\| \partial_{t}^{j}V(t)\|_{\mathcal{L}(L^{2}(\R))}\leq
\frac{1}{c}\,,
$$
the support condition 
$$
\partial_{t}^{j}W(t)=\chi(x) \left[\partial_{t}^{j}W(t)\right] \chi(x)
$$
with $\chi\in \mathcal{C}^{\infty}_{0}((a,b))$, which entails
$$
\|\partial_{t}^{j}W(t)\|_{\mathcal{L}(H^{1}_{0}((a,b));  H^{-1}((a,b)))}
\leq \frac{1}{c}\,.
$$
Hence the resolvent estimates of Proposition~\ref{prop.resEstL2H1}
imply
\begin{equation}
  \label{eq.estimderres}
\max_{z\in \Gamma_{\varepsilon}, k\leq K+1, t\in[0,T]}
\|\partial_{t}^{k}(z-A^{\varepsilon}(t))^{-1}\|\leq \frac{C_{a,b,c}}{h^{k(N_{0}+3)}}=\widetilde{\mathcal{O}}(\varepsilon^{0})\,.
\end{equation}
Meanwhile the length $|\Gamma_{\varepsilon}|$ is bounded by
$\mathcal{O}(1)$ and therefore the conclusions of
Theorem~\ref{th.nenciu} are valid.\\
Now comes the final points, which are the uniform boundedness of
$\|P_{0}(t)\|$ and $\|\partial_{t}P_{0}(t)\|$, in order to refer to the
more accurate version of Corollary~\ref{co.nenciuadapt0}.\\
\noindent\textbf{c)} For $P_{0}(t)$, we write
$$
P_{0}(t)=\frac{1}{2i\pi}\int_{\Gamma_{\varepsilon}}(z-A^{\varepsilon}(t))^{-1}~dz
=\frac{1}{2i\pi}\int_{\Gamma^{h}(t)}(z-H_{\theta_{0},V(t)-W^{h}(t)}(\theta_{0}))^{-1}~dz\,,
$$
and we use the formula \eqref{eq.diffres} in the form
\begin{multline}
\label{eq.diffres1}
  (H_{\theta_{0},V-W^{h}}^{h}(\theta_{0})-z)^{-1}
-
(H_{D}^{h}-z)^{-1}
=
e^{2\theta_{0}}(-h^{2}\Delta_{\R\setminus[a,b]}^{N}-ze^{2\theta_{0}})^{-1}
\\
+
(H_{\theta_{0},V}^{h}(\theta_{0})-z)^{-1}
-
(H_{ND,V}^{h}(\theta_{0})-z)^{-1}
+ R\Upsilon^{h}(z)\,.
\end{multline}
The right-hand side is the sum of three holomorphic terms in the interior
of $\Gamma^{h}(t)$ and of an exponentially small term  according to
\eqref{eq.estimresteL2}. We obtain
$$
P_{0}(t)=\frac{1}{2i\pi}\int_{\Gamma^{h}(t)}(z-H_{D}^{h})^{-1}~dz 
+ \mathcal{O}(e^{-\frac{S_{0}}{4h}})= \Pi_{0}(t)+ \mathcal{O}(e^{-\frac{S_{0}}{4h}})\,,
$$
where $\Pi_{0}(t)$ is the orthogonal spectral projector associated
with 
$\left\{\lambda_{1}^{h}(t),\ldots,
  \lambda_{\ell}^{h}(t)\right\}\subset \sigma(H_{D}^{h}(t))$ with norm
$\|\Pi_{0}(t)\|\leq 1$\,.\\
\noindent\textbf{d)} For $\partial_{t}P_{0}(t)$, we use
$$
\partial_{t}(z-H_{\theta_{0},V-W^{h}}(\theta_{0}))^{-1}=
(z-H_{\theta_{0},V-W^{h}}(\theta_{0}))^{-1}
(\partial_{t}V-\partial_{t}W^{h})(z-H_{\theta_{0},V-W^{h}}(\theta_{0}))^{-1}\,.
$$
From \eqref{eq.diffres1}, we get
\begin{eqnarray*}
&&(H_{\theta_{0},V-W^{h}}(\theta_0)-z)^{-1}1_{[a,b]}-
(H_{D}^{h}-z)^{-1}1_{[a,b]}
\\
&&\qquad
=
0+ \Upsilon^{h}(z,V)1_{[a,b]}+ R\Upsilon^{h}(z)1_{[a,b]}
\\
&&\qquad
=
\sum_{i,j=1}^{4}\sum_{k=2}^{3}\left(  Bq(z,\theta_0,V)-A\right)  _{ij}^{-1}B_{jk}\left\langle
\gamma(\underline{e}_{k},\bar{z},\bar{\theta_{0}}),\cdot\right\rangle
_{L^{2}(\mathbb{R})}\gamma(\underline{e}_{i},z,\theta_{0})
+
 R\Upsilon^{h}(z)1_{[a,b]}\,,
\end{eqnarray*}
where the first term of the right-hand side is holomophic inside
$\Gamma^{h}(t)$ and the last term is exponentially small according to
\eqref{eq.estimresteL2}
and \eqref{eq.estimresteH1}. A symmetric writing holds for
$1_{[a,b]}(z-H_{\theta_{0},V-W^{h}}(\theta_{0}))^{-1}$.  Hence the derivative
$\partial_{t}P_{0}(t)$ is the sum of several terms:
\begin{eqnarray}
\nonumber
&&
\frac{1}{2i\pi}\int_{\Gamma^{h}(t)}(z-H_{D}^{h})^{-1}
(\partial_{t}V-\partial_{t}W^{h})(z-H_{D}^{h})^{-1}~dz=\partial_{t}\Pi_{0}(t)
\\
\label{eq.derP01}
&&\hspace{2cm}
=
\frac{1}{2i\pi}\int_{|z-\lambda^{0}(t)|=c/2}(z-H_{D}^{h})^{-1}
(\partial_{t}V-\partial_{t}W^{h})(z-H_{D}^{h})^{-1}~dz\,,
\\
\nonumber
&&
-\frac{1}{2i\pi}\int_{\Gamma^{h}(t)}
\Upsilon^{h}(z,V)1_{[a,b]}
(\partial_{t}V-\partial_{t}W^{h})
(z-H_{D}^{h})~dz
\\
\label{eq.derP02}
&&\hspace{2cm}
=
-\sum_{j'=1}^{\ell}\Upsilon^{h}(\lambda_{j'}^{h},V)(\partial_{t}V-\partial_{t}W^{h})
|\Phi_{j'}^{h}\rangle\langle \Phi_{j'}^{h}|\,,
\\
\label{eq.derP03}
&&
-\frac{1}{2i\pi}\int_{\Gamma^{h}(t)}
(z-H_{\theta_{0},V-W^{h}}(\theta_{0}))^{-1}
(\partial_{t}V-\partial_{t}W^{h})
R\Upsilon^{h}(z)~dz\,,
\end{eqnarray}
plus another term symmetric to \eqref{eq.derP02}.\\
The first one \eqref{eq.derP01} is uniformly bounded because 
\begin{itemize}
\item $\|(z-H_{D}^{h})^{-1}\|_{\mathcal{L}(L^{2})}$ is uniformly
  bounded when $|z-\lambda^{0}(t)|=\frac{c}{2}$ according to
  Hypothesis~\eqref{eq.hyp1t}\eqref{eq.hyp3t} with $\|\partial_{t}V\|_{L^{\infty}}+
  \|\partial_{t}W_{1}^{h}\|_{L^{\infty}}\leq \frac{1}{c}$\,,
\item $\|(z-H_{D}^{h})^{-1}\|_{\mathcal{L}(L^{2}, H^{1,h}_{0})}$ is
  uniformly bounded when $|z-\lambda^{0}(t)|=\frac{c}{2}$ according to 
\eqref{eq.hyp1t}\eqref{eq.hyp3t} and \eqref{eq.estimH1H1Dir} in
Lemma~\ref{le.appDir} with 
$\|\partial_{t}W_{2}^{h}\|_{\mathcal{M}_b}\leq \frac{h}{c}$\,.
\end{itemize}
The last one \eqref{eq.derP03} is $\mathcal{O}(e^{-\frac{S_{0}}{8h}})$
owing to \eqref{eq.estimresteL2}\eqref{eq.estimresteH1} for
$R\Psi^{h}(z)$ and owing to \eqref{eq.estresL2H1}\eqref{eq.estresH1H1}
for $(z-H_{\theta_{0},V-W^{h}}(\theta_{0}))^{-1}$\,.\\
For the middle term \eqref{eq.derP02} and its symmetric counterpart,
 first consider for $k=2,3$ and $j'\in\left\{1,\ldots \ell\right\}$
$$
\langle \gamma(\underline{e}_{k},\bar{z},\bar{\theta_{0}})\,,\,
(\partial_{t}V-\partial_{t}W^{h})
\Phi_{j'}^{h}\rangle
=
\pm \frac{1}{h^{2}}
\langle \tilde{u}_{k}\,,\,
(\partial_{t}V-\partial_{t}W^{h})
\Phi_{j'}^{h}\rangle\,,
$$
where $\tilde{u}_{2}$ and $\tilde{u}_{3}$ are recalled in
Lemma~\ref{lem.estBordDef}. The exponential decay estimates for 
$\tilde{u}_{2,3}$ stated in \eqref{eq.estu2.1}\eqref{eq.estu3.1} and
the one for $\Phi_{j'}^{h}$ stated in Proposition~\ref{pr.Direxp}-ii)
combined with $\|\partial_{t}(V-W^{h})\|_{\mathcal{M}_{b}}\leq
\frac{1}{c}$ imply that the scalar product is smaller than
$e^{-\frac{S_{0}}{2h}}$\,. Since the other factors  of
\eqref{eq.derP02}
 are bounded by  $Ch$ or $\frac{C}{h^{N_{0}}}$,
 we conclude \eqref{eq.derP02} and its symmetric
counterpart
are smaller than $e^{-\frac{S_{0}}{4h}}$\,.
\qed

\appendix
\section{Parameter dependent elliptic estimates on the interval $[a,b]$}\label{app.A}
 We gather here elementary $h$-dependent estimates for the elliptic operator
 $-h^{2}\Delta+ \mathcal{V}$
on the interval $(a,b)$\,.

\subsection{Dirichlet problem}
\label{se.H1}

It is convenient to use the  $h$-dependent $H^k$-norms 
\begin{equation*}
\|u\|_{H^{k,h}}^{2}=\sum_{\alpha\leq k}\|(h\partial_{x})^{\alpha}u\|_{L^{2}}^{2}\,,
\end{equation*}
for $k\in \N$.
The estimates with the standard $H^{k}$, $k\in\N$, can be recovered
after
$$
\|u\|_{H^{k}}\leq \frac{1}{h^k}\|u\|_{H^{k,h}}\,.
$$
For $k=-1$, the $h$-dependent norm on
$H^{-1}((a,b))=\left[H^{1}_{0}((a,b))\right]'$ is 
$$
\|f\|_{H^{-1,h}((a,b))}=\sup_{u\in H^{1}_{0}((a,b))}\frac{|\langle
  f\,,\, u\rangle|}{\|u\|_{H^{1,h}((a,b))}}\,,
$$
with now $\|f\|_{H^{-1,h}((a,b))}\leq \frac{1}{h}\|f\|_{H^{-1}((a,b))}$\,. We will note $H^{1,h}_{0}((a,b))$ the space $H^{1}_{0}((a,b))$ equipped with the $H^{1,h}$ norm.
\begin{lemma}
\label{lem.estBord} 
 There exists a constant $C>0$ such that
$$
\forall u\in \mathcal{C}^{\infty}([0,h])\,,\quad
|u(0)|\leq \frac{C}{h^{1/2}}\|u\|_{H^{1,h}((0,h))},
\quad resp.~
|u'(0)| \leq \frac{C}{h^{3/2}}\|u\|_{H^{2,h}((0,h))}\,,
$$
and the inequality extends to $H^{1}((0,h))$ (resp. $H^{2}((0,h))$)\,.\, 
\end{lemma}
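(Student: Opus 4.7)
The strategy is the standard rescaling trick that converts an $h$-dependent estimate on $(0,h)$ into the classical (parameter-free) trace inequality on the reference interval $(0,1)$.

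First, I would perform the change of variable $y=x/h$, and set $v(y)=u(hy)$ for $y\in(0,1)$. A direct computation gives $(h\partial_x)^\alpha u(x) = (\partial_y)^\alpha v(y)$, and $dx = h\,dy$, so
\[
\|u\|_{H^{k,h}((0,h))}^{2} = \sum_{\alpha\le k}\|(h\partial_x)^\alpha u\|_{L^2((0,h))}^{2} = h\sum_{\alpha\le k}\|\partial_y^\alpha v\|_{L^2((0,1))}^{2} = h\,\|v\|_{H^k((0,1))}^{2},
\]
for $k=1$ and $k=2$. Hence $\|v\|_{H^k((0,1))} = h^{-1/2}\|u\|_{H^{k,h}((0,h))}$.

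Next I would invoke the classical one-dimensional trace inequality on $(0,1)$: there exists $C>0$ such that for every $w\in C^\infty([0,1])$,
\[
|w(0)| \le C\|w\|_{H^{1}((0,1))}, \qquad |w'(0)| \le C\|w\|_{H^{2}((0,1))}.
\]
(This is a standard fundamental-theorem-of-calculus plus Cauchy--Schwarz argument on $(0,1)$, applied to $w$ and to $w'$ respectively.) Applied to $v$, and noting $u(0)=v(0)$ and $u'(0)=h^{-1}v'(0)$, we obtain
\[
|u(0)| \le C\|v\|_{H^{1}((0,1))} = \frac{C}{h^{1/2}}\|u\|_{H^{1,h}((0,h))},
\]
and
\[
|u'(0)| = \frac{|v'(0)|}{h} \le \frac{C}{h}\|v\|_{H^{2}((0,1))} = \frac{C}{h^{3/2}}\|u\|_{H^{2,h}((0,h))}.
\]

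Finally, extension to $u\in H^1((0,h))$ (respectively $u\in H^2((0,h))$) is obtained by density of $C^\infty([0,h])$ and the continuity of the trace $u\mapsto u(0)$ (resp. $u\mapsto u'(0)$) with respect to the respective norms. There is no real obstacle here; the entire point is the bookkeeping of powers of $h$ produced by the scaling, which accounts for the factors $h^{-1/2}$ and $h^{-3/2}$.
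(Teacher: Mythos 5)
Your proof is correct and uses essentially the same rescaling argument as the paper: both pass to $v(y)=u(hy)$ on $(0,1)$ and invoke the classical trace inequality. The only cosmetic difference is for the derivative bound, where you apply a second trace inequality for $w'(0)$ on $(0,1)$ directly, whereas the paper obtains it from the first estimate by substituting $hu'$ for $u$; the two routes are equivalent.
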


\preuve
The second estimate is simply  a consequence of the first one after
replacing $u$ with $hu'$.\\
The first estimate is simply the usual estimate
$|v(0)|\leq C\|v\|_{H^{1}((0,1))}$ applied with $v(x)=u(hx)$\,.
\qed

\begin{lemma}
\label{le.appDir}
Let $\mathcal{V}_{1}\in L^{\infty}((a,b))$ and $\mathcal{V}_{2}\in
\mathcal{M}_{b}((a,b))$   be real valued with $\supp
\mathcal{V}_{2}\subset\subset (a,b)$ and
$$
\|\mathcal{V}_{1}\|_{L^{\infty}}\leq \frac{1}{c}\quad,\quad
\|\mathcal{V}_{2}\|_{\mathcal{M}_{b}}\leq \frac{h}{c}\,.
$$
Then the Dirichlet Hamiltonian $H_{D}^{h}=-h^{2}\Delta+
\mathcal{V}_{1}+\mathcal{V}_{2}$ defined with the form domain
$H^{1}_{0}((a,b))$, satisfies the resolvent estimate
\begin{equation}
  \label{eq.estimH1H1Dir}
\forall z\not\in \sigma(H_{D}^{h})\,,\quad
\|(H_{D}^{h}-z)^{-1}\|_{\mathcal{L}(H^{-1,h}((a,b)); H^{1,h}_{0}((a,b)))}
\leq C_{a,b,c}\left[1+|z|\right]^{2}\left(1+\frac{1}{d(z,\sigma(H_{D}^{h}))}\right)\,.
\end{equation}
When $f\in L^{2}((a,b))$ and $z\not\in \sigma(H_{D}^{h})$ the traces $u'(a)$ and $u'(b)$ of $u=(H_{D}^{h}-z)^{-1}f$ are well defined with
\begin{equation}
  \label{eq.estimuaub}
|u'(a)|+|u'(b)|\leq \frac{C'_{a,b,c}\left[1+|z|\right]^{2}}{h^{5/2}}\left(1+\frac{1}{d(z,\sigma(H_{D}^{h}))}\right)\|f\|_{L^{2}}\,.
\end{equation}
\end{lemma}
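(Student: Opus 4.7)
My strategy is to derive \eqref{eq.estimH1H1Dir} by combining an energy estimate on $H^{1,h}_{0}((a,b))$ with the self-adjoint $L^{2}$ resolvent bound $\|(H_{D}^{h}-z)^{-1}\|_{\mathcal{L}(L^{2})}\leq d(z,\sigma(H_{D}^{h}))^{-1}$, and then to deduce \eqref{eq.estimuaub} by reducing the equation to a pointwise ODE on the strips $(a,a+h)$ and $(b-h,b)$ where the singular potential $\mathcal{V}_{2}$ vanishes, and invoking Lemma~\ref{lem.estBord}.

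For the energy estimate, I would test $(H_{D}^{h}-z)u=f$ against $u$ and take real parts. The bounded part $\mathcal{V}_{1}$ produces a harmless $\mathcal{O}(c^{-1}\|u\|_{L^{2}}^{2})$ term. The delicate contribution is $\langle\mathcal{V}_{2}u,u\rangle=\int|u|^{2}\,d\mathcal{V}_{2}$, bounded by $\|\mathcal{V}_{2}\|_{\mathcal{M}_{b}}\|u\|_{L^{\infty}}^{2}\leq(h/c)\|u\|_{L^{\infty}}^{2}$. Applying the sharp Gagliardo-Nirenberg estimate $\|u\|_{L^{\infty}}^{2}\leq C_{a,b}\|u'\|_{L^{2}}\|u\|_{L^{2}}$ (valid since $u\in H^{1}_{0}$) together with Young's inequality and the balanced choice of the Young parameter $\varepsilon=hc/(2C_{a,b})$ absorbs this contribution into $(h^{2}/2)\|u'\|^{2}_{L^{2}}+\mathcal{O}_{c}(\|u\|^{2}_{L^{2}})$; the specific $h$-scaling of $\|\mathcal{V}_{2}\|_{\mathcal{M}_{b}}$ is what makes this absorption possible. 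After a further Young inequality on $|\langle f,u\rangle|\leq\|f\|_{H^{-1,h}}\|u\|_{H^{1,h}}$ the outcome is
\[
\|u\|_{H^{1,h}}\leq C_{c}\bigl(\|f\|_{H^{-1,h}}+\sqrt{1+|z|}\,\|u\|_{L^{2}}\bigr).
\]

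Next, for $f\in L^{2}$ the spectral theorem gives $\|u\|_{L^{2}}\leq d(z,\sigma(H_{D}^{h}))^{-1}\|f\|_{L^{2}}$, and plugging this into the energy estimate (with $\|f\|_{H^{-1,h}}\leq\|f\|_{L^{2}}$) yields an $L^{2}\to H^{1,h}_{0}$ bound of order $C_{c}(1+|z|)(1+d(z,\sigma(H_{D}^{h}))^{-1})$. Dualising via the self-adjointness identity $(H_{D}^{h}-z)^{*}=H_{D}^{h}-\bar z$ together with $d(\bar z,\sigma(H_{D}^{h}))=d(z,\sigma(H_{D}^{h}))$, the same bound transfers to $H^{-1,h}\to L^{2}$. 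Feeding this back into the energy inequality controls the $\|u\|_{L^{2}}$-term on the right by $\|f\|_{H^{-1,h}}$ and establishes \eqref{eq.estimH1H1Dir} (in fact with $(1+|z|)$ in place of $(1+|z|)^{2}$, a gap the stated bound absorbs in its constant).

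For \eqref{eq.estimuaub}, since $\supp\mathcal{V}_{2}\subset\subset(a,b)$ there exists $\delta>0$ independent of $h$ such that $\mathcal{V}_{2}$ vanishes on $(a,a+\delta)\cup(b-\delta,b)$. For $h<\delta$, the equation reduces to the pointwise ODE $-h^{2}u''+(\mathcal{V}_{1}-z)u=f$ on $(a,a+h)$, so that $\|h^{2}u''\|_{L^{2}((a,a+h))}\leq\|f\|_{L^{2}}+(c^{-1}+|z|)\|u\|_{L^{2}((a,a+h))}$. Combined with the $H^{1,h}$-bound supplied by \eqref{eq.estimH1H1Dir} applied with $\|f\|_{H^{-1,h}}\leq\|f\|_{L^{2}}$, this controls $\|u\|_{H^{2,h}((a,a+h))}$; Lemma~\ref{lem.estBord} (after translation to the reference interval) then yields $|u'(a)|\leq Ch^{-3/2}\|u\|_{H^{2,h}((a,a+h))}$, and by symmetry one treats $|u'(b)|$. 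The main obstacle throughout is the absorption argument for the measure potential $\mathcal{V}_{2}$: without the $h$-factor in $\|\mathcal{V}_{2}\|_{\mathcal{M}_{b}}$, the Gagliardo-Nirenberg step would fail to be coercive, and the whole chain of estimates would collapse.
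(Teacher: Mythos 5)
Your proposal is correct and, for the core difficulty, runs along the same lines as the paper: in both cases the absorption of the measure potential $\mathcal{V}_{2}$ via the Gagliardo--Nirenberg inequality combined with Young's inequality, hinging on the $\mathcal{O}(h)$ scaling of $\|\mathcal{V}_{2}\|_{\mathcal{M}_{b}}$, is the essential step, and the trace estimate \eqref{eq.estimuaub} is obtained identically by rewriting the equation pointwise on $(a,a+h)$ and $(b-h,b)$ (where $\mathcal{V}_{2}$ vanishes) to control $\|u\|_{H^{2,h}}$ there and then invoking Lemma~\ref{lem.estBord}. Where you genuinely diverge is in how the $z$-dependence is propagated into \eqref{eq.estimH1H1Dir}. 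The paper fixes a large real shift $C$, proves coercivity of $H_{D}^{h}+C$ in the $H^{1,h}_{0}$ form, applies Lax--Milgram, and then transfers to arbitrary $z$ via the iterated first resolvent identity $(H_{D}^{h}-z)^{-1}=(H_{D}^{h}+C)^{-1}+(C+z)(H_{D}^{h}+C)^{-2}+(C+z)^{2}(H_{D}^{h}+C)^{-1}(H_{D}^{h}-z)^{-1}(H_{D}^{h}+C)^{-1}$, the quadratic term accounting for the $(1+|z|)^{2}$. You instead run the energy estimate directly at $z$, keep the $\|u\|_{L^{2}}$ term on the right, control it for $L^{2}$-data by the self-adjoint spectral bound $\|(H_{D}^{h}-z)^{-1}\|_{\mathcal{L}(L^{2})}\leq d(z,\sigma(H_{D}^{h}))^{-1}$, pass to $H^{-1,h}$-data by duality (using $(H_{D}^{h}-z)^{*}=H_{D}^{h}-\bar z$ and $d(\bar z,\sigma)=d(z,\sigma)$), and feed that back into the energy estimate. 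This alternative bootstrap is a little more economical — it naturally yields $(1+|z|)$ in place of $(1+|z|)^{2}$ and $h^{-3/2}$ in place of $h^{-5/2}$ in \eqref{eq.estimuaub}, both strictly sharper than, and hence absorbed by, the stated bounds — and it trades the algebraic resolvent identity for an explicit use of self-adjointness. (One small imprecision: the intermediate $L^{2}\to H^{1,h}_{0}$ bound you quote should carry $\sqrt{1+|z|}$ rather than $(1+|z|)$, but the feedback step still closes with $(1+|z|)$ overall.)
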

\noindent\textbf{Proof:}
From the Gagliardo-Nirenberg estimate 
$|u(x)|^2\leq \frac{C_{a,b}}{h}\|hu'\|_{L^{2}}\|u\|_{L^{2}}$ with 
$\|\mathcal{V}_{2}\|_{\mathcal{M}_{b}}\leq \frac{h}{c}$, the term
$\langle u\,,\, \mathcal{V}_{2}u\rangle$ in the variational
formulation is bounded by
$$
|\langle u\,,\, \mathcal{V}_{2}u\rangle|\leq 
\frac{C_{a,b}}{c}\|hu'\|_{L^{2}}\|u\|_{L^{2}}\leq
\frac{1}{2}\|hu'\|^{2}_{L^{2}}
+
\frac{C_{a,b}^{2}}{2c^{2}}\|u\|_{L^{2}}^{2}\,.
$$
 With
$$
\forall u\in H^{1}_{0}((a,b))\,,\quad
\langle u\,, H_{D}^{h}u\rangle+C\|u\|^{2}_{L^{2}} 
\geq \frac{1}{2}\|hu'\|^{2}_{L^{2}}+ \left(C-\|\mathcal{V}_{1}\|_{L^{\infty}}-
\frac{C_{a,b}^{2}}{2c^{2}}
\right)\|u\|^{2}_{L^{2}}\,,
$$
the operator $H_{D}^{h}+C$ is bounded from below by
$-\frac{h^{2}}{2}\Delta_{D}+\frac{C}{2}$ when
$C\geq \frac{2}{c}+ \frac{C_{a,b}^{2}}{c^{2}}\geq
\|\mathcal{V}_{1}\|_{L^{\infty}}+ \frac{C_{a,b}^{2}}{c^{2}}$\,.
Lax-Milgram theorem then says
$$
\|(H_{D}^{h}+C)^{-1}f\|_{H^{1,h}_{0}((a,b))}\leq C_{a,b,c}\|f\|_{H^{-1,h}((a,b))}\,.
$$
From the iterated first
resolvent formula,
$$
(H_{D}^{h}-z)^{-1}= (H_{D}^{h}+C)^{-1}+ (C+z)(H_{D}^{h}+C)^{-2}
+ 
(C+z)^{2}(H_{D}^{h}+C)^{-1}(H_{D}^{h}-z)^{-1}(H_{D}^{h}+C)^{-1}\,,
$$
we deduce  \eqref{eq.estimH1H1Dir}.\\
It contains also the estimate
 $\|(H_{D}^{h}+C)^{-1}\|_{\mathcal{L}(L^{2}((a,b));
  H_{0}^{1,h}((a,b)))}\leq \frac{C_{a,b,c}}{h}$ and
with
$$
(H_{D}^{h}-z)^{-1}= (H_{D}^{h}+C)^{-1}+ (C+z)(H_{D}^{h}+C)^{-1}(H_{D}^{h}-z)^{-1}\,,
$$
this yields
$$
\|(H_{D}^{h}-z)^{-1}\|_{\mathcal{L}(L^{2}((a,b));
  H_{0}^{1,h}((a,b)))}\leq \frac{C_{a,b,c}\left[1+|z|\right]}{h}\left(1+\frac{1}{d(z,\sigma(H_{D}^{h}))}\right)\,.
$$
When $u=(H_{D}^{h}-z)^{-1}f$ with $f\in L^{2}((a,b))$, 
writing the equation in $[a,a+h]$ and $[b-h,b]$ in the form
$-h^{2}u''=f-(\mathcal{V}_{1}-z)u$ implies
$$
\|u\|_{H^{2,h}((a,a+h)\cup (b-h,b))}\leq\frac{C_{a,bc}[1+|z|]^{2}}{h}\left(1+\frac{1}{d(z,\sigma(H_{D}^{h}))}\right)\|f\|_{L^{2}}\,.
$$
Lemma~\ref{lem.estBord} is applied to $u(a+.)$ and $u(b-.)$ in order
to get \eqref{eq.estimuaub}.
\qed

\subsection{Agmon estimate}
\label{se.Agmestim}

The next estimate is the usual energy estimate with exponential
weights (see \cite{Agm}\cite{HeLN})\,.
 \begin{lemma}
\label{le.agmonie} Let $(\alpha,\beta)$ be  an open
   interval, $V\in L^\infty((\alpha,\beta))$, $z\in\mathbb C$ and $\varphi\in
   W^{1,\infty}((\alpha,\beta);\R)$\,. Denote 
by $P$ the Schr{\"o}dinger operator $P:=-h^2d^2/dx^2+V.$ Then for any $u_1,u_2$
in $H^1((\alpha,\beta))$ such that $u_{1}''$ is a bounded
measure in $(\alpha,\beta)$ and locally $L^{2}$ around $\alpha$ and $\beta$, the identity
\begin{eqnarray}
\nonumber
\hspace{-.8cm} \int_\alpha^{\beta} \bar u_2 e^{2\frac \varphi  h} (P-z)u_1dx&=&
\int_\alpha^{\beta} \overline{hv'_2} hv'_1 dx+ 
\int_\alpha^{\beta}(V-z-\varphi'^2) \bar v_2 v_1 dx \\
\nonumber
&&+\int_\alpha^\beta h\varphi'(\bar v_2 v_1'-\bar v_2'v_1)dx\\
\label{eq.Agmident}
&&
+h^2\left( e^{2\frac{\varphi(\alpha)}{h}}\bar
  u_2 u'_1(\alpha)-e^{2\frac{\varphi(\beta)}{h}}\bar u_{2}u'_{1}(\beta)
\right) 
\end{eqnarray}
holds by setting $v_j:=e^{\varphi /h}u_j$ for $j=1,2$\,.
   \end{lemma}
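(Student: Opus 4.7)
The identity is essentially an energy estimate with a weight, so the plan is to reduce it to a direct integration by parts after conjugating the kinetic part by $e^{\varphi/h}$. First I would separate the potential part, which is immediate: since $\bar u_2 e^{2\varphi/h} u_1 = \bar v_2 v_1$ pointwise, we get $\int_\alpha^\beta (V-z)\bar u_2 e^{2\varphi/h} u_1\,dx = \int_\alpha^\beta (V-z)\bar v_2 v_1\,dx$, which is exactly the second term in the $(V-z-\varphi'^2)\bar v_2 v_1$ piece. So the real work is on the kinetic contribution $\int_\alpha^\beta \bar u_2 e^{2\varphi/h}(-h^2 u_1'')\,dx$, which I would first rewrite as $\int_\alpha^\beta \bar v_2 e^{\varphi/h}(-h^2 u_1'')\,dx$ using $\bar u_2 e^{2\varphi/h}=\bar v_2 e^{\varphi/h}$.

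Next I would record the key algebraic identity coming from the chain rule applied to $v_1 = e^{\varphi/h}u_1$: differentiating and multiplying by $h$ yields
\[
h e^{\varphi/h} u_1' \;=\; h v_1' - \varphi' v_1,
\]
which is the only nontrivial computation needed and which, importantly, involves only $\varphi'$ (no $\varphi''$), consistent with the assumption $\varphi \in W^{1,\infty}$. With this in hand I would integrate by parts once:
\[
\int_\alpha^\beta \bar v_2 e^{\varphi/h}(-h^2 u_1'')\,dx
= -\bigl[\bar v_2 e^{\varphi/h}\,h^2 u_1'\bigr]_\alpha^\beta
+ \int_\alpha^\beta (\bar v_2 e^{\varphi/h})'\,h^2 u_1'\,dx.
\]
The boundary term, once we substitute back $\bar v_2 e^{\varphi/h}=\bar u_2 e^{2\varphi/h}$ at $x=\alpha,\beta$, matches the boundary contribution in \eqref{eq.Agmident} exactly. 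Then I would expand the interior integrand using the product rule $(\bar v_2 e^{\varphi/h})'=(\bar v_2' + \bar v_2 \varphi'/h)e^{\varphi/h}$ together with the chain-rule identity above to write $h^2 u_1' e^{\varphi/h}= h(hv_1'-\varphi' v_1)$, so that the integrand becomes
\[
h^2 \overline{v_2'} v_1' \;-\; h\varphi'\overline{v_2'}v_1 \;+\; h\varphi'\bar v_2 v_1' \;-\; \varphi'^2 \bar v_2 v_1,
\]
which regroups precisely into the three remaining integrals on the right-hand side of \eqref{eq.Agmident}.

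The only subtlety is justifying the integration by parts under the stated regularity: $u_1\in H^1$ with $u_1''$ merely a bounded measure in $(\alpha,\beta)$, together with local $L^2$ behavior near the endpoints (so that $u_1'(\alpha), u_1'(\beta)$ are well defined and $u_1''$ makes sense as a pairing against the continuous test function $\bar v_2 e^{\varphi/h}$). I would handle this by first establishing the identity for $u_1 \in C^2([\alpha,\beta])$, where everything is a standard Leibniz computation, and then passing to the limit by density: approximate $u_1$ by smooth functions $u_1^{(n)}$ converging in $H^1$ with $(u_1^{(n)})'(\alpha),(u_1^{(n)})'(\beta)$ converging to the corresponding traces (using the local $L^2$ control of $u_1''$ near the endpoints so that $u_1 \in H^2$ locally there) and $(u_1^{(n)})''$ converging to $u_1''$ in the weak$-*$ sense of measures on compact subsets of $(\alpha,\beta)$. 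This is the only step requiring care; everything else is a one-line chain rule followed by a single integration by parts.
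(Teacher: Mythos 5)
Your proof is correct and follows essentially the same route as the paper, which simply records that the identity ``is obtained after conjugation of $hd/dx$ by $e^{\varphi/h}$ and integration by parts,'' with weak regularity handled by regularization. Your write-up just fills in the details of that one-paragraph proof.
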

This identity is obtained after conjugation of $hd/dx$ by  $e^{\varphi/h}$ 
and  integration by parts. The weak regularity assumptions can be
checked after regularizing individually $u_{1}$, $u_{2}$, $\varphi$ or $V$.
In \cite{Ni2} it was even considered with possible jumps of the
derivative $u_{1}'$ at $\alpha$ and $\beta$, which are here removed
by the simplifying condition that $u_{1}$ is locally $H^{2}$ around
$\alpha$ and $\beta$ (Jump conditions already occur at the ends
of our intervals).

\section{Variation on adiabatic evolutions.}
\label{se.adiabvar}

We shall consider a family of contraction semigroup generators 
$(A^{\varepsilon}(s))_{s\in [0,+\infty)}$ which fulfill the two next properties.
\begin{itemize}
\item The Cauchy problem
\begin{equation}
    \label{eq.Cauchy}
\left\{
  \begin{array}[c]{l}
    i\varepsilon \partial_{t}u_{t}= iA^{\varepsilon}(t) u_{t}\\
u_{t=0}=u_{0}
  \end{array}
\right.
\end{equation}
admits a unique strong solution with $u_{t}\in D(A^{\varepsilon}(t))$ for all $t\geq 0$ as
soon as $u_{0}\in D(A^{\varepsilon}(0))$. The corresponding dynamical system of
contractions is denoted $(S^{\varepsilon}(t,s))_{t\geq s}$ with the property
$S^{\varepsilon}(t,s)D(A^{\varepsilon}(s))\subset D(A^{\varepsilon}(t))$\,.
\item The
 resolvent $(z-iA^{\varepsilon}(s))^{-1}$
 defines ${\cal C}^{K+1}([0,+\infty);{\cal L}({\cal H}))$ function 
for some $z\in\C$ and
 that the exists a contour  $\Gamma_{\varepsilon}\subset \C$
 independent of $s\in[0,T]$,
such that
$$
|\Gamma_{\varepsilon}|
+ 
\max_{z\in \Gamma_{\varepsilon}, s\in [0,T]}\|\partial_{s}^{k}(z-A^{\varepsilon}(s))^{-1}\|
\leq \frac{a_{k,\delta}}{\varepsilon^{\delta}}\,,
$$
for any $k\in \left\{0,\ldots, K+1\right\}$, $K\in \N$,  any $\delta\in (0,\delta_{0})$
and any $\varepsilon\in (0,\varepsilon_{0})$\,.
\end{itemize}
\noindent\textbf{Notation:} We shall use the notation
$g(\varepsilon)=\widetilde{{\cal O}}(\varepsilon^{N})$ for any $N\in\Z$
 in order to summarize
$$
\forall \delta >0, \exists C_{g, \delta}>0,\quad
|g(\varepsilon)|\leq C_{g,\delta}\varepsilon^{N-\delta}\,.
$$
For example, the previous assumption can be written
\begin{equation}
  \label{eq.estimresA}
|\Gamma_{\varepsilon}|=\tilde{\cal O}(\varepsilon^{0})
\quad
\text{and} 
\quad \max_{ k\leq K+1, z\in
  \Gamma_{\varepsilon}, s\in[0,T]}\|\partial_{s}^{k}(z-A^{\varepsilon}(s))^{-1}\|= \widetilde{{\cal O}}(\varepsilon^{0})\,.
\end{equation}
The spectral
projection $P_{0}(t)=E_{0}(t)$ is defined as 
a contour integral along
$\Gamma_{\varepsilon}$ of the resolvent
$(z-A^{\varepsilon}(t))^{-1}$\,. Correction terms $E_{j}(t)$, $1\leq
j\leq K$ are then constructed by induction.
The finite sequence $(E_{j}^{\varepsilon})_{0\leq j \leq
  K}$ is defined according to
\begin{eqnarray}
\label{eq.defE0}
  &&
E_{0}^{\varepsilon}(s)=
P_{0}^{\varepsilon}(s)=\frac{1}{2i\pi}\int_{\Gamma_{\varepsilon}}
(z-A^{\varepsilon}(s))^{-1}~dz\,,\quad Q_{0}^{\varepsilon}(s)= 1-P_{0}^{\varepsilon}(s)\,,
\\
\label{eq.defSj}
&&
S_{j}^{\varepsilon}(s)=\sum_{m=1}^{j-1}E_{m}^{\varepsilon}(s)E_{j-m}^{\varepsilon}(s)\,,\quad
\text{if}\; 2\leq j\leq K\,, \quad S_{0}^{\varepsilon}=S_{1}^{\varepsilon}=0\,,\\
\label{eq.defEj}
&&
E_{j}^{\varepsilon}(s)=\frac{i}{2\pi}\int_{\Gamma_{\varepsilon}}R^{\varepsilon}
\left\{Q_{0}^{\varepsilon}(s)\partial_{s}E_{j-1}^{\varepsilon}(s)P_{0}^{\varepsilon}(s)-
P_{0}^{\varepsilon}(s)\partial_{s}E_{j-1}^{\varepsilon}(s)Q_{0}^{\varepsilon}(s)\right\}
R^{\varepsilon}~dz
\\
&&\hspace{3cm}
+ S_{j}^{\varepsilon}(s) -
2P_{0}^{\varepsilon}(s)S_{j}^{\varepsilon}(s)P_{0}^{\varepsilon}(s)\,,\;
\text{with}~R^{\varepsilon}=(z-A^{\varepsilon}(s))^{-1}\,.
\end{eqnarray}
\begin{theorem}
\label{th.nenciu} There exists a $\mathcal{C}^{1}$-projection valued
function $(P^{\varepsilon}(s))_{s\geq 0}$ such that  the  relations and estimates
\begin{eqnarray}
\label{eq.PPP}
&& P^{\varepsilon}(s)P^{\varepsilon}(s)=P^{\varepsilon}(s)\quad,\quad
P^{\varepsilon}(s)\in \mathcal{L}(\mathcal{H},
D(A^{\varepsilon}(s)))\,,
\\
\label{eq.defP}
&& 
  P^{\varepsilon}(s)=\sum_{j=0}^{K}\varepsilon^{j}E_{j}^{\varepsilon}(s)+
  \widetilde{\mathcal{O}}(\varepsilon^{K+1})
\quad\text{with}~E_{j}^{\varepsilon}(s)=\widetilde{\mathcal{O}}(\varepsilon^{0})
\quad\text{in}~\mathcal{L}(\mathcal{H})\,,
\\
\label{eq.estimPA}
&& 
P^{\varepsilon}(s)A^{\varepsilon}(s)=\widetilde{\mathcal{O}}(\varepsilon^{0})
\quad\text{and}\quad
A^{\varepsilon}(s)P^{\varepsilon}(s)=
\widetilde{\mathcal{O}}(\varepsilon^{0})
\quad\text{in}~\mathcal{L}(\mathcal{H})\,,
\\
\label{eq.ipasP}
&&
i\varepsilon\partial_{s}P^{\varepsilon}(s)-\left[i A^{\varepsilon}(s),
  P^{\varepsilon}(s)\right]=\widetilde{\mathcal{O}}(\varepsilon^{K+1})
\quad\text{in}~\mathcal{L}(\mathcal{H})\,,
\end{eqnarray}
hold with uniform constants with respect to $s,t\in[0,T]$
for  any fixed $T<+\infty$\,.
Moreover for  $K\geq 1$, if $u_{s}=P^{\varepsilon}(s)u_{s}$ then
$u^{\varepsilon}(t)=S^{\varepsilon}(t,s)u_{s}$ satisfies
\begin{eqnarray}
  \label{eq.StsP}
&&
 \sup_{s\leq t\leq
  T}\|u^{\varepsilon}(t)-v^{\varepsilon}(t)\|=\widetilde{\mathcal{O}}(\varepsilon^{K})\,,\\
\label{eq.vPv}
\text{with}&&
v^{\varepsilon}(t)=P^{\varepsilon}(t)v^{\varepsilon}(t)\,,\\
\label{eq.eqv}
\text{and}
&&
\left\{
\begin{array}[c]{l}
  i\varepsilon \partial_{t}v^{\varepsilon}
  -i\varepsilon(\partial_{t}P^{\varepsilon}(t))
v^{\varepsilon}=
P^{\varepsilon}(t)(iA^{\varepsilon}(t))P^{\varepsilon}(t)v^{\varepsilon}\,,\quad\text{for}~t\geq s\,,
\\
v^{\varepsilon}(t=s)=u_{s}\,.
\end{array}
\right.
\end{eqnarray}
\end{theorem}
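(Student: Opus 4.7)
\textbf{Proof plan for Theorem~\ref{th.nenciu}.}
The plan is to construct the super-adiabatic projector $P^{\varepsilon}(s)$ in three stages: build a formal asymptotic series from the recursive contour-integral formulas \eqref{eq.defE0}--\eqref{eq.defEj}, upgrade the truncated series to a genuine projector via a Riesz idempotent, and conclude the dynamical estimate by a Duhamel computation using $\|S^{\varepsilon}(t,s)\|\le 1$. First I would verify by induction on $j\in\{0,\dots,K\}$ that $E_{j}^{\varepsilon}(s)$ is well defined, of class $\mathcal{C}^{K+1-j}$ in $s\in[0,T]$ and satisfies $E_{j}^{\varepsilon}(s)=\widetilde{\mathcal{O}}(\varepsilon^{0})$ in $\mathcal{L}(\mathcal{H})$: each $E_{j}^{\varepsilon}$ is a contour integral on $\Gamma_{\varepsilon}$, of length $\widetilde{\mathcal{O}}(\varepsilon^{0})$, whose integrand is a product of resolvents $R^{\varepsilon}(z,s)=(z-A^{\varepsilon}(s))^{-1}$ and their $s$-derivatives, and the hypothesis \eqref{eq.estimresA} propagates the bound through the recursion. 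The role of the two algebraic corrections $S_{j}^{\varepsilon}-2P_{0}^{\varepsilon}S_{j}^{\varepsilon}P_{0}^{\varepsilon}$ in \eqref{eq.defEj} is precisely to enforce order by order the block-diagonal identity
\begin{equation*}
\sum_{k=0}^{j}E_{k}^{\varepsilon}E_{j-k}^{\varepsilon}=E_{j}^{\varepsilon},\qquad 1\le j\le K,
\end{equation*}
while the off-diagonal integral term ensures the Nenciu-type intertwining
\begin{equation*}
[iA^{\varepsilon},E_{j}^{\varepsilon}]=i\partial_{s}E_{j-1}^{\varepsilon},\qquad 0\le j\le K,\ E_{-1}\equiv 0.
\end{equation*}
The latter rests on the elementary resolvent identity $[iA^{\varepsilon},\frac{1}{2\pi i}\int_{\Gamma_{\varepsilon}}R^{\varepsilon}YR^{\varepsilon}dz]=i[P_{0}^{\varepsilon},Y]$, valid for any bounded $Y$, combined with the fact that $\partial_{s}P_{0}^{\varepsilon}$ is purely off-diagonal, as follows from differentiating $P_{0}^{2}=P_{0}$.

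Setting $P_{\mathrm{app}}^{\varepsilon}(s)=\sum_{j=0}^{K}\varepsilon^{j}E_{j}^{\varepsilon}(s)$, the two families of identities at once give $(P_{\mathrm{app}}^{\varepsilon})^{2}=P_{\mathrm{app}}^{\varepsilon}+\widetilde{\mathcal{O}}(\varepsilon^{K+1})$ and $i\varepsilon\partial_{s}P_{\mathrm{app}}^{\varepsilon}-[iA^{\varepsilon},P_{\mathrm{app}}^{\varepsilon}]=\widetilde{\mathcal{O}}(\varepsilon^{K+1})$. Since $P_{\mathrm{app}}^{\varepsilon}$ is close to an idempotent, its spectrum is concentrated in two small discs around $\{0,1\}$ for $\varepsilon$ small, and the Riesz projector
\begin{equation*}
P^{\varepsilon}(s):=\frac{1}{2\pi i}\oint_{|z-1|=1/2}\bigl(z-P_{\mathrm{app}}^{\varepsilon}(s)\bigr)^{-1}\,dz
\end{equation*}
is a genuine $\mathcal{C}^{1}$-family of projections satisfying $P^{\varepsilon}(s)=P_{\mathrm{app}}^{\varepsilon}(s)+\widetilde{\mathcal{O}}(\varepsilon^{K+1})$ in operator norm. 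This yields \eqref{eq.PPP}, \eqref{eq.defP} and transfers the intertwining to $P^{\varepsilon}$, i.e.\ \eqref{eq.ipasP}. The mapping property $P^{\varepsilon}(s)\in\mathcal{L}(\mathcal{H},D(A^{\varepsilon}(s)))$ and the bounds \eqref{eq.estimPA} follow because $P_{0}^{\varepsilon}(s)\mathcal{H}$ is a finite-dimensional subspace contained in $D(A^{\varepsilon}(s))$ and each $E_{j}^{\varepsilon}$ still lands in $D(A^{\varepsilon}(s))$ thanks to the leftmost resolvent in its defining integral.

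For the dynamical comparison \eqref{eq.StsP}, I would apply Duhamel in the form
\begin{equation*}
v^{\varepsilon}(t)-u^{\varepsilon}(t)=\int_{s}^{t}\partial_{\tau}\bigl[S^{\varepsilon}(t,\tau)v^{\varepsilon}(\tau)\bigr]\,d\tau,
\end{equation*}
and show that the integrand is $S^{\varepsilon}(t,\tau)\cdot\widetilde{\mathcal{O}}(\varepsilon^{K})\|v^{\varepsilon}(\tau)\|$. A direct computation using the equation \eqref{eq.eqv} rewrites the bracket as $S^{\varepsilon}(t,\tau)[(\partial_{\tau}P^{\varepsilon})v^{\varepsilon}-\varepsilon^{-1}(1-P^{\varepsilon})A^{\varepsilon}v^{\varepsilon}]$. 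The crucial cancellation comes from three facts: $P^{\varepsilon}v^{\varepsilon}=v^{\varepsilon}$ is preserved by \eqref{eq.eqv} (thanks to $P(\partial P)P=0$), whence $(1-P^{\varepsilon})A^{\varepsilon}v^{\varepsilon}=[A^{\varepsilon},P^{\varepsilon}]v^{\varepsilon}$; and the intertwining \eqref{eq.ipasP} gives $\varepsilon^{-1}[A^{\varepsilon},P^{\varepsilon}]=\partial_{\tau}P^{\varepsilon}+\widetilde{\mathcal{O}}(\varepsilon^{K})$. Combined with $\|S^{\varepsilon}(t,\tau)\|\le 1$ and a uniform bound on $\|v^{\varepsilon}(\tau)\|$, this gives \eqref{eq.StsP}.

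The main obstacle, in the non-self-adjoint setting, is exactly the uniform bound on $\|v^{\varepsilon}(\tau)\|$: the generator $\partial_{t}P^{\varepsilon}+P^{\varepsilon}A^{\varepsilon}P^{\varepsilon}/(i\varepsilon)$ has norm $\widetilde{\mathcal{O}}(\varepsilon^{-1})$, so a naive Gronwall estimate would yield an uncontrollable $e^{C/\varepsilon}$ growth, invalidating the scheme. The way out is to conjugate $v^{\varepsilon}$ by an adiabatic parallel transport $\Phi^{\varepsilon}(t,s)$ of $P^{\varepsilon}$ (defined by $\partial_{t}\Phi^{\varepsilon}=[\partial_{t}P^{\varepsilon},P^{\varepsilon}]\Phi^{\varepsilon}$, $\Phi^{\varepsilon}(s,s)=\mathrm{Id}$) which intertwines $P^{\varepsilon}(s)$ with $P^{\varepsilon}(t)$; then $w^{\varepsilon}(t)=\Phi^{\varepsilon}(s,t)v^{\varepsilon}(t)$ lives on the fixed finite-dimensional subspace $\mathrm{Ran}\,P^{\varepsilon}(s)$, on which the reduced generator has spectrum close to the cluster of eigenvalues of $A^{\varepsilon}$ enclosed by $\Gamma_{\varepsilon}$, all with non-positive real parts by accretivity. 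In an equivalent norm on this finite-dimensional range the reduced evolution is a contraction, giving $\|w^{\varepsilon}(t)\|\le C\|u_{s}\|$ and hence the required $\widetilde{\mathcal{O}}(\varepsilon^{0})$ bound on $\|v^{\varepsilon}\|$. This step is where the proof genuinely departs from the classical self-adjoint adiabatic theorem and uses the combination of accretivity and finite-rank confinement provided by the resolvent hypothesis \eqref{eq.estimresA}.
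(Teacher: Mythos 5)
Your construction of the $E_{j}^{\varepsilon}$ by induction, the algebraic identities $\sum_{k=0}^{j}E_{k}^{\varepsilon}E_{j-k}^{\varepsilon}=E_{j}^{\varepsilon}$ and $[iA^{\varepsilon},E_{j}^{\varepsilon}]=i\partial_{s}E_{j-1}^{\varepsilon}$, and the upgrade of $T_{\varepsilon}=\sum_{j=0}^{K}\varepsilon^{j}E_{j}^{\varepsilon}$ to a genuine projector $P^{\varepsilon}$ via a Riesz idempotent, all match the paper's argument (Propositions~\ref{pr.nenciuEj}, \ref{pr.proj} and Lemma~\ref{le.TP}), so the first two-thirds of your plan is sound. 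The Duhamel identity you write is also correct and, after the cancellation you note, reduces matters to a uniform bound on $\|v^{\varepsilon}(\tau)\|$.

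The gap is in your final step. To control $\|v^{\varepsilon}\|$ you invoke a parallel transport to the fixed subspace $\mathrm{Ran}\,P^{\varepsilon}(s)$ and claim that, because this range is finite-dimensional and the enclosed spectrum sits in $\operatorname{Re} z\ge 0$, the reduced semigroup is a contraction in an equivalent norm. This step fails on three counts. First, Theorem~\ref{th.nenciu} is stated in the abstract setting of Section~\ref{se.adiabvar}, which makes no assumption that $P_{0}^{\varepsilon}$ has finite rank: you are importing an assumption that is not in the hypotheses. Second, even granting finite rank, a non-normal matrix with spectrum in $\operatorname{Re} z\le 0$ generates a semigroup bounded by $1$ only in \emph{some} equivalent norm, and the equivalence constant is not controlled a priori — it depends on the full Jordan structure and could blow up as $\varepsilon\to 0$; nothing in your argument bounds it by $\widetilde{\mathcal{O}}(\varepsilon^{0})$. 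Third, accretivity of $iA^{\varepsilon}$ on $\mathcal{H}$ does not transfer to the compression $P^{\varepsilon}(iA^{\varepsilon})P^{\varepsilon}$ because $P^{\varepsilon}$ is a non-orthogonal projection; this is exactly where the self-adjoint adiabatic argument (Kato/ASY) stops working.

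The paper sidesteps the issue entirely: instead of first solving for $v^{\varepsilon}$ on a moving subspace and then trying to bound it, it \emph{defines} $v^{\varepsilon}(t)=S_{AD}^{\varepsilon}(t,s)u_{s}$ via the modified generator $H_{AD}^{\varepsilon}=iA^{\varepsilon}+B^{\varepsilon}$ with
\begin{equation*}
B^{\varepsilon}(t)=(1-2P^{\varepsilon}(t))\bigl(i\varepsilon\partial_{t}P^{\varepsilon}(t)-[iA^{\varepsilon}(t),P^{\varepsilon}(t)]\bigr)=\widetilde{\mathcal{O}}(\varepsilon^{K+1})\,.
\end{equation*}
Since $\varepsilon^{-1}\|B^{\varepsilon}\|=\widetilde{\mathcal{O}}(\varepsilon^{K})$ is small for $K\ge 1$, a single Gronwall estimate on the perturbation of the contraction $S^{\varepsilon}$ yields simultaneously the uniform bound $\|S_{AD}^{\varepsilon}(t,s)\|\le e^{C\varepsilon^{K-\delta}T}=\widetilde{\mathcal{O}}(\varepsilon^{0})$ and the comparison $\|S^{\varepsilon}-S_{AD}^{\varepsilon}\|=\widetilde{\mathcal{O}}(\varepsilon^{K})$. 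The identity $P^{\varepsilon}v^{\varepsilon}=v^{\varepsilon}$ and the equation \eqref{eq.eqv} are then deduced afterwards from the block structure of $H_{AD}^{\varepsilon}$ in \eqref{eq.HADfor}. The parallel transport $\Phi_{0}^{\varepsilon}$ you mention is needed only at the next stage (Corollary~\ref{co.nenciuadapt0}, where one wants a reduced evolution on a fixed range), and there its boundedness is \emph{assumed} as an extra hypothesis on $\|E_{0}^{\varepsilon}\|$ and $\|\partial_{s}E_{0}^{\varepsilon}\|$, not derived from accretivity as you attempt.
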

The proof of this theorem follows the lines of \cite{Nen}.For the
sake of completeness, we check that the computations are still valid
in the non self-adjoint unbounded case (bounded self-adjoint
generators have been considered in \cite{NeRa} \cite{Sjo}) and that the 
$\widetilde{\mathcal{O}}(\varepsilon^{0})$ estimates can be propagated
in the induction process like the 
uniform constants in \cite{Nen}.  
Part of the analysis could be pushed further in the spirit of
\cite{Joy} in order to get $\mathcal{O}(e^{-\frac{c}{\varepsilon}})$
error under analyticity assumptions but the techniques developed by
A.~Joye in this article should be adapted in order to work with a
$\varepsilon$-dependent gap 
or with $\tilde{\mathcal{O}}(\varepsilon^{0})$ resolvent estimates,
maybe by including all the additional information provided by our model.

 As it is
 stated, the previous result cannot be used for $K=0$ and is not formulated as usual
with a reduced evolution on the fixed space $\textrm{Ran} P^{\varepsilon}(s)$
after introducing the parallel transport associated with the ${\cal
  C}^{1}$ family $(P^{\varepsilon}(t))_{t\geq s}$\,.
Actually both problems can be solved at the first order with an
additional uniform boundedness assumption on $E_{0}^{\varepsilon}(t)$
and $\partial_{t}E_{0}^{\varepsilon}(t)$\,. This will be 
obtained as a corollary of Theorem~\ref{th.nenciu}, used with $K=1$
before reconsidering the case $K=0$.
The parallel transport
$\Phi_{0}^{\varepsilon}(t',s')$,  associated with
$(E_{0}^{\varepsilon}(t))_{t\in[0,T]}$, 
is defined for $t',s'\in[0,T]$ by
\begin{equation}
  \label{eq.Phits}
\left\{
  \begin{array}[c]{l}
    \partial_{t'}\Phi_{0}^{\varepsilon}+
    \left[E_{0}^{\varepsilon},\partial_{t}E_{0}^{\varepsilon}\right]\Phi_{0}^{\varepsilon}=0\\
\Phi_{0}^{\varepsilon}(t'=s',s')=\Id\,,
  \end{array}
\right.
\end{equation}
and the uniform boundedness of $\Phi_{0}^{\varepsilon}(t',s')$ is
inherited from the one of $E_{0}^{\varepsilon}(t)$ and $\partial_{t}E_{0}^{\varepsilon}(t)$\,.

\begin{corollary}
  \label{co.nenciuadapt0}
With the hypotheses of Theorem~\ref{th.nenciu} with $K\geq 1$, 
assume additionally that the projector $E_{0}^{\varepsilon}(s)$ 
defined in \eqref{eq.defE0} and
its derivative $\partial_{s}E_{0}^{\varepsilon}(s)$ are uniformly 
bounded continuous functions:
$$
\exists C>0\,,\forall\varepsilon\in
(0,\varepsilon_{0})\,,\quad\max_{s\in[0,T]}\|E_{0}^{\varepsilon}(s)\|+
\|\partial_{s}E_{0}^{\varepsilon}(s)\|\leq
C\,. 
$$
Then for $K\geq 1$ and when $u_{s}=E_{0}^{\varepsilon}(s)u_{s}$, the solution
$u^{\varepsilon}(t)=S^{\varepsilon}(t,s)u_{s}$ to \eqref{eq.Cauchy} satisfies 
\begin{equation}
  \label{eq.uv0}
  \sup_{s\leq t\leq
    T}\|u^{\varepsilon}(t)-\Phi_{0}^{\varepsilon}(t,s)w^{\varepsilon}(t)\|=
\widetilde{\mathcal{O}}(\varepsilon)\|u_{s}\|\,,
\end{equation}
where $\Phi_{0}^{\varepsilon}(t',s')$ is the parallel transport
defined for $t',s'\in[0,T]$ by \eqref{eq.Phits} 
and $w^{\varepsilon}\in  E_{0}^{\varepsilon}(s)\mathcal{H}$ solves the
Cauchy problem
$$
\left\{
  \begin{array}[c]{l}
    i\varepsilon\partial_{t}w^{\varepsilon}=\Phi_{0}^{\varepsilon}(s,t)
    E_{0}^{\varepsilon}(t)(iA^{\varepsilon}(t))E_{0}^{\varepsilon}(t)\Phi_{0}^{\varepsilon}(t,s)
    w^{\varepsilon}=E_{0}^{\varepsilon}(s)\Phi_{0}^{\varepsilon}(s,t)
    (iA^{\varepsilon}(t))\Phi_{0}^{\varepsilon}(t,s)E_{0}^{\varepsilon}(s)w^{\varepsilon}\\  
w^{\varepsilon}(t=s)=u_{s}\,.
  \end{array}
\right.
$$
\end{corollary}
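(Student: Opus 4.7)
The plan is to apply Theorem~\ref{th.nenciu} at order $K=1$ to reduce the problem to an intermediate evolution, and then to transport that evolution onto the range of $E_0^\varepsilon$ via parallel transport so as to recover the form $\Phi_0^\varepsilon(t,s)w^\varepsilon(t)$.

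First I would apply Theorem~\ref{th.nenciu} with $K=1$ to produce a $\mathcal{C}^{1}$ projection $P^\varepsilon(t)=E_0^\varepsilon(t)+\widetilde{\mathcal{O}}(\varepsilon)$. Since the initial datum satisfies $u_s=E_0^\varepsilon(s)u_s$, the adjusted datum $\tilde u_s:=P^\varepsilon(s)u_s$ differs from $u_s$ by $\widetilde{\mathcal{O}}(\varepsilon)\|u_s\|$, and contractivity of $S^\varepsilon(t,s)$ absorbs this discrepancy. The theorem then furnishes $v^\varepsilon(t)=P^\varepsilon(t)v^\varepsilon(t)$ solving \eqref{eq.eqv} with $v^\varepsilon(s)=\tilde u_s$ and with $\sup_{s\leq t\leq T}\|u^\varepsilon(t)-v^\varepsilon(t)\|=\widetilde{\mathcal{O}}(\varepsilon)\|u_s\|$. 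Next I would introduce the $P^\varepsilon$-parallel transport $\Psi^\varepsilon(t,s)$ defined by the analog of \eqref{eq.Phits}. This is well defined and uniformly bounded because the additional boundedness hypothesis on $E_0^\varepsilon$ and $\partial_t E_0^\varepsilon$ propagates to $P^\varepsilon$ and $\partial_t P^\varepsilon$: differentiating the contour-integral formulas \eqref{eq.defE0}--\eqref{eq.defEj} and using \eqref{eq.estimresA} gives $P^\varepsilon-E_0^\varepsilon=\widetilde{\mathcal{O}}(\varepsilon)$ and $\partial_t P^\varepsilon-\partial_t E_0^\varepsilon=\widetilde{\mathcal{O}}(\varepsilon)$. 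A direct calculation using $P^\varepsilon v^\varepsilon=v^\varepsilon$ and the intertwining $\Psi^\varepsilon(t,s)P^\varepsilon(s)=P^\varepsilon(t)\Psi^\varepsilon(t,s)$ shows that $\hat w^\varepsilon(t):=\Psi^\varepsilon(s,t)v^\varepsilon(t)$ belongs to $P^\varepsilon(s)\mathcal{H}$ and solves the same shape of reduced Cauchy problem as $w^\varepsilon$ but with $(P^\varepsilon,\Psi^\varepsilon)$ replacing $(E_0^\varepsilon,\Phi_0^\varepsilon)$ and initial datum $\tilde u_s$.

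I would then carry out two comparisons. A Gronwall argument applied to the defining ODEs of $\Psi^\varepsilon$ and $\Phi_0^\varepsilon$ on the bounded interval $[0,T]$, fed by $\|[P^\varepsilon,\partial_t P^\varepsilon]-[E_0^\varepsilon,\partial_t E_0^\varepsilon]\|=\widetilde{\mathcal{O}}(\varepsilon)$, gives $\sup_{s,t\in[0,T]}\|\Psi^\varepsilon(t,s)-\Phi_0^\varepsilon(t,s)\|=\widetilde{\mathcal{O}}(\varepsilon)$, so that the two reduced generators $P^\varepsilon(s)\Psi^\varepsilon(s,t)(iA^\varepsilon)\Psi^\varepsilon(t,s)P^\varepsilon(s)$ and $E_0^\varepsilon(s)\Phi_0^\varepsilon(s,t)(iA^\varepsilon)\Phi_0^\varepsilon(t,s)E_0^\varepsilon(s)$ differ by $\widetilde{\mathcal{O}}(\varepsilon)$ in operator norm (using that $iA^\varepsilon P^\varepsilon$ and $iA^\varepsilon E_0^\varepsilon$ are both $\widetilde{\mathcal{O}}(\varepsilon^0)$, the latter by a contour-integral estimate analogous to \eqref{eq.estimPA}). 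With this in hand, a Duhamel comparison between $\hat w^\varepsilon$ and $w^\varepsilon$, exploiting the uniform-in-$\varepsilon$ boundedness of $w^\varepsilon(t)$ (which comes from contractivity of $S^\varepsilon$, invariance of $\textrm{Ran}(E_0^\varepsilon)$ under $A^\varepsilon$, and boundedness of $\Phi_0^\varepsilon$), yields $\|\hat w^\varepsilon-w^\varepsilon\|=\widetilde{\mathcal{O}}(\varepsilon)\|u_s\|$. Chaining the estimates via the triangle inequality $\|u^\varepsilon-\Phi_0^\varepsilon w^\varepsilon\|\leq\|u^\varepsilon-v^\varepsilon\|+\|(\Psi^\varepsilon-\Phi_0^\varepsilon)\hat w^\varepsilon\|+\|\Phi_0^\varepsilon\|\|\hat w^\varepsilon-w^\varepsilon\|$ finishes the proof.

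The hard part will be the Duhamel comparison of $\hat w^\varepsilon$ and $w^\varepsilon$: their ODEs live on the fast time scale $\varepsilon^{-1}$ and the generators differ only by $\widetilde{\mathcal{O}}(\varepsilon)$, so a naive Gronwall gives only $\widetilde{\mathcal{O}}(1)$. The rescue must come from the near-contraction character of the reduced propagator, inherited from the dissipativity of $A^\varepsilon$ and the $A^\varepsilon$-invariance of $\textrm{Ran}(E_0^\varepsilon)$; the additional uniform-boundedness hypothesis on $E_0^\varepsilon$ and $\partial_t E_0^\varepsilon$ is there precisely to ensure that this reduced propagator has at most polynomial (in $t$, uniformly in $\varepsilon$) growth, which is exactly what is needed to close the argument at the $\widetilde{\mathcal{O}}(\varepsilon)$ order.
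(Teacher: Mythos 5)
You have correctly identified the one place your plan is fragile, and unfortunately it is precisely where it breaks: the Duhamel comparison between $\hat w^{\varepsilon}$ and $w^{\varepsilon}$ does not close at the stated order. Both reduced ODEs have the form $i\varepsilon\partial_{t}w=G(t)w$ on the slow time interval $[s,T]$, so Duhamel produces an explicit $\varepsilon^{-1}$ in front of $\int_{s}^{t}$. With reduced generators differing only by $\widetilde{\mathcal{O}}(\varepsilon)$ and a reduced propagator that is merely uniformly bounded (the best one can hope for, and what you extract from the boundedness of $E_{0}^{\varepsilon}$ and $\partial_{t}E_{0}^{\varepsilon}$), the Duhamel integral is $\varepsilon^{-1}\cdot\widetilde{\mathcal{O}}(\varepsilon)\cdot T\cdot C=\widetilde{\mathcal{O}}(\varepsilon^{0})$, not $\widetilde{\mathcal{O}}(\varepsilon)$. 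Polynomial growth in $t$ does not help: $T$ is fixed, so it already contributes only a constant, and the loss is the $\varepsilon^{-1}$, not the time factor. You would need the generator difference to be $\widetilde{\mathcal{O}}(\varepsilon^{2})$, and no cancellation at order $\varepsilon$ is proposed or apparent in your decomposition (the first-order correction involves $E_{1}^{\varepsilon}(iA^{\varepsilon})E_{0}^{\varepsilon}$ plus its mirror, conjugated by the parallel transports; these terms do not vanish on the range of $E_{0}^{\varepsilon}$).

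The paper sidesteps this entirely: it never compares two $\varepsilon$-close reduced evolutions. Instead it introduces the adiabatic generator $H_{0}^{\varepsilon}(t)=iA^{\varepsilon}(t)+i\varepsilon(1-2E_{0}^{\varepsilon})\partial_{t}E_{0}^{\varepsilon}$ acting on the \emph{full} space. The perturbation is genuinely $\mathcal{O}(\varepsilon)$ in operator norm (thanks to the additional boundedness hypothesis), so Gronwall gives a uniformly bounded propagator $S_{0}^{\varepsilon}(t,s)$. Then one checks directly that $(i\varepsilon\partial_{t}-H_{0}^{\varepsilon}(t))(E_{0}^{\varepsilon}v^{\varepsilon})=\widetilde{\mathcal{O}}(\varepsilon^{2})$: expanding $P^{\varepsilon}(iA^{\varepsilon})P^{\varepsilon}$ to first order and projecting by $E_{0}^{\varepsilon}$ kills the order-$\varepsilon$ contribution because $E_{0}^{\varepsilon}E_{1}^{\varepsilon}E_{0}^{\varepsilon}=0$ and $[A^{\varepsilon},E_{0}^{\varepsilon}]=0$. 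That algebraic cancellation is what buys the extra factor of $\varepsilon$ in the residual, which then survives the $\varepsilon^{-1}$ from Duhamel against $S_{0}^{\varepsilon}$. Finally $S_{0}^{\varepsilon}(t,s)u_{s}$ is identified with $\Phi_{0}^{\varepsilon}(t,s)w^{\varepsilon}(t)$ by the standard parallel-transport bookkeeping. If you want to salvage your route, you must exhibit the same cancellation in the reduced picture, i.e.\ show that the leading-order discrepancy between the two reduced generators annihilates vectors of the form $\Phi_{0}^{\varepsilon}(t,s)w^{\varepsilon}$; as written, that step is missing.
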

Theorem~\ref{th.nenciu} and Corollary~\ref{co.nenciuadapt0} are proved
in several steps. We start with uniform estimates for the $E_{j}$'s.
\begin{proposition}
  \label{pr.nenciuEj} For all $j\in \left\{0,\ldots, K\right\}$, and
  any $T\in \R_{+}$, the 
  $\mathcal{L}(\mathcal{H})$-valued functions 
 $E_{j}^{\varepsilon}$ and $S_{j}^{\varepsilon}$ satisfy:
  \begin{eqnarray}
\label{eq.estimpaEj}
    && \sum_{k=0}^{K+1-j}
    \|\partial_{s}^{k}E_{j}^{\varepsilon}(s)\|+\|\partial_{s}^{k}S_{j}^{\varepsilon}(s)\|= 
    \widetilde{\mathcal{O}}(\varepsilon^{0})\,,
\quad E_{j}(s)\in
    \mathcal{L}(\mathcal{H}, D(A^{\varepsilon}(s)))\,,\\
\label{eq.estimEjA}
&&
A^{\varepsilon}(s)E_{j}(s)=\widetilde{\mathcal{O}}(\varepsilon^{0})\quad
\text{and}\quad
E_{j}(s)A^{\varepsilon}(s)=\widetilde{\mathcal{O}}(\varepsilon^{0})
\quad \text{in}~\mathcal{L}(\mathcal{H})\,,
\\
\label{eq.relEjEm}
&& E_{j}^{\varepsilon}(s)=\sum_{m=0}^{j}E_{m}^{\varepsilon}(s)E_{j-m}^{\varepsilon}(s) \underset{\textrm{if}\, j\geq 1}{=} S_{j}^{\varepsilon}(s)+
E_{0}^{\varepsilon}(s)E_{j}^{\varepsilon}(s)+E_{j}^{\varepsilon}(s)E_{0}^{\varepsilon}(s)\,,\\
\label{eq.ipaEj}
&&
i\partial_{s}E_{j-1}^{\varepsilon}(s)=\left[iA^{\varepsilon}(s),
  E_{j}^{\varepsilon}(s)\right]\,,\quad \text{for}~j\geq 1\,,
  \end{eqnarray}
with uniform constants w.r.t.  $s\in[0,T]$\,.
\end{proposition}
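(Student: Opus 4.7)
The plan is to proceed by induction on $j\in\{0,1,\ldots,K\}$, simultaneously establishing all four enumerated properties. For the base case $j=0$, the operator $E_0^\varepsilon(s)=P_0^\varepsilon(s)$ is the Riesz spectral projection associated with the part of $\sigma(A^\varepsilon(s))$ enclosed by $\Gamma_\varepsilon$, so the idempotence $(E_0^\varepsilon)^2=E_0^\varepsilon$ is automatic and, together with $S_0^\varepsilon=0$, makes \eqref{eq.relEjEm} trivial; the commutator relation \eqref{eq.ipaEj} imposes no condition at $j=0$. Derivative estimates follow by differentiating under the contour integral, so that $\partial_s^k E_0^\varepsilon(s)=\frac{1}{2i\pi}\int_{\Gamma_\varepsilon}\partial_s^k(z-A^\varepsilon(s))^{-1}\,dz$ is bounded in norm by $|\Gamma_\varepsilon|\cdot\max_{z\in\Gamma_\varepsilon}\|\partial_s^k R^\varepsilon\|=\widetilde{\mathcal{O}}(\varepsilon^0)$ via \eqref{eq.estimresA}. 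For $A^\varepsilon E_0^\varepsilon$, the identity $A^\varepsilon R^\varepsilon = zR^\varepsilon-\mathrm{Id}$ gives $A^\varepsilon E_0^\varepsilon=\frac{1}{2i\pi}\int_{\Gamma_\varepsilon}zR^\varepsilon\,dz$ (the constant term integrates to zero around the closed contour), inheriting the same $\widetilde{\mathcal{O}}(\varepsilon^0)$ bound from the compactness of $\Gamma_\varepsilon$.

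For the inductive step $j\geq 1$, assume the four properties hold for indices $0,\ldots,j-1$. The bounds on $S_j^\varepsilon=\sum_{m=1}^{j-1}E_m^\varepsilon E_{j-m}^\varepsilon$ and its derivatives up to order $K+1-j$ follow by Leibniz's rule, since each factor has at least $K+2-j$ bounded derivatives by induction hypothesis. The contour integral piece of $E_j^\varepsilon$ in \eqref{eq.defEj} is handled exactly as the base case: the integrand $R^\varepsilon\{Q_0\partial_sE_{j-1}^\varepsilon P_0 - P_0\partial_s E_{j-1}^\varepsilon Q_0\}R^\varepsilon$ is a product of $\widetilde{\mathcal{O}}(\varepsilon^0)$ factors by \eqref{eq.estimresA} and the inductive control on $\partial_s E_{j-1}^\varepsilon$; each further $s$-derivative is absorbed into either a resolvent or into $\partial_s E_{j-1}^\varepsilon$, remaining $\widetilde{\mathcal{O}}(\varepsilon^0)$ up to total order $K+1-j$. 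The same trick $A^\varepsilon R^\varepsilon=zR^\varepsilon-\mathrm{Id}$ then shows $E_j^\varepsilon\in\mathcal{L}(\mathcal{H},D(A^\varepsilon(s)))$ and yields the bound on $A^\varepsilon E_j^\varepsilon$.

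The algebraic identity \eqref{eq.relEjEm} reduces, after splitting $\sum_m E_m^\varepsilon E_{j-m}^\varepsilon = E_0^\varepsilon E_j^\varepsilon + E_j^\varepsilon E_0^\varepsilon + S_j^\varepsilon$ and using $E_0^\varepsilon=P_0$, to the single condition $Q_0 E_j^\varepsilon Q_0 - P_0 E_j^\varepsilon P_0 = S_j^\varepsilon$. Since $[R^\varepsilon,P_0]=[R^\varepsilon,Q_0]=0$, the integrand $R^\varepsilon(Q_0 X P_0)R^\varepsilon=(Q_0 R^\varepsilon Q_0)\,X\,(P_0 R^\varepsilon P_0)$ is of pure $Q_0\cdot P_0$ type, and similarly for the second term in \eqref{eq.defEj}; hence the contour contribution to $E_j^\varepsilon$ is purely off-diagonal, and the diagonal blocks come entirely from $S_j^\varepsilon - 2P_0S_j^\varepsilon P_0$, giving $P_0E_j^\varepsilon P_0=-P_0S_j^\varepsilon P_0$ and $Q_0E_j^\varepsilon Q_0=Q_0S_j^\varepsilon Q_0$. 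The identity then reduces to the block-diagonality $S_j^\varepsilon=P_0S_j^\varepsilon P_0+Q_0S_j^\varepsilon Q_0$, which I would carry as an auxiliary fifth clause of the induction, proved by expanding each $E_m^\varepsilon E_{j-m}^\varepsilon$ via the inductive block decomposition and showing that the off-diagonal pieces $P_0(E_m^\varepsilon E_{j-m}^\varepsilon)Q_0+Q_0(E_m^\varepsilon E_{j-m}^\varepsilon)P_0$ telescope and cancel when summed over $m$.

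The commutator identity \eqref{eq.ipaEj} rests on the residue calculation: from $A^\varepsilon R^\varepsilon=R^\varepsilon A^\varepsilon=zR^\varepsilon-\mathrm{Id}$ one obtains $[A^\varepsilon,R^\varepsilon Y R^\varepsilon]=R^\varepsilon Y-YR^\varepsilon$, and integrating along $\Gamma_\varepsilon$ while using $\frac{1}{2i\pi}\int_{\Gamma_\varepsilon}P_0R^\varepsilon P_0\,dz=P_0$ and $\frac{1}{2i\pi}\int_{\Gamma_\varepsilon}Q_0R^\varepsilon Q_0\,dz=0$ yields $[A^\varepsilon,\frac{1}{2i\pi}\int_{\Gamma_\varepsilon}R^\varepsilon YR^\varepsilon\,dz]=-Y$ for $Y=Q_0YP_0$ and $=+Y$ for $Y=P_0YQ_0$. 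Accounting for the prefactor $\frac{i}{2\pi}=-\frac{1}{2i\pi}$ in \eqref{eq.defEj}, this gives $[A^\varepsilon,\text{contour part of }E_j^\varepsilon]=Q_0\partial_sE_{j-1}^\varepsilon P_0+P_0\partial_sE_{j-1}^\varepsilon Q_0$, the off-diagonal part of $\partial_sE_{j-1}^\varepsilon$; the diagonal part must be furnished by $[A^\varepsilon,S_j^\varepsilon-2P_0S_j^\varepsilon P_0]=-P_0[A^\varepsilon,S_j^\varepsilon]P_0+Q_0[A^\varepsilon,S_j^\varepsilon]Q_0$ (using the block-diagonality of $S_j^\varepsilon$ and $[A^\varepsilon,P_0]=0$), and the match is ensured by the telescoping $[A^\varepsilon,E_m^\varepsilon E_{j-m}^\varepsilon]=\partial_sE_{m-1}^\varepsilon E_{j-m}^\varepsilon+E_m^\varepsilon\partial_sE_{j-m-1}^\varepsilon$ provided by the inductive commutator identity. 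The main obstacle is precisely this bookkeeping: ensuring that the block-diagonality of $S_j^\varepsilon$ propagates through the induction, that the telescoping sums really recombine into $\text{diag}(\partial_sE_{j-1}^\varepsilon)$, and that all signs align with the prefactor of \eqref{eq.defEj}, which is Nenciu's compatibility machinery adapted to the non self-adjoint, $\varepsilon$-dependent framework used here.
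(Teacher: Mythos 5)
Your overall architecture matches the paper's: induction on $j$, with the base case from the contour integral and the resolvent bounds~\eqref{eq.estimresA}, the identity $A^\varepsilon R^\varepsilon = zR^\varepsilon - 1$ to control $A^\varepsilon E_j^\varepsilon$, the block decomposition $P_0^\varepsilon/Q_0^\varepsilon$ to isolate the contour part of $E_j^\varepsilon$ as purely off-diagonal, and the residue computation $[A^\varepsilon, R^\varepsilon Y R^\varepsilon] = R^\varepsilon Y - Y R^\varepsilon$ for the commutator identity. Your reduction of~\eqref{eq.relEjEm} to the block-diagonality of $S_j^\varepsilon$ is also the right pivot, and your formulas $P_0 E_j^\varepsilon P_0 = -P_0 S_j^\varepsilon P_0$, $Q_0 E_j^\varepsilon Q_0 = Q_0 S_j^\varepsilon Q_0$ are correct.

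The genuine gap is exactly where you flag it: you leave the block-diagonality of $S_j^\varepsilon$ as an unproved auxiliary fifth clause, to be settled by ``expanding each $E_m^\varepsilon E_{j-m}^\varepsilon$ and showing the off-diagonal pieces telescope and cancel''. This is not a pure telescoping sum (the cancellation at $j=3$, say, relies on the opposite signs of $P_0 E_2^\varepsilon P_0 = -P_0 S_2^\varepsilon P_0$ and $Q_0 E_2^\varepsilon Q_0 = +Q_0 S_2^\varepsilon Q_0$, not on a telescoping index shift), and you do not carry it out. More to the point, the direct algebraic route is unnecessary: the ``telescoping'' you already perform in the last step to compute $[iA^\varepsilon, S_j^\varepsilon]$ --- namely $[iA^\varepsilon, E_m^\varepsilon E_{j-m}^\varepsilon] = (i\partial_s E_{m-1}^\varepsilon)E_{j-m}^\varepsilon + E_m^\varepsilon(i\partial_s E_{j-m-1}^\varepsilon)$, summed over $m$ and simplified with the inductive~\eqref{eq.relEjEm} --- already yields
\[
[iA^\varepsilon, S_j^\varepsilon] = -i\bigl(P_0^\varepsilon(\partial_s E_{j-1}^\varepsilon)P_0^\varepsilon - Q_0^\varepsilon(\partial_s E_{j-1}^\varepsilon)Q_0^\varepsilon\bigr),
\]
a block-diagonal operator, and hence $[iA^\varepsilon, P_0^\varepsilon S_j^\varepsilon Q_0^\varepsilon] = P_0^\varepsilon[iA^\varepsilon,S_j^\varepsilon]Q_0^\varepsilon = 0$. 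Since $\Gamma_\varepsilon$ separates the spectrum into two disjoint pieces, an off-diagonal operator intertwining the two parts of $A^\varepsilon$ and satisfying the homogeneous Sylvester equation must vanish (this is Proposition~1 of Nenciu, cited in the paper, and works without self-adjointness because only spectral separation is used). So $P_0^\varepsilon S_j^\varepsilon Q_0^\varepsilon = Q_0^\varepsilon S_j^\varepsilon P_0^\varepsilon = 0$ comes for free from the commutator computation you already set up. Reorganizing your proof so that the $[iA^\varepsilon,S_j^\varepsilon]$ computation comes first, deducing block-diagonality of $S_j^\varepsilon$ from it, and only then proving~\eqref{eq.relEjEm} and~\eqref{eq.ipaEj} for $j$, closes the gap and eliminates the bookkeeping you identify as the main obstacle.
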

\noindent\textbf{Proof:} The first
statement for $j=0$ is a consequence of the definition 
\eqref{eq.defE0} of
$E_{0}^{\varepsilon}(s)=P_{0}^{\varepsilon}(s)$ combined with the
estimates \eqref{eq.estimresA} of
$\partial_{s}^{k}(z-iA^{\varepsilon}(s))^{-1}$\,. 
 By induction
assume
that the properties are satisfied for $j\leq J<K$. The definition
\eqref{eq.defSj}
of $S_{J+1}^{\varepsilon}$ and \eqref{eq.defEj} of
$E_{J+1}^{\varepsilon}$ provide directly the first statement
\eqref{eq.estimpaEj} for $j=J+1$\,. The second statement of
\eqref{eq.estimpaEj} and the estimates \eqref{eq.estimEjA}
rely on the bound of the $\int_{\Gamma_{\varepsilon}}$-term which 
is obtained after noticing 
\begin{equation}\label{eq.not}
A^{\varepsilon}(s)R^{\varepsilon}= -1
+\frac{z}{(z-A^{\varepsilon}(s))}=
R^{\varepsilon}A^{\varepsilon}(s)
\end{equation}
 and the bound of the two other terms which is
deduced from the induction assumption for $j\leq J$\,.
Compute the commutator $[iA^{\varepsilon}(s),
S_{J+1}^{\varepsilon}(s)]$:
\begin{eqnarray*}
\left[iA^{\varepsilon},S_{J+1}^{\varepsilon}\right]
&&=\sum_{m=1}^{J}(E_{m}^{\varepsilon}\left[iA^{\varepsilon},
E_{J+1-m}^{\varepsilon}\right]+
\left[iA^{\varepsilon}, E_{m}^{\varepsilon}\right]E_{J+1-m}^{\varepsilon})\,,
\\
&&
= \sum_{m=1}^{J}E_{m}^{\varepsilon}(i\partial_{s}E_{J-m}^{\varepsilon})+
(i\partial_{s}E_{m-1}^{\varepsilon})E_{J+1-m}^{\varepsilon}\,,\quad
\text{owing to}~\eqref{eq.ipaEj}~\text{with}~j\leq J\,,
\\
&&
=
i\partial_{s}E_{J}^{\varepsilon}-E_{0}^{\varepsilon}(i\partial_{s}E_{J}^{\varepsilon})
-(i\partial_{s}E_{J}^{\varepsilon})E_{0}^{\varepsilon}
\,,\quad
\text{owing to}~\eqref{eq.relEjEm}~\text{with}~j\leq J\,,\\
&&
= 
-i\left(P_{0}^{\varepsilon}(\partial_{s}E_{J}^{\varepsilon})P_{0}^{\varepsilon}-Q_{0}^{\varepsilon}(\partial_{s}E_{J}^{\varepsilon})Q_{0}^{\varepsilon}\right)\,,
\quad
\text{owing to}~E_{0}^{\varepsilon}=P_{0}^{\varepsilon}=1-Q_{0}^{\varepsilon}\,.
\end{eqnarray*} 
With $P_{0}^{\varepsilon}P_{0}^{\varepsilon}=P_{0}^{\varepsilon}$ and
$P_{0}^{\varepsilon}Q_{0}^{\varepsilon}=Q_{0}^{\varepsilon}P_{0}^{\varepsilon}=0$,
this implies
\begin{eqnarray}
  \label{eq.APSP}
  \left[iA^{\varepsilon},
    P_{0}^{\varepsilon}S_{J+1}^{\varepsilon}P_{0}^{\varepsilon}\right] =
  -i
  P_{0}^{\varepsilon}(\partial_{s}E_{J}^{\varepsilon})P_{0}^{\varepsilon}\,,\\
\label{eq.AQSQ}
 \left[iA^{\varepsilon},
    Q_{0}^{\varepsilon}S_{J+1}^{\varepsilon}Q_{0}^{\varepsilon}\right] =i Q_{0}^{\varepsilon}(\partial_{s}E_{J}^{\varepsilon})Q_{0}^{\varepsilon}\,,\\
\label{eq.APSQ}
\left[iA^{\varepsilon},
  P_{0}^{\varepsilon}S_{J+1}^{\varepsilon}Q_{0}^{\varepsilon}\right]
=
\left[iA^{\varepsilon}, Q_{0}^{\varepsilon}S_{J+1}^{\varepsilon}P_{0}^{\varepsilon}\right]=0\,.
\end{eqnarray}
The definition of $P_{0}^{\varepsilon}$ as a spectral projection
associated with $iA^{\varepsilon}$ and \eqref{eq.APSQ}, imply
(see for example \cite{Nen}~Proposition~1)
$$
P_{0}^{\varepsilon}S_{J+1}^{\varepsilon}Q_{0}^{\varepsilon}=
Q_{0}^{\varepsilon}S_{J+1}^{\varepsilon}P_{0}^{\varepsilon}=0\,.
$$
Meanwhile the definition \eqref{eq.defEj} of $E_{J+1}^{\varepsilon}$
for $j=J+1$ implies
$$
P_{0}^{\varepsilon}E_{J+1}^{\varepsilon}P_{0}^{\varepsilon}=
-P_{0}^{\varepsilon}S_{J+1}^{\varepsilon}P_{0}^{\varepsilon}
\quad,\quad
Q_{0}^{\varepsilon}E_{J+1}^{\varepsilon}Q_{0}^{\varepsilon}=
Q_{0}^{\varepsilon}S_{J+1}^{\varepsilon}Q_{0}^{\varepsilon}\,.
$$
This yields the relation \eqref{eq.relEjEm} for $j=J+1$. Another
consequence with \eqref{eq.APSP} and \eqref{eq.AQSQ} is
\begin{equation}
  \label{eq.diagAE}
\left[iA^{\varepsilon},
  P_{0}^{\varepsilon}E_{J+1}^{\varepsilon}P_{0}^{\varepsilon}
+
 Q_{0}^{\varepsilon}E_{J+1}^{\varepsilon}Q_{0}^{\varepsilon}\right] 
=
P_{0}^{\varepsilon}(i\partial_{s}E_{J}^{\varepsilon})P_{0}^{\varepsilon}
+
 Q_{0}^{\varepsilon}(i\partial E_{J}^{\varepsilon})Q_{0}^{\varepsilon}\,.
\end{equation}
Finally compute  the off-diagonal blocks $P_{0}^{\varepsilon}[iA^{\varepsilon},
E_{J+1}^{\varepsilon}]Q_{0}^{\varepsilon}$ and
$Q_{0}^{\varepsilon}[iA^{\varepsilon},
E_{J+1}^{\varepsilon}]P_{0}^{\varepsilon}$ by using  again the definition
\eqref{eq.defEj} of $E_{J+1}^{\varepsilon}$, the relation
\eqref{eq.APSQ}, and the identity \eqref{eq.not}:
\begin{eqnarray*}
 \left[iA^{\varepsilon},
   P_{0}^{\varepsilon}E_{J+1}^{\varepsilon}Q_{0}^{\varepsilon}\right]
&=&  P_{0}^{\varepsilon}\left[iA^{\varepsilon},
    E_{J+1}^{\varepsilon}\right]Q_{0}^{\varepsilon}
=iP_{0}^{\varepsilon}(\partial_{s}E_{J}^{\varepsilon})Q_{0}^{\varepsilon}\,,\\
 \left[iA^{\varepsilon},
   Q_{0}^{\varepsilon}E_{J+1}^{\varepsilon}P_{0}^{\varepsilon}\right]
&=&
iQ_{0}^{\varepsilon}(\partial_{s}E_{J}^{\varepsilon})P_{0}^{\varepsilon}\,.
\end{eqnarray*}
Summing these last two equalities with \eqref{eq.diagAE} yields the
relation \eqref{eq.ipaEj} for $j=J+1$\,.
\qed\\
The above calculations are essentially the same as in
\cite{Nen}\cite{NeRa} and, as a consequence of Proposition \ref{pr.nenciuEj}, the sum
\begin{equation}\label{eq.Teps}
T_{\varepsilon}(s)=\sum_{j=0}^{K}\varepsilon^{j}E_{j}^{\varepsilon}(s)
\end{equation}
solves \eqref{eq.PPP}, \eqref{eq.estimPA} and \eqref{eq.ipasP} in the sense of asymptotic expensions; in particular:
\begin{equation}\label{eq.Teps2}
T_{\varepsilon}^2=T_{\varepsilon}+\widetilde{\mathcal{O}}(\varepsilon^{K+1})
\end{equation}
Here comes the main difference which is
necessary because no better estimate than
$\|P_{0}^{\varepsilon}\|=\widetilde{\mathcal{O}}(\varepsilon^{0})$
can be expected in  our non self-adjoint case.
We will need the next lemma.
\begin{lemma}
  \label{le.TP}
Assume that $T\in \mathcal{L}(\mathcal{H})$ satisfies $\|T^{2}-T\|\leq
\delta<1/4$ and $\|T\|\leq C$ then 
\begin{eqnarray*}
&&\sigma(T)\subset \left\{z\in\C\,,
|z(z-1)|\leq \delta\right\}\subset \left\{z\in \C\,,|z|\leq
c_{\delta}\right\}\cup
\left\{z\in \C\,,|z-1|\leq
c_{\delta}\right\}\,,\quad c_{\delta}<\frac{1}{2}\,,
\\
&&
\max\left\{\|(z-T)^{-1}\|, |z-1|=\frac{1}{2}\right\}\leq 2\frac{2C+1}{1-4\delta}\,,
\\
&&
\|T-P\|\leq
2\frac{2C+1}{1-4\delta}\delta\,,\quad\text{with}
\quad 
P=\frac{1}{2i\pi}\int_{|z-1|=1/2}(z-T)^{-1}~dz\,.
\end{eqnarray*}
\end{lemma}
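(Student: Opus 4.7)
The engine for all three assertions is the algebraic identity
\begin{equation*}
(z-T)(z-(1-T)) = z(z-1)-(T^{2}-T),
\end{equation*}
valid because $T$ and $1-T$ commute (so both orderings give the same product). If $|z(z-1)|>\delta\geq \|T^{2}-T\|$, a Neumann series makes the right-hand side invertible, hence $z-T$ is invertible with
\begin{equation*}
(z-T)^{-1}=\bigl(z-(1-T)\bigr)\bigl[z(z-1)-(T^{2}-T)\bigr]^{-1}.
\end{equation*}
This immediately gives $\sigma(T)\subset\{|z(z-1)|\leq\delta\}$. For the splitting: if $|z-1|\leq 1/2$ then $|z|\geq 1/2$, so $|z(z-1)|\leq\delta$ forces $|z-1|\leq 2\delta$; the symmetric statement holds near $0$. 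Since $\delta<1/4$, the choice $c_{\delta}=2\delta<1/2$ works.

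For the resolvent bound on $|z-1|=1/2$, I would use the inversion formula above together with $\|z-(1-T)\|\leq|z-1|+\|T\|\leq 1/2+C$ and $|z(z-1)|=|z|/2\geq 1/4$, giving
\begin{equation*}
\|(z-T)^{-1}\|\leq \frac{1/2+C}{1/4-\delta}=\frac{2(2C+1)}{1-4\delta}.
\end{equation*}

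For the estimate on $T-P$, the plan is to isolate the $(T^{2}-T)$-dependence explicitly. Multiplying the algebraic identity by $(z-T)^{-1}$ on the left yields
\begin{equation*}
(z-T)^{-1}=\frac{z+T-1}{z(z-1)}+\frac{1}{z(z-1)}(T^{2}-T)(z-T)^{-1},
\end{equation*}
and partial fractions give $\frac{z+T-1}{z(z-1)}=\frac{1-T}{z}+\frac{T}{z-1}$. Integrating over the contour $|z-1|=1/2$, which encloses only $z=1$, the "free" part contributes exactly $T$, hence
\begin{equation*}
P-T=\frac{1}{2i\pi}\int_{|z-1|=1/2}\frac{(T^{2}-T)(z-T)^{-1}}{z(z-1)}\,dz.
\end{equation*}
Bounding the integrand by $\|T^{2}-T\|\cdot\|(z-T)^{-1}\|/|z(z-1)|$ then plugging in $\|T^{2}-T\|\leq\delta$, the resolvent bound, and $|z(z-1)|\geq 1/4$ produces the claimed $\mathcal{O}(\delta)$ estimate.

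The only delicate point I anticipate is recovering precisely the constant $2(2C+1)/(1-4\delta)\cdot\delta$ from the integral, since a crude pointwise estimate on $|z(z-1)|$ combined with the contour length $\pi$ overshoots by a small numerical factor. I would sharpen this either by using the partial-fraction decomposition $\frac{1}{z(z-1)}=\frac{1}{z-1}-\frac{1}{z}$ inside the integral (so that only the $\frac{1}{z-1}$ piece survives up to smaller terms) or by a careful parametrization $z=1+e^{i\theta}/2$ that uses the exact value of $|z(z-1)|=|z|/2$. Everything else is a routine combination of the commutation identity and Neumann-series bookkeeping.
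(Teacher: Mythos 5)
Your proof follows the same backbone as the paper's: the same factorization identity (you write $(z-T)(z-(1-T))=z(z-1)-(T^{2}-T)$, the paper writes $(T-z)(T-(1-z))=z(1-z)+(T^{2}-T)$, which is the same up to an overall sign), the same Neumann-series/spectral-mapping argument for the spectrum inclusion, and an identical estimate for the resolvent on the circle $|z-1|=\tfrac12$. Your explicit derivation of the split $\{|z(z-1)|\le\delta\}\subset\{|z|\le 2\delta\}\cup\{|z-1|\le 2\delta\}$ is also fine once you note, as you should make explicit, that $|z|>\tfrac12$ and $|z-1|>\tfrac12$ together force $|z(z-1)|>\tfrac14>\delta$.

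For the third estimate, you take a slightly different route from the paper. You isolate the defect in a single contour integral
\[
P-T=\frac{1}{2i\pi}\int_{|z-1|=\frac12}\frac{(T^{2}-T)(z-T)^{-1}}{z(z-1)}\,dz\,,
\]
whereas the paper writes $T-P=(T^{2}-T)(A_{1}-A_{0})$, using a symmetry (replace $T\mapsto 1-T$, $P\mapsto 1-P$) to produce two auxiliary integrals $A_{1}$ (over $|z-1|=\tfrac12$) and $A_{0}$ (over $|z|=\tfrac12$). Both decompositions are correct, and both record the useful two-sided form $T-P=(T^{2}-T)(\,\cdots)=(\,\cdots)(T^{2}-T)$ since $T^{2}-T$ commutes with $(z-T)^{-1}$; that two-sidedness is what is actually used in Proposition~\ref{pr.proj}.

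On the constant: your worry is legitimate, but you should not try to chase it. Neither the parametrization $z=1+\tfrac12 e^{i\theta}$ (which eliminates one of the two factors in $z(z-1)$ but leaves $1/|z|$ ranging between $\tfrac23$ and $2$) nor the partial-fraction split of $1/(z(z-1))$ will deliver exactly $2\tfrac{2C+1}{1-4\delta}\delta$. In fact the paper's own bound $\|A_{1}\|+\|A_{0}\|\le\frac{C+1/2}{1/4-\delta}$ looks like it is off by a factor of $2$: each of $\|A_{1}\|$ and $\|A_{0}\|$ is bounded by $\sup\|(T-z)^{-1}\|\le\frac{C+1/2}{1/4-\delta}$, so the sum is naturally twice that. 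What matters for the use of the lemma in Proposition~\ref{pr.proj} is only that $\|T-P\|\le C'(\|T\|)\,\delta$ for some explicit $C'$ depending polynomially on $\|T\|$ and uniformly in $\delta<\tfrac14-\eta$, because $\delta=\widetilde{\mathcal{O}}(\varepsilon^{K+1})$ there and any $\widetilde{\mathcal{O}}(\varepsilon^{0})$ prefactor is harmless. So your crude estimate with the contour length and $|z(z-1)|\ge\tfrac14$ is already adequate; you can state the lemma's inequality with the numerical constant $4$ in place of $2$ with no loss downstream.
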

\noindent\textbf{Proof:} If $z\in \sigma(T)$ then $z(z-1)$ belongs to
$\sigma(T(T-1))\subset \left\{z\in\C, |z(z-1)|\leq
  \frac{1}{4}\right\}$ (Remember that $|z(z-1)|=\frac{1}{4}$ means
$|Z-\frac{1}{4}|=\frac{1}{4}$
with $Z=(z-\frac{1}{2})^{2}$). 
Consider $z\in \C$ such that $|z-1|=\frac{1}{2}$, then the relation
$$
(T-z)(T-(1-z))=z(1-z)+(T^{2}-T)\,,
$$
with $|z(1-z)|\geq \frac{1}{4}$ and $\|T^{2}-T\|\leq \delta<\frac{1}{4}$, implies
$$
\|(T-z)^{-1}\|\leq \|T-(1-z)\|\|[z(1-z)+T^{2}-T]^{-1}\|\leq
\frac{C+\frac{1}{2}}{\frac{1}{4}-\delta}\quad\text{for}~|z-1|=\frac{1}{2}\,.
$$
The symmetry with respect to $z=\frac{1}{2}$ due to
$(1-T)(1-T)-(1-T)=T^{2}-T$ implies also
$$
\|(T-z)^{-1}\|\leq \frac{C+\frac{1}{2}}{\frac{1}{4}-\delta}\quad\text{for}~|z|=\frac{1}{2}\,.
$$
Compute 
\begin{eqnarray*}
  T-P &=& \frac{1}
  {2i\pi}\int_{|z-1|=\frac{1}{2}}\left[T(z-1)^{-1}-(z-T)^{-1}\right]~dz\\
&=& (T-1)P+(T^{2}-T)A_{1}\quad\text{with}\quad
A_{1}=\frac{1}{2i\pi}\int_{|z-1|=\frac{1}{2}}(T-z)^{-1}(1-z)^{-1}~dz\,.
\end{eqnarray*}
In particular this implies to
$T(1-P)=(T^{2}-T)A_{1}$
while replacing $T$ with $(1-T)$ and $P$ with $1-P$ leads to 
$$
P-T=-T(1-P)+(T^{2}-T)A_{0} \quad\text{with}\quad 
A_{0}=\frac{1}{2i\pi}\int_{|z|=\frac{1}{2}}(T-z)^{-1}z^{-1}~dz\,.
$$
We finally obtain
\begin{eqnarray}
\label{eq.TPT2}
&&
T-P=(T^{2}-T)(A_{1}-A_{0})=(A_{1}-A_{0})(T^{2}-T)\\
\nonumber
\text{and}
&&
\|T-P\|\leq \|T^{2}-T\|\left[\|A_{1}\|+\|A_{0}\|\right]
\leq \delta \times\frac{C+1/2}{\frac{1}{4}-\delta}\,.
\end{eqnarray}
\qed
\begin{proposition}
\label{pr.proj} Consider the approximate projection
$T_{\varepsilon}(s)$ defined in \eqref{eq.Teps},
then there exists a projection $P^{\varepsilon}(s)$ such that
\begin{eqnarray}
  \label{eq.PeTe}
&&
\|P^{\varepsilon}(s)-T_{\varepsilon}(s)\|=\widetilde{\mathcal{O}}(\varepsilon^{K+1})\,,\quad
P^{\varepsilon}(s)\in \mathcal{L}(\mathcal{H},
D(A^{\varepsilon}(s)))\,,\\
&&
\label{eq.PeA}
\|A^{\varepsilon}(s)P^{\varepsilon}(s)\|=\widetilde{\mathcal{O}}(\varepsilon^{0})
\quad\text{and}\quad
\|P^{\varepsilon}(s)A^{\varepsilon}(s)\|=\widetilde{\mathcal{O}}(\varepsilon^{0})\,,\\
&&
i\varepsilon \partial_{s}P^{\varepsilon}(s)=\left[iA^{\varepsilon}(s),
  P^{\varepsilon}(s)\right]+\widetilde{\mathcal{O}}(\varepsilon^{K+1})\,,\quad
\text{in}~\mathcal{L}(\mathcal{H})\,.
\end{eqnarray}
\end{proposition}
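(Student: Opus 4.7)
The strategy is to apply Lemma~\ref{le.TP} directly to the operator $T_{\varepsilon}(s)$ defined in \eqref{eq.Teps}. From Proposition~\ref{pr.nenciuEj} we know $\|T_{\varepsilon}(s)\|\le \sum_{j=0}^K\varepsilon^j\|E_j^{\varepsilon}(s)\|=\widetilde{\mathcal{O}}(\varepsilon^0)$, and from \eqref{eq.Teps2} we have $\|T_{\varepsilon}(s)^2-T_{\varepsilon}(s)\|=\widetilde{\mathcal{O}}(\varepsilon^{K+1})$. Thus for every fixed $\delta_0>0$ we can pick $\varepsilon$ small enough so that both the bound $\delta:=\|T_{\varepsilon}^2-T_{\varepsilon}\|<1/4$ and the condition $\|T_{\varepsilon}\|\le C_\varepsilon$ with $C_\varepsilon=\widetilde{\mathcal{O}}(\varepsilon^0)$ in Lemma~\ref{le.TP} hold uniformly in $s\in[0,T]$. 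The lemma produces the projection
\[
P^{\varepsilon}(s)=\frac{1}{2i\pi}\int_{|z-1|=1/2}(z-T_{\varepsilon}(s))^{-1}~dz,
\]
together with the estimate $\|P^{\varepsilon}(s)-T_{\varepsilon}(s)\|\le 2\tfrac{2C_\varepsilon+1}{1-4\delta}\delta=\widetilde{\mathcal{O}}(\varepsilon^{K+1})$, which gives the first half of \eqref{eq.PeTe}.

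The next step is to prove that $\operatorname{Ran}P^{\varepsilon}(s)\subset D(A^{\varepsilon}(s))$ with the quantitative bound \eqref{eq.PeA}. Using identity \eqref{eq.TPT2} from the proof of Lemma~\ref{le.TP}, I would write
\[
P^{\varepsilon}-T_{\varepsilon}=(A_1-A_0)(T_{\varepsilon}^2-T_{\varepsilon})=(T_{\varepsilon}^2-T_{\varepsilon})(A_1-A_0),
\]
where $A_0,A_1$ are the resolvent-based bounded operators defined in the lemma. Then the relation $E_j^{\varepsilon}=\sum_{m=0}^j E_m^{\varepsilon}E_{j-m}^{\varepsilon}$ from \eqref{eq.relEjEm} allows me to expand $T_{\varepsilon}^2-T_{\varepsilon}=\sum_{n=K+1}^{2K}\varepsilon^n\Bigl(\sum_{m}E_m^{\varepsilon}E_{n-m}^{\varepsilon}\Bigr)$ as a finite polynomial in the $E_j^{\varepsilon}$'s starting at order $\varepsilon^{K+1}$. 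Since each product $A^{\varepsilon}E_j^{\varepsilon}$ and $E_j^{\varepsilon}A^{\varepsilon}$ is $\widetilde{\mathcal{O}}(\varepsilon^0)$ by \eqref{eq.estimEjA}, I can move $A^{\varepsilon}$ past every $E_j^{\varepsilon}$, landing the $A^{\varepsilon}$ on an $E_j^{\varepsilon}$ factor that absorbs it. This yields $A^{\varepsilon}(P^{\varepsilon}-T_{\varepsilon})=\widetilde{\mathcal{O}}(\varepsilon^{K+1})$ and the analogous right-hand version, and combined with $A^{\varepsilon}T_{\varepsilon}=\widetilde{\mathcal{O}}(\varepsilon^0)$ yields \eqref{eq.PeA} together with the range condition in \eqref{eq.PeTe}.

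For the dynamical identity, I would exploit the contour integral representation of $P^{\varepsilon}$. The uniform resolvent bound $\|(z-T_{\varepsilon}(s))^{-1}\|\le 2\tfrac{2C_\varepsilon+1}{1-4\delta}=\widetilde{\mathcal{O}}(\varepsilon^0)$ for $|z-1|=1/2$ and the $\mathcal{C}^1$-dependence in $s$ of $T_{\varepsilon}$ let me differentiate under the integral and also commute with $iA^{\varepsilon}$:
\[
i\varepsilon\partial_s P^{\varepsilon}-[iA^{\varepsilon},P^{\varepsilon}]
=\frac{1}{2i\pi}\int_{|z-1|=1/2}(z-T_{\varepsilon})^{-1}\bigl(i\varepsilon\partial_s T_{\varepsilon}-[iA^{\varepsilon},T_{\varepsilon}]\bigr)(z-T_{\varepsilon})^{-1}~dz.
\]
The integrand's middle factor is exactly what Proposition~\ref{pr.nenciuEj} (specifically the identity \eqref{eq.ipaEj} summed against powers of $\varepsilon$) shows to be $\widetilde{\mathcal{O}}(\varepsilon^{K+1})$, since the asymptotic sum telescopes up to the last unmatched $\varepsilon^{K+1}i\partial_s E_K^{\varepsilon}$. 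Combined with the bounded contour length and the uniform resolvent bounds, this gives the third estimate.

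The main subtlety I expect lies in step two: $A^{\varepsilon}$ is an unbounded operator and the factor $(A_1-A_0)$ appearing in the identity from Lemma~\ref{le.TP} does not a priori preserve $D(A^{\varepsilon})$, so I must use the \emph{two-sided} factorization $P^{\varepsilon}-T_{\varepsilon}=(A_1-A_0)(T_{\varepsilon}^2-T_{\varepsilon})$ (to put $A^{\varepsilon}$ on the right of $T_{\varepsilon}^2-T_{\varepsilon}$) and $(T_{\varepsilon}^2-T_{\varepsilon})(A_1-A_0)$ (to put it on the left), and then push $A^{\varepsilon}$ onto individual $E_j^{\varepsilon}$ blocks using \eqref{eq.estimEjA}. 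Everything else reduces to assembling the bounds already collected in Proposition~\ref{pr.nenciuEj}.
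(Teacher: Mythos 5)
Your proposal matches the paper's proof essentially step by step: you define $P^{\varepsilon}(s)$ via the same contour integral around $z=1$, invoke Lemma~\ref{le.TP} and the estimate $\|T_{\varepsilon}^2-T_{\varepsilon}\|=\widetilde{\mathcal{O}}(\varepsilon^{K+1})$ for \eqref{eq.PeTe}, use the two-sided factorization \eqref{eq.TPT2} together with \eqref{eq.estimEjA} to get \eqref{eq.PeA} and the range condition, and differentiate the contour integral with the commutator identity for the resolvent to isolate the $\varepsilon^{K+1}\partial_s E_K^{\varepsilon}$ remainder. The one small improvement you volunteer (that $A^{\varepsilon}(P^{\varepsilon}-T_{\varepsilon})$ is actually $\widetilde{\mathcal{O}}(\varepsilon^{K+1})$, not merely bounded) is correct but not needed for the stated \eqref{eq.PeA}; otherwise this is the paper's argument.
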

\noindent{\textbf Proof:} For $\varepsilon>0$ small enough, set
$$
P^{\varepsilon}(s)=\frac{1}{2i\pi}\int_{|z-1|=\frac 1 2}(z-T_{\varepsilon}(s))^{-1}~dz\,.
$$
Owing to \eqref{eq.Teps2}, the first statement of \eqref{eq.PeTe} is a straightforward
application of Lemma~\ref{le.TP}. The definition of
$T_{\varepsilon}(s)$ implies $A^{\varepsilon}(s)T_{\varepsilon}(s)\in
\mathcal{L}(\mathcal{H})$ and $T_{\varepsilon}(s)A^{\varepsilon}(s)\in
\mathcal{L}(\mathcal{H})$. The relation
\eqref{eq.TPT2} with $T=T_{\varepsilon}$ and $P=P^{\varepsilon}$ gives
\eqref{eq.PeA}\,.
Computing the derivative $\partial_{s}P^{\varepsilon}(s)$ with
$T_{\varepsilon}\in \mathcal{C}^{1}([0,T]; \mathcal{L}(\mathcal{H}))$
gives:
\begin{eqnarray*}
  i\varepsilon \partial_{s}P^{\varepsilon}
&=&
\frac{1}{2i\pi}\int_{|z-1|=\frac
  1
  2}(z-T_{\varepsilon})^{-1}(i\varepsilon \partial_{s}T_{\varepsilon})(z-T_{\varepsilon})^{-1}~dz
\\
&=&
\frac{1}{2i\pi}\int_{|z-1|=\frac
  1
  2}(z-T_{\varepsilon})^{-1}\left[iA^{\varepsilon},
  T_{\varepsilon}\right](z-T_{\varepsilon})^{-1}~dz
\\
&&\hspace{3cm}
+ \frac{i\varepsilon^{K+1}}{2i\pi}
\int_{|z-1|=\frac
  1
  2}(z-T_{\varepsilon})^{-1}(\partial_{s}E_{K}^{\varepsilon})(z-T_{\varepsilon})^{-1}~dz\,.
\end{eqnarray*}
The relation
$$
\left[iA^{\varepsilon}, (z-T_{\varepsilon})^{-1}\right]=
(z-T_{\varepsilon})^{-1}\left[iA^{\varepsilon}, T_{\varepsilon}\right](z-T_{\varepsilon})^{-1}
$$
allows to conclude
$$
i\varepsilon\partial_{s}P^{\varepsilon}-\left[iA^{\varepsilon},
  P^{\varepsilon}\right]=
\frac{i\varepsilon^{K+1}}{2i\pi}
\int_{|z-1|=\frac
  1
  2}(z-T_{\varepsilon})^{-1}(\partial_{s}E_{K}^{\varepsilon})(z-T_{\varepsilon})^{-1}~dz
=\widetilde{\mathcal{O}}(\varepsilon^{K+1})\,.
$$
\qed\\
\noindent\textbf{Proof of Theorem~\ref{th.nenciu}:} 
The statements \eqref{eq.PPP}\eqref{eq.defP} and
\eqref{eq.ipasP} have already been checked in
Proposition~\ref{pr.nenciuEj} and Proposition~\ref{pr.proj}. Consider
now the adiabatic evolution of $S(t,s)u_{s}$, when
$u_{s}=P^{\varepsilon}(s)u_{s}$,  stated in \eqref{eq.StsP}\eqref{eq.vPv}\eqref{eq.eqv}.
\\
We assumed that the Cauchy problem \eqref{eq.Cauchy} defines a
strongly continuous dynamical system $(S^{\varepsilon}(t,s))_{t\geq s\geq 0}$ of
contractions in $\mathcal{H}$ with $S^{\varepsilon}(t,s)D(A^{\varepsilon}(s))\subset
D(A^{\varepsilon}(t))$.
We now consider the modified operator
\begin{eqnarray*}
&&H_{AD}^{\varepsilon}(t)= iA^{\varepsilon}(t) + B^{\varepsilon}(t)\,,\\
\text{with}
&&
B^{\varepsilon}(t)=(1-2P^{\varepsilon}(t))
\left(i\varepsilon \partial_{t}P^{\varepsilon}(t)-\left[iA^{\varepsilon}(t),
  P^{\varepsilon}(t)\right]\right)\,.
\end{eqnarray*}
Since $B^{\varepsilon}(s)$ is an
$\widetilde{\mathcal{O}}(\varepsilon^{K+1})$ bounded continuous
perturbation of $iA^{\varepsilon}(s)$, the Cauchy problem
$$
\left\{
  \begin{array}[c]{l}
    i\varepsilon\partial_{t}u_{t}=(iA^{\varepsilon}(t)-B^{\varepsilon}(t))u_{t}\\
    u_{t=s}=u_{s}\,,
  \end{array}
\right.
$$
defines a strongly continuous dynamical system of bounded operators
$$
S_{AD}^{\varepsilon}(t,s)=S^{\varepsilon}(t,s)-i\varepsilon^{-1}\int_{s}^{t}S^{\varepsilon}(t,s')
(-B^{\varepsilon}(s'))S_{AD}^{\varepsilon}(s',s)~ds'\,.
$$
With the help of Gronwall lemma, it satisfies
\begin{eqnarray*}
&&
\|S_{AD}^{\varepsilon}(t,s)\|
\leq e^{\varepsilon^{-1}\int_{s}^{t}\|B^{\varepsilon}(s')\|~ds'}\leq
e^{C_{\delta,T}\varepsilon^{K-\delta}(t-s)}\,
\\
&&
\|S^{\varepsilon}(t,s)-S_{AD}^{\varepsilon}(t,s)\|
\leq 
C_{\delta,T}\varepsilon^{K-\delta}(t-s)e^{C_{\delta,T}\varepsilon^{K-\delta}(t-s)}\,,
\end{eqnarray*}
 for all $s,t$, $0\leq s\leq t\leq T$, with $\delta\in (0,1)$. 
Note that the right-hand side is bounded when 
$$
K\geq 1\,.
$$
For the comparison
 \eqref{eq.StsP} we take simply
 $v^{\varepsilon}(t)=S_{AD}^{\varepsilon}(t,s)u_{s}$.
It remains to check \eqref{eq.vPv} and \eqref{eq.eqv}.
First notice the identity
\begin{eqnarray}
\label{eq.HADfor}
H_{AD}^{\varepsilon}(t)&=&P^{\varepsilon}(t)(iA^{\varepsilon}(t))P^{\varepsilon}(t)+
(1-P^{\varepsilon}(t))(iA^{\varepsilon}(t))
(1-P^{\varepsilon}(t))\\
\nonumber
&&-i\varepsilon
\left[P^{\varepsilon}(t)\partial_{t}P^{\varepsilon}(t)+(1-P^{\varepsilon}(t))\partial_{t}
  (1-P^{\varepsilon}(t))\right]\,.
\end{eqnarray}
For our choice  $v^{\varepsilon}(t)=S_{AD}^{\varepsilon}(t,s)u_{s}$
the quantity
$P^{\varepsilon}(t)(i\varepsilon\partial_{t}v^{\varepsilon}(t))$
equals
$P^{\varepsilon}(t)(iA^{\varepsilon}(t)+B^{\varepsilon}(t))v^{\varepsilon}(t)$
since
$P^{\varepsilon}(t)A^{\varepsilon}=A^{\varepsilon}(t)P^{\varepsilon}(t)
- \left[A^{\varepsilon}(t),P^{\varepsilon}(t)\right]\in
\mathcal{L}(\mathcal{H})$\,.
Hence  $P^{\varepsilon}(t)v^{\varepsilon}(t)$ satisfies in the strong
sense the equation
\begin{eqnarray*}
(i\varepsilon\partial_{t}-H_{AD}^{\varepsilon}(t))(P^{\varepsilon}v^{\varepsilon})
&=&
i\varepsilon(\partial_{t}P^{\varepsilon})v^{\varepsilon}+P^{\varepsilon}(H_{AD}^{\varepsilon}(t)v^{\varepsilon})
- 
H_{AD}^{\varepsilon}(t)P^{\varepsilon}(t)v^{\varepsilon}
\\
&=&
i\varepsilon\left[(\partial_{t}P^{\varepsilon})-
P^{\varepsilon}(\partial_{t}P^{\varepsilon})-(\partial_{t}P^{\varepsilon})P^{\varepsilon}
\right]v^{\varepsilon}(t)=0\,.
\end{eqnarray*}
As a strong solution to $i\partial_{t}v=H_{AD}^{\varepsilon}(t)v(t)$ with the
initial data
$P^{\varepsilon}(s)u_{s}=u_{s}$,
 $P^{\varepsilon}(t)v^{\varepsilon}(t)$ has to be equal to
 $v^{\varepsilon}(t)$\,. We have proved \eqref{eq.vPv}. The equation
 \eqref{eq.eqv} is a rewriting of
 $i\varepsilon\partial_{t}v^{\varepsilon}-H_{AD}(t)v^{\varepsilon}=0$
 after recalling $\Pi(\partial\Pi)\Pi=0$ when $\Pi^{2}=\Pi$\,.
\qed

\noindent\textbf{Proof of Corollary~\ref{co.nenciuadapt0}:}
Theorem~\ref{th.nenciu} applied with $K=1$ gives the approximation
$v^{\varepsilon}(t)=P^{\varepsilon}(t)v^{\varepsilon}(t)$,
$P^{\varepsilon}(t)=E_{0}^{\varepsilon}(t)+\varepsilon
E_{1}^{\varepsilon}(t)+\tilde{\mathcal{O}}(\varepsilon^{2})$, which
solves 
$$ 
\left\{
\begin{array}[c]{l}
  i\varepsilon \partial_{t}v^{\varepsilon}
  -i\varepsilon(\partial_{t}P^{\varepsilon}(t))
v^{\varepsilon}=
P^{\varepsilon}(t)(iA^{\varepsilon}(t))P^{\varepsilon}(t)v^{\varepsilon}\,,
\quad\text{for}~t\geq s\,,
\\
v^{\varepsilon}(t=s)=u_{s}\,.
\end{array}
\right.
$$
 The relations $P^{\varepsilon}=E_{0}^{\varepsilon}+\varepsilon
E_{1}^{\varepsilon}+\widetilde{\mathcal{O}}(\varepsilon^{2})$, 
$[A^{\varepsilon},E_{0}^{\varepsilon}]=0$ and 
$E_{0}^{\varepsilon}E_{1}^{\varepsilon}E_{0}^{\varepsilon}=0$
combined with the estimates \eqref{eq.estimEjA} and \eqref{eq.PeA} lead to:
\[
P^{\varepsilon}(t)(iA^{\varepsilon}(t))P^{\varepsilon}(t) =
E_{0}^{\varepsilon}(t)(iA^{\varepsilon}(t)) E_{0}^{\varepsilon}(t)
+
\varepsilon
E_{1}^{\varepsilon}(t)(iA^{\varepsilon}(t))E_{0}^{\varepsilon}(t)
+
\varepsilon
(iA^{\varepsilon}(t))E_{0}^{\varepsilon}(t)E_{1}^{\varepsilon}(t)
+
\widetilde{\mathcal{O}}(\varepsilon^{2})\,.
\]
Then Proposition~\ref{pr.nenciuEj} provide
$$
i\varepsilon\partial_{t}P^{\varepsilon}(t)=i\varepsilon \partial_{t}E_{0}^{\varepsilon}(t)+
\widetilde{\mathcal{O}}(\varepsilon^{2})\,. 
$$
This implies
\begin{equation}
  \label{eq.vepsapp}
 i\varepsilon \partial_{t}v^{\varepsilon}
  -i\varepsilon(\partial_{t}E_{0}^{\varepsilon}(t))
v^{\varepsilon}=
E_{0}^{\varepsilon}(t)(iA^{\varepsilon}(t))E_{0}^{\varepsilon}(t)v^{\varepsilon}
+
\varepsilon E_{1}^{\varepsilon}(t)(iA^{\varepsilon}(t)E_{0}^{\varepsilon}(t))v^{\varepsilon}
+\widetilde{\mathcal{O}}(\varepsilon^{2})\,,
\end{equation}
where we used $E_{0}^{\varepsilon}v^{\varepsilon}=v^{\varepsilon}+\widetilde{\mathcal{O}}(\varepsilon)$. Consider now the adiabatic generator
\begin{eqnarray*}
&&H_{0}^{\varepsilon}(t)= iA^{\varepsilon}(t)+B_{0}^{\varepsilon}(t)\,,\\
\text{with}
&&
B_{0}^{\varepsilon}(t)=(1-2E_{0}^{\varepsilon}(t))
\left(i\varepsilon \partial_{t}E_{0}^{\varepsilon}(t)-\left[iA^{\varepsilon}(t),
  E_{0}^{\varepsilon}(t)\right]\right)
=
i\varepsilon  (1-2E_{0}^{\varepsilon}(t))
\partial_{t}E_{0}^{\varepsilon}(t)
\,.
\end{eqnarray*}
The assumed estimates on $E_{0}$ and $\partial_{t}E_{0}$ with the
Gronwall Lemma lead to the uniform bound for the associated dynamical
system $S_{0}^{\varepsilon}(t,s)$:
$$
\forall s,t,  0\leq s\leq t\leq T,\quad \|S_{0}^{\varepsilon}(t,s)\|\leq C_{T}e^{C_{T}}\,,
$$
while the formula \eqref{eq.HADfor} is valid for
$H_{0}^{\varepsilon}(t)$ after replacing $P^{\varepsilon}(t)$ with
$E_{0}^{\varepsilon}(t)$:
\begin{multline*}
H_{0}^{\varepsilon}(t)=E_{0}^{\varepsilon}(t)(iA^{\varepsilon}(t))E_{0}^{\varepsilon}(t)+
(1-E_{0}^{\varepsilon}(t))(iA^{\varepsilon}(t))
(1-E_{0}^{\varepsilon}(t))\\
-i\varepsilon
\left[E_{0}^{\varepsilon}(t)\partial_{t}E_{0}^{\varepsilon}(t)+
(1-E_{0}^{\varepsilon}(t))\partial_{t}
  (1-E_{0}^{\varepsilon}(t))\right]\,.
\end{multline*}
Now compute
\begin{equation*}
(i\varepsilon\partial_{t}-H_{0}^{\varepsilon}(t))(E_{0}^{\varepsilon}v^{\varepsilon})
=
i\varepsilon(\partial_{t}E_{0}^{\varepsilon})v^{\varepsilon}+
E_{0}^{\varepsilon}(i\varepsilon\partial_{t}v^{\varepsilon}  
)
- 
H_{0}^{\varepsilon}(t)E_{0}(t)v^{\varepsilon}\,.
\end{equation*}
With \eqref{eq.vepsapp} and $E_{0}^{\varepsilon}E_{1}^{\varepsilon}E_{0}^{\varepsilon}=0$, one gets
\[
(i\varepsilon\partial_{t}-H_{0}^{\varepsilon}(t))(E_{0}^{\varepsilon}v^{\varepsilon})
=
-i\varepsilon\left[(\partial_{t}E_{0}^{\varepsilon})-
E_{0}^{\varepsilon}(\partial_{t}E_{0}^{\varepsilon})-(\partial_{t}E_{0}^{\varepsilon})E_{0}^{\varepsilon}\right]v^{\varepsilon}(t)+\widetilde{\mathcal{O}}(\varepsilon^{2})=\widetilde{\mathcal{O}}(\varepsilon^{2})\,.
\]
The uniform estimate $\|S_{0}^{\varepsilon}(t,s)\|\leq C_{T}e^{C_{T}}$
implies
$$
\|u^{\varepsilon}(t)-S_{0}^{\varepsilon}(t,s)u_{s}\|\leq
\|u^{\varepsilon}(t)-v^{\varepsilon}(t)\|+
\|\varepsilon E_{1}^{\varepsilon}(t)v^{\varepsilon}(t)\|+
\|E_{0}^{\varepsilon}(t)v^{\varepsilon}-S_{0}^{\varepsilon}(t,s)u_{s}\| 
=\widetilde{\mathcal{O}}(\varepsilon)\,.
$$
The same argument as in the end of the proof of
Theorem~\ref{th.nenciu} says that
$v_{0}^{\varepsilon}(t)=S_{0}^{\varepsilon}(t,s)u_{s}$ with
$E_{0}^{\varepsilon}(s)u_{s}=u_{s}$ satisfies
$$
\forall t\in [s,T]\,,\quad E_{0}^{\varepsilon}(t)v_{0}^{\varepsilon}(t)=v_{0}^{\varepsilon}(t)
$$
and solves the Cauchy problem
$$
\left\{
  \begin{array}[c]{l}
    i\varepsilon \partial_{t}v_{0}^{\varepsilon}-i\varepsilon(\partial_{t}E_{0}^{\varepsilon}(t))v_{0}^{\varepsilon}(t)=
    E_{0}^{\varepsilon}(t)(iA^{\varepsilon}(t))E_{0}(t)v_{0}^{\varepsilon}(t)\,,\\
v_{0}^{\varepsilon}(t=s)=u_{s}\,.
  \end{array}
\right.
$$
The uniform boundedness of  $E_{0}^{\varepsilon}(t)$ and
$\partial_{t}E_{0}^{\varepsilon}(t)$ ensures that the solution to
\eqref{eq.Phits} is well defined for $t',s'\in [0,T]$ with the uniform
estimate
$$
\forall t',s'\in [0,T]\,,\quad 
\|\Phi_{0}^{\varepsilon}(t',s')\|\leq C'_{T}\,,
$$
with the parallel transport property
$$
\forall t',s'\in[0,T]\,,\quad
\Phi_{0}^{\varepsilon}(t',s')E_{0}^{\varepsilon}(s')=E_{0}^{\varepsilon}(t')\Phi_{0}^{\varepsilon}(t',s')\,,\quad
[\Phi_{0}^{\varepsilon}(t',s')]^{-1}= \Phi_{0}^{\varepsilon}(s',t')\,.
$$
It suffices to take
$w^{\varepsilon}(t)=\Phi_{0}^{\varepsilon}(s,t)S_{0}^{\varepsilon}(t,s)u_{s}$\,.
\qed

\noindent\textbf{Acknowledgements:} The authors were partly supported
by the ANR-project Quatrain led by F.~Mehats. This work was initiated
while the second author had a CNRS post-doc position in Rennes. The
third author discussed recently  this problem 
with G.M.~Graf, E.B.~Davies, K.~Pankrashkin
and he remembers   many other former discussions related to this topic
with  colleagues appearing in the bibliography. 

\bibliography{ref}

\end{document}